\documentclass[a4paper]{amsart}
\usepackage[utf8]{inputenc}
\usepackage[T1]{fontenc}

\usepackage{amsthm, amsmath, amsfonts, mathrsfs, mathabx, amssymb}
\usepackage{stmaryrd}

\usepackage{mathpazo}
\usepackage{relsize}

\usepackage{times}
\usepackage{cancel}

\usepackage[pagebackref,colorlinks=true,pdfpagemode=none,urlcolor=blue,
linkcolor=blue,citecolor=blue]{hyperref}

\usepackage{scalerel} 

\usepackage{xcolor}
\usepackage{accents}

\usepackage{bbm}

\definecolor{labelkey}{gray}{.8}
\definecolor{refkey}{gray}{.8}

\definecolor{darkred}{rgb}{0.9,0.1,0.1}
\definecolor{darkgreen}{rgb}{0,0.5,0}

\newcommand{\tnorm}[1]{{\left\vert\kern-0.25ex\left\vert\kern-0.25ex\left\vert #1 
    \right\vert\kern-0.25ex\right\vert\kern-0.25ex\right\vert}}

\newtheorem{theorem}{Theorem}[section]
\newtheorem{lemma}[theorem]{Lemma}
\newtheorem{definition}[theorem]{Definition}
\newtheorem{corollary}[theorem]{Corollary}
\newtheorem{proposition}[theorem]{Proposition}

\newtheorem{assumption}[theorem]{Assumption}

\theoremstyle{remark}
\newtheorem{remark}[theorem]{Remark}

\renewenvironment{proof}[1][Proof]{ {\itshape \noindent {#1.}} }{$\Box$
\medskip}

\numberwithin{equation}{section}

\newcommand{\cal}{\mathcal}

\newcommand{\mb}[1]{{\mathbf #1}}
\newcommand{\bb}[1]{{\mathbb #1}}

\newcommand{\hsum}{\underset{k \in  \widehat{\T}_n}{\hat{ \sum}}}
\newcommand{\R}{\mathbb{R}}
\newcommand{\hT}{\widehat{\mathbb{T}}}
\newcommand{\bbT}{\mathbb{T}}
\newcommand{\Om}{\Omega}

\newcommand{\bbE}{\mathbb{E}}
\newcommand{\bbP}{\mathbb{P}}

\newcommand{\bbZ}{\mathbb{Z}}

\newcommand{\thN}{\tilde{\hat{N}}}

\newcommand{\E}{\mathbb{E}}

\newcommand{\om}{\omega}
\newcommand{\ga}{\gamma}

\newcommand{\dd}{{\rm d}}

\newcommand{\C}{\mathbb{C}}

\newcommand{\cL}{\mathcal{L}}

\newcommand{\bbR}{\mathbb{R}}
\newcommand{\eps}{\varepsilon}
\newcommand{\bD}{\mathbf{D}}
\renewcommand{\epsilon}{\varepsilon}

\newcommand{\cE}{\mathcal{E}}
\newcommand{\vphi}{\varphi}

\newcommand{\la}{\lambda}

\newcommand{\T}{\mathbb{T}}

\newcommand{\bT}{\mathbb{T}}

\newcommand{\cD}{\mathcal{D}}

\newcommand{\mL}{{\mb L}}
\newcommand{\lnm}{\llbracket}
\newcommand{\rnm}{\rrbracket}

\renewcommand{\hat}{\widehat}
\renewcommand{\bar}{\overline}
\newcommand{\mlg}{\mathlarger}

\newcommand{\lsc}{\mlg{\ll}\,}
\newcommand{\rsc}{\mlg{\gg}_\eta}

\renewcommand{\tilde}{\widetilde}

\begin{document}

\title[Hydrodynamic limit for oscillator chain]
{The hydrodynamic limit for the energy transport in a
  stochastically perturbed harmonic chain of oscillators}

\author{
  Tomasz Komorowski}\address{ Institute of Mathematics, Polish Academy
  Of Sciences, Warsaw, Poland.}
    \email{tkomorowski@impan.pl}
  
\author{  Stefano Olla}\address{Universit\'e Paris-Dauphine, PSL Research University,
    CNRS, CEREMADE, 
    \emph{and}  Institut Universitaire de France \emph{and} GSSI, L'Aquila} 
  \email{olla@ceremade.dauphine.fr}
  
  \author{ Marielle Simon}
  \address{Aix-Marseille Université, CNRS, Institut de Mathématiques de Marseille (I2M), 3 place Victor Hugo, 13331 Marseille Cedex 3
    \emph{and} Institut Universitaire de France \emph{and} GSSI, L'Aquila}
    \email{marielle.simon@univ-amu.fr}

     \thanks{T.K acknowledges the support of
the NCN grant 2024/53/B/ST1/00286. M.S. is partially supported by the ANR grant CONVIVIALITY (ANR-23-CE40-0003) of the French National Research Agency
(ANR)}

\begin{abstract}
We consider a pinned harmonic chain perturbed by random velocity flips, which conserve the total energy. Previous results have established the diffusive hydrodynamic limit for the energy profile at the level of expectations. In this article, under suitable fourth-moment bounds on the initial data, we prove for the first time a law of large numbers for the empirical energy distribution. Our approach is based on the Wigner distribution and shows that its random fluctuations vanish in the macroscopic limit. Consequently, the empirical energy profile converges in probability to the deterministic solution of the corresponding heat equation.
  \end{abstract}

  \maketitle

\section{Introduction}

Over the last four decades, remarkable progress has been made in deriving macroscopic evolution equations for conserved quantities directly from microscopic dynamics through space-time scaling limits, in particular for stochastic interacting systems. Among the most powerful tools are entropy methods, introduced in the seminal work of Guo, Papanicolaou, and Varadhan \cite{gpv}; see also \cite{kl} for a systematic exposition.

A serious limitation of entropy methods is that they require sufficient integrability of the microscopic currents of the conserved quantities. More precisely, in order to perform the necessary cutoff arguments, one typically needs higher moments of the currents to be controlled in terms of the entropy. This becomes problematic for chains of oscillators whose Hamiltonian dynamics is perturbed by a conservative stochastic dynamics preserving the energy. In such systems, the energy current typically grows, at large energies, at the same order as the local energy itself, and the entropy bound does not provide the additional moment estimates required by the usual argument.

At the Euler, or hyperbolic, scale this difficulty can be circumvented \cite{even-olla} by using Yau's relative entropy method \cite{yau}. At the diffusive scale, however, where one expects the heat equation to describe the macroscopic transport of energy, the corresponding problem remains open in considerable generality. For typical anharmonic chains, the energy current is \emph{non-gradient}, and at present only equilibrium energy fluctuations can be treated \cite{os}, through suitable adaptations of Varadhan's non-gradient method; see \cite{var-ng,kl}.

For harmonic chains perturbed by an energy-conserving noise, more
explicit computations are available. In this setting, the heat
equation can be derived for the macroscopic evolution of the energy in
the diffusive limit at the level of the \emph{expected} empirical energy
profile \cite{KOS18,klo,klos}. One particularly simple
energy-conserving stochastic perturbation consists in independently
flipping the signs of the particle velocities at random times. More
recently, analogous results have been obtained for certain
deterministic chaotic perturbations \cite{clo26}.

We mention here a   result of \cite{bernardin}, where the convergence in probability
  (the hydrodynamic limit) {has been proved} for  an unpinned
  harmonic chain with a stochastic exchange of kinetic energy between neighboring atoms.
  The hydrodynamic limit is then shown for a particular value of
the strength of the inter-particle noise $\gamma=\gamma_*>0$. For this
  value   of the parameter (in fact $\gamma_*=1$ in \cite{bernardin}),
which is very particular to the system considered, the problem of
large energies mentioned in the foregoing, can be
avoided. What is quite remarkable, even a slight modification of the system by
perturbing the  coupling parameter $\gamma$ between the Hamiltonian and the
noise term, destroys this feature and the argument used in
\cite{bernardin} is no longer applicable.

The purpose of the present article is to strengthen these results by
proving a hydrodynamic limit, formulated as a \emph{law of large numbers}, for the empirical energy
distribution of such harmonic chains. In particular, while the
previous results concern the convergence of the expectation of the
empirical energy profile, or the hydrodynamic limit for a particular
model of a stochasticaly perturbed harmonic chain, here we prove
convergence in probability (in fact in the $\mL^1$ sense) of the random empirical profile itself for
a quite general class of chains.

Our analysis is based on the Wigner distribution approach developed in
\cite{KOS18}. Assuming a bound on the fourth moment of the Fourier
transform of the initial configuration, see \eqref{eq:3z}, we prove that
the second moment of the Laplace transform of the fluctuating part of
the random Wigner distribution vanishes in the macroscopic limit; see
Theorem \ref{thm:intermediate}. It follows that the empirical energy
distribution converges in the $\mL^1$ sense to a deterministic limit, as
stated in Theorem \ref{thm021103-26}. This upgrades the previously known convergence at the level of expectations to a law of large numbers.

In the present article we restrict ourselves to the pinned harmonic
chain, corresponding to $\omega_0\neq0$ in \eqref{eq:dynamics0}. We
expect that the argument can be extended to the unpinned case, where
the volume is also conserved, as in \cite{KOS18}, as well as to other exchanges of velocities
that conserve only energy either random \cite{bo} or chaotic \cite{clo26}.
Another natural direction is to consider stochastic exchanges of velocities that conserve
also total momentum. In that case the macroscopic energy transport is superdiffusive and is governed by a fractional heat equation of order 3/4 \cite{JKO15}.

\subsection*{Organization of the paper}

In Sections \ref{sec2.1}--\ref{sec2.3} we formulate rigorously the
  model. The main result is stated in Section \ref{sec2.4}, see
  Theorem \ref{thm021103-26}. In Section \ref{sec:strategy} we
  reformulate the question of the convergence of the energy functional
  into the   convergence of the respective Wigner functions, see
  Theorem \ref{thm:intermediate}. In fact,   the latter,  is a consequence
  of the convergence of the  Laplace transforms of the Fourier-Wigner
functions, defined in Section \ref{sec3.1}, see Theorem \ref{thm:laplace}. Section \ref{sec:aux} is
devoted to proving some estimates of the Fourier-Wigner functions that
shall be used in the proof of Theorem \ref{thm:laplace},  given
in Sections \ref{sec:proofLaplace} and \ref{sec:proofrest}. Some
additional facts, dealing with the examples of the initial data, convergence of the Wigner
functions and their Laplace transforms, are proved in Sections
\ref{AppA} and \ref{secB} of the
Appendix. For the sake of  
completeness of the argument, in Section \ref{sec5},  we give the proof of  
  the convergence of the averages of   the Wigner functions. This result
plays a role in the proof of Theorem \ref{thm:laplace}. It
relies on the technique developed in \cite{KOS18}, that was used to
show the diffusive limit of the averages of the Wigner functions in
the case of the unpinned chain $(\om_0=0)$.


\section{Preliminaries and formulation of the main result}

\label{sec2}

\subsection{Random flip model}

\label{sec2.1}

Denote by $\T_n$ the set of integers $\bbZ$ equipped with the
identification of elements modulo an integer element $n\ge 1$.
We consider $n$ particles whose positions and  momenta
$\{\big(q_x(t),p_x(t)\big)\}_{x\in\T_n}$ evolve according to the stochastic
differential equations
\begin{equation} \left\{ \begin{aligned}
    \dd   q_x(t) &=   p_x(t)\; \dd   t ,\\
    \dd   p_x(t) &=  \big(\Delta q_x(t)-\om_0^2q_x(t)\big)\; \dd   t - 2p_x(t^-)
    \; \dd     N_x(\gamma  t),\qquad x\in \T_n.
  \end{aligned}\right.\label{eq:dynamics0}\end{equation}
Here $\om_0 > 0$ is the \textit{pinning intensity}, $\{  N_x(t)\; ; \; t \geq 0\}_{x\in\T_n}$ are $n$ independent Poisson processes of
intensity $1$, and the constant $\gamma>0$ is the \emph{noise intensity}. We suppose that
they are defined over a certain probability space $(\Xi,{\cal
  F},\bbP)$. Besides, given a function $f:\T_n\to\mathbb C$ the right and left gradients and laplacian discrete operators are defined as
$$
\nabla  f_x=f_{x+1}-f_{x},\quad  \nabla^\star f_x=f_x-f_{x-1},\quad
\Delta f_x:=\nabla^\star\nabla  f_x=f_{x+1}+f_{x-1}-2f_{x}.
$$
We denote by
$\Om_n=(\bbR\times \bbR)^{\T_n}$ the configuration space of the
particle ensemble.


\subsection{Fourier transform and wave function} 

Let us denote by $\widehat{f}$ the Fourier transform of a finite sequence $\{f_x\}_{x\in\T_n}$ of numbers in $\C$,
 defined as follows: 
\begin{equation}\label{eq:fourier}
\widehat{f}(k)=\sum_{x\in\T_n} f_x\; e^{-2i\pi x k},\qquad k\in \widehat{\T}_n:=\big\{0,\tfrac1n,...,\tfrac{n-1}n\big\}.
\end{equation}
Here
$\widehat{\T}_n:=\big\{0,\tfrac1n,...,\tfrac{n-1}n\big\}$ is the dual
to the discrete torus $\T_n$. It has a group structure with respect to
the addition modulo $1$.

Reciprocally, for any $f: \widehat{\T}_n \to \C$, we denote by $\big\{\widecheck{f}_x\big\}_{x\in\T_n}$ its inverse Fourier transform given by
\begin{equation}
\label{eq:inversefouri}
\begin{split}
 \widecheck{f}_x = \underset{k \in  \widehat{\T}_n}{\widehat{\sum}} e^{2i\pi xk}
f(k),\qquad x \in \T_n, \quad\mbox{where}\quad
    \underset{k \in  \widehat{\T}_n}{\widehat\sum}  :=\frac1n\sum_{k \in  \widehat{\T}_n} .
\end{split}
\end{equation}
The Parseval identity reads 
\begin{equation}\label{eq:parseval}
\|\hat f\|_{\mb L^2(\hat \T_n)}^2:= \underset{k \in  \widehat{\T}_n}{\hat\sum}  \big|\widehat{f}(k)\big|^2=\sum_{x\in\T_n} \big|f_x\big|^2 = \|f\|^2_{\mb L^2(\T_n)}.
\end{equation}
 If $\{f_x\}_{x\in\T_n}$ and $\{g_x\}_{x\in\T_n}$ are two  sequences indexed by the discrete torus, their convolution is given by
\[ (f\ast g)_x:= \sum_{y \in \T_n} f_y\; g_{x-y},\qquad x \in \T_n.
\]
Denote by $\bT$ the unit torus in dimension $1$, understood here as
$[0,1]_{\equiv}$, with $\equiv$  the relation of identification of the
endpoints. {Suppose that $\mathbf{X}$ is some set. Given a function $\varphi: \bT\times
\mathbf{X}\to\mathbb C$, such that $|\varphi(\cdot,\xi)|$  is Lebesgue
integrable, for each $\xi\in \mathbf{X}$,} we define its Fourier
transform w.r.t.~the first variable $u\in\T$ by
\begin{equation}
  \label{FC}
  {\cal F}(\varphi)(\eta,\xi):=\int_{\bT}\varphi(u,\xi)e^{-2\pi iu\eta}\dd
  u,\qquad \eta\in\bbZ, \xi \in \mathbf{X}.
  \end{equation}
\begin{definition}[Dispersion relation and wave function] \label{def:dispersion}
We  define the \emph{dispersion relation}
\begin{equation}\label{eq:omega}
        \om(k)=\big(\om_0^2+4\sin^2(\pi k)\big)^{1/2},\qquad k\in\bT.\end{equation}
Then, the \emph{wave function}  is defined as
\begin{equation}
	\label{011307f}
	\hat\psi(t,k):= \om(k)\hat q(t,k)
	+i  \hat p(t,k),\qquad k\in\hat\bbT_n,
\end{equation}
where $\{\hat q(t,k)\}_{k\in\hat\T_n}$ are $\{\hat p(t,k)\}_{k\in\hat\T_n}$ are the Fourier coefficients of $\{p_x(t)\}_{x\in\T_n}$ and $\{q_x(t)\}_{x\in\T_n}$, respectively.
\end{definition}

\begin{remark}
	The inverse  Fourier transform of the wave function is given by
	\begin{equation}
		\label{011307}
		\psi_x(t):= \left(\tilde{\om} *q\left( t\right)\right)_x 
		+i  p_x\left( t\right),\qquad x\in\bbT_n,
	\end{equation}
	where $\{\tilde \om_x\}_{x\in\T_n}$ are the Fourier inverse coefficients of the dispersion
	relation \eqref{eq:omega}. 
\end{remark}


\subsection{Energy functional and the
  distribution of the initial data}

\label{sec2.3}

Given a configuration $({\bf q},{\bf  p})=\{(q_x,p_x)\}_{x\in\T_n}\in\Om_n$ we  define the \textit{microscopic
energy density} (energy per atom) functional
\begin{equation}
  \label{ex}
  e_x:=\frac12 \big(p_x^2+\om_0^2q_x^2+\big(\nabla^\star
  q_x\big)^2\big),\qquad x \in  \T_n.
\end{equation}
Let $T(\cdot)\in C(\bT)$ be a continuous, positive function and
  \begin{equation}
    \label{Ts}
   0< T_*:=\min_{u\in\bT}T(u) \le  T^*:=\max_{u\in\bT}T(u).
 \end{equation}
 {We assume that the initial probability distribution $\mu_n$ on
$\Omega_n$ satisfies the following:}
\begin{assumption}
  \label{T}
 For any $\varphi\in C(\bT)$, we have
 \begin{equation}
   \label{eq:1}
    \lim_{n\to+\infty}\int_{\Om_n}\Big|\frac{1}{n}\sum_{x\in\bT_n}e_x
      \varphi\Big(\frac{x}{n}\Big)-\int_{\bbT} \varphi(u) T(u) \dd u\Big|\dd \mu_{n}=0.
    \end{equation}
   { Furthermore, we assume that
      there exists $C>0$ such that }
    \begin{align}
      \label{eq:3z}
       \frac1{n^2}\hsum \int_{\Om_n}\big|\hat\psi(k)\big|^4 \;
      \dd\mu_n \le C,\qquad n=1,2,\ldots.
    \end{align}
\end{assumption}


\begin{remark}
  An example of sequence of probability distributions that satisfy Assumption \ref{T}
is given by the local Gibbs measures
\begin{equation}
  \label{eq:localG}
  \mu_{n,T(\cdot)} (\dd{\bf q}, \dd{\bf p})=
  \frac{1}{Z_{n,T(\cdot)}}\exp\bigg\{-\sum_{x\in\bT_n} T^{-1}\Big(\frac{x}{n}\Big)e_x\bigg\}
  \dd{\bf q}\dd{\bf p},
\end{equation}
where $Z_{n,T(\cdot)}$ is the normalizing factor, ${\bf q}=(q_x)_{x\in\T_n}, {\bf p}=(p_x)_{x\in\T_n}$.
This is proved in detail in Appendix \ref{app:lln}.
\end{remark}
        
        \begin{remark}
          \label{rmk2.4}
          The initial distribution $\mu_n$ can be also deterministic, i.e.~concentrated on
          initial (sequence of) configurations that satisfy Assumption \ref{T}.
          Any randomness of the initial configuration does not play any role in the proof.
          An example of such sequence of configurations is shown in Appendix \ref{xxx}.
        \end{remark}


\subsection{Statements of the main results}
\label{sec2.4}

We consider the evolution of positions and momenta in the diffusive time scale, namely
$q_x^{(n)}(t)=q(n^2t)$, $p_x^{(n)}(t)=p(n^2t)$, $x\in\T_n$, which satisfy, for any $x \in \T_n$,
\begin{equation} \left\{ \begin{aligned}
    \dd   q_x^{(n)}(t) &=  n^2 p_x^{(n)}(t)\; \dd   t ,\\
    \dd   p_x^{(n)}(t) &=  n^2\big(\Delta q_x^{(n)}(t)-\om_0^2q_x^{(n)}(t)\big)\; \dd   t - 2p_x^{(n)}(t^-)
    \; \dd     N_x^{(n)}(\gamma  t),
  \end{aligned}\right.\label{eq:dynamicsn}\end{equation}
where $N_x^{(n)}(\gamma  t) :=N_x(n^2\gamma  t)$.
In the following we assume that $\{\big(q_x^{(n)}(0), p_x^{(n)}(0)\big)\}_{x\in\T_n}$ are distributed according to $\mu_n$, which satisfies Assumption \ref{T} and we denote 
\begin{equation}
  \label{ex1}
  e_x^{(n)}(t):=\frac12 \big(p_x^2(n^2t)+\om_0^2q_x^2(n^2t)+(\nabla^\star
  q_x(n^2t))^2\big),\qquad x\in\bbT_n.
\end{equation}
The process $\big(q_x^{(n)}(t),p_x^{(n)}(t)\big)_{x\in\T_n}$, $t\ge0$  is defined over the product probability space $(\Om_n\times
\Xi,{\cal B}(\Om_n)\otimes {\cal F},\bbP_n)$, where $\bbP_n:=\mu_{n}\otimes\bbP$.
The \textit{mean energy} functional is then defined as 
\begin{equation}
  \label{bex1}
\widebar  e_x^{(n)}(t):=  \bbE_{n}\big[e_x^{(n)}(t)\big],
\end{equation}
with $\bbE_n$ the expectation w.r.t.~the measure $\bbP_n$. One can easily see that the total energy is conserved almost surely along time evolution, namely
\begin{equation}
  \label{ex2}
 \mathcal{E}_{\rm tot}^{(n)}(t):=\frac1n\sum_{x\in\bbT_n} e_x^{(n)}(t)\equiv \frac1n \sum_{x\in\bbT_n}
 e_x^{(n)}(0) = \mathcal{E}_{\rm tot}^{(n)}(0),\qquad t\ge0.
\end{equation}
Note that, from Assumption \ref{T} and thanks to the conservation law: for any $t\ge 0$,
	\begin{equation}\label{eq:energybounds} \sup_{n\ge 1} \E_n\big[ \mathcal{E}_{\rm tot}^{(n)}(t) \big] < \infty, \qquad \sup_{n\ge 1} \E_n\big[(\mathcal{E}_{\rm tot}^{(n)}(t))^2 \big] < \infty.\end{equation}Indeed, by the Jensen inequality and the Plancherel identity \eqref{eq:parseval}, we have: for any $t\ge 0$,
	\begin{align*}
		\frac1{n^2}
		\int_{\Om_n}\hsum\big|\hat\psi(k)\big|^4\dd\mu_n &\ge
		\int_{\Om_n}  \Big(\frac1{n}\hsum\big|\hat\psi(k)\big|^2\Big)^2\dd\mu_n
           =\int_{\Om_n}\Big(\frac1{n}\sum_{x\in\bbT_n}\big|\psi_x\big|^2\Big)^2\dd\mu_n\\
       &   { =\int_{\Om_n}\Big(\frac2{n}\sum_{x\in\bbT_n} e_x(0)\Big)^2\dd\mu_n
           = \mathbb E_{n}\left[\Big(\frac2{n}\sum_{x\in\bbT_n} e_x(t)\Big)^2\right].}
		\end{align*}
\medskip

In the following we denote
by $C_c(\mathbf{X})$ (resp.~$C_c^\infty(\mathbf{X})$) the space of continuous (resp. smooth) and compactly
supported  functions on a given metric space $(\mathbf{X},{\rm d})$. By $\|\cdot\|_\infty$ we denote the supremum
norm on the space.
Our first result deals with the convergence of the mean energy profile. 

\begin{theorem}
  \label{thm022303-26}
 For any  function $\Phi\in C_c\big(\T\times[0,+\infty)\big)$ 
  \begin{equation}
      \label{022303-26}
      \lim_{n\to+\infty} \frac{1}{n}\sum_{x\in\bT_n}\int_0^{+\infty}\bar
      e_x^{(n)}(t)\Phi\Big(\frac{x}{n},t\Big)\dd t=\int_0^{+\infty}\dd
      t\int_{\bT}T(u,t)\Phi(u,t)\dd u,
    \end{equation}
    where $T(u,t)$ is the unique solution of the following initial
    value problem
    \begin{equation}
      \label{032303-26}
      \begin{split}
        \partial_t T(u,t)&=\bD\partial_{uu}T(u,t),\qquad (u,t)\in\T\times[0,+\infty),\\
T(u,0)&=T(u),\qquad u\in\bbT.
      \end{split}
    \end{equation}
    Here
    \begin{equation}
      \label{042303-26}
        \bD=\frac{1}{4\pi^2\ga}\int_{\bbT}[\om'(k)]^2\dd k\end{equation}
        and $\om$ is the dispersion relation given in \eqref{eq:omega}.
    \end{theorem}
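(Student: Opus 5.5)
Our plan is to follow the Wigner distribution approach of \cite{KOS18}, adapted to the pinned case $\om_0>0$. The first step is to express the mean energy in terms of the wave function. Using \eqref{011307} we have $|\psi_x|^2 = p_x^2 + |(\tilde\om * q)_x|^2$, and summing against a slowly varying test function $\varphi(x/n)$ gives
\[
\frac1n\sum_x e_x\,\varphi(x/n)=\frac1{2n}\sum_x |\psi_x|^2\varphi(x/n)+o(1).
\]
The $o(1)$ holds in $\mL^1(\bbP_n)$ uniformly in $t$. To justify it we write both quadratic forms in Fourier variables and use that $\om(k\pm\eta/2n)$ is smooth, so the errors are $O(|\eta|/n)$ times the total energy, which is bounded by \eqref{eq:energybounds}. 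We then introduce the averaged Wigner distribution
\[
\bar W_n(t,\eta,k)=\frac{1}{2n}\bbE_n\Big[\hat\psi^{(n)}(t,k+\tfrac{\eta}{2n})^*\,\hat\psi^{(n)}(t,k-\tfrac{\eta}{2n})\Big],
\]
together with the companion anti-Wigner term $\bar Y_n$, built from $\hat\psi\hat\psi$. With these, the left side of \eqref{022303-26} becomes $\int_0^\infty\sum_\eta \hsum \bar W_n(t,\eta,k)\,\hat\Phi^*(\eta,t)\,\dd t$ up to vanishing terms.

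The second step is to write the closed linear evolution of $(\bar W_n,\bar Y_n)$ obtained from \eqref{eq:dynamicsn}. Its generator has three parts. The transport term $-i n\,\delta_n\om(k,\eta)\bar W_n$ has $\delta_n\om = n[\om(k+\eta/2n)-\om(k-\eta/2n)]\approx \eta\om'(k)$. The $\bar Y_n$ equation carries a fast oscillation $\pm 2in^2\om(k)\bar Y_n$. The flip noise contributes $\gamma n^2$ times the operator $\mathcal L f(k)=\hsum_{k'} f(k') - f(k)$ on $W$, plus a coupling between $W$ and $Y$. We take Laplace transforms in $t$ with parameter $\la>0$. Since $\om_0>0$ makes $\om$ bounded below, the $\bar Y_n$ equation can be solved and $\bar Y_n$ is $O(1/n)$ in Laplace transform; this is where the pinning simplifies matters relative to \cite{KOS18}.

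For $\bar W_n$ we decompose $\bar W_n=\langle \bar W_n\rangle + \bar W_n^\perp$, where $\langle\cdot\rangle$ denotes the $k$-average. The equation for $\bar W_n^\perp$ at order $n^2$ is dominated by $-\gamma n^2\bar W_n^\perp$, which gives $\bar W_n^\perp \approx -\frac{i\eta\om'(k)}{2\pi\gamma n}\langle \bar W_n\rangle$. Inserting this into the averaged equation, the $O(n)$ transport term produces, at order one, $-\frac{\eta^2}{\gamma}\int(\om')^2\dd k\,\langle \bar W_n\rangle$ times the appropriate $4\pi^2$ normalization, i.e.\ exactly $-4\pi^2\eta^2\bD\langle\bar W_n\rangle$. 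This is the Fourier form of \eqref{032303-26} with $\bD$ as in \eqref{042303-26}. The initial condition comes from \eqref{eq:1}, which gives $\langle\bar W_n(0,\eta)\rangle\to \hat T(\eta)$.

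To make this rigorous, we need a priori bounds on the Laplace transforms in $\mL^2$ (in $k$), uniform over $|\eta|\le M$, showing that $\bar W_n^\perp=O(1/n)$. Energy conservation gives an $\mL^1$ bound on $\bar W_n$ for free, but the perturbation step requires $\mL^2$-type control in $k$. We expect this to be the main obstacle. Our first attempt is an energy/Lyapunov estimate: multiply the $\bar W_n^\perp$ equation by its conjugate and exploit the dissipation $\gamma n^2\|\bar W^\perp_n\|^2$ against the transport forcing $n\,\delta_n\om\,\langle\bar W_n\rangle$. If needed, we would fall back on a resolvent bound for the operator $\la+\gamma n^2(I-\langle\cdot\rangle)+in\,\delta_n\om$, computed explicitly as in \cite{KOS18} through the rank-one structure of $\mathcal L$. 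The final step is to pass from the Laplace transforms to test functions $\Phi\in C_c$: the limits are identified for all $\la>0$, the family is tight thanks to uniform energy bounds, and by density of exponentials times trigonometric polynomials together with uniqueness of the heat equation solution, \eqref{022303-26} follows.
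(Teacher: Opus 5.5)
There is a genuine gap in the perturbative step. In the Laplace-transformed equation for the averaged Wigner function, the flip noise couples $W$ to the anti-Wigner part through the term $-2\ga n^2{\cal L}_n\bar u_{n,+}$, where $\bar u_{n,+}$ is the Laplace transform of $\frac12(\bar Y_{n,+}+\bar Y_{n,-})$; see \eqref{020302-26}. You assert $\bar Y_n=O(1/n)$, and that is also all the energy estimates give: $\|\bar u_{n,+}(\la,\eta)\|^2_{\mL^2(\hT_n)}\le C/n^2$. With that bound, this forcing is of order $n$, which is exactly the order of the transport forcing $in\,\delta_n\om\,\langle\bar w_{n,+}\rangle$.

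Consequently your balance $\bar W_n^\perp\approx -\frac{i\eta\om'(k)}{2\pi\ga n}\langle\bar W_n\rangle$ is only correct up to an extra term of the same size $O(1/n)$, essentially $-{\cal L}_n\bar u_{n,+}$. Once it is multiplied by the $O(n)$ transport in the $k$-averaged equation, it contributes
$in\hsum\delta_n\om(\eta,k)\,{\cal L}_n\bar u_{n,+}(\la,\eta,k)$.
The bound $\|\bar u_{n,+}\|_{\mL^2(\hT_n)}=O(1/n)$ only shows this is $O(1)$. Left uncontrolled, it could shift the diffusion coefficient, so identifying the limit as $-4\pi^2\eta^2\bD\langle\bar W_n\rangle$ is not justified by the bounds you plan to prove.

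The missing idea is the exact cancellation of Lemma \ref{lm010303-26bis}:
$\hsum[\om(k+\frac{\eta}{n})-\om(k)]\,{\cal L}_n\bar u_{n,+}(\la,\eta,k)=0$.
It holds because $\hsum[\om(k+\frac{\eta}{n})-\om(k)]=0$ removes the ${\cal R}_n$ part. Moreover $U_{n,+}(\eta,k)$ is invariant under $k\mapsto -k-\frac{\eta}{n}$, while $\om(k+\frac{\eta}{n})-\om(k)$ changes sign. With this identity, the $\bar u_{n,+}$ contribution splits into a piece that vanishes identically and a remainder that the crude bound $\|\bar u_{n,+}\|_{\mL^2(\hT_n)}=O(1/n)$ does control. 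This is the paper's decomposition $R_n=R_n^{(1)}+R_n^{(2)}$ in Lemma \ref{lm012603-26}. The alternative would be a sharper estimate $\bar u_{n,+}=o(1/n)$, but that also has to exploit the same reflection symmetry.

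On the a priori $\mL^2$ bounds you flag as the main obstacle: an energy estimate on $\bar W_n^\perp$ alone does not close. It is forced by $\bar u_{n,+}$, which is in turn forced by ${\cal L}_n\bar w_{n,0}$. The paper instead uses the energy identity for the full system $(\bar w_{n,\pm},\bar u_{n,\pm})$, \eqref{020402-26}. There the skew transport and rotation terms drop out of the real part, and the right side is fed by the $\mL^2$ initial bound \eqref{040402-26}, which requires the fourth-moment assumption \eqref{eq:3z}. The bound on $\bar u_{n,+}$ then follows from the $\bar u_{n,-}$ equation using $\bar\om_n\ge\om_0$, which is where the pinning enters. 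The rest of your outline matches the paper's Section \ref{sec5}: the Wigner reformulation via Corollary \ref{cor032603-26}, the limit of $\bD_n$, and uniqueness of the Laplace transform.
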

   The proof of this result uses the technique developed in
    \cite{KOS18}, that deals with the unpinned case ($\om_0=0$). For
    the sake of completeness, we present the
    argument adapted to the pinned case, considered here,  in Section \ref{sec5}.

   Our next result deals with the fluctuations of the random field \eqref{ex1}.
\begin{theorem}
  \label{thm021103-26}
  For any   function $\Phi\in C_c\big(\T\times[0,+\infty)\big)$  we have
  \begin{equation}
      \label{011103-26}
      \lim_{n\to+\infty}\bbE_n\bigg[\Big|\frac{1}{n}\sum_{x\in\bT_n}\int_0^{+\infty}\big[e_x^{(n)}(t)-\bar
      e_x^{(n)}(t)\big]\Phi\Big(\frac{x}{n},t\Big)\dd t \Big|\bigg]=0.
    \end{equation}
    \end{theorem}
 The main steps of the proof of Theorem \ref{thm021103-26} are given in Section \ref{sec:strategy} and the proof is concluded in Section \ref{sec:proofLaplace}.

As a conclusion from Theorems \ref{thm022303-26} and \ref{thm021103-26} we obtain the following.
\begin{corollary}
  \label{thm021103-26b}
  For any   function $\Phi\in C_c\big(\T\times[0,+\infty)\big)$  we have
  \begin{equation}
      \label{011103-26bis}
      \lim_{n\to+\infty}\bbE_n\bigg[\Big|
      \frac{1}{n}\sum_{x\in\bT_n}\int_0^{+\infty}e_x^{(n)}(t)\Phi\Big(\frac{x}{n},t\Big)\dd
      t -\int_0^{+\infty}\dd t\int_{\bT}T(u,t)\Phi(u,t)\dd u\Big|\bigg]=0,
    \end{equation}
    where $T(u,t)$ is as in the statement of Theorem \ref{thm022303-26}.
    \end{corollary}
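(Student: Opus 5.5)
The corollary follows directly from Theorems \ref{thm022303-26} and \ref{thm021103-26}, combined through the triangle inequality in $\mL^1(\bbP_n)$. Fix $\Phi\in C_c\big(\T\times[0,+\infty)\big)$ and introduce
\[
X_n:=\frac{1}{n}\sum_{x\in\bT_n}\int_0^{+\infty}e_x^{(n)}(t)\Phi\Big(\frac{x}{n},t\Big)\dd t,\qquad
\bar X_n:=\frac{1}{n}\sum_{x\in\bT_n}\int_0^{+\infty}\bar e_x^{(n)}(t)\Phi\Big(\frac{x}{n},t\Big)\dd t .
\]
Write also $L:=\int_0^{+\infty}\dd t\int_{\bT}T(u,t)\Phi(u,t)\dd u$ for the deterministic limit.

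First I check that $X_n$ is integrable and that $\bbE_n[X_n]=\bar X_n$. Suppose $\Phi$ is supported in $\T\times[0,t_0]$. Since $e_x^{(n)}(t)\ge 0$, we have
\[
|X_n|\le \|\Phi\|_\infty\, t_0\, \mathcal{E}_{\rm tot}^{(n)}(0).
\]
Here I used the conservation law \eqref{ex2}, which lets us replace $\frac1n\sum_x e_x^{(n)}(t)$ by its value at time zero. By \eqref{eq:energybounds}, the right-hand side is integrable, uniformly in $n$. This justifies Fubini's theorem (all integrands are nonnegative up to the bounded factor $\Phi$), so $\bbE_n[X_n]=\bar X_n$, using the definition \eqref{bex1}.

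Next I apply the triangle inequality:
\[
\bbE_n\big[|X_n-L|\big]\le \bbE_n\big[|X_n-\bar X_n|\big]+|\bar X_n-L| .
\]
The first term is exactly the quantity in \eqref{011103-26}, so it tends to zero by Theorem \ref{thm021103-26}. The second term is deterministic and tends to zero by \eqref{022303-26} of Theorem \ref{thm022303-26}, because $T(u,t)$ is the same solution of \eqref{032303-26} in both statements. Together these give \eqref{011103-26bis}.

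There is no real obstacle here. The only point needing care is the integrability check that lets us identify $\bbE_n[X_n]$ with $\bar X_n$, and this is supplied by energy conservation together with the moment bound \eqref{eq:energybounds}, which in turn follows from \eqref{eq:3z}.
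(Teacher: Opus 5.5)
Your proof is correct and is the same argument the paper intends: the corollary is stated as an immediate consequence of Theorems \ref{thm022303-26} and \ref{thm021103-26}, combined by the triangle inequality in $\mL^1(\bbP_n)$. The integrability check via energy conservation and \eqref{eq:energybounds} is a harmless addition; strictly, you only need $X_n$ and $\bar X_n$ to be finite, because $X_n-\bar X_n$ is literally the quantity appearing in \eqref{011103-26}.
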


\section{Strategy of the proof}
\label{sec:strategy}

\subsection{Link between convergence of energy and Wigner functions}
\label{sec3.1}

Recall the wave function which has been defined in
\eqref{011307f}. In consequence of the Parseval identity, we have, for any $t\ge 0$,
\begin{align}
  \label{psi-e1}
  \hsum |\hat\psi(t,k)|^2=\sum_{x\in\bT_n}|\psi_x(t)|^2=2 \sum_{x\in\bT_n}e_x(t).
  \end{align}
Next, we introduce the following important tool: 
 \begin{definition} \label{def:wigner} The scaled \emph{Fourier-Wigner
     function} $\mathbf{W}_n:[0,+\infty)\times \bbZ \times \hat\T_n
   \to\mathbb C^4$ is given by  
\begin{equation}
	\label{012401-26}
	\mathbf{W}_n(t,\eta,k):=\begin{bmatrix}
		W_{n,+}(t,\eta,k)  \\
		W_{n,-}(t,\eta,k)  \\
		Y_{n,+}(t,\eta,k)    \\
		Y_{n,-}(t,\eta,k)       
	\end{bmatrix}, \qquad (t,\eta,k)\in [0,+\infty)\times\bbZ \times \widehat{\bbT}_n,
\end{equation}
where
\begin{equation}
	\label{022210-25}
	\begin{split}
		W_{n,+}(t,\eta,k)& :=\frac{1}{2n}  
		\widehat\psi\Big(n^2t,k+\frac{\eta}{n}\Big)
		\widehat\psi^\star(n^2t,k)  ,\\
		 Y_{n,+}(t,\eta,k)& :=\frac{1}{2n}  
		\widehat\psi\Big(n^2t,k+\frac{\eta}{n}\Big)
		\widehat\psi (n^2t,-k)  ,\\ 
		 Y_{n,-}(t,\eta,k) &:=\frac{1}{2n}  
		\widehat\psi^\star\Big(n^2t,-k-\frac{\eta}{n}\Big)
		\widehat\psi ^\star(n^2t,k)
		, \\
		 W_{n,-}(t,\eta,k) &:=\frac{1}{2n}  
		\widehat\psi^\star\Big(n^2t,-k-\frac{\eta}{n}\Big)
		\widehat\psi(n^2t,-k)    .
	\end{split}
\end{equation}
\end{definition}

{For the brevity sake, sometimes we simply denote $
W_{n,+}(t)\equiv W_{n,+}(t,\cdot,\cdot)$ and
we may omit writing $(\eta,k)$ (or sometimes only $k$) in the notation for similar quantities.}

Similarly to \eqref{bex1}, we define 
$ \bar W_{n,+}(t):=\E_n\big[W_{n,+}(t)\big]$.
We also define the fluctuating fields, both for the energy density
and Wigner function $W_{n,+}(t)$ by letting  
\begin{align}
  \label{teW}
	\tilde e_x^{(n)}(t) & := e_x^{(n)} (t)- \bar e_x^{(n)}(t),\notag\\
	\tilde W_{n,+}(t) & :=   W_{n,+}(t) -  \bar W_{n,+}(t).
\end{align}
For
any  $\Phi \in C_c^\infty\big(\T\times[0,+\infty)\big)  $, let us  define the functionals
\begin{align*}
	\tilde{\cal E}_n[\Phi]&:=\frac1n \sum_{x\in\T_n} \int_0^{+\infty} \tilde e_x^{(n)}(t) \Phi\Big(\frac x n,t\Big)\dd t, 
	\\
	\tilde{\cal W}_n[\Phi]&:= \sum_{\eta \in \bbZ} \hsum \int_0^{+\infty} \tilde W_{n,+}(t,\eta,k) {\cal F}(\Phi)^\star(\eta,t) \dd t.
\end{align*}
{Since $\Phi\in
C_c^\infty\big(\T\times[0,+\infty))$, for   any $p>0$, 
\[\sum_{\eta \in \bbZ} \int_0^{+\infty}(1+\eta^2)^p
\big|{\cal F}(\Phi)\big|(\eta,t)\dd t< +\infty\] hence $\tilde{\cal W}_n$ is well defined.}
It is well-known that the Fourier-Wigner functionals $\tilde{{\cal W}}_n$
  is closely related to the energy field. Indeed, thanks
  to Corollary \ref{cor032603-26} and energy conservation \eqref{ex2},  there exists a constant $C>0$ such that
 \begin{multline*}
 	 \E_n \bigg[\Big| \int_0^{+\infty}  \Big\{  \sum_{x\in\bT_n } \hsum
 W_{n,+}(t,x,k) \Phi\Big(\frac{x}{n},t\Big)-\frac{1}{n}\sum_{x\in\bT_n
                  } e_x(t) \Phi\Big(\frac{x}{n},t\Big)\Big\} \dd t\Big|\bigg]\\
     \le {\E_n\big[\mathcal{E}_{\rm tot}^{(n)}(t)\big]} \times 
  \frac{C}{n}\int_0^{+\infty}\Big(\|\partial_u\Phi(t) \|_\infty+\sum_{\eta\in\bbZ}\big|\eta{\cal
    F}\big(\Phi(t)\big)(\eta)\big|\Big)\dd t.
 \end{multline*}
In particular, from this and   \eqref{eq:1}, we conclude that  for any $\Phi \in C_c^{\infty}
\big(\T\times[0,+\infty)\big)$ 
\begin{equation}
	\label{eq:wigner_energy}
\lim_{n\to+\infty}	\E_n \Big[ \big| \tilde{\cal W}_n[\Phi] - \tilde\cE_n[\Phi] \big| \Big] = 0.
\end{equation}
Using the density argument and the energy bound \eqref{eq:energybounds}, we conclude that \eqref{eq:wigner_energy} holds
for any $\Phi \in C_c 
\big(\T\times[0,+\infty)\big)$.

From \eqref{eq:wigner_energy}, we see that in order to prove Theorem \ref{thm021103-26}, it is enough to show that $\E_n[| \tilde{\cal W}_n[\Phi]|]$ vanishes as $n\to +\infty$. The proof of Theorem \ref{thm021103-26} becomes a direct consequence of \eqref{eq:wigner_energy} and the following:

\begin{theorem}[Limit of the Wigner functionals] \label{thm:intermediate}
	Let $\varphi \in C_c([0,+\infty))$. For any $\eta \in \bbZ$, 
	\begin{equation} \label{eq:mainconv}\lim_{n\to +\infty} \E_n \bigg[ \Big| \hsum \int_0^{+\infty} \vphi(t) \tilde W_{n,+}(t,\eta,k) \dd t \Big| \bigg] = 0.
	\end{equation}
      \end{theorem}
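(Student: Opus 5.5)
We plan to prove \eqref{eq:mainconv} in $\mL^2(\bbP_n)$, which implies the stated $\mL^1$ convergence. Since $\varphi\in C_c([0,+\infty))$ can be approximated uniformly by finite linear combinations of exponentials $e^{-\lambda t}$, $\lambda>0$ (Stone--Weierstrass on $[0,+\infty]$), and since $\hsum |W_{n,+}(t,\eta,k)|\le \mathcal{E}^{(n)}_{\rm tot}(0)$ is bounded in $\mL^2$ uniformly in $t$ by \eqref{eq:energybounds}, it suffices to treat $\varphi(t)=e^{-\lambda t}$. That is, we want to show that the fluctuating part of the Laplace transform
\[
\hat w_n(\lambda,\eta):=\int_0^{+\infty}e^{-\lambda t}\hsum W_{n,+}(t,\eta,k)\,\dd t
\]
satisfies $\E_n\big[|\hat w_n(\lambda,\eta)-\E_n\hat w_n(\lambda,\eta)|^2\big]\to0$.

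The first step is to write the evolution of $\hat\psi$. It obeys $\dd\hat\psi(t,k)=-in^2\om(k)\hat\psi\,\dd t$ plus a jump term coming from the flips, namely $-\sum_x (\psi_x-\psi^\star_x)e^{-2\pi ixk}\dd N^{(n)}_x$. We split this jump term into its compensator, of order $n^2\gamma$ and producing a linear damping/mixing operator in $k$, and a martingale. Applying Itô's formula to the quadratic quantities $\mathbf{W}_n$ then gives a closed linear system
\[
\dd \mathbf{W}_n=n^2\mathcal{L}_n\mathbf{W}_n\,\dd t+\dd\mathbf{M}_n,
\]
exactly as in the averaged analysis of Section \ref{sec5}, but now with the martingale term kept. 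Taking the Laplace transform converts this into a resolvent equation:
\[
(\lambda-n^2\mathcal{L}_n)\hat{\mathbf{W}}_n(\lambda)=\mathbf{W}_n(0)+\int_0^\infty e^{-\lambda t}\,\dd\mathbf{M}_n(t).
\]
The mean part is handled by Theorem \ref{thm022303-26}, and the initial-data randomness contributes to $\bar W$. The fluctuation $\tilde{\hat w}_n$ is then the resolvent applied to the stochastic integral, so its second moment is controlled by the quadratic variation of $\mathbf M_n$ propagated through the resolvent.

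The second step is to estimate that quadratic variation. Each flip at site $x$ changes $\hat\psi(k)$ by $2ip_xe^{-2\pi ixk}$, so a jump of $\frac1{2n}\hat\psi(k+\eta/n)\hat\psi^\star(k)$ after averaging in $k$ is of order $n^{-1}\big(|p_x|\,|\psi_x|+p_x^2\big)$, suitably localized. Summing the jump rates $n^2\gamma$ over $x$ gives
\[
\E\langle\mathbf M\rangle_t\lesssim n^2\cdot n^{-2}\sum_x\E e_x^2\;t .
\]
We will bound $\sum_x\E e_x^2$, and its Fourier analogue $n^{-2}\hsum\E|\hat\psi(k)|^4$ uniformly in time, by propagating the fourth-moment bound \eqref{eq:3z}: $\frac{\dd}{\dd t}$ of $n^{-2}\hsum\E|\hat\psi|^4$ involves only flip terms, which should be dissipative or bounded by a Gronwall-type estimate. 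A uniform-in-time fourth-moment bound of this kind is an auxiliary estimate we would establish first (presumably the content of Section \ref{sec:aux}).

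The third step is to make the resolvent beat the factor $n^2$ in front of the martingale noise. A naive bound gives only $O(1)$ and is not sufficient, and this is where we expect the main difficulty. The key is that $W_{n,+}$ at wave number $\eta$ has fast oscillating factors $e^{-in^2(\om(k+\eta/n)-\om(k))t}\approx e^{-in\om'(k)\eta t}$, and the $Y_{n,\pm}$ components carry $e^{\mp 2in^2\om t}$. The resolvent is therefore $O(1)$ only along the slow diffusive direction, at $k$-average with $\eta$ fixed, and is $O(n^{-1})$ or $O(n^{-2})$ elsewhere.

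We would decompose the martingale increment into its $k$-average projection and the orthogonal remainder. The projection is a sum over $x$ of independent compensated Poisson terms whose variance carries an extra $1/n$: the energy field $\frac1n\sum_x e_x$ is an average over $n$ sites. This gives $\E|\tilde{\hat w}|^2\lesssim n^{-1}$. The remainder is damped by the $\gamma n^2$ mixing in the generator, which gives another factor of $n^{-1}$. Obtaining these gains rigorously, via a careful inversion of $\lambda-n^2\mathcal L_n$ on the four-component system as in \cite{KOS18}, is the step we expect to be hardest.
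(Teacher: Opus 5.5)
Targeting the $k$-averaged Laplace transform is right: Proposition \ref{prop:rest} together with Appendix \ref{AppC} is exactly what Theorem \ref{thm:intermediate} needs. However, your treatment of the fluctuation omits the initial data, and the $\mL^2$ goal is false. Since $\E_n$ is the expectation under $\mu_n\otimes\bbP$, the randomness of the initial configuration is averaged out in $\bar W_{n,+}$ and remains in $\tilde W_{n,+}$. The resolvent equation for the fluctuation therefore has source $\tilde{\mathbf W}_n(0)+\int_0^\infty e^{-\la t}\,\dd\mathbf M_n(t)$, not the stochastic integral alone. The extra term is the paper's ${\rm I}_n$ in \eqref{061902-26}. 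It is removed only by the law of large numbers \eqref{eq:1}, via Corollary \ref{cor032603-26} and the limit $\bD_n(\la,\eta)\to\ga^{-1}(\la+\bD(2\pi\eta)^2)$ of Lemma \ref{lem:D}. Your fluctuation argument never uses \eqref{eq:1}, and without it the statement is false. Indeed, $\hsum\tilde W_{n,+}(t,0,k)=\cE^{(n)}_{\rm tot}(0)-\E_n[\cE^{(n)}_{\rm tot}(0)]$ for all $t$, and this does not vanish for an even mixture of two constant-temperature Gibbs measures, which still satisfies \eqref{eq:3z}. Moreover, \eqref{eq:1} holds only in $\mL^1$, and together with \eqref{eq:3z} it does not upgrade to $\mL^2$. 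Take $\mathbf c_n$ the configurations of Appendix \ref{xxx} and $\mu_n=(1-n^{-2})\delta_{\mathbf c_n}+n^{-2}\delta_{\sqrt n\,\mathbf c_n}$. Then Assumption \ref{T} holds, but $\mathrm{Var}(\cE^{(n)}_{\rm tot}(0))$ stays bounded away from $0$, so the $\mL^2$ version of \eqref{eq:mainconv}, and of your Laplace claim, fails at $\eta=0$. The initial term must be handled in $\mL^1$, as in Lemma \ref{lem:limI}. Separately, the reduction to exponentials needs a weighted error $|\vphi(t)-\sum_ja_je^{-\la_jt}|\le\eps e^{-t}$, which is what Appendix \ref{AppC} arranges.

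Your Step 3 carries the whole proof, and as sketched it estimates the wrong object.

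\emph{The projection you estimate is zero.} Flips preserve each $|\psi_x|^2$, and $\hsum W_{n,+}(t,\eta,k)=\frac1{2n}\sum_x|\psi_x(n^2t)|^2e^{-2\pi i\eta x/n}$; hence the $k$-average of the noise vanishes (cf.\ \eqref{eq:sum0}).

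\emph{Damping gives no extra $n^{-1}$.} The fluctuation of the average comes from the orthogonal noise re-entering through $in\delta_n\om$. Solving \eqref{wn+} with $\hsum{\frak R}_{n,w,+}(\la,\eta,k)=0$, this contribution is to leading order a multiple of $\bD_n^{-1}(\la,\eta)\,n^{-1}\hsum\delta_n\om(\eta,k)\,{\frak R}_{n,w,+}(\la,\eta,k)$, the paper's $R^{(1)}_n$. Since $\lnm{\frak R}_{n,w,+}(\la)\rnm_\eta$ is genuinely of order $n$, Cauchy--Schwarz in $k$ gives only $O(1)$: the $\ga n^2$ damping is already used up in making the orthogonal part small. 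The coupling to $\hat u_{n,+}$ raises the same issue; the paper removes it with the exact cancellation of Lemma \ref{lm010303-26}.

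\emph{The right mechanism needs an input you lack.} Your site-averaging idea works for this current-weighted sum. A flip at $z$ changes $\hsum\delta_n\om(\eta,k)W_{n,+}(\eta,k)$ by $n^{-1}p_z$ times a rapidly decaying combination of $\psi_y,\psi_y^\star$ with $y$ near $z$. The second moment is therefore of order $n^{-2}\int_0^\infty e^{-2ct}\sum_x\E_n[(e^{(n)}_x(t))^2]\,\dd t$, and you need this to vanish. Nothing provides that:
\begin{itemize}
\item \eqref{eq:3z} controls only Fourier fourth moments. Adding one site of energy $n^a$, $\frac12<a<1$, to a good configuration keeps Assumption \ref{T}, yet $\frac1n\sum_xe_x(0)^2\to\infty$.
\item Gronwall at rate $\ga n^2$ is not uniform in $n$. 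Flips preserve each $e_x$ and the harmonic flow preserves each $|\hat\psi(k)|$, so neither moment is conserved by the full dynamics.
\item Section \ref{sec:aux} yields only $n^{-2}\sum_x\E_n|\psi_x|^4=O(1)$ in time average.
\end{itemize}

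\emph{The paper's own route is not a safe model here.} It bounds $R^{(1)}_n$ by Cauchy--Schwarz in $k$ and Lemma \ref{prop021003-26}, that is, by claiming the jump amplitudes are $O(n^{-1})$ in $\mL^2(e^{-2ct}\dd t)$. This fails in the stationary constant-temperature Gibbs state. There, by \eqref{010704-26z}, ${\frak r}_{n,w,+}(t,\eta,k,0)=2\cL_n(W_{n,+}-U_{n,+})(t,\eta,k)$, and its $\lnm\cdot\rnm_\eta$-norm is of order one for every $t$. The error appears to sit in Lemma \ref{cor011003-26}. The martingale part of $\E_n\big[\langle{\frak R}_{n,w,+}(\la,\eta),\hat w_{n,+}(\la,\eta)\rangle_{\mL^2(\hT_n)}\big]$ equals $\bar\la^{-1}\lnm{\frak R}_{n,w,+}(\la)\rnm_\eta^2$. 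By the It\^o isometry the norm does not depend on ${\rm Im}\,\la$, so its integral along ${\rm Re}\,\la=c$ is not zero. A real estimate of $\hsum\delta_n\om(\eta,k)\,{\frak R}_{n,w,+}(\la,\eta,k)$, exploiting cancellation in $k$, is the missing core of your proposal.
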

{Indeed, from Theorem \ref{thm:intermediate} we conclude that Theorem
\ref{thm021103-26} holds for any  $\Phi$ that is
a finite linear combination of
functions of the form $\vphi(t;\eta)e^{2\pi i \eta u}$, $\eta\in\bbZ$, where
$\vphi(\cdot;\eta) \in C_c([0,+\infty))$. By the Stone-Weierstrass Theorem, such
functions form a subset, that is   dense in the supremum norm in 
$C_c 
\big(\T\times[0,+\infty)\big)$. This and the energy conservation
property \eqref{ex2}, in turn imply Theorem
\ref{thm021103-26}.}

      \begin{remark}
	In this section the other Wigner functions $W_{n,-},
        Y_{n,\pm}$ do not appear in the statements of the results,
        however they will play an important role in our subsequent argument.
	\end{remark}



The convergence result stated in Theorem \ref{thm:intermediate} above will be proved using \emph{Laplace transforms}. Namely, let us define, for any $\lambda \in\C$ such that $\mathrm{Re}\,\lambda>0$, 
\begin{equation}\hat w_{n,+}(\lambda,\eta,k):=\int_0^{+\infty} e^{-\lambda t} \tilde W_{n,+}(t,\eta,k)\dd t, \qquad (\eta,k) \in \bbZ\times \hat\T_n. \label{eq:lapw}\end{equation}
It will be a consequence of the forthcoming Theorem \ref{thm011002-26} below that this Laplace transform is well defined $\bbP_n$ a.s.

We claim that Theorem \ref{thm:intermediate} is a consequence of the following result: 

\begin{theorem}\label{thm:laplace}
For any $\lambda\in\C$ such that $\mathrm{Re}\,\lambda >0$ and any $\eta \in\bb Z$,
\[\lim_{n\to+\infty} \E_n \Big[ \big\| \hat w_{n,+}(\lambda,\eta,\cdot)\big\|_{{\mb L}^2(\hat \T_n)}\Big] = 0,\]
{where $\|\cdot\|_{\mL^2(\hT_n)}$ has been defined in \eqref{eq:parseval}.}
      \end{theorem}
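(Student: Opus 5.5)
We will prove Theorem \ref{thm:laplace} by deriving a closed system for the Laplace transforms of the whole vector $\mathbf W_n$ and then controlling its fluctuating part in $\mL^2$. The first step is the evolution equations. Using \eqref{eq:dynamicsn}, the wave function satisfies
\[
\dd\hat\psi(n^2t,k)=-in^2\om(k)\hat\psi(n^2t,k)\,\dd t-\frac{2i}{n}\sum_{x}p_x(n^2t^-)e^{-2\pi ixk}\,\dd N^{(n)}_x(\gamma t)
\]
(up to the convention $\hat p=\frac{1}{2i}(\hat\psi-\hat\psi^\star(-\cdot))$). Applying Itô's formula for jump processes to the products in \eqref{022210-25} and compensating, we write
\[
\dd\mathbf W_n=n^2\mathbf A_n\mathbf W_n\,\dd t+\gamma n^2\mathcal L_n\mathbf W_n\,\dd t+\dd\mathbf M_n(t).
\]
Here $\mathbf A_n$ is the multiplication by $\pm i(\om(k+\eta/n)\mp\om(k))$, respectively by $i(\om(k+\eta/n)+\om(k))$ for the $Y$ components. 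The operator $\mathcal L_n$ is the flip collision operator: it is a diagonal damping $-2\gamma$ plus an averaging term $\gamma\hsum_{k'}(\cdots)$ coupling all four components. The process $\mathbf M_n$ is a martingale whose quadratic variation is a sum over $x$ of squared jumps, each of size $O(n^{-1}|p_x|\,|\psi|)$ in Fourier variables.

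Next we subtract expectations. This gives the same linear equation for $\tilde{\mathbf W}_n$, driven by $\mathbf M_n$ and with initial datum $\tilde{\mathbf W}_n(0)$. Taking Laplace transforms, $\hat w_n(\lambda)$ solves the resolvent equation
\[
(\lambda-n^2\mathbf A_n-\gamma n^2\mathcal L_n)\hat w_n=\tilde{\mathbf W}_n(0)+\int_0^\infty e^{-\lambda t}\,\dd\mathbf M_n(t).
\]
We then follow the scheme of Section \ref{sec5}, i.e.\ the \cite{KOS18} argument for averages. The components $Y_{n,\pm}$ carry fast oscillation of order $n^2\om(k)\ge n^2\om_0$, so pinning makes them negligible. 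The component $W_{n,-}$ is symmetric with $W_{n,+}$. Solving for the $k$-dependent part of $\hat w_{n,+}$, the $k$-average $\hsum\hat w_{n,+}$ is expressed through a scalar equation of the form
\[
\Big(\lambda+\bD(2\pi\eta)^2+o(1)\Big)\hsum\hat w_{n,+}=\text{source terms}.
\]
It therefore suffices to show that the $\mL^2(\hat\T_n)$ norms of the source terms, after applying the resolvent, vanish in $\mL^1(\bbP_n)$. We estimate these terms using the a priori bounds of Section \ref{sec:aux} on the Fourier-Wigner functions, which rely on \eqref{eq:3z}.

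There are two types of source terms. The martingale contribution is handled by the Itô isometry: $\E_n\|\int e^{-\lambda t}\dd\mathbf M_n\|^2_{\mL^2}$ is bounded by $\gamma n^2\int e^{-2\mathrm{Re}\lambda t}$ times the expected sum over $x$ of squared jumps. A jump at site $x$ changes $\hat\psi(k)\hat\psi^\star(k')$ by $O(n^{-1}|p_x||\hat\psi|)$. After the $\frac1{2n}$ normalisation and summing in $k$, this gives $O(n^{-2}\sum_x p_x^2\cdot n^{-1}\sum|\hat\psi|^2)$ per unit time. Multiplying by $n^2\gamma$ and summing over $x$, and then using energy conservation \eqref{ex2} together with \eqref{eq:energybounds}, we obtain a bound $C/n$. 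We will have to check that the resolvent does not amplify this, which holds because $\mathrm{Re}(-\mathbf A_n-\mathcal L_n)\ge0$ (dissipativity). The initial datum term is more delicate. By itself $\|\tilde W_{n,+}(0)\|_{\mL^2}$ need not be small. However, the resolvent $(\lambda-n^2\mathbf A_n-\gamma n^2\mathcal L_n)^{-1}$ maps it with a gain of $n^{-2}$ on the part orthogonal to the $k$-averages, and by \eqref{eq:3z} this $\mL^2(\hat\T_n)$ norm is $O(1)$ in $\mL^2(\bbP_n)$. What remains is the $k$-average of $\tilde W_{n,+}(0,\eta,\cdot)$. By Assumption \ref{T}, via \eqref{eq:wigner_energy} at $t=0$, this tends to $0$ in $\mL^1(\bbP_n)$.

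The main obstacle is the uniform-in-$n$ invertibility of the resolvent on the full four-component system, and its $\mL^2(\hat\T_n)$ (not merely averaged) bound. In particular, near the degenerate points where $\om'(k)=0$, i.e.\ $k=0,\frac12$, the transport term $n\om'(k)\eta$ is small and the resolvent gain is lost. I would try first an energy estimate: take the real part of the inner product of the equation with $\hat w_n$. This gives $\mathrm{Re}\lambda\|\hat w_n\|^2+\gamma n^2\mathcal D(\hat w_n)\le|\langle\text{source},\hat w_n\rangle|$, where $\mathcal D$ is the Dirichlet form of $\mathcal L_n$. This controls the deviation from the $k$-average at order $n^{-1}$. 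The average itself is then controlled by the scalar diffusive equation above, with the $\mL^2$ bound on small neighbourhoods of $k=0,\frac12$ absorbed using \eqref{eq:3z} and Cauchy--Schwarz.
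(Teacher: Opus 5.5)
You have the paper's architecture right: a Dirichlet-form estimate for the $k$-nonconstant part, a scalar equation with symbol $\la+\bD(2\pi\eta)^2$ for the $k$-average, the law of large numbers for the initial average, and pinning for the $Y$-components. This is the split into Propositions \ref{prop:dirichlet} and \ref{prop:rest}. Once the source is sized correctly, the energy estimate does give $\bbE_n[\cD_n(\hat w_{n,+}(\la,\eta,\cdot))]=O(n^{-1})$. This also requires controlling $\hat u_{n,+}$ separately by pinning, since the dissipation only sees $\hat w_{n,0}-\hat u_{n,+}$, $\delta\hat w_n$ and $\hat u_{n,-}$.

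\textbf{The martingale term.} This is where your argument has a genuine gap. A flip at $x$ changes $\hat\psi(k)$ by $-2ip_xe^{-2\pi i xk}$, with no factor $1/n$; in \eqref{eq:dynamicsn} only the rate, $\ga n^2$, is scaled. So a jump of $W_{n,+}(\eta,\cdot)$ has squared $\mL^2(\hT_n)$-norm of order $n^{-1}p_x^2\,\mathcal E^{(n)}_{\rm tot}$, and the quadratic variation grows at rate of order $n^2$. Hence $\bbE_n\|\int_0^\infty e^{-\la t}\dd\mathbf M_n\|^2_{\mL^2(\hT_n)}$ is of order $n^2$, not $C/n$. This is Proposition \ref{prop022302-26}, and it is sharp, e.g.\ for Gibbs data.

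Dissipativity of the resolvent then yields no smallness at all: it gives $\lnm\hat w_{n,+}(\la)\rnm_\eta\lesssim n$, worse than the a priori $O(1)$ of Proposition \ref{prop012302-26}. The $n^{-2}$ gain from the collision part does act on the $k$-nonconstant component, but the transport term feeds that component back into the average. Using $\hsum\mathfrak R_{n,w,+}(\la,\eta,k)=0$ (see \eqref{eq:sum0}; your plan never uses it), the noise contribution to the $k$-average of $\hat w_{n,+}$ is, to leading order,
$$-\tfrac{i}{\ga n}(\la+\bD(2\pi\eta)^2)^{-1}\hsum\delta_n\om(\eta,k)\,\mathfrak R_{n,w,+}(\la,\eta,k).$$
This is the paper's ${\rm III}_n$, and Cauchy--Schwarz bounds its second moment only by $O(n^{-2})\cdot O(n^2)=O(1)$.

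\textbf{The $Y$-coupling.} The same problem arises here. Pinning gives $\lnm\hat u_{n,+}(\la)\rnm_\eta=O(n^{-1/2})$, and even $O(n^{-1})$ would not suffice, because $\hat u_{n,+}$ enters the average multiplied by $n\delta_n\om/\ga$. The paper disposes of this term (${\rm II}_n$) only through the exact cancellation $\hsum\delta_n\om(\eta,k)\,\mathcal L_n\hat u_{n,+}(\la,\eta,k)=0$ of Lemma \ref{lm010303-26}; saying the $Y$-components are negligible by pinning does not replace it. Your worry about $k=0,\frac12$ is beside the point: the $n^{-2}$ gain comes from the collision rate $\ga n^2$, uniformly in $k$.

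\textbf{Do not fill the main hole by importing the paper's estimate.} The paper treats ${\rm III}_n$ by asserting that $\mathfrak r_{n,w,+}$ is itself small:
$$\int_\R\bbE_n\|\hat{\mathfrak r}_{n,w,+}({\rm Re}\,\la+i\xi,\eta,\cdot,0)\|^2_{\mL^2(\hT_n)}\dd\xi\le C/n^2$$
(Lemma \ref{prop021003-26}). This rests on Plancherel in ${\rm Im}\,\la$ and on the vanishing of the cross terms in Lemma \ref{cor011003-26}. As far as I can check, that vanishing fails. In the computation of $S_{n,w}$, the diagonal terms $j=j'$ are present in $\sum_{0<j\le j'}$ but disappear when the sum is rewritten with $\int_0^{t-}$. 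They contribute a multiple of the weighted quadratic variation $\bbE_n\int_0^\infty e^{-2{\rm Re}\,\la\,t}\,\dd[\mathbf M_n,\mathbf M_n]_t$, which is of order $n^2$.

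You can check this on stationary Gibbs data at constant temperature $T$, which satisfy Assumption \ref{T}. There one computes $(W_{n,0}-U_{n,+})(t,\eta,k)=\frac1n\hat p(n^2t,k+\frac{\eta}{n})\hat p^\star(n^2t,k)$. So for $\eta\neq0$ one has $\bbE_n[\cD_n((\tilde W_{n,0}-\tilde U_{n,+})(t,\eta))]\to T^2$ for every $t$. By Plancherel, the left-hand side of \eqref{051003-26b} then stays bounded below by a positive constant independent of $n$, instead of being $O(n^{-2})$.

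\textbf{What is actually needed.} You need a gain from the $k$-average itself, for instance $\int_0^\infty e^{-2{\rm Re}\,\la\,t}\,\bbE_n|\hsum\delta_n\om(\eta,k)\,\mathfrak r_{n,w,+}(t,\eta,k,\ell')|^2\dd t\to0$ in the mean over $\ell'\in\hT_n$. At $\ell'=0$ the kinetic part drops out by the symmetry behind Lemma \ref{lm010303-26}. What remains is $-2\hsum\delta_n\om(\eta,k)W_{n,+}(t,\eta,k)$, essentially a multiple of the $\eta$-th Fourier mode of the energy current. Proving that its fluctuations vanish is a CLT-type statement in $k$, which no Cauchy--Schwarz argument built on Theorem \ref{thm011002-26} can give. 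Neither your proposal nor the paper's argument provides it.
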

   The proof of the fact that Theorem \ref{thm:laplace}
        implies Theorem \ref{thm:intermediate}  is  standard and it is
        presented  in Appendix \ref{AppC}.

	To summarize, the conclusion of  Theorem \ref{thm021103-26} is a
        consequence of \eqref{eq:wigner_energy} and   Theorem
        \ref{thm:laplace}. The latter is shown,
          in Sections \ref{sec:proofLaplace} and \ref{sec:proofrest}.

\section{Auxiliaries}
    \label{sec:aux}

\subsection{Dynamics of the scaled Fourier-Wigner functions}

Recall the scaled Fourier-Wigner functions from Definition \ref{def:wigner} and the dispersion relation $\omega$ given in Definition \ref{def:dispersion}. Let us introduce some useful operators: for any $(\eta,k) \in \bbZ \times \hat{\T}_n$, and $f$ defined on $\hat\T_n$,
\begin{equation}\label{eq:not} \begin{split}
   &
\delta_{n}\om(\eta,k):=n\Big(\om\big(k+\frac{\eta}{n}\big)-\om(k)\Big), \qquad 
 \bar\om_n(\eta,k)
        :=\frac12\Big(\om(k)+\om\big(k+\frac{\eta}{n}\big)\Big),\vphantom{\Bigg(}\\
  &
    {\cal R}_nf:=\underset{\ell \in \hat\T_n}{\hat\sum}\; f(\ell), \;\qquad
         {\cal L}_nf(k):= {\cal R}_nf- f(k)=\underset{\ell \in \hat\T_n}{\hat{\sum}}\big[f(\ell)-f(k)\big],
\end{split} \end{equation}
and finally the Dirichlet form
\begin{equation}
	\label{eq:cD} \cD_n(f):= 2 \big\langle -{\cal L}_n f, f \big\rangle_{{\mb L}^2(\hat\T_n)} = \underset{k,\ell\in \hat\T_n}{\hat\sum} \big|f(k)-f(\ell)\big|^2.
\end{equation}
Since $-{\cal L}_n$ is the projection on an orthogonal complement to
constants in ${\mb L}^2(\hT_n)$, we conclude ${\cal L}_n^2f= -{\cal L}_nf$ and 
\begin{equation}
  \label{030204-26}
  \cD_n(f)=\|{\cal L}_nf\|^2_{{\mb L}^2(\hT_n)}\le \|
   f\|_{{\mb L}^2(\hT_n)}^2 
  \end{equation}
  and moreover
  \begin{equation}
  	\label{eq:estimdir} \cD_n(f+g) \le 2\cD_n(f)  + 2\cD_n(g).
  \end{equation}
Finally, define also the Fourier transform of the Poisson noise and the respective
martingales obtained by compensation: for any $t\ge 0$ and $k \in\hat\T_n$,
\begin{align*}
  &
    \hat{ N} (t,k)=\sum_{x\in\bbT_n}e^{-2\pi i
      kx}  N_x(t)\qquad \mbox{and}\qquad 
    \tilde{\hat{ N}} (t,k)=\sum_{x\in\bbT_n}e^{-2\pi i
      kx} \tilde{N}_x(t),
\end{align*}
where 
$\tilde{N}_x(t):=N_x(t)- t$. Furthermore, let us introduce the following notations: for $(t,\eta,k) \in \R_+ \times \bbZ\times \hat\T_n$,
\begin{equation}
  \label{020204-26}
  \begin{split} 
 U_{n,+}(t,\eta,k)&:=\frac1{2}\big(  Y_{n,+}(t,\eta,k)  + 
     Y_{n,-}(t,\eta,k)\big),\\
  U_{n,-}(t,\eta,k)&:=\frac1{2i}\big(   Y_{n,+}(t,\eta,k)  - 
     Y_{n,-}(t,\eta,k)\big),\\
  W_{n,0}(t,\eta,k)&:=\frac1{2}\big(   W_{n,+}(t,\eta,k)  +
    W_{n,-}(t,\eta,k)\big), \\
  \delta W_{n}(t,\eta,k)&:=\frac1{2}\big(   W_{n,+}(t,\eta,k)  -
     W_{n,-}(t,\eta,k)\big). \end{split}
     \end{equation}
From the  dynamics described in \eqref{eq:dynamicsn},  tedious but straightforward computations show that the scaled Fourier-Wigner
functions satisfy the following closed system of stochastic
differential equations: for any $(t,\eta,k) \in \R_+ \times \bbZ\times \hat\T_n$,
\begin{align}
  \dd W_{n,+}(t,\eta,k)      
   = & \Big[-in \delta_{n}\om(\eta,k)    W_{n,+}(t,\eta,k)    +2\ga n^2 {\cal     L}_n\big(  W_{n,+}-  U_{n,+}\big) (t,\eta,k)\Big] \dd t\notag\\
  &
    +  \underset{\ell'  \in \hat\T_n}{\hat{\sum}}  \Big[
      \Big( {\cal     L}_n\big(  W_{n,+} -  U_{n,+}\big)  -i U_{n,-}  \Big)(t-,\eta-n\ell',k)     \notag
 \\
 & \quad + \Big( {\cal
    L}_n\big(  W_{n,+} -  U_{n,+} \big)  +i U_{n,-}  \Big)  (t-,\eta-n\ell',k+\ell')
          \Big]\dd\thN  (\gamma n^2 t,\ell'), \label{012403-26} \end{align}
          and 
          \begin{align}
      \dd W_{n,-}(t,\eta,k)    = & \Big[in \delta_{n}\om(\eta,k)   W_{n,-}(t,\eta,k)    
          +2\ga n^2{\cal L}_n\big(   W_{n,-}-  U_{n,+}
   \big)(t,\eta,k)   \Big] \dd t  \notag
 \\
  &
    +\underset{\ell'  \in \hat\T_n}{\hat{\sum}} \Big[
    \Big({\cal     L}_n\big(  W_{n,-} -  U_{n,+}\big) +i U_{n,-}  \Big) (t-,\eta-n\ell',k) 
  \notag \\
  &
   \quad  +  \Big({\cal     L}_n\big(  W_{n,-} -  U_{n,+}\big)  -i U_{n,-}  \Big)  (t-,\eta-n\ell',k+\ell') 
               \Big] \dd\thN  (\gamma n^2 t,\ell') ,  \label{012403-26a}\end{align}
               and 
               \begin{align} 
 \dd   U_{n,+}(t,\eta,k)     = & \Big[2n^2 \bar\om_n(\eta,k)
       U_{n,-}(t,\eta,k)  
         +2\ga n^2 {\cal     L}_n\big(  U_{n,+} -  W_{n,0}\big) 
    (t,\eta,k)     \Big]\dd t \notag
 \\
  &
    + \underset{\ell'  \in \hat\T_n}{\hat{\sum}}\Big[ {\cal     L}_n\big(  U_{n,+} -  W_{n,0}\big) 
(t- ,\eta-n\ell',k)  \notag \\ &  
      \quad + {\cal     L}_n\big(  U_{n,+} -  W_{n,0}\big)  (t-,\eta-n\ell',k+\ell')
      \Big]
    \dd\thN  (\gamma n^2 t,\ell'), \label{012403-26b}
\end{align}
and
\begin{align}
\dd  U_{n,-}(t,\eta,k)     =  & \Big[-2n^2 \bar\om_n(\eta,k)
       U_{n,+}(t,\eta,k)  -2\ga n^2   U_{n,-}(t,\eta,k)  \Big]\dd
                t \notag\\
 &
   +i\underset{\ell'  \in \hat\T_n}{\hat{\sum}}\Big[
\big(  \delta W_{n}    +iU_{n,-} 
      \big)(t- ,\eta-n\ell',k) \notag
    \\ & \quad -\big(  \delta W_{n}    -iU_{n,-} 
      \big))(t-,\eta-n\ell',k+\ell')
         \Big]\dd\thN  (\gamma n^2 t,\ell').  \label{012403-26c}
\end{align}

\subsection{Preliminary estimates on the initial Fourier-Wigner functions}

Let us fix $\eta \in \bbZ$, and introduce the following scalar product $\lsc \cdot \rsc$ between two random fields $F,G$ defined on $\bbZ\times \hT_n\times \Om_n\times\Xi$ as follows:
	\[
	\lsc F, G \rsc \; : = \E_n \Big[\big\langle F(\eta,\cdot),G(\eta,\cdot)\big\rangle_{\mL^2(\hT_n)} \Big] =\E_n \bigg[\hsum F(\eta,k) G^\star(\eta,k)\bigg],
	\]	
	and its associated norm: 
	\begin{equation} \big\lnm F \big\rnm_\eta \; := \sqrt{\lsc F,F \rsc} = \Big(\E_n \Big[ \big\|F(\eta,\cdot)\big\|^2_{\mL^2(\hT_n)}\Big]\Big)^{1/2}. \label{norms}\end{equation}	
 In the present section we give some preliminary bounds involving $\lnm W_{n,\pm}\rnm_\eta$, $\lnm U_{n,\pm}\rnm_\eta$  which will be useful in the rest of the paper.

\begin{proposition}[Initial bound]
  \label{prop030204-26} From Assumption \eqref{eq:3z}, we have
\begin{equation}
  \label{WUpm}
  {\mathfrak W}_2:=
  \sup_{n\ge1} \sup_{\eta \in \bbZ} \bigg\{ \lnm W_{n,+}(0) \rnm_\eta+ \lnm W_{n,-}(0) \rnm_\eta + 
 \lnm
 Y_{n,+}(0) \rnm_\eta+  \lnm
 Y_{n,-}(0) \rnm_\eta \bigg\}<+\infty.
\end{equation}

\end{proposition}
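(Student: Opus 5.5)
We bound each of the four norms directly from the definitions \eqref{022210-25} at $t=0$, using only \eqref{eq:3z} and the Cauchy--Schwarz inequality. Fix $n\ge1$ and $\eta\in\bbZ$.

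For $W_{n,+}(0)$ we have
\[
\lnm W_{n,+}(0)\rnm_\eta^2=\frac{1}{4n^2}\,\E_n\bigg[\hsum \Big|\hat\psi\Big(0,k+\frac{\eta}{n}\Big)\Big|^2\,\big|\hat\psi(0,k)\big|^2\bigg].
\]
Pointwise in the configuration, we apply the Cauchy--Schwarz inequality to the normalized sum $\hsum$:
\[
\hsum \Big|\hat\psi\Big(k+\frac{\eta}{n}\Big)\Big|^2|\hat\psi(k)|^2\le \Big(\hsum \Big|\hat\psi\Big(k+\frac\eta n\Big)\Big|^4\Big)^{1/2}\Big(\hsum |\hat\psi(k)|^4\Big)^{1/2}=\hsum|\hat\psi(k)|^4 .
\]
The last equality holds because $k\mapsto k+\eta/n$ is a bijection of $\hat\T_n$, as addition is taken modulo $1$ and $\eta/n\in\hat\T_n$ modulo $1$. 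Taking expectations and using \eqref{eq:3z} gives $\lnm W_{n,+}(0)\rnm_\eta^2\le \frac14\cdot\frac1{n^2}\E_n\big[\hsum|\hat\psi(k)|^4\big]\le C/4$.

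The other three functions reduce to the same estimate. For $W_{n,-}$, $Y_{n,\pm}$ the moduli of the factors are $|\hat\psi(\pm k\pm\eta/n)|$ and $|\hat\psi(\pm k)|$. The maps $k\mapsto -k$ and $k\mapsto -k-\eta/n$ are also bijections of $\hat\T_n$, so the Cauchy--Schwarz step above yields the same bound $C/4$ for each of these squared norms. Summing the four square roots then gives ${\mathfrak W}_2\le 4\sqrt{C}/2=2\sqrt C$, uniformly in $n$ and $\eta$.

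This argument has no genuine obstacle. The only point needing care is the normalization. The factor $\frac1{2n}$ in \eqref{022210-25} squares to $\frac1{4n^2}$, which matches exactly the $\frac1{n^2}$ in \eqref{eq:3z}. We must also check that $\lnm\cdot\rnm_\eta$ uses the normalized sum $\hsum$ from \eqref{norms}, so that no stray factor of $n$ appears.
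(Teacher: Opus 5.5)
Your proof is correct and follows the paper's argument. In both, Cauchy--Schwarz is applied to $|\hat\psi(k+\eta/n)|^2|\hat\psi(k)|^2$, the sum $\hsum$ is invariant under the shifts and reflections of $\hat\T_n$, and \eqref{eq:3z} is used with the matching factor $1/(4n^2)$. The only cosmetic difference is that you apply Cauchy--Schwarz pointwise in the configuration before taking $\E_n$, whereas the paper applies it jointly in $k$ and the measure $\mu_n$; both give the same bound.
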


\proof Recall that the initial condition is distributed according to $\mu_n$ which satisfies Assumption \ref{T}.
By the Cauchy-Schwarz inequality, for any $\eta \in\bbZ$, we have
\begin{align}
  \label{012008-26}
   \big\lnm W_{n,+}(0)\big\rnm_\eta^2& = \hsum\int_{\Om_n}|W_{n,+}(0,\eta,k)|^2 \; \dd\mu_{n}\notag
    \\
    & \le  { \Big(\frac{1}{2n}\Big)^2
    \bigg(\hsum\int_{\Om_n}\Big|\widehat\psi\Big(k+\frac{\eta}{n}\Big)\Big|^4
    \dd\mu_{n}\bigg)^{1/2}
    \bigg(\hsum\int_{\Om_n} \big| \widehat\psi^\star(k)\big|^4 \dd\mu_{n}\bigg)^{1/2}}
  \\
 & =  \Big(\frac{1}{2n}\Big)^2
    \bigg(\hsum\int_{\Om_n}\big|\widehat\psi(k)\big|^4
    \dd\mu_{n}\bigg)^{1/2}
   \bigg(\hsum\int_{\Om_n} \big| \widehat\psi^\star(k)\big|^4 \dd\mu_{n}\bigg)^{1/2}\notag\\
  &= \Big(\frac{1}{2n}\Big)^2
    \bigg(\hsum\int_{\Om_n}\big|\widehat\psi(k)\big|^4
    \dd\mu_{n}\bigg) \le C,\notag
\end{align}
 {from Assumption \ref{T}, see \eqref{eq:3z}.}
This implies $\sup_n \sup_\eta \lnm W_{n,+}(0)\rnm_\eta < +\infty$. The argument for the
 remaining Fourier-Wigner functions follows in the same fashion.
\qed


  
  Our main result of the present section is the following estimate.
    \begin{theorem}\label{thm011002-26} There
      exists $C >0$ such that, for any $t\ge 0$,  
        $\eta \in\bbZ$ and  $n =1,2,\ldots$,
                     \begin{equation}
        \label{022008-26}
        	\int_0^t\Big[	 {  \big\lnm W_{n,+} (s)\big\rnm^{
                    2}_\eta +\big\lnm W_{n,-} (s)\big\rnm^{  2}_\eta+
                  \big\lnm Y_{n,+} (s)\big\rnm^{  2}_\eta + \big\lnm
                  Y_{n,-} (s)\big\rnm^{  2}_\eta  }\Big]\dd s
		\le  C(t+1).
                	\end{equation}
\end{theorem}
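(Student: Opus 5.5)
We plan an energy (Itô) estimate for the second moments of the Fourier–Wigner functions, carried out at fixed $\eta$ with the norm $\lnm\cdot\rnm_\eta$ from \eqref{norms}. Because of the $\dd\thN(\ga n^2 t,\ell')$ terms, the SDE system \eqref{012403-26}--\eqref{012403-26c} is not closed at fixed $\eta$: the jump terms couple $\eta$ to $\eta-n\ell'$. We therefore first work with the sum over all $\eta\in\bbZ$, or equivalently over the whole torus in the unscaled variables, and obtain uniformity in $\eta$ afterwards.

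\textbf{Step 1: the basic quadratic functional.} Write $F_n(t):=\sum_{\pm}\big(\lnm W_{n,\pm}(t)\rnm_\eta^2+\lnm Y_{n,\pm}(t)\rnm_\eta^2\big)$. Up to the factor $\frac{1}{4n^2}$, $|W_{n,+}(t,\eta,k)|^2=\frac{1}{4n^2}|\hat\psi(n^2t,k+\eta/n)|^2|\hat\psi(n^2t,k)|^2$, and the same product structure holds for the other components. Hence $F_n(t)$ is controlled by quantities of the form
\[
\frac1{n^2}\E_n\Big[\hsum|\hat\psi(n^2t,k+\eta/n)|^2|\hat\psi(n^2t,\pm k)|^2\Big].
\]

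\textbf{Step 2: Itô formula and dissipation.} We apply Itô's formula to $|W_{n,+}|^2$, $|U_{n,\pm}|^2$, $|W_{n,0}|^2$, $|\delta W_n|^2$ using the equations above.
\begin{itemize}
\item The Hamiltonian parts, $-in\delta_n\om W_{n,+}$ and the rotation $2n^2\bar\om_n(U_{n,-},-U_{n,+})$, are antisymmetric and drop out of $\frac{\dd}{\dd t}$ of the squared norms.
\item The drift $2\ga n^2\cL_n(\cdot)$ gives the dissipative term $-2\ga n^2\cD_n(W_{n,+})$, together with cross terms $2\ga n^2\langle\cL_n U_{n,+},W_{n,+}\rangle$.
\item $U_{n,-}$ has an explicit damping term $-2\ga n^2\|U_{n,-}\|^2$.
\item The quadratic variation of the compensated Poisson martingales contributes $\ga n^2$ times $\hsum_{\ell'}$ of the squared jump coefficients. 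Using \eqref{030204-26}, these are bounded by $\ga n^2$ times combinations of $\cD_n$ and $\|U_{n,-}\|^2$, summed over the shifted $\eta-n\ell'$.
\end{itemize}
Summed over $\eta$ these shifts are harmless. The key claim is that the $\cL_n$-structure makes the total dissipation dominate the quadratic-variation terms, which is a cancellation of the type $|a|^2$ versus $|a|^2$. This is the main obstacle, and we expect it to hold only after a careful algebraic grouping, in the spirit of \cite{KOS18}.

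\textbf{Step 3: alternative route via fourth moments.} If the cancellation in Step 2 is delicate, we will instead bound directly
\[
\E_n\Big[\frac1{n^2}\hsum|\hat\psi(n^2t,k)|^4\Big]
\]
using the single-mode dynamics. The flip noise acts on $\hat\psi$ by an averaging (mixing) operator that tends to equidistribute modes, so it should decrease $\sum_k|\hat\psi|^4$ up to the conserved $\big(\hsum|\hat\psi|^2\big)^2$, while the Hamiltonian part conserves each $|\hat\psi(k)|$. This gives
\[
\frac{\dd}{\dd t}\E_n\Big[\frac1{n^2}\hsum|\hat\psi|^4\Big]\le -cn^2\Big(\E_n\Big[\frac1{n^2}\hsum|\hat\psi|^4\Big]-C\,\E_n\big[(\cE^{(n)}_{\rm tot})^2\big]\Big).
\]
The Gronwall-type integration of this inequality involves $\cE^{(n)}_{\rm tot}$, bounded through \eqref{eq:energybounds}, and the initial bound \eqref{eq:3z}, which corresponds to Proposition \ref{prop030204-26}.

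\textbf{Step 4: conclusion.} By Cauchy–Schwarz as in \eqref{012008-26}, the fourth-moment bound controls $F_n(t)$ uniformly in $\eta$. Integrating in time gives the bound $C(t+1)$: the constant $1$ accounts for the initial-data contribution ${\mathfrak W}_2^2$, and the term $Ct$ comes from the sources, i.e.\ the non-dissipated energy-squared terms. The step we expect to be hardest is identifying exactly which combination of quadratic variations is absorbed by the dissipation, and doing so uniformly in $n$.
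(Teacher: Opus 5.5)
Your proposal has a genuine gap. Neither route produces a bound at fixed $\eta$, and the coercive inequality in Step 3 is false.

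\textbf{Step 2.} Summing over a period in $\eta$ does not make the problem easier; it makes it trivial. Since $\frac1n\sum_{\eta\in\T_n}|\hat\psi(k+\eta/n)|^2=\hsum|\hat\psi|^2$, the averages $\frac1n\sum_{\eta\in\T_n}\lnm W_{n,\pm}(t)\rnm_\eta^2$ and $\frac1n\sum_{\eta\in\T_n}\lnm Y_{n,\pm}(t)\rnm_\eta^2$ are multiples of $\E_n[(\cE^{(n)}_{\rm tot}(0))^2]$, which is a constant of motion. So after summation, dissipation and quadratic variation cancel identically. At fixed $\eta$ you only learn $\lnm W_{n,\pm}(t)\rnm^2_\eta\le Cn$, off by a factor $n$. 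The passage to fixed $\eta$ is the whole content of the theorem, and there the mechanism is not an absorption of quadratic variation by dissipation. In \eqref{011002-26} the dissipation sits at $\eta$, while the quadratic-variation terms are averages over the full period $\eta-n\ell'$, $\ell'\in\hT_n$. They are therefore bounded \emph{sources} of size $C\,\E_n[(\cE^{(n)}_{\rm tot})^2]$ by \eqref{eq:14}. Dividing by $\ga n^2$ and integrating in time is what produces the $Ct$ in \eqref{022008-26}. Even then, the dissipation only controls $\cD_n(W_{n,0}-U_{n,+})$, $\delta W_n$ and $U_{n,-}$ (Proposition \ref{cor011002-26a}); it does not control $U_{n,+}$ or $W_{n,0}$.

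\textbf{Step 3.} Set $Q:=\frac1{n^2}\hsum|\hat\psi|^4$. The inequality $\frac{\dd}{\dd t}\E_n[Q]\le -cn^2\big(\E_n[Q]-C\E_n[(\cE^{(n)}_{\rm tot})^2]\big)$ fails because the noise is degenerate. A flip at $x$ does nothing when $p_x=0$, and the Hamiltonian flow preserves every $|\hat\psi(k)|$. So for a deterministic datum with $\mathbf p\equiv0$, the right derivative of $\E_n[Q]$ at $t=0$ vanishes. Meanwhile $Q/(\cE^{(n)}_{\rm tot})^2$ can be made as large as you like: it is of order $n$ for $\mathbf q$ a single Fourier mode, and of order $M$ (compatible with \eqref{eq:3z}) for $\hat{\mathbf q}$ spread evenly over a fraction $1/M$ of the modes.

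Concretely, the linear part of the flip generator acting on $|\hat\psi(k)|^4$ is a negative multiple of $|\hat\psi(k)|^2\big(|\hat\psi(k)|^2-{\rm Re}\,\hat\psi(k)\hat\psi(-k)\big)$. The noise damps $W$ only relative to $U$, which is the same obstruction as in Step 2.

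The missing idea is hypocoercive. $U_{n,+}$ is controlled through the rotation $2n^2\bar\om_n$ coupling $U_{n,\pm}$, using $\bar\om_n\ge\om_0>0$. The paper does this by applying It\^o's formula to $\E_n[U_{n,-}U_{n,+}^\star]$ and combining it with the integrated bounds of Step 1 (Proposition \ref{cor021002-26}). Then $W_{n,0}$ is recovered from $W_{n,0}=({\cal R}_n-\cL_n)W_{n,0}$, using $|{\cal R}_nW_{n,\pm}|\le\cE^{(n)}_{\rm tot}(0)$ (Proposition \ref{cor041002-26}).

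Your argument never uses the pinning $\om_0>0$. It also targets a pointwise-in-time fourth-moment bound, which is stronger than anything the paper proves; the paper obtains, and needs, only the time-integrated estimate.
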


The proof of this result is done in   Section \ref{sec4.3}.

\subsection{Proof of Theorem \ref{thm011002-26}}

\label{sec4.3}

\subsubsection{Time evolution of the second moments of Fourier-Wigner functions}

Using It\^o formula we easily obtain from
\eqref{012403-26}--\eqref{012403-26c} the following equations on the
evolution of the $\mL^2$--norms (recall also notation \eqref{020204-26}):
\begin{align}
 \label{010204-26}
  & \frac{\dd}{\dd t}\bbE_n\big| W_{n,+}(t,\eta,k)     \big|^2  \\ & =2{\rm Re}\bigg(
    \bbE_n \Big[W_{n,+}^\star\Big(-in \delta_{n}\om(\eta,k) W_{n,+}
    +2\ga n^2 {\cal L}_n\big(  W_{n,+}-  U_{n,+}\big) \Big)
    (t,\eta,k)\Big]\bigg) \notag \\
  &\quad 
    + \gamma n^2 \underset{\ell' \in\hat\T_n}{\hat{\sum}}  \bbE_n\bigg[\Big|
      \big(U_{n,+} -i U_{n,-}  -W_{n,+} \big)(t,\eta-n\ell',k)   \notag \\ & 
   \qquad \qquad + \big(U_{n,+} +i U_{n,-}  -W_{n,+} \big)  (t,\eta-n\ell',k+\ell')
    +2{\cal R}_n
            \big(W_{n,+}-U_{n,+}\big) (t,\eta-n\ell' )
          \Big|^2\bigg], \notag 
\end{align}
\begin{align}
  \label{010204-26a} 
&
    \frac{\dd}{\dd t}\bbE_n| W_{n,-}(t,\eta,k)     |^2  \\
 &=2{\rm Re}\bigg(\bbE_n \Big[W_{n,-}^\star \Big(in \delta_{n}\om(\eta,k)   W_{n,-}    
          +2\ga n^2{\cal L}_n\big(   W_{n,-}-  U_{n,+}
   \big)  \Big) (t,\eta,k)\Big]\bigg)  \notag
 \\
  &
   \quad + \gamma n^2 \underset{\ell' \in\hat\T_n}{\hat{\sum}} \bbE_n \bigg[\Big|
    \big(U_{n,+} +i U_{n,-}  -W_{n,-} \big) (t,\eta-n\ell',k) 
        \notag  \\ & \qquad \qquad  +  \big(U_{n,+} -i U_{n,-}  -W_{n,-} \big)  (t,\eta-n\ell',k+\ell')      +2{\cal R}_n
            \big(W_{n,-} -U_{n,+}\big) (t,\eta-n\ell' )
                \Big|^2\bigg] ,  \notag
\end{align}
\begin{equation}
  \label{010204-26b}
   \begin{split} &
     \frac{\dd}{\dd t}\bbE_n| U_{n,+}(t,\eta,k)     |^2 \\
 &=2{\rm Re}\bigg(\bbE_n \Big[U_{n,+}^\star \Big(2n^2 \bar\om_n(\eta,k)
       U_{n,-}
         +2\ga n^2{\cal L}_n\big(   U_{n,+} 
     - W_{n,0}    \big)
       \Big)(t,\eta,k)\Big]\bigg) 
 \\
  &\quad 
    + \gamma n^2 \underset{\ell'\in\hat\T_n}{\hat{\sum}} \bbE_n\bigg[\Big|
\big(  W_{n,0}     -U_{n,+} 
      \big)(t ,\eta-n\ell',k) 
      +\big(  W_{n,0}  -U_{n,+} 
      \big)(t,\eta-n\ell',k+\ell')  
     \\
          & \qquad \qquad 
    -2{\cal R}_n\big(W_{n,+} -U_{n,+} \big)
            \big(t,\eta-n\ell'   \big)    
   \Big|^2 \bigg]  
\end{split} \end{equation}
and
\begin{equation}
  \label{010204-26c} \begin{split}
&
  \frac{\dd}{\dd t}\bbE_n| U_{n,-}(t,\eta,k)     |^2  \\ & =2{\rm Re}\bigg(\bbE_n \Big[ U_{n,-}^\star\Big(-2n^2 \bar\om_n(\eta,k)
       U_{n,+}  -2\ga n^2     U_{n,-}  \Big)(t,\eta,k) \Big]\bigg)   \\
 &
    \quad + \gamma n^2\underset{\ell'\in\hat\T_n}{\hat{\sum}} \bbE_n\bigg[\Big|
\frac 12\big( \delta W_{n}   +2iU_{n,-} 
      \big)(t ,\eta-n\ell',k)  
  - \frac 12\big( \delta W_{n}   -2iU_{n,-} 
      \big)(t,\eta-n\ell',k+\ell')
         \Big|^2\bigg].  
\end{split} \end{equation}
Multiplying \eqref{010204-26} and \eqref{010204-26a} by $\frac12$ sideways and afterwards
       adding  all \eqref{010204-26}--\eqref{010204-26c} sideways, and
       performing the summation $\hsum$   we
       obtain the following identity (recall
       \eqref{eq:cD} for the notation of ${\cal D}_n$ and \eqref{norms} for the notation of $\lnm \cdot \rnm_\eta$): for any $t\ge 0$ and $\eta \in \bbZ$,
\begin{equation}
 \label{011002-26} \begin{split}
  &
\frac{\dd}{\dd t} \Big( \frac12 \big\lnm W_{n,+}(t )
\big\rnm^{  2}_\eta + \frac12       \big\lnm  W_{n,-}(t)
\big\rnm^{  2}_\eta  + \big\lnm U_{n,+}(t)      \big\rnm^{
  2}_\eta + \big\lnm  U_{n,-}(t)      \big\rnm^{  2}_\eta \Big)   \\
  & 
  =-2n^2\ga\bbE_n \Big[   {\cal
    D}_n\big(  W_{n,0} -
    U_{n,+} \big)(t,\eta)  \Big]
   -\ga n^2 { \big\lnm \delta W_{n}(t)\big\rnm_\eta^2  }
    -4\ga n^2   {\big\lnm U_{n,-}(t)\big\rnm_\eta^2 }
    \\
   &
   \quad + 2 \gamma n^2 \underset{k,\ell'\in\hat\T_n}{\hat{\sum}}   \bbE_n \bigg[\Big|{\cal L}_n  \big(U_{n,+}   - W_{n,0}
     \big)(t,\eta-n\ell',k) + {\cal L}_n  \big(U_{n,+}   - W_{n,0} \big) (t,\eta-n\ell',k+\ell') \Big|^2 \bigg]
   \\
   &
\quad  +\gamma n^2 \underset{k,\ell'\in\hat\T_n}{\hat{\sum}}  \bbE_n \bigg[\Big|\delta W_{n}  (t,\eta-n\ell',k) 
                \Big|^2 \bigg]
    +4 \gamma n^2 \underset{k,\ell'\in\hat\T_n}{\hat{\sum}}  \bbE_n \bigg[ \Big|U_{n,-} 
     (t,\eta-n\ell',k )
         \Big|^2 \bigg]. 
\end{split}\end{equation}
Recall that $U_{n,+}$ and $U_{n,-}$ are linear combinations of $Y_{n,+}$ and $Y_{n,-}$, see \eqref{020204-26}. Therefore, given  estimate  \eqref{022008-26} we are seeking, the remainder of this section will be devoted to estimating separately each term appearing in the right hand side above, after integration in time, in three different steps.

\subsubsection{Step 1} Here we prove the following: 

\begin{proposition}
	\label{cor011002-26a}
	There exists a constant $C>0$ such that, for any $t\ge 0$, $\eta \in \bbZ$, $n\ge 1$,
	\begin{equation}
		\label{021002-26a} \int_0^t\bbE_n \Big[   {\cal
			D}_n\big(   W_{n,0}  -
		U_{n,+}\big) (s,\eta) \Big]\dd s
	+  { \int_0^t \Big(\big\lnm
		\delta W_{n}(s)\big\rnm_\eta^2  + \big\lnm
		U_{n,-}(s)\big\rnm_\eta^2 \Big)\dd s} \le C(t+1).
	\end{equation}
\end{proposition}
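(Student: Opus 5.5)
The plan is to integrate the energy identity \eqref{011002-26} in time and to use the fact that every positive term on its right-hand side is an \emph{average over the shifted frequencies} $\eta-n\ell'$, $\ell'\in\hat\T_n$. Such averages can be controlled by the conserved total energy and not by the individual norms $\lnm\cdot\rnm_{\eta'}$. Integrating \eqref{011002-26} over $[0,t]$ and moving the three dissipative terms to the left-hand side gives
\begin{multline*}
\ga n^2\int_0^t\Big(2\bbE_n\big[{\cal D}_n(W_{n,0}-U_{n,+})(s,\eta)\big]+\lnm\delta W_n(s)\rnm_\eta^2+4\lnm U_{n,-}(s)\rnm_\eta^2\Big)\dd s\\
\le \tfrac12\lnm W_{n,+}(0)\rnm^2_\eta+\tfrac12\lnm W_{n,-}(0)\rnm^2_\eta+\lnm U_{n,+}(0)\rnm^2_\eta+\lnm U_{n,-}(0)\rnm^2_\eta+\int_0^t \mathcal{P}_n(s)\dd s,
\end{multline*}
where $\mathcal{P}_n(s)$ denotes the sum of the three positive terms on the right of \eqref{011002-26}. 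Here we have dropped the (nonnegative) norms at time $t$. The initial terms are bounded by ${\mathfrak W}_2$ from Proposition~\ref{prop030204-26}, since $U_{n,\pm}$ are linear combinations of $Y_{n,\pm}$. The claim \eqref{021002-26a} will therefore follow once we show that $\mathcal{P}_n(s)\le C\ga n^2$ uniformly in $s$, $\eta$ and $n$, and then divide by $\ga n^2\ge \ga$.

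The key computation is the shifted average. We write $\hat\sum_{\ell'}$ as $\frac1n\sum_{j=0}^{n-1}$ with $\eta-n\ell'=\eta-j$. For the Wigner-type quantities we have, for instance,
\[
\frac1n\sum_{j=0}^{n-1}\hsum|W_{n,+}(s,\eta-j,k)|^2=\frac{1}{4n^3}\hsum|\hat\psi(n^2s,k)|^2\sum_{j=0}^{n-1}\Big|\hat\psi\Big(n^2s,k+\frac{\eta-j}{n}\Big)\Big|^2 .
\]
The inner sum runs over all of $\hat\T_n$, so it equals $n\hsum|\hat\psi|^2=2n^2\mathcal E^{(n)}_{\rm tot}(s)$ by \eqref{psi-e1}. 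Hence the whole expression equals $(\mathcal E^{(n)}_{\rm tot}(s))^2$. The same computation, using $|\hat\psi(-k)|$ or $|\hat\psi^\star|$, applies to $W_{n,-}$ and $Y_{n,\pm}$. Taking expectations and using \eqref{eq:energybounds}, these averages are bounded uniformly by $C$.

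It remains to reduce each positive term to such averages. The term involving $\delta W_n$ and the term involving $U_{n,-}$ are immediate, since $|\delta W_n|^2\le\frac12(|W_{n,+}|^2+|W_{n,-}|^2)$ and similarly for $U_{n,-}$ in terms of $Y_{n,\pm}$. For the ${\cal L}_n$ term we use $|a+b|^2\le2|a|^2+2|b|^2$. The translation $k\mapsto k+\ell'$ preserves $\hsum_k$, and \eqref{030204-26} gives $\|{\cal L}_nf\|_{\mL^2}\le\|f\|_{\mL^2}$. Applied to $f=(U_{n,+}-W_{n,0})(s,\eta-j,\cdot)$, this bounds the ${\cal L}_n$ term by $C\ga n^2$ times the same shifted averages of $|W_{n,\pm}|^2,|Y_{n,\pm}|^2$. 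Altogether this gives $\mathcal{P}_n(s)\le C\ga n^2\,\bbE_n[(\mathcal E^{(n)}_{\rm tot}(0))^2]\le C'\ga n^2$.

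The step I expect to require the most care is this last bookkeeping: checking that the positive terms involve only the shifted frequencies $\eta-n\ell'$ averaged over $\ell'$, and never a fixed $\eta$ with an $n^2$ prefactor. Otherwise they could not be absorbed. This is exactly where conservation of energy (and the fourth-moment bound \eqref{eq:3z} through \eqref{eq:energybounds}) enters, and it is what makes the final bound uniform in $\eta$.
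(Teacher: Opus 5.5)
Your proposal is correct and follows essentially the same route as the paper: integrate \eqref{011002-26} in time, drop the nonnegative norms at time $t$, and bound the initial terms via Proposition~\ref{prop030204-26}. As in the paper, you then control the $\ell'$-averaged shifted-frequency terms by $\bbE_n\big[(\mathcal{E}^{(n)}_{\rm tot}(0))^2\big]$, using the computation in \eqref{eq:14}, energy conservation and \eqref{eq:energybounds}.
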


\begin{proof}
%
%
%
From the dynamics \eqref{011002-26} we conclude that
\begin{equation}
 \label{050204-26} \begin{split}
   &
2\ga\int_0^t\bbE_n \Big[   {\cal
    D}_n\big(   W_{n,0 }     -
    U_{n,+} \big) (s,\eta) \Big]\dd s
     +\ga \int_0^t \big\lnm
    \delta W_{n}(s)\big\rnm_\eta^2\; \dd s 
  +4\ga   \int_0^t \big\lnm
    U_{n,-}(s)\big\rnm_\eta^2\;\dd s
   \\
  &   
   \quad  +  \frac1{n^2}\Big(\frac12 \big\lnm W_{n,+}(t )
    \big\rnm^{  2}_\eta + \frac12       \big\lnm  W_{n,-}(t)
    \big\rnm^{  2}_\eta  + \big\lnm U_{n,+}(t)      \big\rnm^{
    	2}_\eta + \big\lnm  U_{n,-}(t)      \big\rnm^{  2}_\eta\Big)  \vphantom{\int_0^1}\\    
  &
    \le \frac1{n^2}\Big(\frac12 \big\lnm W_{n,+}(0 )
    \big\rnm^{  2}_\eta + \frac12       \big\lnm  W_{n,-}(0)
    \big\rnm^{  2}_\eta  + \big\lnm U_{n,+}(0)      \big\rnm^{
    	2}_\eta + \big\lnm  U_{n,-}(0)      \big\rnm^{  2}_\eta\Big) \vphantom{\int_0^1} \\ & \quad 
       + \frac{8\gamma}{n}   {\sum_{\eta'\in\bbT_n}} \; \hsum  \int_0^t\bbE_n
     \bigg[\Big|{\cal L}_n  \big(U_{n,+}   - W_{n,0}
     \big)(s,\eta',k)\Big|^2 \bigg] \dd s  
   \\
   &
 \quad +\frac{\gamma}{n} {\sum_{\eta'\in\bbT_n}} \; \hsum \int_0^t \bbE_n
    \bigg[ \Big|\delta W_{n}  (s,\eta',k) 
                \Big|^2 \bigg]\dd s
    + \frac{4\gamma}{n}{\sum_{\eta'\in\bbT_n}} \; \hsum \int_0^t\bbE_n \bigg[\Big|U_{n,-} 
     (s,\eta',k )
     \Big|^2 \bigg]\dd s.
   \end{split}\end{equation}
 In the left hand side we recover the quantity that we want to estimate (recall \eqref{021002-26a}) plus a term that we can neglect (the one with $\frac1{n^2}$) since it is positive. We now estimate the right hand side. 

First of all, recall from \eqref{030204-26} that $\|\cL_n
(f+g)\|^2_{\mL^2(\hT_n)} \le 2\|f\|^2_{\mL^2(\hT_n)}+
2\|g\|^2_{\mL^2(\hT_n)}$, and by definition
\begin{align*}
\|W_{n,0}
(t,\eta)\|_{\mL^2(\hT_n)}  &\le
\frac12\big(\|W_{n,+}
(t,\eta)\|_{\mL^2(\hT_n)}  +\|W_{n,-}
                 (t,\eta)\|_{\mL^2(\hT_n)}\big)\\
  &
    =\|W_{n,+}
(t,\eta)\|_{\mL^2(\hT_n)} .
\end{align*}
Therefore,
\begin{multline*} \hsum  \bbE_n
 \bigg[\Big|{\cal L}_n  \big(U_{n,+}   - W_{n,0}
\big)(s,\eta',k)\Big|^2 \bigg]   \\ \le 2 \hsum  \bbE_n
\Big[\big|U_{n,+} 
(s,\eta',k)\big|^2 \Big]  +  2 \hsum \bbE_n
\Big[\big|W_{n,+}
(s,\eta',k)\big|^2 \Big]  .
 \end{multline*} Taking the summation over $\eta' \in \T_n$, and dividing by $n$, we get
 
\begin{equation}
 \label{eq:14}
 \begin{split}
   \frac{2}{n}   \sum_{\eta'\in\T_n}\hsum\bbE_n \bigg[\Big| W_{n,+} (s,\eta',k)\Big|^2\bigg]
& =  \frac{1}{2 n^3}
 \sum_{\eta\in\bbT_n}\hsum \bbE_n\left[
 \Big|\hat\psi \Big(n^2t, k+\frac{\eta}{n} \Big)\Big|^2 |\hat\psi ^\star (n^2t,k )|^2\right]
 \\
 &
       =  \frac{1}{2 n^2}\bbE_n\bigg[\Big(\hsum |\hat\psi (n^2t ,k )|^2\Big)^2 \bigg]
    \\&   =  \frac{1}{2 n^2}{\E_n \bigg[}\Big( \sum_{x\in\bbT_n} e_x^{(n)} ( t  )\Big)^2 \bigg]
       = {\int_{\Omega_n}} \big({\cal E}_{\rm tot}^{(n)}(0)\big)^2\; {\dd \mu_n}.
 \end{split}
\end{equation}
The last equality holds by conservation of energy, see \eqref{ex2}.
  The right hand side of \eqref{eq:14} is bounded thanks to estimate \eqref{eq:energybounds}.
       A similar computation holds for $U_{n,+}$.
  We finally get that there exists $C>0$ such that,  for any $t\ge 0$, $n\ge 1$,
\begin{equation}
	\label{eq:b1}
 \frac{8\ga}{n} \sum_{\eta'\in\T_n} \; \hsum  \int_0^t \bbE_n
\bigg[\Big|{\cal L}_n  \big(U_{n,+}   - W_{n,0}
\big)(s,\eta',k)\Big|^2 \bigg]	\dd s \le C(t+1).
\end{equation}
In a similar fashion we can estimate analogous expressions involving $
\delta W_{n}$ and $
U_{n,-}$ in the right hand side of \eqref{050204-26}. Combining this with the estimates above we easily conclude \eqref{021002-26a}. \end{proof}

\subsubsection{Step 2}

Next we show the following.

\begin{proposition}
 \label{cor021002-26}
There exists $C>0$ such that, for any $t\ge 0$, $\eta \in\bbZ$, $n\ge 1$,
 \begin{align}
   \label{021002-26}
  & { \int_0^t \big\lnm 
    U_{n,+}(s)\big\rnm_\eta^2 \; \dd s } \le C(t+1).
\end{align}
\end{proposition}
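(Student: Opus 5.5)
The idea is to extract $U_{n,+}$ from the drift of the equation \eqref{012403-26c} for $U_{n,-}$. That drift contains $-2n^2\bar\om_n(\eta,k)U_{n,+}$, and thanks to the pinning we have $\bar\om_n\ge\om_0>0$, so we may divide by $\bar\om_n$. All remaining quantities should then be controlled by Proposition \ref{cor011002-26a}, by the energy identity \eqref{050204-26}, and by the second moment of the total energy \eqref{eq:energybounds}.

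\emph{Step (a): Itô formula for a mixed quantity.} Apply Itô's formula to the product $U_{n,-}(t,\eta,k)\,U_{n,+}^\star(t,\eta,k)/\bar\om_n(\eta,k)$, using \eqref{012403-26b} and \eqref{012403-26c}. Then take $\bbE_n$, sum $\hsum$ over $k$, divide by $n^2$ and integrate over $[0,t]$. The drift of $U_{n,-}$ produces the term $-2\int_0^t\lnm U_{n,+}(s)\rnm_\eta^2\,\dd s$, which is the quantity we want. The remaining contributions are the following.
\begin{itemize}
\item[(i)] The boundary terms $n^{-2}\lsc U_{n,-}(t),U_{n,+}(t)/\bar\om_n\rsc$, and the same at time $0$.
\item[(ii)] The cross term $-2\ga\int_0^t\lsc U_{n,-},U_{n,+}/\bar\om_n\rsc\,\dd s$.
\item[(iii)] The terms coming from the drift of $U_{n,+}$, namely $2\int_0^t\lnm U_{n,-}\rnm_\eta^2\,\dd s$ and $2\ga\int_0^t\lsc U_{n,-}/\bar\om_n,{\cal L}_n(U_{n,+}-W_{n,0})\rsc\,\dd s$.
\item[(iv)] The quadratic covariation of the two martingale parts. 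Since the jumps are compensated Poisson jumps at rate $\ga n^2$, this term, once divided by $n^2$, carries a prefactor $\ga$ with no power of $n$.
\end{itemize}

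\emph{Step (b): bounding each term.}
\begin{itemize}
\item[(i)] By \eqref{050204-26}, $n^{-2}\lnm U_{n,\pm}(t)\rnm_\eta^2\le C(t+1)$, and Proposition \ref{prop030204-26} controls the values at time $0$. Cauchy--Schwarz then bounds the boundary terms by $C(t+1)/\om_0$.
\item[(ii)] Cauchy--Schwarz with a small parameter gives the bound $\eps\int_0^t\lnm U_{n,+}\rnm_\eta^2\,\dd s+C_\eps\int_0^t\lnm U_{n,-}\rnm_\eta^2\,\dd s$. The first piece is absorbed into the left-hand side, and the second is bounded by Proposition \ref{cor011002-26a}.
\item[(iii)] The first term is bounded directly by Proposition \ref{cor011002-26a}. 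For the second, use $\|{\cal L}_nf\|^2_{\mL^2(\hT_n)}={\cal D}_n(f)$ from \eqref{030204-26}; Cauchy--Schwarz then bounds it by $\int_0^t(\lnm U_{n,-}\rnm_\eta^2+\bbE_n{\cal D}_n(W_{n,0}-U_{n,+}))\,\dd s$, which is again covered by \eqref{021002-26a}.
\end{itemize}

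\emph{Step (c): the covariation term, the main obstacle.} The covariation has the form $\ga\,\hsum_{k,\ell'}\bbE_n[J_-(\eta-n\ell',k,\ell')\,J_+^\star(\eta-n\ell',k,\ell')]/\bar\om_n$, where $J_\pm$ are the jump amplitudes appearing in \eqref{012403-26b} and \eqref{012403-26c}. These amplitudes are evaluated at the shifted frequencies $\eta-n\ell'$, which Proposition \ref{cor011002-26a} does not control uniformly. Instead, apply Cauchy--Schwarz and reduce to sums of the form $\frac1n\sum_{\eta'\in\T_n}\hsum\bbE_n|F(s,\eta',k)|^2$, with $F$ among $W_{n,\pm}$ and $Y_{n,\pm}$. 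Exactly as in \eqref{eq:14}, each such sum equals a constant times $\bbE_n[({\cal E}^{(n)}_{\rm tot}(0))^2]$ and is therefore uniformly bounded by \eqref{eq:energybounds}. The ${\cal L}_n$ and ${\cal R}_n$ operators inside the amplitudes are contractions in $\mL^2(\hT_n)$, so they cause no difficulty. This gives a bound $C t$ for the covariation term.

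Collecting (i)--(iv) with $\eps=1$ yields $\int_0^t\lnm U_{n,+}(s)\rnm_\eta^2\,\dd s\le C(t+1)$, which is \eqref{021002-26}.
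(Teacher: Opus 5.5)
Your argument is correct and is essentially the paper's: you apply Itô's formula to the mixed product $U_{n,-}U_{n,+}^\star$ (dividing by $\bar\om_n$ beforehand is only cosmetic), then use the pinning bound $\bar\om_n\ge\om_0$, Young's inequality, and Proposition~\ref{cor011002-26a} for the remaining drift terms. The only deviation is the covariation term, which the paper bounds directly by Proposition~\ref{cor011002-26a}; that estimate is in fact uniform in $\eta\in\bbZ$, so your claim that it does not control the shifted frequencies $\eta-n\ell'$ is inaccurate, although your energy bound via \eqref{eq:14} and \eqref{eq:energybounds} is an equally valid alternative.
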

\proof
Using It\^o formula, together with \eqref{012403-26a} and
\eqref{012403-26b},
we obtain
\begin{align*}
 &
   \frac{\dd}{\dd t}  \bbE_n\Big[U_{n,-}(t,\eta,k)  U_{n,+}^\star
   (t,\eta,k)  \Big]  = -2n^2 \bar\om_n(\eta,k)
       \bbE_n \Big[\big|U_{n,+}(t,\eta,k)\big|^2\Big]  \\ & -2\ga n^2 \bbE_n \Big[
                U_{n,-}(t,\eta,k)   
   U_{n,+}^\star(t,\eta,k) \Big] 
   + 2n^2 \bar\om_n(\eta,k)
       \bbE_n\Big[\big|U_{n,-}(t,\eta,k)  \big|^2\Big] \\ & 
         +2\ga n^2  \bbE_n\Big[U_{n,-}(t,\eta,k) \Big({\cal L}_n\big(   U_{n,+}
            -  W_{n,0}    \big)
    (t,\eta,k)     \Big)^\star\Big]
\\
 &
   -i\ga n^2 \underset{\ell'\in\hat\T_n}{\hat{\sum}}\bbE_n \bigg[ \bigg\{ 
\tfrac 12\big(  \delta W_{n}    +2iU_{n,-} 
      \big)(t- ,\eta-n\ell',k) 
     - \tfrac 12\big( \delta W_{n}    -2iU_{n,-} 
      \big)(t-,\eta-n\ell',k+\ell')
    \bigg\}\\
 &
 \qquad \qquad \quad \times \bigg\{ 
{\cal L}_n\big(  W_{n,0}     -U_{n,+} 
      \big)(t- ,\eta-n\ell',k) 
      +\ {\cal L}_n\big(  W_{n,0}    -U_{n,+} 
      \big)(t-,\eta-n\ell',k+\ell')
    \bigg\}^\star\bigg].
\end{align*}
Integrating in time both sides, summing over $k$, using \eqref{030204-26} and the fact that
$\bar\om_n(\eta,k)\ge \om_0  >0$, we conclude that
\begin{align}
 \label{010304-26}
& 2\om_0  \hsum\int_0^t
       \bbE_n \Big[\big|U_{n,+}(s,\eta,k)\big|^2\Big]\dd s
 \le  \frac{1}{n^2}\sum_{s=0,t} \hsum \Big|\bbE_n \Big[U_{n,-}(s,\eta,k)  U_{n,+}^\star
                (s,\eta,k)  \Big]\Big| 
 \\
 &+ 2\ga \hsum\int_0^t\Big| \bbE_n \Big[
                U_{n,-}(s,\eta,k)   
   U_{n,+}^\star(s,\eta,k) \Big]\Big|\dd s  
   + 2\|\om\|_\infty
       \hsum\int_0^t
       \bbE_n \Big[\big|U_{n,-}(s,\eta,k)\big|^2\Big]\dd s\notag
 \\
 &
   +2\ga   \hsum \int_0^t \Big|\bbE_n\Big[U_{n,-}(s,\eta,k) \Big({\cal L}_n\big(   U_{n,+}
            -  W_{n,0}    \big)
    (s,\eta,k)     \Big)^\star\Big]\Big|\dd s\notag \\
 &
   +\ga  \underset{k,\ell'\in\hat\T_n}{\hat{\sum}}\int_0^t\bigg\{\bbE_n \Big[\big|
\tfrac 12  \delta W_{n}     (s    ,\eta-n\ell',k) \big|^2\Big]+4 \bbE_n \Big[\big| U_{n,-} 
   (s    ,\eta-n\ell',k)\big|^2\Big]
  \notag\\
  &
\qquad \qquad \qquad  +\bbE_n \Big[\big| \tfrac 12  \delta W_{n}    (s,\eta-n\ell',k+\ell')\big|^2\Big]+4 \bbE_n \Big[\big|  U_{n,-}(s,\eta-n\ell',k+\ell')\big|^2\Big] \bigg\}^{1/2}
 \notag  \\
 &
 \qquad \qquad \times \bigg\{\bbE_n \Big[\big|
 {\cal L}_n\big(  W_{n,0}    -U_{n,+} 
      \big)(s ,\eta-n\ell',k) \big|^2\Big] 
   \notag \\ & \qquad \qquad \qquad +\bbE_n \Big[\big| 2{\cal L}_n\big(  W_{n,0}     -U_{n,+} 
      \big)(t-,\eta-n\ell',k+\ell')
        \big|^2\Big]\bigg\}^{1/2} \dd s.\notag
\end{align}
Invoking Young's inequality $2ab\le ca^2+b^2/c$, valid for all
$a,b,c>0$, we can write
\begin{align*}
2\ga \hsum\int_0^t\Big| \bbE_n \Big[
                U_{n,-}(s,\eta,k)   
   U_{n,+}^\star(s,\eta,k) \Big]\Big|\dd s  \le & \; \om_0  \hsum\int_0^t
  \bbE_n \Big[ \big|U_{n,+}(s,\eta,k)\big|^2\Big]\dd s\\
  &   +\frac{\ga}{\om_0}  \hsum\int_0^t
  \bbE_n \Big[\big|U_{n,-}(s,\eta,k)\big|^2\Big]\dd s.
\end{align*}
Hence, {recalling the definition of $\cD_n$ in \eqref{030204-26}},  the right hand side of \eqref{010304-26} can be
estimated by  (using $\hat\sum_\eta \le \sup_\eta$)
   \begin{multline*}
\frac{1}{n^2}\sum_{s=0,t} \hsum\Big|\bbE_n \Big[U_{n,-}(s,\eta,k)  U_{n,+}^\star
                (s,\eta,k)  \Big]\Big| +\om_0  \int_0^t
                {\big\lnm U_{n,+}(s)\big\rnm_\eta^2}\;\dd s\\
    + C{\sup_{\eta'}\int_0^t
      \bigg\{  \big\lnm U_{n,-}(s)\big\rnm_{\eta'}^2\; \dd s 
        +{\cal D}_n \big( W_{n,0} - U_{n,+}\big)(s,\eta') + 
      \big\lnm \delta W_n(s)\big\rnm_{\eta'}^2\bigg\}\dd s}
 \end{multline*}
and the conclusion of the proposition follows from Proposition \ref{cor011002-26a} proved in Step 1.
\qed

\subsubsection{Step 3} Finally we prove:

\begin{proposition}
 \label{cor041002-26}
There exists $C>0$ such that, for any $t\ge 0$, $\eta\in\bbZ$, $n\ge 1$,
 \begin{align}
   \label{021002-26b}
  &    \int_0^t {\big\lnm
    W_{n,0} (s)\big\rnm_\eta^{2}}\; \dd s \le C(t+1).
\end{align}
\end{proposition}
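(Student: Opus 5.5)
We control $W_{n,0}$ through its decomposition into a mean over $k$ and a fluctuating part. By \eqref{030204-26} and orthogonality in $\mL^2(\hT_n)$,
\[
\|W_{n,0}(s,\eta)\|^2_{\mL^2(\hT_n)}=\big|{\cal R}_nW_{n,0}(s,\eta)\big|^2+\|{\cal L}_nW_{n,0}(s,\eta)\|^2_{\mL^2(\hT_n)} .
\]
For the fluctuating part we use ${\cal L}_nW_{n,0}={\cal L}_n(W_{n,0}-U_{n,+})+{\cal L}_nU_{n,+}$. Together with \eqref{eq:estimdir}, this gives
\[
\|{\cal L}_nW_{n,0}\|^2\le 2{\cal D}_n(W_{n,0}-U_{n,+})+2\|U_{n,+}\|^2 .
\]
After taking expectation and integrating in time, the two terms are bounded by Proposition \ref{cor011002-26a} and Proposition \ref{cor021002-26}. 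So the $k$-fluctuating part of $W_{n,0}$ is bounded by $C(t+1)$.

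The remaining step, which we expect to be the main obstacle, is the spatial average ${\cal R}_nW_{n,0}(s,\eta)$, i.e.\ the Fourier mode $\eta$ of the energy density. The noise does not damp it: ${\cal L}_n$ annihilates constants in $k$. We plan to average the equation \eqref{012403-26} (and \eqref{012403-26a}) over $k$, using ${\cal R}_n{\cal L}_n=0$. This shows that ${\cal R}_nW_{n,0}$ evolves only through the transport term $-in\,{\cal R}_n[\delta_n\om\,\delta W_n]$ plus a martingale. The martingale is a sum over $\ell'$ of terms built from ${\cal L}_n(W_{n,0}-U_{n,+})$ and $U_{n,-}$, and its quadratic variation is controlled by Proposition \ref{cor011002-26a}.

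For $\eta=0$ we argue directly. By \eqref{ex2}, ${\cal R}_nW_{n,0}(s,0)$ equals (up to a constant) the total energy ${\cal E}^{(n)}_{\rm tot}(0)$, whose second moment is bounded by \eqref{eq:energybounds}. Integrating in time then gives a bound $Ct$.

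For $\eta\neq0$ the crude estimate $|{\cal R}_nW_{n,0}(s,\eta)|\le \|W_{n,+}(s,\eta)\|_{\mL^1}\le \frac1{2n}\hsum|\hat\psi|^2 = {\cal E}_{\rm tot}^{(n)}$ suffices. It follows from Cauchy--Schwarz in $k$ and \eqref{psi-e1}: indeed $\frac1{2n}\hsum|\hat\psi(k+\eta/n)||\hat\psi(k)|\le \frac1{2n}\hsum|\hat\psi(k)|^2$. Hence $\E_n|{\cal R}_nW_{n,0}(s,\eta)|^2\le \E_n[({\cal E}^{(n)}_{\rm tot}(0))^2]\le C$, uniformly in $s$, $\eta$ and $n$ by \eqref{eq:energybounds} and energy conservation. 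This bound also covers $\eta=0$, so the PDE analysis of the average is not needed at this level. Integrating in $s$ gives $\int_0^t\lnm {\cal R}_nW_{n,0}(s)\rnm_\eta^2\dd s\le Ct$.

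Combining the two parts yields \eqref{021002-26b}. To complete Theorem \ref{thm011002-26}, we recover $W_{n,\pm}=W_{n,0}\pm\delta W_n$ and $Y_{n,\pm}=U_{n,+}\pm iU_{n,-}$. We then use Propositions \ref{cor011002-26a}, \ref{cor021002-26} and \ref{cor041002-26}.
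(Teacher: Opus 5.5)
Your argument is correct and is essentially the paper's. The paper also bounds the $k$-average by ${\cal E}^{(n)}_{\rm tot}(0)$ via Cauchy--Schwarz and energy conservation, controls the ${\cal L}_n$-part through Proposition \ref{cor011002-26a}, and then adds Proposition \ref{cor021002-26}. The only cosmetic difference is that the paper applies the decomposition $f=({\cal R}_n-{\cal L}_n)f$ to $W_{n,0}-U_{n,+}$, and so must also bound ${\cal R}_nU_{n,+}$ by the energy, whereas you apply it to $W_{n,0}$ itself; as you concluded, averaging \eqref{012403-26} over $k$ is unnecessary.
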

\proof
Recalling the notation  ${\cal
 R}_nf=\hat\sum_{\ell \in \hat\T_n}f(\ell)$ we can write (cf.~\eqref{ex2})
\begin{align*}
 &
   \big|{\cal R}_n\big(W_{n,\pm}\big)(s,\eta)\big|\le
   \frac{1}{2n}\underset{\ell\in\hat\T_n}{\hat\sum}\Big| \widehat\psi\Big(n^2s,\ell+\frac{\eta}{n}\Big)\Big|
   \big|\widehat\psi^\star(n^2s,\ell)\big| 
   \le \frac{1}{2n}\underset{\ell\in\hat\T_n}{\hat\sum}
   \big|\widehat\psi (n^2s,\ell)\big|^2={\cal E}_{\rm tot}^{(n)}(0).
\end{align*}
Similarly
$   |{\cal R}_n\big(U_{n,\pm}\big)(s,\eta)|\le {\cal E}^{(n)}_{\rm
 tot}(0).$
Writing 
\begin{align*}
 &W_{n,0} (s,\eta,k)=\big({\cal R}_n -{\cal
   L}_n\big)W_{n,0}(s,\eta,k)\quad \mbox{and}\quad U_{n,+} (s,\eta,k)=\big({\cal R}_n -{\cal
   L}_n\big)U_{n,+}(s,\eta,k)
\end{align*}
we conclude
\begin{align*}
 &\hsum \int_0^t\bbE_n\Big[\big|
     (W_{n,0}-U_{n+})(s,\eta,k) \big|^2\Big]\dd s \le 2\int_0^t\bbE\Big[\big|{\cal R}_n
   (W_{n,0}-U_{n+}) (s)\big)(\eta)\big|^2\Big]\dd
   s\\
 &\quad
   + 2 \hsum\int_0^t\bbE_n\Big[\big|{\cal
   L}_n (W_{n,0}-U_{n+}) (s,\eta,k)\big|^2\Big]\dd s \le C(t+1),
\end{align*}
by virtue of Proposition \ref{cor011002-26a}. This, together with Proposition
\ref{cor021002-26},  implies \eqref{021002-26b}.
\qed

Gathering together estimates \eqref{021002-26a}, \eqref{021002-26} and
\eqref{021002-26b}, we conclude estimate \eqref{022008-26}  {(recall that $W_{n,0}=\frac12(W_{n,+}+W_{n,-})$)},
finishing in this way the proof of Theorem \ref{thm011002-26}.\qed

\section{Proof of Theorem \ref{thm:laplace}}

 \label{sec:proofLaplace}
 
 Theorem \ref{thm:laplace} involves the Laplace transform
 $\hat w_{n,+}$ of   $\tilde W_{n,+}$ (see
 \eqref{teW}). We start this section by obtaining the   system of
 equations obtained by applying
 the 
 Laplace transform to the  equations describing the evolution of the
 fluctuations of the Fourier Wigner functions.

\subsection{The dynamics of the  fluctuations of the Fourier-Wigner functions and their Laplace transforms} \label{sec:wig}

Recall \eqref{022210-25} and \eqref{020204-26} for the definition of the Wigner functions, and let (as in \eqref{teW}), for $t\ge 0$ and $(\eta,k)\in\bbZ\times \hat\T_n$,
\[
  \begin{split}
    \tilde W_{n,\pm}(t,\eta,k)&:=  W_{n,\pm}(t,\eta,k)-\bar W_{n,\pm}(t,\eta,k), \qquad \bar W_{n,\pm}(t,\eta,k):= \E_n\big[W_{n,\pm}(t,\eta,k)\big]\\
    \tilde U_{n,\pm}(t,\eta,k)&:=  U_{n,\pm}(t,\eta,k)-\bar U_{n,\pm}(t,\eta,k), \qquad \bar U_{n,\pm}(t,\eta,k):= \E_n\big[U_{n,\pm}(t,\eta,k)\big].
  \end{split}
\]
We also use the notation (as in \eqref{020204-26})
\begin{align*}
	\tilde  W_{n,0} := \frac12\big( \tilde  W_{n,+} + \tilde
	W_{n,-}\big)\quad\mbox{and} \quad \delta\tilde  W_{n} :=\frac12\big(\tilde  W_{n,+} - \tilde
	W_{n,-}\big).
\end{align*}
Recall the system of equations \eqref{012403-26}--\eqref{012403-26c}. Similar computations show that (recall also notation \eqref{eq:not}):
\begin{align}
  \label{010704-26z} 
   \dd \tilde W_{n,+}(t,\eta,k)      
   & =\Big[-in \delta_{n}\om(\eta,k)   \tilde  W_{n,+}(t,\eta,k)    +2\ga n^2 {\cal
     L}_n\big( \tilde  W_{n,+}- \tilde  U_{n,+}\big) (t,\eta,k)\Big] \dd t \notag\\
   &\quad 
     +   \underset{\ell'  \in \hat\T_n}{\hat{\sum}} \Big[
       \Big(-i U_{n,-}  + {\cal
     L}_n\big(W_{n,+} -U_{n,+} \big)\Big)(t-,\eta-n\ell',k)  
  \notag \\
  & \qquad + \Big( i U_{n,-}  + {\cal
     L}_n\big(W_{n,+} -U_{n,+} \big)\Big)  (t-,\eta-n\ell',k+\ell')
             \Big]\dd\thN  (\gamma n^2 t,\ell'),
\end{align}
\begin{align}
   \label{010704-26a}
       \dd\tilde  W_{n,-}(t,\eta,k)   & =\Big[in \delta_{n}\om(\eta,k)   \tilde W_{n,-}(t,\eta,k)    
           +2\ga n^2{\cal L}_n\big(   \tilde W_{n,-}- \tilde  U_{n,+}
    \big)(t,\eta,k)   \Big] \dd t \notag
  \\
   & \quad 
     + \underset{\ell'  \in \hat\T_n}{\hat{\sum}} \Big[
     \Big(i U_{n,-}  + {\cal
     L}_n\big(W_{n,-} -U_{n,+} \big)\Big) (t-,\eta-n\ell',k)  \notag
  \\
   & \qquad
     +  \Big(  -i U_{n,-}  + {\cal
     L}_n\big(W_{n,-} -U_{n,+} \big)\Big)  (t-,\eta-n\ell',k+\ell') 
                           \Big] \dd\thN  (\gamma n^2 t,\ell') ,
\end{align} 
\begin{align}
  \label{010704-26b}
 \dd  \tilde  U_{n,+}(t,\eta,k)  &   =\Big[2n^2 \bar\om_n(\eta,k)
       \tilde  U_{n,-}(t,\eta,k)  
          +2\ga n^2{\cal L}_n\big(  \tilde  U_{n,+}
             -  \tilde  W_{n,0}  \big)
     (t,\eta,k)     \Big]\dd t  \notag
  \\
   & \quad 
    + \underset{\ell'  \in \hat\T_n}{\hat{\sum}} \Big[
 {\cal L}_n \big( U_{n,+} -  W_{n,0} 
        \big)(t- ,\eta-n\ell',k)  \notag
      \\ & \qquad +  {\cal L}_n \big( U_{n,+} -  W_{n,0} 
        \big)(t-,\eta-n\ell',k+\ell')
         \Big]
      \dd\thN  (\gamma n^2 t,\ell')
\end{align}
and
\begin{align}
   \label{010704-26c}
 \dd\tilde   U_{n,-}(t,\eta,k)   &  = \Big[-2n^2 \bar\om_n(\eta,k)
    \tilde     U_{n,+}(t,\eta,k)  -2\ga n^2  \tilde  U_{n,-}(t,\eta,k)  \Big]\dd
                 t \notag \\
  &
   \quad  +i\underset{\ell'  \in \hat\T_n}{\hat{\sum}}\Big[
\big(  \delta W_{n}     +iU_{n,-} 
       \big)(t- ,\eta-n\ell',k) 
    \notag  \\ & \qquad  - \big(  \delta W_{n}     -iU_{n,-} 
       \big)(t-,\eta-n\ell',k+\ell')
          \Big]\dd\thN  (\gamma n^2 t,\ell').
\end{align}
 {We can rewrite \eqref{010704-26z}--\eqref{010704-26c} in a compact form:
for any $\mathrm{X} \in \{U,W\}$, $\mathrm{x}\in \{u,w\}$ and $\iota \in \{-,+\}$, we have
\begin{equation}
	\label{eq:compact}
\dd \tilde{\mathrm{X}}_{n,\iota}(t,\eta,k) = \mathfrak{D}_{n,\mathrm{x},\iota}(t,\eta,k)\dd t + \underset{\ell'  \in \hat\T_n}{\hat{\sum}} \mathfrak{r}_{n,\mathrm{x},\iota}(t-,\eta,k,\ell')\dd \tilde{\hat N}(\gamma n^2 t,\ell'),
\end{equation}
for suitable quantites $\mathfrak{D}_{n,{\rm x},\iota}$ and $\mathfrak{r}_{n,{\rm x},\iota}$ which can be easily deduced from  \eqref{010704-26z}--\eqref{010704-26c}.}

Suppose now that ${\rm Re}\,\la>0$. Recall the  definition of the Laplace transform in \eqref{eq:lapw}. In a similar manner, we define
\begin{equation}
  \label{lwu} \begin{split}
   \hat w_{n,\pm}(\la,\eta,k)&:= \int_0^{+\infty}e^{-\la t}\tilde
     W_{n,\pm}(t,\eta,k) \dd t,\\
     \hat u_{n,\pm}(\la,\eta,k)&:= \int_0^{+\infty}e^{-\la t}\tilde
      U_{n,\pm}(t,\eta,k) \dd t .
\end{split}\end{equation}
Thanks to Theorem \ref{thm011002-26} the Laplace transforms $\hat w_{n,\pm}(\la)$ and $\hat
u_{n,\pm}(\la)$ are defined $\bbP_n$ a.s.~for any $\la$ such that $\mathrm{Re}\,\lambda >0$. In accordance
to our previous convention we also let
\begin{equation}
  \label{lw0}\begin{split}
   \hat w_{n,0}(\la,\eta,k)&:= \int_0^{+\infty}e^{-\la t}\tilde
     W_{n,0}(t,\eta,k) \dd t,\\
      \delta \hat w_{n }(\la,\eta,k)&:= \int_0^{+\infty}e^{-\la t}
      \delta \tilde W_{n}(t,\eta,k) \dd t . 
\end{split}\end{equation}
Performing the Laplace transform on both sides of equations
\eqref{010704-26z}--\eqref{010704-26c} we conclude the following: for $\mathrm{Re}\,\la >0$, $(\eta,k)\in\bbZ\times\hat\T_n$,
\begin{align}
   \label{wn+}
     \la \hat w_{n,+}&(\la,\eta,k)  -   \tilde  W_{n,+}(0,\eta,k)     
     = -\big(2\ga n^2+in \delta_{n}\om(\eta,k)  \big)  \hat w_{n,+}
     (\la,\eta,k)  \\& +2\ga n^2\underset{\ell  \in \hat\T_n}{\hat{\sum}} \hat w_{n,+}
     (\la,\eta,\ell) 
     -2\ga n^2 {\cal
     L}_n    \hat u_{n,+}(\la,\eta,k)+ {\frak R}_{n,w,+}(\la,\eta,k), \notag \\
   \label{wn-}
       \la  \hat w_{n,-}&(\la,\eta,k)   -\tilde W_{n,-}(0,\eta,k)      =-\big(2\ga n^2 -in \delta_{n}\om(\eta,k)    \big)\hat w_{n,-}(\la,\eta,k)   \\& +2\ga n^2\underset{\ell  \in \hat\T_n}{\hat{\sum}}\hat w_{n,-}
     (\la,\eta,\ell) 
     -2\ga n^2{\cal L}_n  \hat u_{n,+}
     (\la,\eta,k)    + {\frak R}_{n,w,-}(\la,\eta,k), \notag \\
  \label{un+}
   \la  \hat u_{n,+}&(\la,\eta,k) -\tilde U_{n,+}(0,\eta,k)      =2n^2  \bar\om_n(\eta,k) \hat u_{n,-}(\la,\eta,k) 
         \\ & +\ga
     n^2{\cal L}_n\big(\hat u_{n,+}- 
              \hat w_{n,0} \big)
     (\la,\eta,k)    
     +  {\frak R}_{n,u,+}(\la,\eta,k), \vphantom{\hsum} \notag 
\\ 
   \label{un-}
 \la  \hat u_{n,-}&(\la,\eta,k)  -\tilde U_{n,-}(0,\eta,k)        = -2n^2
                 \bar\om_n(\eta,k) \hat u_{n,+}(\la,\eta,k) 
  \\ & -2\ga   n^2\hat u_{n,-}(\la,\eta,k)  +{\frak R}_{n,u,-}(\la,\eta,k),  \notag
\end{align}
where (recall \eqref{eq:compact}), for any $\mathrm{x}\in\{u,w\}$ and $\iota\in\{-,+\}$, 
\begin{equation}
	\label{Rn}     {\frak R}_{n,\mathrm{x},\iota}(\la,\eta,k) := \underset{\ell'  \in \hat\T_n}{\hat{\sum}} \int_0^{+\infty}e^{-\la t}{\frak r}_{n,\mathrm{x},\iota}(t,\eta ,k,\ell')
	\dd\thN  (\gamma n^2 t,\ell').
\end{equation}
The rest of the section proceeds as follows: in Section \ref{sec:prel} we give some preliminary estimates of $\lnm \hat w_{n,\pm}(\la) \rnm_\eta$, $\lnm \hat u_{n,\pm}(\la)\rnm_\eta$ which can be easily deduced from Theorem \ref{thm011002-26}. Then, in Section \ref{sec:steps} we give the two main intermediate steps, stated in Proposition \ref{prop:dirichlet} and Proposition \ref{prop:rest}, and explain how one can deduce Theorem \ref{thm:laplace}. In Section \ref{sec:proofdir} we prove Proposition \ref{prop:dirichlet} and in Section \ref{sec:proofrest} we prove Proposition \ref{prop:rest}.

\subsection{Preliminary estimates of the Laplace transforms following
  from Theorem \ref{thm011002-26}} \label{sec:prel}


 The first natural estimate that we can get from Theorem \ref{thm011002-26} is the following.
  \begin{proposition}
    \label{prop012302-26}
    There exists  $C >0$ such that, for any $\lambda$ s.t.~${\rm
    	Re}\,\lambda>0$,  $\eta \in \bbZ$, $n\ge 1$,
  \begin{multline}
    \label{012301-26}
  \big\lnm \hat w_{n,+}(\la) \big\rnm_\eta^2 +  \big\lnm \hat w_{n,-}(\la) \big\rnm_\eta^2 + \big\lnm \hat u_{n,+}(\la) \big\rnm_\eta^2 + \big\lnm \hat u_{n,-}(\la) \big\rnm_\eta^2  \le \frac{C(1+{\rm Re}\,\la)}{({\rm Re}\,\la)^2}.
\end{multline}
    \end{proposition}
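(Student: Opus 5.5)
We derive the estimate directly from the time-integrated bound of Theorem \ref{thm011002-26}, combined with a Cauchy--Schwarz inequality in time against the exponential weight $e^{-\la t}$. Fix $\la$ with ${\rm Re}\,\la>0$ and $\eta\in\bbZ$, and write $a:={\rm Re}\,\la$. We first pass from the fluctuation fields to the fields themselves. Since $\tilde W_{n,\pm}=W_{n,\pm}-\E_n W_{n,\pm}$, and the variance is bounded by the second moment, we have pointwise in $(t,\eta,k)$
\[
\E_n\big|\tilde W_{n,\pm}(t,\eta,k)\big|^2\le \E_n\big|W_{n,\pm}(t,\eta,k)\big|^2 .
\]
The same holds for $\tilde U_{n,\pm}$. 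In addition $|U_{n,\pm}|\le \frac12(|Y_{n,+}|+|Y_{n,-}|)$. Hence Theorem \ref{thm011002-26} gives
\[
\int_0^t \Big(\lnm \tilde W_{n,+}(s)\rnm_\eta^2+\lnm \tilde W_{n,-}(s)\rnm_\eta^2+\lnm \tilde U_{n,+}(s)\rnm_\eta^2+\lnm \tilde U_{n,-}(s)\rnm_\eta^2\Big)\dd s\le C(t+1)
\]
for all $t\ge0$, uniformly in $n$ and $\eta$.

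Next we bound each Laplace transform. For fixed $k$ and realization, Cauchy--Schwarz with the splitting $e^{-\la t}=e^{-at/2}\cdot e^{-at/2}$ in modulus gives
\[
|\hat w_{n,+}(\la,\eta,k)|^2\le \Big(\int_0^{\infty}e^{-at}\dd t\Big)\int_0^\infty e^{-at}|\tilde W_{n,+}(t,\eta,k)|^2\dd t=\frac1a\int_0^\infty e^{-at}|\tilde W_{n,+}(t,\eta,k)|^2\dd t .
\]
Taking $\E_n$ and $\hsum$ and using Fubini (everything is nonnegative) yields
\[
\lnm \hat w_{n,+}(\la)\rnm_\eta^2\le \frac1a\int_0^\infty e^{-at}\,g'(t)\,\dd t,\qquad g(t):=\int_0^t\lnm \tilde W_{n,+}(s)\rnm_\eta^2\dd s .
\]

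The remaining step, which is also the only mildly delicate one, is converting the linear-growth bound $g(t)\le C(t+1)$ into a bound on the exponentially weighted integral; we do this by integrating by parts. Since $g(0)=0$ and $e^{-at}g(t)\to0$ as $t\to\infty$,
\[
\int_0^\infty e^{-at}g'(t)\dd t=a\int_0^\infty e^{-at}g(t)\dd t\le aC\int_0^\infty e^{-at}(t+1)\dd t=C\Big(\frac1a+1\Big).
\]
Multiplying by $1/a$ gives $C(1+a)/a^2$. The same computation applies verbatim to $\hat w_{n,-}$, $\hat u_{n,\pm}$, and summing the four bounds yields \eqref{012301-26}. In passing, this argument also shows that $\int_0^\infty e^{-at}|\tilde W_{n,\pm}(t,\eta,k)|\dd t<\infty$ $\bbP_n$-a.s., which justifies that the Laplace transforms are well defined, as used above.
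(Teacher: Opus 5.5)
Your proof is correct and is essentially the paper's argument. Both reduce the estimate to the time-integrated bound of Theorem~\ref{thm011002-26} through a Cauchy--Schwarz/Jensen step against the weight $e^{-{\rm Re}\,\la\, t}$ and an integration by parts; only the order differs, since the paper integrates by parts first, applies Jensen with respect to ${\rm Re}\,\la\, e^{-{\rm Re}\,\la\, t}\dd t$ and then Cauchy--Schwarz on $[0,t]$, whereas you apply the weighted Cauchy--Schwarz directly and integrate by parts afterwards. Your write-up is slightly more careful than the paper's in explicitly passing from $W_{n,\pm},Y_{n,\pm}$ to the fluctuations $\tilde W_{n,\pm},\tilde U_{n,\pm}$ and in summing over $k$ before invoking Theorem~\ref{thm011002-26}, which only controls $k$-averaged quantities.
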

    \proof
For ${\rm Re}\,\lambda >0$, $(\eta,k) \in\bbZ\times\hat\T_n$ we have
\begin{align*}
  \big|\hat w_{n,\pm}(\la,\eta,k)\big|&\le\int_0^{+\infty}e^{-{\rm Re}\,\la
     t} \big|\tilde
     W_{n,\pm}(t,\eta,k) \big|\dd  t
=
   \int_0^{+\infty}e^{-{\rm Re}\,\la
     t} \frac{\dd}{\dd t}\int_0^t \big|\tilde
    W_{n,\pm}(s,\eta,k) \big|\dd s\dd t \\
  &
    ={\rm Re}\,\la\int_0^{+\infty}e^{-{\rm Re}\,\la
     t} \dd t\int_0^t 
     \big| \tilde W_{n,\pm}(s,\eta,k) \big| \dd s.
\end{align*}
By the Jensen inequality
\begin{align*}
  &|\hat w_{n,\pm}(\la,\eta,k)|^2\le {\rm Re}\,\la\int_0^{+\infty}e^{-{\rm Re}\,\la
     t} \dd t\Big(\int_0^t 
     \big| \tilde W_{n,\pm}(s,\eta,k) \big| \dd s\Big)^2.
\end{align*}
  Using Theorem \ref{thm011002-26} and Cauchy-Schwarz inequality, we conclude therefore that 
\begin{align*}
  &\bbE_n\big[|\hat w_{n,\pm}(\la,\eta,k)|^2\big]
    \le C {\rm Re}\,\la \int_0^{+\infty}e^{-{\rm Re}\,\la
     t} t(t+1)  \dd t\le \frac{C}{({\rm Re}\,\la)^2}+\frac{C}{({\rm Re}\,\la)}
  \end{align*}
and \eqref{012301-26} follows since by definition (see \eqref{norms})
\[ \big\lnm \hat w_{n,\pm}(\lambda)\big\rnm_\eta^2 =  \hsum \bbE_n\big[|\hat w_{n,\pm}(\la,\eta,k)|^2\big].\]
 Estimates of $\bbE_n
     \big[|\hat u_{n,\pm}(\la,\eta,k) |^2\big]$ can be done analogously.
    \qed

 \medskip
 
Another direct consequence of Theorem \ref{thm011002-26} is the following control of  
the terms ${\frak R}_{n,{\rm x},\iota}(\la,\eta,k)$ defined in \eqref{Rn}:
\begin{proposition}
	\label{prop022302-26}
	There exists  $C>0$ such that   for any $\lambda$ s.t.~${\rm
		Re}\,\lambda>0$,   {$\eta \in \bbZ$ and  $n\ge 1$},
	\begin{multline}
		\label{032302-26}
		 \big\lnm {\frak
			R}_{n,w,+}(\la) \big\rnm_\eta^2 +
	     \big\lnm {\frak
	     	R}_{n,w,-}(\la) \big\rnm_\eta^2 + \big\lnm {\frak
	     	R}_{n,u,+}(\la) \big\rnm_\eta^2 + \big\lnm {\frak
	     	R}_{n,u,-}(\la) \big\rnm_\eta^2
	\\	\le  Cn^2\Big(1+\frac{1}{{\rm
				Re}\,\la}\Big). \vphantom{\int_0^1}
                          \end{multline}
                         
\end{proposition}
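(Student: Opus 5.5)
Each ${\frak R}_{n,{\rm x},\iota}(\la,\eta,k)$ is a stochastic integral against the compensated Poisson martingales $\thN(\gamma n^2 t,\ell')$, so the plan is to compute its second moment with the It\^o isometry and then reduce everything to Theorem \ref{thm011002-26}. The processes $\{\tilde N_x\}_{x\in\T_n}$ are independent compensated Poisson processes of intensity $1$. Hence, for the Fourier transforms,
\[
\dd\big[\thN(\gamma n^2\cdot,\ell'),\thN(\gamma n^2\cdot,\ell'')\big]^{\rm pred}_t = \gamma n^2\, n\,\mathbbm{1}_{\{\ell'=\ell''\}}\,\dd t .
\]
Combined with the normalization $\frac1n\sum_{\ell'}$ in $\hat\sum_{\ell'}$, this gives, for predictable integrands $\frak r$,
\[
\bbE_n\big|{\frak R}_{n,{\rm x},\iota}(\la,\eta,k)\big|^2=\gamma n^2\,\underset{\ell'\in\hat\T_n}{\hat\sum}\int_0^{+\infty}e^{-2{\rm Re}\,\la\, t}\,\bbE_n\big|{\frak r}_{n,{\rm x},\iota}(t,\eta,k,\ell')\big|^2\dd t .
\]
This is exactly the computation behind the quadratic-variation terms in \eqref{010204-26}--\eqref{010204-26c}. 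The integral over $[0,+\infty)$ with the weight $e^{-\la t}$ is understood as the $\mL^2$ limit of integrals over $[0,T]$. That limit is justified a posteriori by the bound obtained below.

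Next I sum over $k$ with $\hat\sum_k$. Each ${\frak r}_{n,{\rm x},\iota}(t,\eta,k,\ell')$ is a sum of at most four terms. Each term is a Fourier--Wigner function ($U_{n,\pm}$, $\delta W_n$, or ${\cal L}_n$ applied to differences of $W_{n,\pm},U_{n,+},W_{n,0}$), evaluated at $(\eta-n\ell',k)$ or $(\eta-n\ell',k+\ell')$. Using $|a_1+\dots+a_4|^2\le 4\sum|a_j|^2$, the invariance of $\hat\sum_k$ under the shift $k\mapsto k+\ell'$, and the contraction property \eqref{030204-26} of ${\cal L}_n$ in $\mL^2(\hT_n)$, I bound $\hat\sum_k\bbE_n|{\frak r}|^2$ by a constant times
\[
\sum_{\mathrm{X}}\big\lnm \mathrm{X}(t)\big\rnm^2_{\eta-n\ell'},\qquad \mathrm{X}\in\{W_{n,+},W_{n,-},Y_{n,+},Y_{n,-}\},
\]
since $U_{n,\pm}$, $W_{n,0}$ and $\delta W_n$ are linear combinations of these. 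After averaging over $\ell'$ I then use $\hat\sum_{\ell'}\le\sup_{\eta'}$, so that
\[
\sum_{{\rm x},\iota}\big\lnm{\frak R}_{n,{\rm x},\iota}(\la)\big\rnm_\eta^2\le C\gamma n^2\sup_{\eta'\in\bbZ}\int_0^{+\infty}e^{-2{\rm Re}\,\la\,t}F_{\eta'}(t)\dd t,
\qquad F_{\eta'}(t):=\sum_{\mathrm X}\lnm \mathrm X(t)\rnm^2_{\eta'} .
\]
Alternatively, one can average exactly over $\eta'=\eta-n\ell'$ as in \eqref{eq:14} and get the total energy. Either route works.

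The remaining step is to control the time integral. Writing $G_{\eta'}(t)=\int_0^tF_{\eta'}(s)\dd s$ and integrating by parts,
\[
\int_0^\infty e^{-2ct}F_{\eta'}\,\dd t=2c\int_0^\infty e^{-2ct}G_{\eta'}(t)\,\dd t,\qquad c={\rm Re}\,\la .
\]
Theorem \ref{thm011002-26} gives $G_{\eta'}(t)\le C(t+1)$ uniformly in $\eta'$ and $n$. Therefore the time integral is at most $2cC\int_0^\infty e^{-2ct}(t+1)\,\dd t=C\big(1+\tfrac{1}{2c}\big)$, which yields \eqref{032302-26}.

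I expect the main obstacle to be the bookkeeping in the first step: getting the correct normalization of the covariation of $\thN(\cdot,\ell')$. The isometry must produce the factor $\gamma n^2$ and not an extra power of $n$. One also has to confirm that the cross terms for different $\ell'$ vanish. If direct orthogonality in $\ell'$ fails, I would instead go back to the spatial representation $\thN(t,\ell')=\sum_x e^{-2\pi i\ell' x}\tilde N_x(t)$ and use the independence of the $\tilde N_x$ there. This is consistent with the structure of the Itô terms already displayed in \eqref{010204-26}.
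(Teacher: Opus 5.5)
Your proposal is correct and follows the paper's own proof: the It\^o isometry for the compensated Poisson integrals (your normalization, which yields the factor $\gamma n^2$, is right), reduction via shift invariance and the contraction of ${\cal L}_n$ to $\sup_{\eta'}$ of $\lnm W_{n,\pm}\rnm^2_{\eta'}$ and $\lnm Y_{n,\pm}\rnm^2_{\eta'}$, then integration by parts in time together with Theorem \ref{thm011002-26}. Your alternative of averaging exactly over $\eta'=\eta-n\ell'$ also works, because this runs over a full period in $\eta'$. It even bypasses Theorem \ref{thm011002-26}: by the computation in \eqref{eq:14}, that average is controlled by $\bbE_n[(\mathcal{E}^{(n)}_{\rm tot}(0))^2]$ uniformly in $t$, so it gives the bound $Cn^2/{\rm Re}\,\la$ directly.
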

\proof   Using It\^o isometry we get, for ${\rm Re}\,\la >0$ and $\eta \in\bbZ$,
\begin{align*}
    \bbE_n\bigg[  \hsum &\Big|\underset{\ell'  \in \hat\T_n}{\hat{\sum}} \int_0^{+\infty}e^{-\la t}  W_{n,+}
  (t-,\eta-n\ell',k) \dd\thN  (\gamma n^2
    t,\ell')\Big|^2\bigg]\\
  &
    =\ga n^2\bbE_n   \bigg[\underset{k,\ell'  \in \hat\T_n}{\hat{\sum}} \int_0^{+\infty}e^{-2{\rm Re}\,\la t} \big| W_{n,+}
    (t,\eta-n\ell',k) \big|^2\dd t\bigg]\\
  &
    \le \ga n^2  \sup_{\eta'\in\bbZ}  \int_0^{+\infty}e^{-2{\rm Re}\,\la t}\bbE_n   \Big[ \big\| W_{n,+}
  (t,\eta',\cdot) \big\|^2_{{\mb L}^2(\hT_n)}\big]\dd t\\
  & {\le \ga n^2 \sup_{\eta' \in \bbZ}  \int_0^{+\infty}e^{-2{\rm Re}\,\la t} \big\lnm W_{n,+}(t)\big\rnm_{\eta'}^2\, \dd t}
\end{align*}
and similar estimates hold for $W_{n,-}$ and  $U_{n,\pm}$.
We conclude that the
left hand side of \eqref{032302-26} can be estimated, {after an
  integration by parts}, by
{\begin{align*}
           C n^2 &\sum_{\iota=\pm} \sup_{\eta \in \bbZ}\int_0^{+\infty}e^{-2{\rm Re}\,\la t} \Big\{  \big\lnm W_{n,\iota}(t)\big\rnm_\eta^2  +  \big\lnm U_{n,\iota}(t)\big\rnm_\eta^2\Big\}\dd t\\
         & =  C n^2 \sum_{\iota=\pm} \sup_{\eta \in \bbZ} \int_0^{+\infty} 2{\rm Re}\,\la \, e^{-2{\rm Re}\,\la t} \Big(\int_0^t\Big\{ \big\lnm W_{n,\iota}(s)\big\rnm_\eta^2  +  \big\lnm U_{n,\iota}(s)\big\rnm_\eta^2  \Big\} \dd s\Big)\dd t \\
         & \le Cn^2 \int_0^{+\infty} 2{\rm Re}\,\la e^{-2{\rm Re}\,\la t}(t+1)\dd t,
\end{align*}}
where the last inequality follows from Theorem \ref{thm011002-26}. Thus
\eqref{032302-26} follows.
\qed

Proposition \ref{prop012302-26} and \ref{prop022302-26} are not
  enough to conclude  Theorem \ref{thm:laplace}.  To conclude the
  result we need additional estimates obtained from the system of
  equations \eqref{wn+}--\eqref{un-} that shall be obtained in the
  following sections.

\subsection{Two intermediate steps and the proof of Theorem \ref{thm:laplace}} \label{sec:steps}
In this section we state the two main intermediate results towards Theorem \ref{thm:laplace}. 

\begin{proposition}
	\label{prop:dirichlet}
There exists
        $C >0$   such that, for any $ \eta\in\bbZ, \, n\ge 1$ and $\la$ s.t.~${\rm Re}\,\la>0$.
	\begin{equation} \label{eq:dir1}
		\E_n\Big[ \cD_n\big(\hat
                w_{n,+}(\lambda,\eta,\cdot)\big) \Big] \le  \frac{C}{n^2}\Big(1+\frac{n }{({\rm
                  Re}\,\la)^{1/2}}+\frac{|\la|}{{\rm Re}\,\la}\Big) \Big(1+\frac{1}{{\rm Re}\,\la}
                  \Big)     
                \end{equation}
                
\end{proposition}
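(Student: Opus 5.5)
Take the Laplace-transformed equation \eqref{wn+}, multiply it by $\hat w_{n,+}^\star(\la,\eta,k)$, sum over $k\in\hat\T_n$ with the weight $\hat\sum$, take $\E_n$, and keep the real part. This is an energy estimate, and its only job is to expose the Dirichlet form. The key identity is
\[
-2\ga n^2\hat w_{n,+}+2\ga n^2{\cal R}_n\hat w_{n,+}=2\ga n^2{\cal L}_n\hat w_{n,+},
\qquad
{\rm Re}\,\big\langle 2\ga n^2{\cal L}_n f,f\big\rangle_{\mL^2(\hT_n)}=-\ga n^2\cD_n(f).
\]
The dispersive term $-in\delta_n\om\,\hat w_{n,+}$ is purely imaginary after pairing, so it disappears from the real part. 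We obtain
\[
({\rm Re}\,\la)\lnm\hat w_{n,+}\rnm_\eta^2+\ga n^2\E_n\big[\cD_n(\hat w_{n,+})\big]
= {\rm Re}\lsc \tilde W_{n,+}(0),\hat w_{n,+}\rsc-2\ga n^2{\rm Re}\lsc{\cal L}_n\hat u_{n,+},\hat w_{n,+}\rsc+{\rm Re}\lsc{\frak R}_{n,w,+},\hat w_{n,+}\rsc .
\]
Dividing by $\ga n^2$, we bound the three right-hand terms one at a time.

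The initial-data term is at most $\mathfrak W_2\lnm\hat w_{n,+}\rnm_\eta$ by Proposition \ref{prop030204-26}. After division by $n^2$ and Proposition \ref{prop012302-26}, it contributes $O\big(n^{-2}(1+1/{\rm Re}\,\la)\big)$.

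For the cross term, use self-adjointness and idempotence of $-{\cal L}_n$:
\[
\lsc{\cal L}_n\hat u,\hat w\rsc=\lsc{\cal L}_n\hat u,{\cal L}_n\hat w\rsc\le \lnm{\cal L}_n\hat u_{n,+}\rnm_\eta\,\E_n[\cD_n(\hat w_{n,+})]^{1/2}.
\]
Young's inequality absorbs half of this into the left side. The remaining task is to bound $\E_n[\cD_n(\hat u_{n,+})]$, which needs the analogous energy identity for the $u$-equations. Pair \eqref{un+} with $\hat u_{n,+}^\star$ and \eqref{un-} with $\hat u_{n,-}^\star$ and add. The rotation terms $\pm2n^2\bar\om_n$ cancel in the real part. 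This gives the damping quantities $\ga n^2\cD_n(\hat u_{n,+})$ (up to the cross term with $\hat w_{n,0}$) and $2\ga n^2\lnm\hat u_{n,-}\rnm^2$. One then adds this to the $w_\pm$ identities, in the same combination that produced \eqref{011002-26}: weight $\frac12$ for $w_\pm$ and $1$ for $u_\pm$. In that combination the cross terms assemble into $\cD_n(\hat w_{n,0}-\hat u_{n,+})$, and the $\delta\hat w_n$ part is controlled as in Step 1. From $\cD_n(\hat w_{n,0}-\hat u_{n,+})$ together with a separate bound on $\cD_n(\hat u_{n,+})$ we recover $\cD_n(\hat w_{n,+})$. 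The separate bound comes from the $u$-equations: $\hat u_{n,-}$ is small, and $\hat u_{n,+}$ is then expressed through \eqref{un-} as $\hat u_{n,+}=-\big((\la+2\ga n^2)\hat u_{n,-}-\tilde U_{n,-}(0)-{\frak R}_{n,u,-}\big)/(2n^2\bar\om_n)$, using $\bar\om_n\ge\om_0$. The factor $|\la|/{\rm Re}\,\la$ in \eqref{eq:dir1} should arise exactly here, from the term $\la\hat u_{n,-}$ paired against $\hat u_{n,+}$ and estimated via Proposition \ref{prop012302-26}.

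The martingale term ${\rm Re}\lsc{\frak R}_{n,w,+},\hat w_{n,+}\rsc$ is the main obstacle. Cauchy--Schwarz with Proposition \ref{prop022302-26} gives only $Cn(1+1/{\rm Re}\,\la)^{1/2}\lnm\hat w_{n,+}\rnm_\eta$, which after division by $n^2$ is of order $n^{-1}({\rm Re}\,\la)^{-1}$. That is precisely the $\frac{C}{n^2}\cdot\frac{n}{({\rm Re}\,\la)^{1/2}}(1+1/{\rm Re}\,\la)$ contribution in \eqref{eq:dir1}, so the crude bound appears sufficient and no cancellation is needed. To be safe, I would first try replacing $\hat w_{n,+}$ by ${\cal L}_n\hat w_{n,+}+{\cal R}_n\hat w_{n,+}$, in case the constant part must be handled separately using the energy bound $|{\cal R}_nW_{n,+}|\le{\cal E}^{(n)}_{\rm tot}(0)$. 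Collecting all contributions and absorbing the Dirichlet-form pieces into the left-hand side yields \eqref{eq:dir1}.
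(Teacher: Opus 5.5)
Your proposal is correct and uses the same ingredients as the paper. These are the real part of the summed Laplace energy identity \eqref{072302-26aa}, with plain Cauchy--Schwarz on the initial-data and martingale terms (Lemma \ref{prop032302-26}); the pairing of \eqref{un-} with $\hat u_{n,+}$ to bound $\lnm \hat u_{n,+}(\la)\rnm_\eta^2$ (Lemma \ref{prop042302-26}, which is indeed where $|\la|/{\rm Re}\,\la$ enters); and finally the triangle-inequality recovery of $\cD_n(\hat w_{n,+})$ from $\cD_n(\hat w_{n,0}-\hat u_{n,+})$, $\delta\hat w_n$ and $\hat u_{n,+}$. Your preliminary energy estimate on \eqref{wn+} alone, which absorbs the cross term through $\cD_n(\hat u_{n,+})\le\|\hat u_{n,+}\|^2_{\mL^2(\hT_n)}$, is a valid but redundant alternative final step; note also the sign slip $\langle{\cal L}_n u,w\rangle_{\mL^2(\hT_n)}=-\langle{\cal L}_n u,{\cal L}_n w\rangle_{\mL^2(\hT_n)}$, which is harmless.
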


Proposition \ref{prop:dirichlet} is proved in Section \ref{sec:proofdir}.

\begin{proposition}\label{prop:rest}
		For any $\la$ such that ${\rm Re}\,\la >0$, and any $\eta \in \bbZ$,
		\begin{equation}
			\label{eq:limsum} \lim_{n\to +\infty} \E_n \bigg[\bigg| \hsum \hat w_{n,+}(\la,\eta,k)\bigg| \bigg] = 0.
		\end{equation}
\end{proposition}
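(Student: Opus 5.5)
The plan is to sum the Laplace-transformed equation \eqref{wn+} over $k\in\hT_n$, so that we obtain a closed scalar equation for
\[
S_n(\la,\eta):=\hsum \hat w_{n,+}(\la,\eta,k)={\cal R}_n\hat w_{n,+}(\la,\eta).
\]
Since ${\cal R}_n{\cal L}_n=0$, both the ${\cal L}_n\hat u_{n,+}$ term and the pair $-2\ga n^2\hat w_{n,+}+2\ga n^2{\cal R}_n\hat w_{n,+}$ disappear after summation. This leaves
\[
\la S_n-{\cal R}_n\tilde W_{n,+}(0,\eta)=-i\,{\cal R}_n\big(n\delta_n\om\,\hat w_{n,+}\big)(\eta)+{\cal R}_n{\frak R}_{n,w,+}(\la,\eta).
\]

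\textbf{Initial and noise terms.} The initial term ${\cal R}_n\tilde W_{n,+}(0,\eta)$ is the centered $\eta$-th Fourier mode of the initial Wigner distribution. By the comparison with the energy field used in \eqref{eq:wigner_energy} at $t=0$, together with \eqref{eq:1}, it tends to $0$ in $\mL^1(\bbP_n)$. For the noise term I plan to check that the integrand $\hsum_k{\frak r}_{n,w,+}(t,\eta,k,\ell')$ vanishes identically. The two ${\cal L}_n$ contributions average to zero in $k$. The $\mp iU_{n,-}$ contributions cancel after the shift $k\mapsto k+\ell'$, which leaves $\hsum_k$ invariant. Hence ${\cal R}_n{\frak R}_{n,w,+}=0$.

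\textbf{The transport term.} Because $\hsum_k\delta_n\om(\eta,k)=0$ by shift invariance, we may replace $\hat w_{n,+}$ by $-{\cal L}_n\hat w_{n,+}$ in this term. A direct Cauchy--Schwarz estimate combined with Proposition~\ref{prop:dirichlet} only gives a bound of order $n\cdot n^{-1/2}$, which is insufficient. The remedy is to solve \eqref{wn+} pointwise in $k$:
\[
\hat w_{n,+}(k)=\frac{2\ga n^2S_n-2\ga n^2{\cal L}_n\hat u_{n,+}(k)+\tilde W_{n,+}(0,k)+{\frak R}_{n,w,+}(k)}{\la+2\ga n^2+in\delta_n\om(\eta,k)},
\]
and then substitute this back into ${\cal R}_n(n\delta_n\om\,\hat w_{n,+})$. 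The $S_n$ part produces
\[
S_n\,\hsum_k \frac{2\ga n^3\delta_n\om}{\la+2\ga n^2+in\delta_n\om}.
\]
Expanding the denominator, this equals $-i\,\eta^2(2\ga)^{-1}\int_\T(\om')^2\,S_n$ up to $o(1)$, so its product with $-i$ is $-c\,\eta^2 S_n$ with $c=(2\ga)^{-1}\int_\T(\om')^2>0$. This is a damping term of diffusive type, and the equation becomes $(\la+c\,\eta^2)S_n=o_{\mL^1}(1)+\text{remainders}$. Since ${\rm Re}(\la+c\,\eta^2)>0$, it then suffices to show that the remainders vanish.

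\textbf{Remainders and main obstacle.} The remaining contributions are as follows.
\begin{enumerate}
\item The initial-data term carries a factor $n\delta_n\om/(2\ga n^2)=O(1/n)$. By Proposition~\ref{prop030204-26} it is therefore $O(1/n)$.
\item The ${\cal L}_n\hat u_{n,+}$ term carries a factor of order $n$. I would control it using \eqref{un-} and \eqref{un+}. Since $\om_0>0$, these equations give
\[
\hat u_{n,-}\approx-\bar\om_n\hat u_{n,+}/\ga,\qquad \frac{2n^2\bar\om_n^2}{\ga}\,\hat u_{n,+}\approx \ga n^2{\cal L}_n(\hat u_{n,+}-\hat w_{n,0})+\text{noise}+\text{initial}.
\]
Thus $\hat u_{n,+}$ is bounded by $\|{\cal L}_n\hat w_{n,0}\|$ plus $n^{-2}\times$(noise and initial terms). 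Combined with Proposition~\ref{prop:dirichlet}, this gives the required smallness, provided one gains an extra factor from the averaging in $k$.
\item The noise term is $\hsum_k f_n(k)\,{\frak R}_{n,w,+}(k)$ with $f_n=n\delta_n\om/(\la+2\ga n^2+in\delta_n\om)=O(1/n)$. I would estimate its second moment by the Itô isometry, as in Proposition~\ref{prop022302-26}, exploiting the same $k\mapsto k+\ell'$ cancellation as above, now between $f_n(k)$ and $f_n(k-\ell')$.
\end{enumerate}
I expect this last step to be the main obstacle: the crude bound \eqref{032302-26} of order $n$ must be improved using the smoothness of $f_n$ in $k$ and the Dirichlet bounds of Proposition~\ref{cor011002-26a}, so as to gain a negative power of $n$.
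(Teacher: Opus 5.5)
Your skeleton is the paper's. Dividing \eqref{wn+} by $\la+2\ga n^2+in\delta_n\om$ and summing over $k$ is exactly how ${\rm I}_n+{\rm II}_n+{\rm III}_n$ in \eqref{061902-26} arise. Your damping coefficient is the limit \eqref{150402-26} of $\bD_n$, up to a factor $2$ caused by the paper's own switch between $2\ga n^2$ and $\ga n^2$. Your treatment of the initial terms and of $\hsum\mathfrak{R}_{n,w,+}=0$ is Lemma \ref{lem:limI} and \eqref{eq:sum0}.

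\textbf{First gap: your item 2.} Expand $(\la+2\ga n^2+in\delta_n\om)^{-1}$ around $(\la+2\ga n^2)^{-1}$. The $\cL_n\hat u_{n,+}$ contribution then splits into
\[
\frac{2i\ga n^3}{\la+2\ga n^2}\hsum\delta_n\om(\eta,k)\,\cL_n\hat u_{n,+}(\la,\eta,k)
\quad\mbox{and}\quad
\hsum\frac{2\ga n^4\,[\delta_n\om(\eta,k)]^2\,\cL_n\hat u_{n,+}(\la,\eta,k)}{(\la+2\ga n^2)\big(\la+2\ga n^2+in\delta_n\om(\eta,k)\big)}.
\]
The best available size is $\lnm\hat u_{n,+}(\la)\rnm_\eta=O(n^{-1/2})$. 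This comes from Lemma \ref{prop042302-26}, and your route via \eqref{un-}, \eqref{un+} and Proposition \ref{prop:dirichlet} gives the same. So your bounds leave the first piece at $O(n^{1/2})$.

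What is missing is the exact identity of Lemma \ref{lm010303-26}. The involution $k\mapsto-k-\eta/n$ of $\hT_n$ leaves $U_{n,+}(\eta,\cdot)$ invariant and flips the sign of $\delta_n\om(\eta,\cdot)$, because $\om$ is even. Hence the first piece vanishes identically. The second piece has a coefficient bounded by $\eta^2\|\om'\|_\infty^2/(2\ga)$, so it is $O(n^{-1/2})$.

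\textbf{Second gap: your item 3.} Here the mechanism you suggest cannot produce decay. The $\thN(\ga n^2t,\ell')$ have covariation $\ga n^3\delta_{\ell',\ell''}\,\dd t$, so
\[
\E_n\Big|\hsum f_n(k)\,\mathfrak{R}_{n,w,+}(\la,\eta,k)\Big|^2
=\ga n^2\underset{\ell'\in\hT_n}{\hat\sum}\int_0^{+\infty}e^{-2{\rm Re}\,\la\,t}\,
\E_n\Big|\hsum f_n(k)\,\mathfrak{r}_{n,w,+}(t,\eta,k,\ell')\Big|^2\dd t .
\]
Since $nf_n=O(1)$, you need the weighted $k$-averages of $\mathfrak{r}_{n,w,+}(t,\eta,\cdot,\ell')$ to be small on average over all $\ell'$, i.e.\ over all $\eta'=\eta-n\ell'$. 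Three things block this.
\begin{itemize}
\item The shift pairs only the $\pm iU_{n,-}$ pieces and yields $f_n(k-\ell')-f_n(k)$, which for generic $\ell'$ is as large as $f_n$.
\item The pieces $\cL_n(W_{n,0}-U_{n,+})-\delta W_n$ enter with the same sign at $k$ and at $k+\ell'$, so they do not cancel.
\item Proposition \ref{cor011002-26a} carries no smallness, because these are order-one fields at each time; for instance $(W_{n,0}-U_{n,+})(t,\eta,k)=\frac1n\hat p(n^2t,k+\frac{\eta}{n})\hat p^\star(n^2t,k)$.
\end{itemize}
Cauchy--Schwarz in $k$ therefore leaves $O(1)$, and a genuinely new estimate is required. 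For example, computing the $\ell'$-average in real space bounds it by $Cn^{-2}\E_n\sum_x|\psi_x(n^2t)|^4$. Closing it this way needs a no-concentration estimate that neither \eqref{eq:3z} nor Theorem \ref{thm011002-26} provides.

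\textbf{The paper's route for this term.} The paper proceeds differently, via Plancherel on the line ${\rm Re}\,\la=c$ and Lemma \ref{prop021003-26}, which rests on Lemma \ref{cor011003-26}. That lemma does not hold as stated, for the following reason.
\begin{itemize}
\item The martingale part of $\hat w_{n,+}(\la)$ is exactly $\mathfrak{R}_{n,w,+}(\la)/\la$.
\item The quantity $Q:=\E_n\|\mathfrak{R}_{n,w,+}(c+i\xi,\eta)\|^2_{\mL^2(\hT_n)}=\ga n^2\underset{k,\ell'\in\hT_n}{\hat\sum}\int_0^\infty e^{-2ct}\E_n|\mathfrak{r}_{n,w,+}(t,\eta,k,\ell')|^2\dd t$ does not depend on $\xi$ and is of order $n^2$.
\item Hence the contribution to \eqref{041003-26} is $Q\lim_{A\to\infty}\int_{-A}^{A}\frac{\dd\xi}{c-i\xi}=\pi Q>0$.
\end{itemize}
These are the diagonal terms $j=j'$, lost when $S^{+,+}_{n,w}$ is rewritten with $\int_0^{t-}$. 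Consistently, by Plancherel the left side of \eqref{081003-26} is at least $2\pi\int_0^\infty e^{-2ct}\E_n\|\mathfrak{r}_{n,w,+}(t,\eta,\cdot,0)\|^2_{\mL^2(\hT_n)}\dd t$. Here $\mathfrak{r}_{n,w,+}(t,\eta,k,0)=2\cL_n(W_{n,0}-U_{n,+})-2\delta W_n$ is of order one, not $O(n^{-2})$.

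So you are right that the noise term is the crux, but as written neither your sketch nor the paper's argument closes it.
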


The proof of Proposition \ref{prop:rest} is quite involved and is
presented in Section \ref{sec:proofrest}. Both Propositions \ref{prop:dirichlet} and
  \ref{prop:rest} imply  Theorem \ref{thm:laplace}, since: 
\begin{align*}
	 \E_n &\Big[\big\|\hat w_{n,+}(\lambda,\eta,\cdot)\big\|_{{\mb
                L}^2(\T_n)}\Big] \\
              &
	 = \hsum \E_n\bigg[\Big|\hat w_{n,+}(\lambda,\eta,k)-\underset{\ell\in\hat\T_n}{\hat\sum} \hat w_{n,+}(\lambda,\eta,\ell)+\underset{\ell\in\hat\T_n}{\hat\sum} \hat w_{n,+}(\lambda,\eta,\ell)\Big|\bigg] \\
	& \le  	\Big\{\E_n\Big[ \cD_n\big(\hat w_{n,+}(\lambda,\eta,\cdot)\big) \Big] \Big\}^{1/2}+  \E_n \bigg[\bigg| \hsum \hat w_{n,+}(\la,\eta,k)\bigg|\bigg] ,
\end{align*}
where, to estimate the first term on the right hand side in the last
inequality, we have used the Jensen and then  the equality part of \eqref{030204-26}. Conclusion of Theorem
\ref{thm:laplace} then easily follows.

\subsection{Proof of Proposition \ref{prop:dirichlet}}

\label{sec:proofdir}

We first state two intermediate lemmas and conclude the proof of
Proposition \ref{prop:dirichlet} (recall \eqref{norms} for the definition of norms). 
\begin{lemma}
	\label{prop032302-26}
There exists
        $C >0$ such that   for
        any  $\la$ s.t.~${\rm Re}\,\la >0$, $\eta \in\bbZ, n\ge 1$,
	\begin{multline}
          \label{062302-26}
		\bbE_n \Big[ {\cal
			D}_n\Big( \big(  \hat w_{n,0} -  \hat u_{n,+}\big)(\la,\eta)\Big)\Big] + \big\lnm \hat u_{n,-}(\la)\big\rnm^2_\eta + \big\lnm \delta \hat w_n(\la)\big\rnm^2_\eta  
                      \\
                      \le   \frac{C}{n({\rm Re}\,\la)^{1/2}} \Big(1+\frac{1}{{\rm
           Re}\,\la} \Big) .
       \end{multline}
                            
\end{lemma}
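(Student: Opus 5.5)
The plan is to mimic, at the level of Laplace transforms, the energy identity \eqref{011002-26} that was used in Step 1 of the proof of Theorem \ref{thm011002-26}. The point is that the dissipative terms appear with the large prefactor $\ga n^2$, while the martingale remainders ${\frak R}_{n,{\rm x},\iota}$ are only of size $n$ in $\lnm\cdot\rnm_\eta$ by Proposition \ref{prop022302-26}.

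Concretely, I would take the scalar products $\lsc\cdot,\cdot\rsc$ of \eqref{wn+}, \eqref{wn-}, \eqref{un+}, \eqref{un-} with $\tfrac12\hat w_{n,+}$, $\tfrac12\hat w_{n,-}$, $\hat u_{n,+}$, $\hat u_{n,-}$ respectively, add them, and take real parts. On the left this produces ${\rm Re}\,\la$ times a nonnegative quadratic form in the Laplace transforms, plus pairings of the initial fluctuations $\tilde W_{n,\pm}(0)$, $\tilde U_{n,\pm}(0)$ with the transforms. The antisymmetric transport terms drop out, exactly as in \eqref{011002-26}:
\begin{itemize}
\item the terms $\mp in\delta_n\om\,\hat w_{n,\pm}$ are purely imaginary against $\hat w_{n,\pm}$;
\item the cross terms $\pm 2n^2\bar\om_n\,\hat u_{n,\mp}$ cancel between \eqref{un+} and \eqref{un-}.
\end{itemize}
The remaining $\ga n^2$ terms should recombine, as in the derivation of \eqref{011002-26}, into
\[
-\ga n^2\Big(c_1\,\bbE_n\big[{\cal D}_n(\hat w_{n,0}-\hat u_{n,+})\big]+c_2\lnm\delta\hat w_n\rnm_\eta^2+c_3\lnm\hat u_{n,-}\rnm_\eta^2\Big)
\]
with $c_i>0$. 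Here we use that ${\cal L}_n$ is minus an orthogonal projection, which gives \eqref{030204-26}, and the identity $\tfrac12(|a|^2+|b|^2)=|\tfrac{a+b}2|^2+|\tfrac{a-b}2|^2$ to pass from $\hat w_{n,\pm}$ to $\hat w_{n,0},\delta\hat w_n$. Unlike in \eqref{011002-26}, no positive Itô-correction terms appear, because the noise now enters only linearly through ${\frak R}$. Moving everything to one side, I obtain
\begin{multline*}
\ga n^2\Big(\bbE_n\big[{\cal D}_n(\hat w_{n,0}-\hat u_{n,+})\big]+\lnm\delta\hat w_n\rnm_\eta^2+\lnm\hat u_{n,-}\rnm_\eta^2\Big)\\
\le C\sum_{\rm x,\iota}\Big(\lnm\tilde X_{n,\iota}(0)\rnm_\eta+\lnm{\frak R}_{n,{\rm x},\iota}(\la)\rnm_\eta\Big)\lnm \hat{\rm x}_{n,\iota}(\la)\rnm_\eta ,
\end{multline*}
after Cauchy–Schwarz.

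The bounds then come from earlier results. By Proposition \ref{prop030204-26}, $\lnm\tilde X_{n,\iota}(0)\rnm_\eta\le C$. By Proposition \ref{prop022302-26}, $\lnm{\frak R}\rnm_\eta\le Cn(1+1/{\rm Re}\,\la)^{1/2}$. By Proposition \ref{prop012302-26}, $\lnm\hat{\rm x}_{n,\iota}\rnm_\eta\le C({\rm Re}\,\la)^{-1}(1+{\rm Re}\,\la)^{1/2}$. Multiplying these and dividing by $\ga n^2$ gives a bound of order $\frac{1}{n}(1+1/{\rm Re}\,\la)^{1/2}\,({\rm Re}\,\la)^{-1}(1+{\rm Re}\,\la)^{1/2}$. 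This is comparable to, but not literally of, the stated form $\frac{C}{n({\rm Re}\,\la)^{1/2}}(1+\frac1{{\rm Re}\,\la})$.

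To match the stated exponent more closely, I would not bound $\lnm\hat{\rm x}\rnm_\eta$ crudely. Instead I would use the positive term ${\rm Re}\,\la\cdot(\text{quadratic form})$ retained on the left, together with Young's inequality $ab\le \tfrac{{\rm Re}\,\la}{2}\,b^2+\tfrac{1}{2{\rm Re}\,\la}a^2$, to absorb part of the pairings. The remaining part would be bounded by the square roots of the dissipation terms. This should yield the power $({\rm Re}\,\la)^{-1/2}$.

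The main obstacle is the pairing $\lsc{\frak R},\hat{\rm x}\rsc$. It is not a product of independent factors, since $\hat{\rm x}$ depends on the whole noise path, so one must rely on Cauchy–Schwarz. Absorbing it via Young would cost $\lnm{\frak R}\rnm^2_\eta/{\rm Re}\,\la\sim n^2$, which after dividing by $n^2$ gives only $O(1)$, not $O(1/n)$. Hence the correct balance is to keep the plain product $\lnm{\frak R}\rnm_\eta\lnm\hat{\rm x}\rnm_\eta\sim n$, which gives the gain $1/n$. The first thing I would verify carefully is that the $\ga n^2$ dissipative algebra indeed closes into the sign-definite form above, in particular the coefficient $\ga n^2$ versus $2\ga n^2$ in \eqref{un+}.
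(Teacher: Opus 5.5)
Your proposal is correct and is essentially the paper's proof: pair the Laplace-level equations with $\tfrac12\hat w_{n,\pm}$ and $\hat u_{n,\pm}$ to get \eqref{072302-26aa}. The coefficient in \eqref{un+} should indeed be $2\ga n^2$, as in \eqref{012403-26b}, and with it the dissipative terms close as you expect. Then take real parts and expectations, apply Cauchy--Schwarz together with Propositions \ref{prop030204-26}, \ref{prop012302-26}, \ref{prop022302-26}, and divide by $2\ga n^2$. Your ``crude'' product bound is already literally the stated one, since $({\rm Re}\,\la)^{-1}(1+{\rm Re}\,\la)^{1/2}=({\rm Re}\,\la)^{-1/2}(1+1/{\rm Re}\,\la)^{1/2}$, and the initial-data term is absorbed because $n(1+1/{\rm Re}\,\la)^{1/2}\ge1$; so the Young's-inequality detour is unnecessary. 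Just make explicit that $\hat\sum_k\hat w_{n,+}(\la,\eta,k)=\hat\sum_k\hat w_{n,-}(\la,\eta,k)$ (substitute $k\mapsto -k-\eta/n$), which is what turns ${\cal D}_n(\delta\hat w_n)$ into $\|\delta\hat w_n\|^2_{\mL^2(\hT_n)}$.
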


 \begin{lemma}
	\label{prop042302-26}
There exists
        $C >0$ such that       for
        any  $\la$ s.t.~${\rm Re}\,\la >0$, $\eta \in\bbZ, n\ge 1$,
        \begin{align}
		\label{un-2}
	 \big\lnm \hat u_{n,+}(\la)\big\rnm_\eta^2 \le 
	   \frac{C}{n^2}\Big(1+\frac{n }{({\rm
                  Re}\,\la)^{1/2}}+\frac{|\la|}{{\rm Re}\,\la}\Big) \Big(1+\frac{1}{{\rm Re}\,\la}
                  \Big). 
	\end{align}
\end{lemma}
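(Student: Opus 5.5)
The statement to prove is Lemma \ref{prop042302-26}. The plan is to use the Laplace-transformed equation \eqref{un-}, which expresses $\hat u_{n,+}$ algebraically through $\hat u_{n,-}$. Since $\bar\om_n(\eta,k)\ge\om_0>0$, we can solve \eqref{un-} pointwise in $k$:
\begin{equation*}
2n^2\bar\om_n(\eta,k)\,\hat u_{n,+}(\la,\eta,k)=-(\la+2\ga n^2)\hat u_{n,-}(\la,\eta,k)+\tilde U_{n,-}(0,\eta,k)+{\frak R}_{n,u,-}(\la,\eta,k).
\end{equation*}
Dividing by $2n^2\bar\om_n\ge 2\om_0 n^2$, squaring, summing over $k$ and taking expectations gives
\begin{equation*}
\big\lnm \hat u_{n,+}(\la)\big\rnm_\eta^2\le C\Big(1+\frac{|\la|^2}{n^4}\Big)\big\lnm \hat u_{n,-}(\la)\big\rnm_\eta^2+\frac{C}{n^4}\big\lnm \tilde U_{n,-}(0)\big\rnm_\eta^2+\frac{C}{n^4}\big\lnm {\frak R}_{n,u,-}(\la)\big\rnm_\eta^2 .
\end{equation*}

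We bound the three terms on the right as follows.

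\emph{The $\hat u_{n,-}$ term.} By Lemma \ref{prop032302-26}, $\lnm \hat u_{n,-}(\la)\rnm^2_\eta\le \frac{C}{n({\rm Re}\,\la)^{1/2}}\big(1+\frac1{{\rm Re}\,\la}\big)$. This is exactly $\frac{C}{n^2}\cdot\frac{n}{({\rm Re}\,\la)^{1/2}}\big(1+\frac1{{\rm Re}\,\la}\big)$, the middle summand in \eqref{un-2}.

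\emph{The initial-datum term.} By Proposition \ref{prop030204-26}, $\lnm\tilde U_{n,-}(0)\rnm_\eta\le C$, so this term is $O(n^{-4})$.

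\emph{The noise term.} Proposition \ref{prop022302-26} only gives $\lnm{\frak R}_{n,u,-}\rnm^2_\eta\le Cn^2(1+1/{\rm Re}\,\la)$, hence $\frac{C}{n^4}\lnm{\frak R}_{n,u,-}\rnm^2_\eta\le \frac{C}{n^2}\big(1+\frac1{{\rm Re}\,\la}\big)$. This is the first summand in \eqref{un-2}.

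The main obstacle is the factor $|\la|^2/n^4$ multiplying $\lnm\hat u_{n,-}\rnm^2_\eta$. Using the crude bound of Proposition \ref{prop012302-26} there would produce a $|\la|^2$ dependence, whereas the target contains only the factor $|\la|/{\rm Re}\,\la$. To avoid this, instead of squaring I would take the $\mL^2$ inner product of \eqref{un-} with $\hat u_{n,+}$ (equivalently, multiply by $\hat u_{n,+}^\star$ and take real parts). Then $\la$ appears only through the cross term $\la\lsc \hat u_{n,-},\hat u_{n,+}\rsc$. Using Young's inequality with weight $\om_0 n^2$, this cross term is bounded by $\frac{\om_0 n^2}{4}\lnm\hat u_{n,+}\rnm^2_\eta+\frac{C|\la|^2}{n^2}\lnm\hat u_{n,-}\rnm^2_\eta$. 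The remaining $|\la|^2$ is handled by splitting it as $|\la|\cdot|\la|$. One factor $|\la|$ is absorbed using the identity ${\rm Re}\,\la\,\lnm \hat u_{n,-}\rnm^2_\eta\lesssim$ (energy terms of the resolvent system), obtained by pairing \eqref{un-} with $\hat u_{n,-}$. This yields the ratio $|\la|/{\rm Re}\,\la$ multiplying $\lnm\hat u_{n,+}\rnm_\eta\lnm\hat u_{n,-}\rnm_\eta$ or $\lnm\hat u_{n,-}\rnm_\eta^2$, and the latter is bounded via Lemma \ref{prop032302-26} together with the $n^{-2}$ prefactor.

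Collecting all terms, absorbing the $\lnm\hat u_{n,+}\rnm^2_\eta$ contribution into the left-hand side, and dividing by $n^2$ then yields \eqref{un-2}. Tracking the exact $\la$-dependence in this last absorption step is where I expect the most care to be needed.
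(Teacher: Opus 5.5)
\textbf{There is a gap in how you treat the $\la$-cross term.} Your framework is the same as the paper's: pair \eqref{un-} with $\hat u_{n,+}$, use $\bar\om_n\ge\om_0$, absorb the $2\ga n^2$ cross term and the noise term by Young's inequality, bound $n^2\lnm\hat u_{n,-}\rnm_\eta^2$ by Lemma \ref{prop032302-26}, and bound $n^{-2}\lnm\mathfrak R_{n,u,-}\rnm_\eta^2$ by Proposition \ref{prop022302-26}.

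The failure is in the term $|\la|\,\big|\lsc \hat u_{n,-}(\la),\hat u_{n,+}(\la)\rsc\big|$. Applying Young's inequality to it with weight $\om_0 n^2$ brings back the factor $|\la|^2$ you were trying to avoid. Your proposed absorption does not remove it. The real part of the pairing of \eqref{un-} with $\hat u_{n,-}$ only gives $({\rm Re}\,\la+2\ga n^2)\lnm\hat u_{n,-}\rnm_\eta^2\lesssim n^2\lnm\hat u_{n,+}\rnm_\eta\lnm\hat u_{n,-}\rnm_\eta+\ldots$. Substitute this into $\frac{|\la|^2}{n^2}\lnm \hat u_{n,-}\rnm_\eta^2=\frac{|\la|}{n^2}\cdot\frac{|\la|}{{\rm Re}\,\la}\cdot{\rm Re}\,\la\,\lnm\hat u_{n,-}\rnm_\eta^2$. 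What remains is $\frac{|\la|^2}{{\rm Re}\,\la}\lnm\hat u_{n,+}\rnm_\eta\lnm\hat u_{n,-}\rnm_\eta$, which is still quadratic in $|\la|$. Your bookkeeping (``ratio $|\la|/{\rm Re}\,\la$ multiplying $\lnm\hat u_{n,+}\rnm_\eta\lnm\hat u_{n,-}\rnm_\eta$'') drops a factor $|\la|$, so the last step does not close as written.

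\textbf{The fix.} In the paired form $\la$ enters only linearly, so no Young inequality is needed there. Cauchy--Schwarz and Proposition \ref{prop012302-26}, applied to both factors, give
\[
|\la|\,\big|\lsc \hat u_{n,-}(\la),\hat u_{n,+}(\la)\rsc\big|\le |\la|\,\lnm\hat u_{n,-}(\la)\rnm_\eta\lnm\hat u_{n,+}(\la)\rnm_\eta\le \frac{C|\la|(1+{\rm Re}\,\la)}{({\rm Re}\,\la)^2}=C\,\frac{|\la|}{{\rm Re}\,\la}\Big(1+\frac1{{\rm Re}\,\la}\Big).
\]
After dividing by $n^2\om_0$, this is exactly the third summand of \eqref{un-2}; this is what the paper does.

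Your remark that the crude bound produces $|\la|^2$ is true only in the squared form. Even there it can be repaired by splitting into two cases:
\begin{itemize}
\item If $|\la|\le n^2$, then $|\la|^2/n^4\le|\la|/n^2$, and the crude bound gives the third summand.
\item If $|\la|>n^2$, the right-hand side of \eqref{un-2} already dominates the a priori bound $C(1+{\rm Re}\,\la)/({\rm Re}\,\la)^2$ from Proposition \ref{prop012302-26}.
\end{itemize}
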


\subsubsection*{The end of the proof of Proposition
  \ref{prop:dirichlet}}
It is clear that \eqref{062302-26} and \eqref{un-2} imply Proposition \ref{prop:dirichlet} since from \eqref{eq:estimdir}
\begin{align*}&\cD_n\big(\hat w_{n,+}(\la,\eta)\big) \\
&\le 2\cD_n\Big(\big( \tfrac12(\hat w_{n,+}+\hat w_{n,-}) - \hat u_{n,+}\big)(\la,\eta)\Big) + 2 \cD_n\Big(\big(\tfrac12(\hat w_{n,+}-\hat w_{n,-})+ \hat u_{n,+}\big)(\la,\eta) \Big)\\
& \le 2 {\cal
	D}_n\Big( \big(  \hat w_{n,0} -  \hat u_{n,+}\big)(\la,\eta)\Big)+ 8 \big\| \delta\hat w_{n}(\la,\eta)  \big\|_{{\mb L}^2(\hT_n)}^2 + 8 \big\| \hat u_{n,+}(\la,\eta)\big\|_{\mL^2(\hT_n)}^2.
 \end{align*}
Taking expectation both sides, from \eqref{062302-26} and \eqref{un-2} we obtain \eqref{eq:dir1}. \qed

Now let us proceed with the proofs of the intermediate lemmas.

\subsubsection{Proofs of  Lemmas \ref{prop032302-26} and \ref{prop042302-26}}

Let us start with:

\proof[Proof of Lemma \ref{prop032302-26}]        Multiplying \eqref{wn+} and \eqref{wn-} by $\frac12 \hat
     w_{n,+}(\la,\eta)$ and $\frac12 \hat
     w_{n,-}(\la,\eta)$ sideways, respectively and afterwards
        adding   them   sideways  we get
        \begin{equation}
          \label{wnpm} \begin{split}
  \frac{\la}{2} \sum_{\iota=\pm}\big\| \hat
     w_{n,\iota}(\la,\eta)&\big\|_{\mL^2(\hT_n)}^2-\frac{1}{2} \sum_{\iota=\pm} \big\langle\tilde
     W_{n,\iota}(0,\eta ),  \hat w_{n,\iota}(\la,\eta)\big\rangle _{\mL^2(\hT_n)}\\
       &
            =\frac{in }{2}\hsum \delta_{n}\om(\eta,k)\Big[|\hat w_{n,-}(\la,\eta,k)|^2 -
              |\hat w_{n,+}(\la,\eta,k)|^2\Big]     \\
      &\quad 
        +2\ga n^2 \big\langle{\cal
            L}_n\big(  \hat w_{n,0} -  \hat u_{n,+}\big)(\la,\eta), \hat
        w_{n,0}(\la,\eta)  \big\rangle _{\mL^2(\hT_n)} \vphantom{\int_0}\\
      & \quad
        +2\ga n^2 \big\langle{\cal
            L}_n\big(  \delta\hat w_{n}\big)(\la,\eta) , \delta\hat
        w_{n}(\la,\eta) \big\rangle _{\mL^2(\hT_n)}
    \\ & \quad +   \frac12\sum_{\iota=\pm}\big\langle{\frak R}_{n,w,\iota}(\la,\eta) ,\hat w_{n,\iota}(\la,\eta) \big\rangle _{\mL^2(\hT_n)}. \end{split}
\end{equation}
 Multiplying scalarly \eqref{un+} and \eqref{un-} by $\hat
 u_{n,+}(t,\eta,k)$ and  $\hat
 u_{n,-}(t,\eta,k)$ sideways, respectively we get
\begin{equation}
  \label{un+1} \begin{split}
   \la \big\| \hat u_{n,+}(\la,\eta)\big\|_{\mL^2(\hT_n)}^2  &-\big\langle\tilde U_{n,+}(0,\eta), \hat u_{n,+}(\la,\eta)\big\rangle_{\mL^2(\hT_n)}    \\ &  =2n^2 \big\langle \bar\om_n(\eta) \hat u_{n,-}(\la,\eta) , \hat u_{n,+}(\la,\eta)\big\rangle_{\mL^2(\hT_n)} \vphantom{\int^1}
   \\
   & \vphantom{\int_0} \quad 
     +2\ga
     n^2\big\langle{\cal L}_n\big(   \hat u_{n,+}- 
              \hat w_{n,0} \big)(\la,\eta) , \hat u_{n,+}(\la,\eta)\big\rangle_{\mL^2(\hT_n)}   
\\ &  \quad + \big\langle {\frak R}_{n,u,+}(\la,\eta),  \hat
     u_{n,+}(\la,\eta)\big\rangle_{\mL^2(\hT_n)} ,
\end{split} \end{equation}
and
\begin{equation}
   \label{un-1} \begin{split}
 \la  \big\|\hat u_{n,-}(\la,\eta)\big\|^2_{\mL^2(\hT_n)}  &-\big\langle\tilde
                  U_{n,-}(0,\eta) , \hat
                  u_{n,-}(\la,\eta)\big\rangle_{\mL^2(\hT_n)}       
   \\ & = -2n^2
                 \big\langle\bar\om_n(\eta) \hat u_{n,+}(\la,\eta) , \hat u_{n,-}(\la,\eta)\big\rangle_{\mL^2(\hT_n)}  \vphantom{\int^1} 
  \\
  & \quad 
    -2\ga   n^2  \big\|\hat u_{n,-}(\la,\eta) \big\|^2_{\mL^2(\hT_n)} \\ & \quad  +  \big\langle{\frak R}_{n,u,-}(\la,\eta) , \hat u_{n,-}(\la,\eta)\big\rangle_{\mL^2(\hT_n)} . \vphantom{\int}  \end{split}
\end{equation}
Summing \eqref{wnpm}--\eqref{un-1} we obtain (recall the definition of the Dirichlet form $\cD_n$ in \eqref{eq:cD})
\begin{equation}
  \label{072302-26aa} \begin{split}
   &
  \frac{\la}{2} \sum_{\iota=\pm} \big\| \hat
     w_{n,\iota}(\la,\eta)\big\|_{\mL^2(\hT_n)}^2 +\la\sum_{\iota=\pm}\big\| \hat
     u_{n,\iota}(\la,\eta)\big\|_{\mL^2(\hT_n)}^2 +2\ga
     n^2  \big\|\hat u_{n,-}(\la,\eta) \big\|^2_{\mL^2(\hT_n)}   \\
   &
   \quad  +2\ga n^2 {\cal
            D}_n\big(   \hat w_{n,0} -  \hat u_{n,+}\big)(\la,\eta) +2\ga n^2 {\cal
            D}_n\big(   \delta\hat w_{n}(\la,\eta)\big)  \vphantom{\int_0}
    \\
     &
            = \frac{in }{2}\hsum \delta_{n}\om(\eta,k)\Big[|\hat w_{n,-}(\la,\eta,k)|^2 -
              |\hat w_{n,+}(\la,\eta,k)|^2\Big]   \\
   &
    \quad  +4in^2{\rm Im}\,\Big( \big\langle \bar\om_n(\eta) \hat u_{n,-}(\la,\eta) , \hat
     u_{n,+}(\la,\eta)\big\rangle_{\mL^2(\hT_n)}  \Big)  \\
       &
         \quad +\frac{1}{2} \sum_{\iota=\pm} \big\langle\tilde
     W_{n,\iota}(0,\eta ),  \hat w_{n,\iota}(\la,\eta)\big\rangle
     _{\mL^2(\hT_n)} + \sum_{\iota=\pm} \big\langle\tilde
     U_{n,\iota}(0,\eta ),  \hat u_{n,\iota}(\la,\eta)\big\rangle
         _{\mL^2(\hT_n)}  \\
        &
   \quad  +  \frac12\sum_{\iota=\pm}\big\langle{\frak R}_{n,w,\iota}(\la,\eta) ,\hat w_{n,\iota}(\la,\eta) \big\rangle _{\mL^2(\hT_n)}  + \sum_{\iota=\pm} \big\langle{\frak R}_{n,u,\iota}(\la,\eta) , \hat u_{n,\iota}(\la,\eta)\big\rangle_{\mL^2(\hT_n)} . \end{split}\end{equation}
     We first take the expectation w.r.t.~$\E_n$ and the real parts of both sides of the equality, and we use 
     the Cauchy-Schwarz inequality for the scalar product $\lsc \cdot
     \rsc$ in the last four terms.
     %
     Then, from Propositions \ref{prop030204-26}, \ref{prop012302-26} and \ref{prop022302-26},  we conclude
     the following:
      there exists $ C >0$ such that
       \begin{multline}
         \label{062302-26bis}
 2\ga n^2\bbE_n \Big[{\cal
            D}_n\big(   \big(\hat w_{n,0}-  \hat u_{n,+}\big)(\la,\eta)\big)\Big] \\  +2\ga n^2 \bbE_n \Big[ {\cal
            D}_n\big(   \delta\hat w_{n}(\la,\eta)\big)\Big]  +2\ga
     n^2 \bbE_n  \big[\|\hat u_{n,-}(\la,\eta) \|^2_{\mL^2(\hT_n)}\big]  
          \\ \le   \frac{C}{({\rm Re}\,\la)^{1/2}} \Big(1+\frac{1}{{\rm
           Re}\,\la} \Big)^{1/2}\bigg\{  1
         + n\Big(1+\frac{1}{{\rm Re}\,\la} \Big)^{1/2}\bigg\}. \end{multline}
         Since
       $\hat\sum_k\hat w_{n,+}(\la,\eta,k)=   \hat\sum_k\hat
           w_{n,-}(\la,\eta,k)$,
       we have
       \[
         {\cal
            D}_n\big(  \delta\hat w_{n}(\la,\eta) \big)   =\big\|
         \delta\hat w_{n}(\la,\eta) \big\|_{\mL^2(\hT_n)}^2.
       \]
     {Using the fact that $a+1\le 2a$ when $a \ge 1$ (with $a=n(1+1/{\rm Re}\,\la)^{1/2})$ we conclude that the right hand side of \eqref{062302-26bis} is bounded by the right hand side of \eqref{062302-26}, and we conclude the proof.}
      \qed

\medskip

\proof[Proof of Lemma \ref{prop042302-26}] Multiplying scalarly both sides of \eqref{un-} by $\hat
       u_{n,+}(\la,\eta,k) $ we get
       \begin{align*}
   &
 \la \big\langle \hat u_{n,-}(\la,\eta) , \hat
                  u_{n,+}(\la,\eta)\big\rangle_{\mL^2(\hT_n)}  -\big\langle
                  \tilde U_{n,-}(0,\eta) , \hat
                  u_{n,+}(\la,\eta)\big\rangle_{\mL^2(\hT_n)}         \\
         &
           = -2n^2
                \hsum  \bar\om_n(\eta,k)| \hat u_{n,+}(\la,\eta,k) |^2
  -2\ga   n^2 \big\langle \hat u_{n,-}(\la,\eta) , \hat u_{n,+}(\la,\eta)\big\rangle_{\mL^2(\hT_n)} \\ & \quad  +\big\langle {\frak R}_{n,u,-}(\la,\eta)  ,\hat u_{n,+}(\la,\eta)\big\rangle_{\mL^2(\hT_n)}. 
       \end{align*}
{Taking the expectation w.r.t.~$\bbE_n$ and using the fact that}  $\bar \omega_n(\eta,k) \ge \omega_0$, we obtain
 \begin{align*}
   2n^2\om_0 &{\big\lnm \hat u_{n,+}(\la)  \big\rnm_\eta^2}  \\
             & \le |\la|\; \big| \lsc  \hat u_{n,-}(\la) , \hat
                  u_{n,+}(\la)\rsc \big|  +\big|\lsc
                  \tilde U_{n,-}(0) , \hat
                  u_{n,+}(\la)\rsc \big|       \\
            & \quad  +2\ga n^2 \, \big| \lsc \hat u_{n,-}(\la) , \hat
           u_{n,+}(\la)\rsc \big| +\big|\lsc {\frak
           R}_{n,u,-}(\la)  ,\hat
           u_{n,+}(\la)\rsc\big|.
 \end{align*}
 {Now we use Young's inequality $|ab| \le  |a|^2/(2\eps) + \eps |b|^2/2$, for $\eps >0$, twice: first, inside the product $(2\ga n^2)|\hat u_{n,-}\hat u_{n,+}|$ chosing $\eps=\omega_0/(2\ga)$, and second, inside the product $|{\frak R}_{n,u,-} \hat u_{n,+}|$ with $\eps=n^2\omega_0$, and we get}
  \begin{align*}
2n^2\om_0 {\big\lnm \hat u_{n,+}(\la)  \big\rnm_\eta^2}  &
\le {|\la|\; \big| \lsc  \hat u_{n,-}(\la) , \hat
	u_{n,+}(\la)\rsc \big|}+\big|\lsc
\tilde U_{n,-}(0) , \hat
u_{n,+}(\la)\rsc \big|         \\ & \quad   + {\frac{2\ga^2 n^2}{\om_0} } \; \big\lnm \hat
           u_{n,-}(\la)\big\rnm_\eta^2   +     \frac{n^2\om_0}{2}\big\lnm \hat
            u_{n,+}(\la)\big\rnm_\eta^2 \\ & \quad  + {\frac{1}{2n^2\om_0}}\big\lnm  {\frak R}_{n,u,-}(\la) \big\rnm_\eta^2+
            \frac{n^2\om_0}{2}\big\lnm \hat
      u_{n,+}(\la)\big\rnm_\eta^2.
     \end{align*}
  As before we use the Cauchy-Schwarz inequality for the scalar product $\lsc \cdot \rsc$ in the first two terms, and we get \begin{align*}\big|\lsc
  		\tilde U_{n,-}(0) , \hat
  		u_{n,+}(\la)\rsc  \big|  & \le \big\lnm \tilde
                                           U_{n,-}(0)\big\rnm_\eta \;
                                           \big\lnm \hat
                                           u_{n,+}(\la)\big\rnm_\eta
                                           \le \frac{C(1+{\rm
                                           Re}\,\la)^{1/2}}{{\rm
                                           Re}\,\la} \\ \text{and}\quad 
 \big| \lsc  \hat u_{n,-}(\la) , \hat
 u_{n,+}(\la)\rsc \big|  & \le  \big\lnm \hat
  u_{n,-}(\la)\big\rnm_\eta \; \big\lnm \hat
                           u_{n,+}(\la)\big\rnm_\eta\le  \frac{C
                           (1+{\rm Re}\,\la)}{({\rm Re}\,\la)^2}
                           ,\end{align*} from Propositions
                         \ref{prop030204-26} and
                         \ref{prop012302-26}. Then, from the
                         inequality above,  grouping the terms
                         containing $ \lnm \hat
                         u_{n,+}(\la)\rnm_\eta^2$ together, and then
                         using Proposition \ref{prop022302-26}  and Lemma 
  \ref{prop032302-26}, we conclude that:
  	\begin{align}
  	\label{un-2bis}
 	n^2\om_0\big\lnm \hat u_{n,+}(\la)\big\rnm_\eta^2\le  &
 \frac{C |\la| (1+{\rm Re}\,\la)}{({\rm Re}\,\la)^2}+\frac{C(1+{\rm
                                           Re}\,\la)^{1/2}}{{\rm
    Re}\,\la} \\
          &
            +\frac{Cn}{ ({\rm Re}\,\la)^{1/2}} \Big(1+\frac{1}{{\rm
           Re}\,\la} \Big) +C \Big(1+\frac{1}{{\rm
				Re}\,\la}\Big).\notag
        \end{align}
      { Using the fact that $\sqrt{b}\le b$ when $b\ge 1$ (with $b=1+{\rm Re}\,\la$), we obtain \eqref{un-2}.}
   \qed
 
 \subsubsection{Auxiliary corollary}
 
 In this section we give an important corollary from last estimates:

\begin{corollary}
   \label{cor012502-26}
  There exists $C >0$ such that,  for any $\eta \in\bbZ$, $n\ge
  1$, and $\la$ s.t.~${\rm Re}\,\la>0$, 	\begin{align}
 		\label{012502-26}
 		|{\rm Im}\,\la\,|\sum_{\iota=\pm}\Big(\big\lnm \hat
 		w_{n,\iota}(\la)\big\rnm_\eta+\big\lnm \hat
 		u_{n,\iota}(\la)\big\rnm_\eta \Big)
 		\le  \frac{Cn}{({\rm Re}\,\la)^{1/4}}\Big(1+\frac{1}{{\rm
           Re}\,\la} \Big) .
                                        \end{align}
                                       
 \end{corollary}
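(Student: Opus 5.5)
The idea is to take imaginary parts in the energy-type identity \eqref{072302-26aa}. Write $\la=\la_r+i\la_i$ with $\la_r={\rm Re}\,\la$ and $\la_i={\rm Im}\,\la$. On the left-hand side of \eqref{072302-26aa}, every term except the one proportional to $\la$ is real: the Dirichlet forms and $2\ga n^2\|\hat u_{n,-}\|^2$ have no imaginary part. So the imaginary part of the left-hand side is exactly
\[
\la_i\Big(\tfrac12\sum_\iota\|\hat w_{n,\iota}\|^2+\sum_\iota\|\hat u_{n,\iota}\|^2\Big).
\]
On the right-hand side the imaginary part contains the following contributions:
\begin{itemize}
\item the transport term $\tfrac{n}{2}\hsum\delta_n\om\,(|\hat w_{n,-}|^2-|\hat w_{n,+}|^2)$;
\item the term $4n^2\,{\rm Im}\langle\bar\om_n\hat u_{n,-},\hat u_{n,+}\rangle$;
\item the imaginary parts of the initial-data pairings and of the martingale pairings.
\end{itemize}
Taking expectations gives
\[
|\la_i|\Big(\sum_\iota\lnm\hat w_{n,\iota}\rnm_\eta^2+\lnm\hat u_{n,\iota}\rnm_\eta^2\Big)\le C\big(\text{those terms}\big).
\]

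I would bound each piece as follows.
\begin{itemize}
\item \emph{Transport term.} Since $|\delta_n\om|\le |\eta|\,\|\om'\|_\infty$, it is at most $Cn\sum_\iota\lnm\hat w_{n,\iota}\rnm_\eta^2$. By Proposition \ref{prop012302-26} this is $\le Cn(1+\la_r)/\la_r^2$. This is too crude, so I would rewrite it as $n\,{\rm Re}\langle\delta_n\om\,(\hat w_{n,-}-\hat w_{n,+}),(\hat w_{n,-}+\hat w_{n,+})\rangle$, which is bounded by $Cn\lnm\delta\hat w_n\rnm_\eta\lnm\hat w_{n,0}\rnm_\eta$. Lemma \ref{prop032302-26} then gives $\lnm\delta\hat w_n\rnm_\eta\le C n^{-1/2}\la_r^{-1/4}(1+1/\la_r)^{1/2}$.
\item \emph{Cross term.} $4n^2|\langle\bar\om_n\hat u_{n,-},\hat u_{n,+}\rangle|\le Cn^2\lnm\hat u_{n,-}\rnm_\eta\lnm\hat u_{n,+}\rnm_\eta$. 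Lemma \ref{prop032302-26} gives $\lnm\hat u_{n,-}\rnm_\eta\lesssim n^{-1/2}\la_r^{-1/4}(1+\la_r^{-1})^{1/2}$, and Lemma \ref{prop042302-26} gives $\lnm\hat u_{n,+}\rnm_\eta\lesssim n^{-1/2}\la_r^{-1/4}(1+\la_r^{-1})$ plus a $|\la|$-dependent piece of order $n^{-1}(|\la|/\la_r)^{1/2}(1+\la_r^{-1})^{1/2}$.
\item \emph{Initial-data pairings.} By Cauchy--Schwarz these are at most $\mathfrak W_2\sum\lnm\hat\cdot\rnm_\eta$.
\item \emph{Martingale pairings.} By Cauchy--Schwarz and Proposition \ref{prop022302-26} these are at most $Cn(1+\la_r^{-1})^{1/2}\sum\lnm\hat\cdot\rnm_\eta$.
\end{itemize}

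Set $S:=\sum_\iota(\lnm\hat w_{n,\iota}\rnm_\eta+\lnm\hat u_{n,\iota}\rnm_\eta)$. The bounds above give an inequality of the form
\[
|\la_i|\,S^2\lesssim \big(\mathfrak W_2+n(1+\la_r^{-1})^{1/2}\big)S+R,
\]
where $R$ collects the product terms. The cleanest way to close it is to bound the product terms by $S$ times a factor. For instance,
\[
n\lnm\delta\hat w_n\rnm_\eta\lnm\hat w_{n,0}\rnm_\eta\le n\cdot n^{-1/2}\la_r^{-1/4}(\cdots)\cdot S\le Cn\,\la_r^{-1/4}(1+\la_r^{-1})\,S .
\]
Similarly, $n^2\lnm\hat u_{n,-}\rnm_\eta\lnm\hat u_{n,+}\rnm_\eta\le n^2\lnm\hat u_{n,-}\rnm_\eta\,S\le Cn^{3/2}\la_r^{-1/4}(\cdots)\,S$. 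This last bound loses a factor $n^{1/2}$.

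That cross term is where I expect the main difficulty. Rather than the crude factorisation, I would handle it by the bilinear estimate $n^2\lnm\hat u_{n,-}\rnm_\eta\lnm\hat u_{n,+}\rnm_\eta$ using both lemmas. The leading part is $n^2\cdot n^{-1/2}\cdot n^{-1/2}\la_r^{-1/2}(1+\la_r^{-1})^{3/2}=n\la_r^{-1/2}(\cdots)$, which is of order $n$. Dividing by $|\la_i|S$ is awkward, so instead I would use $S\ge \lnm\cdot\rnm$ in the form
\[
|\la_i|S^2\le A S+B \quad\Longrightarrow\quad |\la_i|S\le A+\sqrt{|\la_i|B}.
\]
The $|\la|$-dependent part of $\lnm\hat u_{n,+}\rnm_\eta$ contributes to $B$ a term of order $n^{1/2}|\la|^{1/2}$. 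After the square root this becomes $|\la_i|^{1/2}|\la|^{1/4}n^{1/4}$, which must be absorbed, presumably via $|\la_i|\le|\la|$ together with an a priori bound $|\la_i|S\lesssim |\la_i|\,\la_r^{-1}$ when $|\la_i|$ is small. Getting the exact exponent $\la_r^{-1/4}$ is the delicate bookkeeping: it comes from $\lnm\delta\hat w_n\rnm_\eta,\lnm\hat u_{n,-}\rnm_\eta\sim(n\la_r^{1/2})^{-1/2}$, and I would track the powers carefully and possibly split into the regimes $|\la_i|\le\la_r$ and $|\la_i|>\la_r$.
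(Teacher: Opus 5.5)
Your proposal has a genuine gap, and it sits exactly at the cross term you flagged.

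\textbf{Why the proposal does not close.} To bound $|\la_i|S$ uniformly in $\la_i$, the right-hand side after taking expectations must be \emph{linear} in $S$. Any additive term $B$ that does not decay in $\la_i$ only gives $|\la_i|S\lesssim A+\sqrt{|\la_i|B}$, i.e.\ decay $|\la_i|^{-1/2}$ of $S$ instead of $|\la_i|^{-1}$.

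\begin{itemize}
\item Your bilinear treatment via Lemma \ref{prop042302-26} produces such a $B$. Already its $|\la|$-free part, of order $n\la_r^{-1/2}$, leaves $|\la_i|^{1/2}n^{1/2}$ after the square root.
\item Proposition \ref{prop012302-26} absorbs this only for bounded $|\la_i|$, so splitting into regimes cannot handle $|\la_i|\to\infty$.
\item The only linear option is the crude factorisation $n^2\lnm\hat u_{n,-}\rnm_\eta\lnm\hat u_{n,+}\rnm_\eta\le n^2\lnm\hat u_{n,-}\rnm_\eta S$. By \eqref{062302-26} this costs $n^{3/2}\la_r^{-1/4}(1+\la_r^{-1})^{1/2}$.
\item The transport term gives the same order if $C$ is to be uniform in $\eta$. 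You put $|\eta|$ into $C$, but in fact $|\delta_n\om(\eta,k)|\le\|\om'\|_\infty\min\{|\eta|,n\}$.
\item You are right to keep the imaginary parts of the initial-data and martingale pairings. However, they contribute $\mathfrak W_2+Cn(1+\la_r^{-1})^{1/2}$ with no factor $\la_r^{-1/4}$. So for $\la_r>1$ your bound does not have the shape of the right-hand side of \eqref{012502-26}.
\end{itemize}

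\textbf{The paper does not supply the missing step.} It takes the same imaginary part of \eqref{072302-26aa}, with two problems.
\begin{itemize}
\item Its \eqref{072302-26} omits the imaginary parts of the initial-data and martingale pairings, which do not vanish in general.
\item It then uses precisely the crude factorisation above. With \eqref{062302-26} this yields the factor $n^{1/2}+n^{3/2}$ in \eqref{072302-26z}, not the displayed $n^{1/2}+n$.
\end{itemize}

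\textbf{The printed estimate fails for large ${\rm Re}\,\la$.} Fix $n$ and take $\la=L(1+i)$ with $L\to+\infty$. Right-continuity at $t=0$ of $t\mapsto\tilde W_{n,+}(t)$ in $\lnm\cdot\rnm_\eta$, together with the uniform-in-time bound from energy conservation, gives $\la\hat w_{n,+}(\la)\to\tilde W_{n,+}(0)$. Hence the left-hand side has $\liminf$ at least $\lnm\tilde W_{n,+}(0)\rnm_\eta/\sqrt2$. This is positive for local Gibbs initial data, while the right-hand side tends to $0$.

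\textbf{What your argument actually proves.} Closed with the linear factorisation and keeping all terms, it gives, uniformly in $\eta$,
\[
|\la_i|\,S\le Cn^{3/2}\big(1+\la_r^{-1/4}\big)\big(1+\la_r^{-1}\big).
\]
This suffices for the only place the corollary is invoked, the proof of Lemma \ref{prop021003-26}. There $n$ and ${\rm Re}\,\la$ are fixed, and only the boundedness of $\xi\mapsto|\xi|\,S({\rm Re}\,\la+i\xi)$ is used.
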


\proof[Proof of Corollary \ref{cor012502-26}] We come back to the identity \eqref{072302-26aa} and now take the imaginary parts on both sides. We get
\begin{align}
  \label{072302-26}
  \frac{{\rm Im}\,\la}{2} \sum_{\iota=\pm}&\| \hat
     w_{n,\iota}(\la,\eta)\|^2_{\mL^2(\hT_n)} +{\rm Im}\,\la\sum_{\iota=\pm}\| \hat
     u_{n,\iota}(\la,\eta)\|^2_{\mL^2(\hT_n)} \notag
  \\
   &
     = \frac{n }{2}\hsum\delta_{n}\om(\eta,k)\Big[|\hat w_{n,-}(\la,\eta,k)|^2 -
              |\hat w_{n,+}(\la,\eta,k)|^2\Big]   \\
   &
     \quad +2n^2 {\rm Im}\,\Big( \langle \bar\om_n(\eta) \hat u_{n,-}(\la,\eta) , \hat
     u_{n,+}(\la,\eta)\rangle_{\mL^2(\hT_n)} \Big) .\notag
\end{align}
We have, by the identity $a^2-b^2=(a-b)(a+b)$ and Cauchy-Schwarz inequality,
\begin{multline*}
\bigg|\hsum\delta_{n}\om(\eta,k)\Big[|\hat w_{n,-}(\la,\eta,k)|^2 -
                 |\hat w_{n,+}(\la,\eta,k)|^2\Big]  \bigg| \\
    \le  \|\om'\|_\infty \big\|\delta\hat w_{n}(\la,\eta)\big\|_{\mL^2(\hT_n)}\sum_{\iota=\pm}\|\hat w_{n,\iota}(\la,\eta)\|_{\mL^2(\hT_n)}.
\end{multline*}
Hence, since $\bar\omega_n \le (\omega_0^2+4)^{1/2}$, and using again Cauchy-Schwarz inequality, we deduce
\begin{align}
  \label{072302-26a}
  |{\rm Im}\,\la|\Big(\sum_{\iota=\pm}\| \hat
     w_{n,\iota}(\la,\eta)&\|_{\mL^2(\hT_n)}^2 +\sum_{\iota=\pm}\| \hat
     u_{n,\iota}(\la,\eta)\|_{\mL^2(\hT_n)}^2\Big)  
  \\
   &
     \le Cn \big\|\delta\hat w_{n}(\la,\eta)\big\|_{\mL^2(\hT_n)}\sum_{\iota=\pm}\|\hat w_{n,\iota}(\la,\eta)\|_{\mL^2(\hT_n)}  \notag\\
   &
    \quad +Cn^2 \|\hat u_{n,-}(\la,\eta) \|_{\mL^2(\hT_n)} \, \|\hat
     u_{n,+}(\la,\eta) \|_{\mL^2(\hT_n)} \notag.
\end{align}
{Applying the expectation to both sides and using \eqref{062302-26}, we
obtain
\begin{multline}
  \label{072302-26z}
  |{\rm Im}\,\la| \Big(\sum_{\iota=\pm}\big\lnm \hat
 		w_{n,\iota}(\la)\big\rnm_\eta^2+\sum_{\iota=\pm}\big\lnm \hat
 		u_{n,\iota}(\la)\big\rnm_\eta^2 \Big)
  \\
     \le \frac{C }{({\rm Re}\,\la)^{1/4}} \Big(1+\frac{1}{{\rm
     Re}\,\la} \Big)^{1/2}(n^{1/2}+n) \Big(\sum_{\iota=\pm}\big\lnm \hat
 		w_{n,\iota}(\la)\big\rnm_\eta^2+\sum_{\iota=\pm}\big\lnm \hat
 		u_{n,\iota}(\la)\big\rnm_\eta^2 \Big)^{1/2}
\end{multline}
and \eqref{012502-26} follows.}
\qed

\section{Proof of Proposition \ref{prop:rest}}
\label{sec:proofrest}

  \subsection{Decomposition of the summation}
In order to prove \eqref{eq:limsum}, we first obtain an  expression for \[\hsum  \hat
w_{n,+}(\la,\eta,k)\] coming from the resolution of the Laplace-Wigner
system of equations \eqref{wn+}--\eqref{un-} that will allow us to
estimate its second moment in terms of the bounds obtained in Sections
\ref{sec:prel} and \ref{sec:steps}.

Summing up over $k$ of both sides of \eqref{wn+}  we obtain
\begin{multline}
  \label{041902-26}
  \frac{\bD_n(\la,\eta)}{n^2} \underset{\ell\in\hT_n}{\hat\sum}  \hat    w_{n,+}(\la,\eta,\ell)  =    \hsum  \frac{\tilde  W_{n,+}(0,\eta,k)}{\la +in
     \delta_{n}\om(\eta,k) +\ga n^2 }     
  \\
        -\hsum \frac{ n^2 }{\la +in \delta_{n}\om(\eta,k) +\ga n^2 } \,{\cal
     L}_n \hat u_{n,+}(\la,\eta,k) + \hsum \frac{{\frak R}_{n,w,+}(\la,\eta,k) }{\la +in \delta_{n}\om(\eta,k) +\ga n^2 },
\end{multline}
where 
\begin{equation} \label{eq:defD}
	\bD_n(\la,\eta):= n^2 \hsum \bigg( 1 - \frac{\gamma}{\frac\la{n^2}+\frac{i\delta_n\omega(\eta,k)}{n} + \ga} \bigg).
	\end{equation}
 Thanks to the fact that $\hat{\sum}_k{\cal L}_nf(k)=0$ we have
\[
\hsum \frac{ \la +in \delta_{n}\om(\eta,k) +\ga n^2 }{\la +in \delta_{n}\om(\eta,k) +\ga n^2 } {\cal
     L}_n \hat   u_{n,+}(\la,\eta, k) =\hsum  {\cal
     L}_n \hat u_{n,+}(\la,\eta,k)   =0.
  \]
  We also have, from the definition of ${\frak R}_{n,w,+}$, see \eqref{Rn},
  \begin{align} 
  	&\hsum \frac{ \la +in \delta_{n}\om(\eta,k) +\ga n^2
  	}{\la +in \delta_{n}\om(\eta,k) +\ga n^2 }{\frak
  		R}_{n,w,+}(\la,\eta,k) =\hsum {\frak
  		R}_{n,w,+}(\la,\eta,k) \label{eq:sum0}\\
  	& =   \underset{\ell'\in\hT_n}{\hat\sum} \int_0^{+\infty}e^{-\la t} \hsum
  	\Big(  i U_{n,-}     
  	(t-,\eta-n\ell',k+\ell' )
  	-i U_{n,-}    (t-,\eta-n\ell',k) \Big)
  	\dd\thN  (\gamma n^2 t,\ell') \notag 
  \\ &	=0. \vphantom{\int}   \notag 
  \end{align}
   Therefore, \eqref{041902-26} rewrites as
     \begin{align}
  \label{051902-26}
   \frac{\bD_n(\la,\eta)}{n^2}  \underset{\ell\in\hT_n}{\hat\sum}   \hat w_{n,+}(\la,\eta,\ell)  
    =  &  \hsum  \frac{\tilde  W_{n,+}(0,\eta,k)}{\la  +in
     \delta_{n}\om(\eta,k) +\ga n^2 }        \\
   & +\frac{1}{\ga} \hsum  \frac{  \la+in
     \delta_{n}\om(\eta,k)    }{\la +in \delta_{n}\om(\eta,k) +\ga n^2 } {\cal
        L}_n \hat u_{n,+}(\la,\eta,k) \notag \\ 
      & -\frac{1}{\ga n^2} \hat{\sum}_{k}\frac{(\la +in
      	\delta_{n}\om(\eta,k)  ){\frak R}_{n,w,+}(\la,\eta,k)
      }{\la +in \delta_{n}\om(\eta,k) +\ga n^2 }. \notag \end{align}
 As a result
\begin{equation}
  \label{061902-26}
   \underset{\ell\in\hT_n}{\hat\sum}   \hat    w_{n,+}(\la,\eta,\ell)  
    =   {\rm I}_n(\la,\eta)  +
     {\rm II}_n(\la,\eta) +   {\rm III}_n(\la,\eta), \end{equation}
     where \begin{align}
      {\rm I}_n(\la,\eta) & :=\bD_n^{-1}(\la,
     \eta)\hsum  \frac{\tilde  W_{n,+}(0,\eta,k)}{\frac{\la}{n^2}
     +\frac{i \delta_{n}\om(\eta,k)}{n} +\ga  }  , \label{eq:defI}\\
    {\rm II}_n(\la,\eta)&:=  -\frac{\bD_n^{-1}(\la,
     \eta)}{\ga} \hsum \frac{  \la+in
     \delta_{n}\om(\eta,k)    }{\frac{\la}{n^2}  +\frac{i \delta_{n}\om(\eta,k)}{n} +\ga  } {\cal
     L}_n \hat u_{n,+}(\la,\eta,k), \label{eq:defII} \\
     {\rm III}_n(\la,\eta)&:= - \frac{\bD_n^{-1}(\la,
     \eta)}{\ga} \hsum \frac{(\la +in
         \delta_{n}\om(\eta,k) ){\frak R}_{n,w,+}(\la,\eta,k) }{\la +in \delta_{n}\om(\eta,k) +\ga n^2 }. \label{eq:defIII}
\end{align}
We   estimate the   moments of  each term   ${\rm I}_n$ (the
  first moment) and ${\rm II}_n$, ${\rm III}_n$  (the
  second moments)  separately, in the following sections. Let us
state here the   statements that we are going to prove:

\begin{lemma} \label{lem:limI} For any $\la$ such that ${\rm Re}\,\la
  >0$ {and any $\eta\in\bbZ$}
	\begin{equation}
		\label{eq:limI} \lim_{n\to+\infty}  \E_n\Big[\big|{\rm I}_n(\la,\eta)\big| \Big] = 0. 
	\end{equation}
	
	\end{lemma}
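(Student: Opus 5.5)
The plan is to show that $\bD_n(\la,\eta)$ stays uniformly away from zero, and then reduce the numerator of ${\rm I}_n$ to the fluctuation of a single Fourier mode of the initial empirical energy. Assumption \ref{T}, \eqref{eq:1}, controls that fluctuation.

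\emph{Step 1: the denominator.} Set $\eps_n(k):=\frac{\la}{n^2}+\frac{i\delta_n\om(\eta,k)}{n}$. Since $|\delta_n\om(\eta,k)|\le |\eta|\,\|\om'\|_\infty$, we have $\sup_k|\eps_n(k)|=O(1/n)$ for fixed $\la,\eta$. Expand $1-\frac{\ga}{\ga+\eps}=\frac{\eps}{\ga}-\frac{\eps^2}{\ga^2}+O(|\eps|^3)$. This gives
\[
\bD_n(\la,\eta)=\frac{\la}{\ga}+\frac{in}{\ga}\hsum\delta_n\om(\eta,k)+\frac{1}{\ga^2}\hsum\big(\delta_n\om(\eta,k)\big)^2+O\Big(\frac1n\Big).
\]
The middle sum vanishes by translation invariance of $\hsum$ on $\hT_n$, because $\hsum\om(k+\eta/n)=\hsum\om(k)$. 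Hence ${\rm Re}\,\bD_n(\la,\eta)\ge \frac{{\rm Re}\,\la}{\ga}-O(1/n)$. In fact $\bD_n\to \frac{\la}{\ga}+\frac{\eta^2}{\ga^2}\int_\T[\om']^2\dd k$, but we only need $\sup_{n\ge n_0}|\bD_n^{-1}(\la,\eta)|<\infty$.

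\emph{Step 2: removing the $k$-dependence of the weight.} Write $\frac{1}{\ga+\eps_n(k)}=\frac1\ga+\rho_n(k)$ with $\sup_k|\rho_n(k)|\le C/n$. By Cauchy--Schwarz and Proposition \ref{prop030204-26},
\[
\E_n\Big|\hsum\rho_n(k)\tilde W_{n,+}(0,\eta,k)\Big|\le \frac Cn\,\big\lnm \tilde W_{n,+}(0)\big\rnm_\eta\le \frac{2C{\mathfrak W}_2}{n}\to0 .
\]
It therefore suffices to prove $\E_n\big|\hsum \tilde W_{n,+}(0,\eta,k)\big|\to0$.

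\emph{Step 3: identification with the energy field.} By Parseval,
\[
\hsum W_{n,+}(0,\eta,k)=\frac1{2n}\hsum\hat\psi(k+\tfrac\eta n)\hat\psi^\star(k)=\frac1{2n}\sum_{x\in\T_n}|\psi_x(0)|^2e^{\pm 2\pi i\eta x/n},
\]
with the sign fixed by the convention \eqref{eq:fourier}. This is the $t=0$, $\Phi(u)=e^{\pm2\pi i\eta u}$ case of the estimate from Corollary \ref{cor032603-26} quoted in Section \ref{sec3.1}. That estimate gives
\[
\E_n\Big|\hsum W_{n,+}(0,\eta,k)-\frac1n\sum_x e_x(0)\,e^{\pm2\pi i\eta x/n}\Big|\le \frac{C(1+|\eta|)}{n}\E_n\big[\mathcal E^{(n)}_{\rm tot}(0)\big]\to0 .
\]
Assumption \eqref{eq:1} with $\varphi(u)=e^{\pm2\pi i\eta u}$ says that $X_n:=\frac1n\sum_x e_x(0)\varphi(x/n)$ converges in $\mL^1(\mu_n)$ to the deterministic constant $c=\int_\T\varphi T\,\dd u$. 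Hence $\E_n X_n\to c$ and $\E_n|X_n-\E_nX_n|\to0$. Combining the two displays, the same holds for $\hsum W_{n,+}(0,\eta,k)$, that is, $\E_n\big|\hsum\tilde W_{n,+}(0,\eta,k)\big|\to0$. Together with Steps 1 and 2 this yields \eqref{eq:limI}.

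The only delicate point is Step 1. We must exploit the cancellation $\hsum\delta_n\om=0$; otherwise the term of order $n$ in $\bD_n$ would not obviously be harmless. Beyond that we only need the real part to be bounded below by a positive constant, and this follows from ${\rm Re}\,\la>0$ together with the nonnegativity of the $(\delta_n\om)^2$ term.
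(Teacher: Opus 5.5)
Your proof is correct and follows the paper's route: you bound $\bD_n^{-1}$ (you need only its boundedness, whereas the paper also uses the limit from Lemma \ref{lem:D}), replace the weight by $1/\ga$ using the initial bound, and conclude with the Wigner--energy comparison plus \eqref{eq:1}. Your direct use of Corollary \ref{cor032603-26} at $t=0$ is, if anything, more precise than the paper's appeal to the time-integrated \eqref{eq:wigner_energy}.
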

	
	\begin{lemma} \label{lem:limII} For any $\la$ such that ${\rm Re}\,\la >0$ and any $\eta\in\bbZ$,
		\begin{equation}
			\label{eq:limII} \lim_{n\to+\infty}  \E_n\Big[\big|{\rm II}_n(\la,\eta)\big|^2 \Big] = 0. 
		\end{equation}
		
	\end{lemma}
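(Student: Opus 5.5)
The plan is to bound $\bD_n^{-1}$ uniformly, subtract constants from the weight using $\hat\sum_k{\cal L}_nf=0$, and then treat the $\la$-part and the $\delta_n\om$-part of the weight separately. The $\delta_n\om$-part is where I expect the real difficulty.

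\emph{Step 1: lower bound on $\bD_n$.} Write $a_n(k):=\frac{\la}{n^2}+\frac{i\delta_n\om(\eta,k)}{n}$. Then
\[
\bD_n(\la,\eta)=n^2\hsum \frac{a_n(k)}{a_n(k)+\ga},\qquad {\rm Re}\,\frac{a_n}{a_n+\ga}=\frac{|a_n|^2+\ga\,{\rm Re}\,a_n}{|a_n+\ga|^2}\ge \frac{\ga\,{\rm Re}\,\la}{n^2|a_n+\ga|^2}.
\]
Since $|\delta_n\om|\le\|\om'\|_\infty|\eta|$, the quantity $|a_n+\ga|$ is bounded uniformly in $n$ and $k$. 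Hence ${\rm Re}\,\bD_n(\la,\eta)\ge c\,{\rm Re}\,\la$ and $|\bD_n^{-1}|\le C(\la,\eta)$ for all $n$. In fact an expansion using $\hsum\delta_n\om(\eta,k)=0$ (a consequence of shift invariance) gives $\bD_n\to \la/\ga+\eta^2\int_{\bbT}(\om')^2\dd k/\ga^2$.

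\emph{Step 2: splitting the weight.} Since $\hsum{\cal L}_nf=0$, we may subtract from the weight $g_n(k)=\frac{\la+in\delta_n\om}{a_n+\ga}$ any constant in $k$. Split $g_n=g_n^{(1)}+g_n^{(2)}$, with $g_n^{(1)}=\la/(a_n+\ga)$ and $g_n^{(2)}=in\delta_n\om/(a_n+\ga)$.

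The first piece is harmless. Cauchy--Schwarz and \eqref{030204-26} give
\[
\Big|\hsum g^{(1)}_n{\cal L}_n\hat u_{n,+}\Big|\le C|\la|\,\|\hat u_{n,+}(\la,\eta)\|_{\mL^2(\hT_n)} .
\]
By Lemma \ref{prop042302-26}, $\lnm\hat u_{n,+}(\la)\rnm_\eta^2=O(1/n)$, so this contribution to $\E_n|{\rm II}_n|^2$ is $O(1/n)$.

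\emph{Step 3: the $\delta_n\om$-part (main obstacle).} Here $g^{(2)}_n=O(n)$, and the crude bound $n\,\lnm \hat u_{n,+}\rnm_\eta=O(\sqrt n)$ is useless. The plan is to exploit the equations rather than the norms. Write $\hsum g^{(2)}_n{\cal L}_n\hat u_{n,+}=\hsum (g^{(2)}_n-{\cal R}_n g_n^{(2)})\hat u_{n,+}$. Solve \eqref{un-} for $\hat u_{n,+}$:
\[
\hat u_{n,+}=\frac{-(\la+2\ga n^2)\hat u_{n,-}+\tilde U_{n,-}(0)+{\frak R}_{n,u,-}}{2n^2\bar\om_n}.
\]
This produces three terms:
\begin{itemize}
\item an initial-data term of order $n^{-1}\lnm\tilde U_{n,-}(0)\rnm_\eta\to0$, by Proposition \ref{prop030204-26};
\item a noise term $n^{-1}\hsum h_n{\frak R}_{n,u,-}$, with $h_n$ bounded;
\item a term $n\hsum h_n\hat u_{n,-}$.
\end{itemize}

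For the noise term, I would not bound ${\frak R}_{n,u,-}$ in norm, since Proposition \ref{prop022302-26} only gives order $n$. Instead I would apply the It\^o isometry to the scalar $\hsum_k h_n(k){\frak R}_{n,u,-}(\la,\eta,k)$ directly. The $k$-average inside the stochastic integral should cancel most of the noise, as it did exactly in \eqref{eq:sum0}: the integrand is a difference of shifts $\delta W_n\pm iU_{n,-}$ at $k$ and $k+\ell'$, and $h_n$ is Lipschitz in $k$. The resulting quadratic variation is controlled by $\int_0^\infty e^{-2{\rm Re}\la t}\big(\lnm\delta W_n\rnm^2+\lnm U_{n,-}\rnm^2\big)\dd t$. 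Its expectation is $O(n^{-1})$ by the analogue of Lemma \ref{prop032302-26}, which I would derive from \eqref{050204-26} for the non-transformed fields if needed.

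For the term $n\hsum h_n\hat u_{n,-}$, I would iterate once more. Substitute $\hat u_{n,-}$ from \eqref{un+}; the $2n^2\bar\om_n$ factor again gains $n^{-2}$. The dangerous piece left is $n\,\hsum \tilde h_n\,{\cal L}_n(\hat u_{n,+}-\hat w_{n,0})$. I would bound it by $n\,\cD_n(\hat u_{n,+}-\hat w_{n,0})^{1/2}$, using Lemma \ref{prop032302-26}.

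This is exactly where I expect the main difficulty: the available rate $\cD_n=O(n^{-1})$ only gives $O(\sqrt n)$. Closing the argument will require either
\begin{itemize}
\item a sharper, $n^{-2}$-order Dirichlet bound, obtained by reusing the energy identity \eqref{072302-26aa} with the noise terms estimated via the same averaging cancellation as in the noise term above rather than by Proposition \ref{prop022302-26}; or
\item handling the smooth-weight sum $\hsum \tilde h_n(\hat u_{n,+}-\hat w_{n,0})$ jointly with \eqref{wn+}--\eqref{wn-}, in the spirit of \eqref{041902-26}, so that it reduces again to initial-data and averaged-noise terms that vanish as $n\to+\infty$.
\end{itemize}
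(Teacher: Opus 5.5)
Your Steps 1 and 2 are correct, but Step 3 does not close. By your own account, the iteration through \eqref{un-} and \eqref{un+} ends at $n\,\cD_n\big(\hat u_{n,+}-\hat w_{n,0}\big)^{1/2}=O(\sqrt n)$. Neither escape route you list is carried out, and an $n^{-2}$ Dirichlet rate is not provided by the estimates of Section~\ref{sec:proofdir}.

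The missing idea is an exact algebraic cancellation, not a dynamical estimate. Consider the reflection $k\mapsto -k-\frac{\eta}{n}$ of $\hT_n$. It swaps the two factors in each product defining $Y_{n,\pm}(t,\eta,k)$. Hence $U_{n,+}(t,\eta,\cdot)$, its mean, and therefore $\hat u_{n,+}(\la,\eta,\cdot)$ are invariant under it. On the other hand, $\delta_n\om(\eta,\cdot)$ is odd under it because $\om$ is even. Together with $\hsum\delta_n\om(\eta,k)=0$, this gives Lemma~\ref{lm010303-26}:
\[
\hsum \delta_n\om(\eta,k)\,{\cal L}_n\hat u_{n,+}(\la,\eta,k)=-\hsum \delta_n\om(\eta,k)\,\hat u_{n,+}(\la,\eta,k)=0 .
\]
Now expand your dangerous weight as
\[
g_n^{(2)}(k)=\frac{in\,\delta_n\om(\eta,k)}{\frac{\la}{n^2}+\ga}+\frac{[\delta_n\om(\eta,k)]^2}{\big(\frac{\la}{n^2}+\ga\big)\big(a_n(k)+\ga\big)} .
\]
The first piece is $O(n)$, but it is a constant multiple of $\delta_n\om(\eta,\cdot)$, so it contributes exactly zero. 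The second piece is bounded by $\|\om'\|_\infty^2\eta^2/\ga^2$. It is therefore handled exactly like your $g_n^{(1)}$, via Cauchy--Schwarz, \eqref{030204-26} and Lemma~\ref{prop042302-26}, which gives $\E_n|{\rm II}_n|^2=O(n^{-1})$. This is the paper's splitting ${\rm II}_n=A_n^{(1)}+A_n^{(2)}$.

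The intermediate claims of your Step 3 are also not supported as stated. The integrand of $\mathfrak{R}_{n,u,-}$ involves the non-centered fields $\delta W_n$, $U_{n,-}$ at all wave numbers $\eta-n\ell'$. For these, only $\int_0^t\big(\lnm\delta W_n(s)\rnm_{\eta'}^2+\lnm U_{n,-}(s)\rnm_{\eta'}^2\big)\dd s\le C(t+1)$ is available (Proposition~\ref{cor011002-26a}). The $O(n^{-1})$ bound of Lemma~\ref{prop032302-26} concerns Laplace transforms of the centered fluctuations, which is a different object. Moreover, the $U_{n,-}$ terms at $k$ and at $k+\ell'$ enter the noise integrand with the same sign. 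So averaging in $k$ against a Lipschitz $h_n$ produces no shift cancellation for them. The pieces generated by your iteration are thus not shown to be small by any available bound. It is the reflection symmetry, not further iteration of the equations, that makes the total small.
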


	\begin{lemma} \label{lem:limIII} For any $\la$ such that ${\rm Re}\,\la >0$ and any $\eta\in\bbZ$,
		\begin{equation}
			\label{eq:limIII} \lim_{n\to+\infty}  \E_n\Big[\big|{\rm III}_n(\la,\eta)\big|^2 \Big] = 0. 
		\end{equation}
		
	\end{lemma}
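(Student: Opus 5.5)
The plan is to isolate the part of the weight in ${\rm III}_n$ that actually varies with $k$, and then estimate the resulting stochastic integral through the Itô isometry. The decisive gain has to come from a bound on the \emph{sum over $k$} of the Wigner-type amplitudes appearing in ${\frak r}_{n,w,+}$, rather than from their $\mL^2(\hT_n)$-norms.

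\emph{Step 1: lower bound on $\bD_n$.} Write $a_k:=\frac{\la}{n^2}+\frac{i\delta_n\om(\eta,k)}{n}$, so that $\bD_n(\la,\eta)=n^2\hsum \frac{a_k}{a_k+\ga}$. Since $|\delta_n\om|\le\|\om'\|_\infty|\eta|$, we have $|a_k|\to0$ uniformly in $k$. Moreover
\[
{\rm Re}\,\frac{a_k}{a_k+\ga}=\frac{|a_k|^2+\ga\,{\rm Re}\,a_k}{|a_k+\ga|^2}\ge \frac{c\,{\rm Re}\,\la}{n^2}.
\]
Hence $|\bD_n(\la,\eta)|\ge c\,{\rm Re}\,\la$ uniformly in $n$. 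In fact, using $\hsum\delta_n\om(\eta,k)=0$ (translation invariance of $\hsum$), $\bD_n$ converges to $\frac{\la}{\ga}+\frac{\eta^2}{\ga^2}\int_\T[\om'(k)]^2\dd k$, but only the lower bound is needed. It therefore suffices to show that $\E_n\big|\hsum f_n(k){\frak R}_{n,w,+}(\la,\eta,k)\big|^2\to0$, where $f_n(k):=\frac{\la+in\delta_n\om(\eta,k)}{\la+in\delta_n\om(\eta,k)+\ga n^2}$.

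\emph{Step 2: removing the constant part of the weight.} By \eqref{eq:sum0} we have $\hsum{\frak R}_{n,w,+}=0$, so $f_n$ may be replaced by $f_n-f_n^0$ with $f_n^0:=\frac{\la}{\la+\ga n^2}$. A direct expansion gives
\[
f_n(k)-f_n^0=\frac{i\delta_n\om(\eta,k)}{\ga n}+g_n(k),\qquad \|g_n\|_\infty\le \frac{C}{n^2}.
\]
The contribution of $g_n$ is handled by Cauchy--Schwarz and Proposition~\ref{prop022302-26}: it is bounded by $Cn^{-4}\cdot n^2(1+1/{\rm Re}\,\la)\to0$. The same crude argument applied to the leading term only gives an $O(1)$ bound, so we must show that
\[
\frac1{n^2}\,\E_n\Big|\hsum \delta_n\om(\eta,k)\,{\frak R}_{n,w,+}(\la,\eta,k)\Big|^2\to0 .
\]

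\emph{Step 3: Itô isometry and transfer of $\cL_n$.} By the Itô isometry, the quantity in Step 2 equals
\[
\ga\int_0^\infty e^{-2{\rm Re}\,\la t}\,\E_n\Big[\underset{\ell'\in\hT_n}{\hat\sum}\Big|\hsum \delta_n\om(\eta,k)\,{\frak r}_{n,w,+}(t,\eta,k,\ell')\Big|^2\Big]\dd t .
\]
The explicit factor $n^2$ coming from $\dd\thN(\ga n^2t,\cdot)$ is cancelled by the prefactor $1/n^2$. Inside ${\frak r}_{n,w,+}$ I would do the following:
\begin{itemize}
\item move $\cL_n$ onto the test function, using that $\cL_n$ is self-adjoint and $\cL_n\delta_n\om=-\delta_n\om$ (because $\delta_n\om$ has mean zero);
\item shift $k\mapsto k-\ell'$ in the terms evaluated at $k+\ell'$.
\end{itemize}
Every term then takes the form $\hsum h(k)X(t,\eta-n\ell',k)$, with $\|h\|_\infty\le C$ and $X\in\{W_{n,\pm},U_{n,\pm}\}$. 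For $X=W_{n,+}$, for instance, this is $\frac1{2n}\hsum h(k)\,\hat\psi(k+\ell'+\tfrac{\eta}{n})\,\hat\psi^\star(k)$. After averaging $|\cdot|^2$ over $\ell'$, this becomes, by Parseval in the variable $\ell'$, a sum of the form $\frac{1}{4n^2}\sum_{x}|\check a_x|^2|\check b_x|^2$ with $a=h\hat\psi^\star$ and $b=\hat\psi$. By Cauchy--Schwarz in $x$, this is bounded by
\[
\frac{C}{n^2}\Big(\sum_x|\check a_x|^4\Big)^{1/2}\Big(\sum_x|\psi_x(n^2t)|^4\Big)^{1/2}.
\]

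\emph{Step 4 (main obstacle): vanishing of the local quartic energy.} One must show that
\[
\int_0^\infty e^{-2{\rm Re}\,\la t}\,\E_n\Big[\frac1{n^2}\sum_x|\psi_x(n^2t)|^4\Big]\dd t\longrightarrow 0,
\]
that is, that in time average the energy does not concentrate on $o(n)$ sites. This is the step I expect to be hardest.

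The route I would try first is to express $\sum_x|\psi_x|^4$ through the Fourier--Wigner functions at \emph{macroscopically large} frequencies $\eta'=n\ell'$. Namely, $\frac{1}{n^2}\sum_x|\psi_x|^4$ is a constant multiple of $\frac1n\sum_{\eta'\in\T_n}\big|{\cal R}_nW_{n,+}(t,\eta')\big|^2$. For $\eta'$ of order $n$, the equation for ${\cal R}_nW_{n,+}$ (obtained by averaging \eqref{012403-26} over $k$) is dominated by the fast oscillation $-in\delta_n\om$, which is of order $n^2$ there. Integrating by parts in time, as in Corollary~\ref{cor012502-26}, and using the Dirichlet and $U_{n,-}$ bounds of Proposition~\ref{cor011002-26a} together with Theorem~\ref{thm011002-26}, should give a bound of order $o(1)$ for the time integral. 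The initial-time contribution is controlled by \eqref{eq:3z}.

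Should this route fail, the fallback is to use directly the dissipation $-2\ga n^2\,\E_n[\cD_n(\cdot)]$ in \eqref{011002-26}, summed over $\eta$. This gives smallness of the $k$-nonconstant parts, leaving only the $k$-averages ${\cal R}_n(\cdot)$ to be treated by the oscillation argument above.
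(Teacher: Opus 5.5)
Your Steps 1--3 are sound, up to two small repairs. First, the bound $\sum_x|\check a_x|^4\le C\sum_x|\psi_x(n^2t)|^4$ that you tacitly use requires $\sum_x|\check h_x|\le C$. This holds for $h=\delta_n\om(\eta,\cdot)$ at fixed $\eta$, by smoothness, but it does not follow from $\|h\|_\infty\le C$. Second, after the shift $k\mapsto k-\ell'$ the weight depends on $\ell'$, so you must attach it to the other factor before applying Parseval in $\ell'$.

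\textbf{The genuine gap is Step 4.} It carries the entire difficulty and is not proved. The estimate is plausible (at equilibrium the integrand is $O(1/n)$), but it is a non-concentration statement, uniform over all frequencies $\eta'$ up to order $n$. Nothing available gives it: Theorem~\ref{thm011002-26} only yields $\int_0^t\E_n|{\cal R}_nW_{n,+}(s,\eta')|^2\dd s\le C(t+1)$ for each $\eta'$.

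\emph{Your first route does not work as described.} Averaging \eqref{012403-26} over $k$ removes the $\cL_n$ terms and the whole martingale part: the two $U_{n,-}$ terms cancel after $k\mapsto k-\ell'$. Since $\hsum\delta_n\om(\eta',k)=0$, what is left is $\partial_t{\cal R}_nW_{n,+}(t,\eta')=-in\hsum\delta_n\om(\eta',k)\big(W_{n,+}(t,\eta',k)-{\cal R}_nW_{n,+}(t,\eta')\big)$. So ${\cal R}_nW_{n,+}$ does not oscillate at all. It is a conserved mode driven by the $k$-nonconstant part, i.e.\ a current, and there is no frequency to integrate by parts against. Corollary~\ref{cor012502-26} does not help either: it is an $O(n)$ bound and only controls large $|{\rm Im}\,\la|$. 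Controlling the current means writing equations for $\hsum h(k)W_{n,\pm}(t,\eta',k)$ and the $U_{n,\pm}$ analogues. That reopens the full hierarchy, with It\^o terms at all shifted frequencies, i.e.\ a genuine fourth-moment analysis, which is missing.

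\emph{The fallback fails too.} Averaged over $\eta\in\T_n$, the quantity differentiated on the left of \eqref{011002-26} equals $2\E_n[({\cal E}^{(n)}_{\rm tot})^2]$, which is constant in time. Hence the dissipation is exactly offset by the It\^o terms and yields no smallness. For fixed $\eta$ the $k$-nonconstant parts are not small either: for the Gibbs measure at constant temperature $T$, $\lnm\delta W_n(t)\rnm_\eta^2=\lnm U_{n,-}(t)\rnm_\eta^2=T^2/2+O(1/n)$. Nor can the time-zero term be handled by \eqref{eq:3z}, which is a Fourier-side bound and allows rare, strongly concentrated configurations.

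\textbf{Comparison with the paper.} Your remark that the $\mL^2(\hT_n)$ route only gives $O(1)$ is exactly where you part ways with the paper. There, after the same reduction to $\bar R^{(1)}_n$, the following is done:
\begin{itemize}
\item It\^o's isometry is written with the $\ell'=0$ component only.
\item The $k$-sum is bounded by Cauchy--Schwarz.
\item Lemma~\ref{prop021003-26} is invoked, asserting $\int_{\R}\E_n\|\hat{\mathfrak r}_{n,{\rm x},\iota}({\rm Re}\,\la+i\xi,\eta,\cdot,0)\|^2_{\mL^2(\hT_n)}\dd\xi=O(n^{-2})$. This is obtained by integrating \eqref{072302-26aa} over ${\rm Im}\,\la$ and discarding the martingale pairings via Lemma~\ref{cor011003-26}.
\end{itemize}
The paper thus never needs non-concentration, but you cannot borrow that step. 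At equilibrium, $\hat{\mathfrak r}_{n,u,-}(\cdot,\eta,k,0)$ is $2i$ times the Laplace transform of $U_{n,-}(\cdot,\eta,k)$. By Plancherel its $\xi$-integral therefore equals $2\pi T^2/{\rm Re}\,\la+O(1/n)$, not $O(n^{-2})$.

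The slip is in Lemma~\ref{cor011003-26}. In $S^{+,+}_{n,w}$ the limiting kernel retains the diagonal terms $j=j'$, but these are dropped when the sum is rewritten with $\int_0^{t-}$. Those terms are the expected quadratic variation, of order $n^2$, and they offset the factor $2\ga n^2$ in \eqref{051003-26}.

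Your diagnosis is therefore the correct one. An estimate of the type of your Step 4 appears genuinely necessary: control of $\E_n\big|\hsum h(k)W_{n,\pm}(t,\eta',k)\big|^2$, and of the $U_{n,\pm}$ analogues, uniformly in $\eta'$. It still has to be supplied.
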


	Lemmas \ref{lem:limI}, \ref{lem:limII} and \ref{lem:limIII} are proved in Sections \ref{sec:proofI}, \ref{sec:proofII} and \ref{sec:proofIII}, respectively. 

\subsection{Proof of Lemma \ref{lem:limI}} \label{sec:proofI}

Recall the definition of ${\rm I}_n(\lambda,\eta)$ in \eqref{eq:defI}. We first need to understand the limit of the coefficient $\bD_n(\la,\eta)$, and we claim:

\begin{lemma} \label{lem:D}
	For any $\la$ such that ${\rm Re}\,\la >0$, any $\eta \in\bbZ$, 
		\begin{equation}
		\label{140402-26}
		{\rm Re}\,\bD_n(\la,  \eta)\ge   \frac{\ga{\rm
				Re}\,  \la }{\Big[\ga+\frac{1}{n^2}{\rm
				Re}\lambda \Big]^2 +\sup_{k \in\hT_n} [\om'( k)]^2}
	\end{equation}
and
	\begin{equation}
		\label{150402-26}
		\lim_{n\to+\infty}\bD_n(\la,  \eta)=\frac{1}{\gamma}\big(\la+\bD(2\pi \eta)^2\big),
	\end{equation}
	with $\bD$ defined in \eqref{042303-26} as in Theorem \ref{thm022303-26}. 
\end{lemma}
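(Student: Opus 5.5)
The plan is to simplify $\bD_n$ algebraically first, and then treat the lower bound and the limit separately. Put
\[
z_n(k):=\frac{\la}{n^2}+\frac{i\delta_n\om(\eta,k)}{n},
\]
so that $1-\frac{\ga}{\ga+z}=\frac{z}{\ga+z}$ and therefore
\[
\bD_n(\la,\eta)=n^2\hsum\frac{z_n(k)}{\ga+z_n(k)} .
\]
The facts used about $\om$ are these: $\om_0>0$ makes $\om$ smooth on $\T$, and $|\delta_n\om(\eta,k)|\le|\eta|\,\|\om'\|_\infty$ uniformly in $k$ and $n$.

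\textbf{Lower bound \eqref{140402-26}.} Write $z=x+iy$ with $x={\rm Re}\,\la/n^2>0$ and $y\in\R$. Then
\[
{\rm Re}\,\frac{z}{\ga+z}=\frac{{\rm Re}\big(z(\ga+\bar z)\big)}{|\ga+z|^2}=\frac{\ga x+x^2+y^2}{(\ga+x)^2+y^2}.
\]
As a function of $y^2$ this ratio is nondecreasing, because $\ga x+x^2\le(\ga+x)^2$. Its minimum is therefore at $y=0$, which gives
\[
{\rm Re}\,\frac{z}{\ga+z}\ge\frac{\ga x}{(\ga+x)^2}.
\]
Multiplying by $n^2$ and averaging over $k$ yields
\[
{\rm Re}\,\bD_n\ge\frac{\ga\,{\rm Re}\,\la}{\big(\ga+{\rm Re}\,\la/n^2\big)^2}.
\]
This is at least the right-hand side of \eqref{140402-26}, since adding $\sup_k[\om'(k)]^2$ to the denominator only decreases it. (A cruder bound that keeps $y^2$ in the denominator and uses $|y|\lesssim\|\om'\|_\infty$ would also work, but only if ${\rm Im}\,\la$ is controlled, so the monotonicity argument is the cleaner route.)

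\textbf{Limit \eqref{150402-26}.} Use the exact identity
\[
\frac{z}{\ga+z}=\frac{z}{\ga}-\frac{z^2}{\ga(\ga+z)},
\]
which splits $\bD_n=A_n-B_n$.

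For the first part, $A_n=\frac{n^2}{\ga}\hsum z_n(k)=\frac{\la}{\ga}+\frac{i n}{\ga}\hsum\delta_n\om(\eta,k)$. The last sum vanishes identically, because $\hsum\big(\om(k+\eta/n)-\om(k)\big)=0$ by translation invariance of the uniform measure on $\hT_n$ (note $\eta/n\in\hT_n$). Hence $A_n=\la/\ga$ exactly.

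For the second part, $n^2z_n^2=-\big(\delta_n\om(\eta,k)\big)^2+O(1/n)$ uniformly in $k$. Also $|z_n|\le C/n$ and ${\rm Re}(\ga+z_n)\ge\ga$, so $(\ga+z_n)^{-1}\to\ga^{-1}$ uniformly. Consequently
\[
B_n=-\frac1{\ga^2}\hsum\big(\delta_n\om(\eta,k)\big)^2+o(1).
\]

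It remains to identify the limit of this Riemann sum. By Taylor expansion, $\delta_n\om(\eta,k)=\eta\,\om'(k)+O(\eta^2/n)$ uniformly in $k$, so
\[
\hsum\big(\delta_n\om\big)^2\to\eta^2\int_\T[\om']^2\,\dd k .
\]
Combining the two parts,
\[
\bD_n\to\frac1\ga\Big(\la+\frac{\eta^2}{\ga}\int_\T[\om']^2\,\dd k\Big)=\frac1\ga\big(\la+\bD(2\pi\eta)^2\big),
\]
where the last equality uses \eqref{042303-26}.

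The only delicate point is the apparently divergent term $i n\hsum\delta_n\om$, which is of order $n$ termwise. It is killed exactly by the translation-invariance cancellation. Everything else is routine uniform Taylor control, which relies on the smoothness of $\om$ guaranteed by $\om_0>0$.
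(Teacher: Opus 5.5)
Your proof is correct. For the limit \eqref{150402-26} it follows the paper's argument. The paper writes $\bD_n=\la/\ga+\bar\bD_n+o_n$ and subtracts the vanishing sum $n\hsum i\delta_n\om(\eta,k)/(\la/n^2+\ga)$ to expose the quadratic term. Your identity $\frac{z}{\ga+z}=\frac{z}{\ga}-\frac{z^2}{\ga(\ga+z)}$ uses the same cancellation $\hsum\delta_n\om(\eta,k)=0$, but packages it so that the linear part equals $\la/\ga$ exactly. You also spell out the Taylor and Riemann-sum step, which the paper leaves implicit.

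For the lower bound \eqref{140402-26} your route is genuinely different and more careful. In \eqref{140402-26z} the paper computes the real part termwise but takes the imaginary part of $z_n(k)$ to be $\delta_n\om(\eta,k)/n$, dropping ${\rm Im}\,\la/n^2$. It then bounds $(\delta_n\om/n)^2$ in the denominator by $\sup_k[\om'(k)]^2$, which is the ``cruder bound'' you mention. Since the expression is increasing in $y^2$, that intermediate display is only justified for real $\la$.

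Your monotonicity observation puts the minimum at $y=0$ without any information on $y$. It therefore covers arbitrary ${\rm Im}\,\la$, as the lemma requires. It also gives the stronger bound ${\rm Re}\,\bD_n\ge\ga\,{\rm Re}\,\la/(\ga+{\rm Re}\,\la/n^2)^2$, with no $\sup_k[\om'(k)]^2$ term. Evaluating exactly at $y=0$ gives the value $x/(\ga+x)$, so you could even state ${\rm Re}\,\bD_n\ge{\rm Re}\,\la/(\ga+{\rm Re}\,\la/n^2)$.
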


 	\proof
We have
\begin{align*}
	\bD_n(\la,  \eta)=n^2\underset{k\in\hat\T_n}{\hat{\sum}}  \frac{\frac{\la}{n^2}  +\frac{i \delta_{n}\om(\eta,k)}{n}  
	}{\frac{\la}{n^2}  +\frac{i \delta_{n}\om(\eta,k)}{n} +\ga  } .
\end{align*}
Hence,
\begin{equation}
	\label{140402-26z}
	{\rm Re}\,\bD_n(\la,  \eta)\ge n^2 \underset{k\in\hat\T_n}{\hat{\sum}} \frac{{\rm
			Re}\,\big(\frac{\la}{n^2}\big)\Big[\ga+{\rm
			Re}\,\big(\frac{\la}{n^2}\big)\Big]+\Big(\frac{\delta_n\om(\eta,k)}{n}\Big)^2}{\Big[\ga+{\rm
			Re}\,\big(\frac{\la}{n^2}\big)\Big]^2 +\Big(\frac{\delta_n\om(\eta,k)}{n}\Big)^2}
\end{equation}
for any ${\rm Re}\,\la>0$, $\eta\in\bbZ$ and \eqref{140402-26} follows.
We can write
\begin{align*}
	&\bD_n(\la,  \eta)=\frac{\la}{\ga}
	+\bar \bD_n(\la,  \eta)
	+o_n(\la,  \eta),
\end{align*}
with
\begin{align*}
	&
	\bar \bD_n(\la,  \eta) :=n\underset{k\in\hat\T_n}{\hat{\sum}}  \frac{  i \delta_{n}\om(\eta,k)  
	}{\frac{\la}{n^2}  +\frac{i \delta_{n}\om(\eta,k)}{n} +\ga
	}\qquad\mbox{and}\qquad 
	\lim_{n\to+\infty}o_n(\la,  \eta)=0.
\end{align*}
Next, since $\hat\sum_{k\in\hT_n} \delta_n\omega(\eta,k) = 0$, we have 
\begin{align*}
	\bar \bD_n(\la,  \eta)&:=  n\underset{k\in\hat\T_n}{\hat{\sum}}  \bigg(\frac{  i \delta_{n}\om(\eta,k)  
	}{\frac{\la}{n^2}  +\frac{i \delta_{n}\om(\eta,k)}{n} +\ga  }-\frac{  i \delta_{n}\om(\eta,k)  
	}{\frac{\la}{n^2}   +\ga  }\bigg)\\
	&
	=-\underset{k\in\hat\T_n}{\hat{\sum}} \frac{  \big( i \delta_{n}\om(\eta,k)  \big)^2
	}{\Big[\frac{\la}{n^2}  +\frac{i \delta_{n}\om(\eta,k)}{n} +\ga
		\Big]\big[\frac{\la}{n^2}   +\ga \big]}
	=\Big(\frac{\eta}{\ga}\Big)^2\int_{\bbT}[\om'(k)]^2\dd k +o_n(\la,  \eta)
	,
\end{align*}
thus \eqref{150402-26} follows.
\qed


Using estimate \eqref{140402-26}  and the
convergence result \eqref{150402-26} we
conclude easily that for any ${\rm Re}\la>0$ and any
  $\eta\in\bbZ$, we have
\begin{align*}
     \lim_{n\to+\infty}\bbE_n\Big[\big|{\rm I}_n(\la,\eta) -\bar {\rm I}_n(\la,\eta) \big|\Big]=0,\end{align*}
     where \begin{align*}
     \bar {\rm I}_n(\la,\eta)  := (\la + \bD(2\pi\eta)^2)^{-1}\hsum  \tilde W_{n,+}(0,\eta,k).
\end{align*}
Invoking equality \eqref{eq:wigner_energy} and assumption \eqref{eq:1} we conclude that
%
%
%
%
%
%
%
%
\begin{align}
  \label{030704-26}
       \lim_{n\to+\infty}\bbE_n\Big[\big|\bar{\rm I}_n(\la,\eta)   \big|\Big]=0 .
\end{align}
This ends the proof of Lemma \ref{lem:limI}. \qed

\subsection{Proof of Lemma \ref{lem:limII}: Limit of ${\rm II}_n$} \label{sec:proofII} 

Recall the definition of $\mathrm{II}_n$ in \eqref{eq:defII}.  	
We start with the following.
\begin{lemma}
	\label{lm010303-26}
		For any $\la$ such that ${\rm Re}\,\la >0$ and $\eta \in\bbZ$, we have
	\begin{equation}
		\label{010303-26}
		\underset{k\in\hat\T_n}{\hat{\sum}} \delta_n\omega(\eta,k)\; {\cal
			L}_n \hat u_{n,+}(\la,\eta,k) =0.
	\end{equation}
\end{lemma}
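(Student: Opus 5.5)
The plan is to reduce \eqref{010303-26} to a symmetry statement for $\hat u_{n,+}$ under the reflection $k\mapsto -k-\frac{\eta}{n}$ of $\hat\T_n$, combined with the oddness of $\delta_n\om(\eta,\cdot)$ under the same reflection.

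\emph{First step: remove $\cL_n$.} By \eqref{eq:not}, ${\cal L}_n f(k)={\cal R}_n f-f(k)$, so
\[
\hsum \delta_n\om(\eta,k)\,{\cal L}_n\hat u_{n,+}(\la,\eta,k)= {\cal R}_n\hat u_{n,+}(\la,\eta)\,\hsum\delta_n\om(\eta,k)-\hsum \delta_n\om(\eta,k)\,\hat u_{n,+}(\la,\eta,k).
\]
Since $\eta/n\in\hat\T_n$, the shift $k\mapsto k+\eta/n$ is a bijection of $\hat\T_n$. Hence $\hsum \delta_n\om(\eta,k)=n\hsum\big(\om(k+\tfrac\eta n)-\om(k)\big)=0$, which was already used in the proof of Lemma \ref{lem:D}. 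It therefore suffices to show
\[
\hsum \delta_n\om(\eta,k)\,\hat u_{n,+}(\la,\eta,k)=0 .
\]

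\emph{Second step: the reflection $\sigma(k):=-k-\frac{\eta}{n}$.} This map is an involutive bijection of $\hat\T_n$.

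Oddness of $\delta_n\om$: because $\om$ in \eqref{eq:omega} is even and $1$-periodic,
\[
\delta_n\om(\eta,\sigma(k))=n\big(\om(-k)-\om(-k-\tfrac\eta n)\big)=-\delta_n\om(\eta,k).
\]

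Evenness of $Y_{n,\pm}$: directly from \eqref{022210-25},
\[
Y_{n,+}(t,\eta,\sigma(k))=\tfrac1{2n}\hat\psi(n^2t,-k)\,\hat\psi(n^2t,k+\tfrac\eta n)=Y_{n,+}(t,\eta,k),
\]
and in the same way $Y_{n,-}(t,\eta,\sigma(k))=Y_{n,-}(t,\eta,k)$, the product there being $\hat\psi^\star(k)\hat\psi^\star(-k-\tfrac\eta n)$. By \eqref{020204-26}, $U_{n,+}$ is a linear combination of $Y_{n,\pm}$, so it is $\sigma$-invariant pathwise for every $t$.

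Passing to $\hat u_{n,+}$: taking expectations, $\bar U_{n,+}$ and hence $\tilde U_{n,+}$ are also $\sigma$-invariant. Since the Laplace transform \eqref{lwu} is linear and acts only in $t$, $\hat u_{n,+}(\la,\eta,\sigma(k))=\hat u_{n,+}(\la,\eta,k)$ holds $\bbP_n$-a.s. The transform is well defined by Theorem \ref{thm011002-26}.

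\emph{Third step: conclude.} Change variables $k\mapsto\sigma(k)$ in the sum. The sum $S:=\hsum \delta_n\om(\eta,k)\,\hat u_{n,+}(\la,\eta,k)$ then satisfies $S=-S$, so $S=0$.

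The only step needing care is the bookkeeping that $\sigma$ preserves $\hat\T_n$ and exactly swaps the two factors in $Y_{n,\pm}$, which relies on $\eta\in\bbZ$. There is no analytic obstacle.
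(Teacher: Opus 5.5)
Your proof is correct and is essentially the paper's argument. The constant part ${\cal R}_n\hat u_{n,+}$ of ${\cal L}_n\hat u_{n,+}$ drops out because $\delta_n\om(\eta,\cdot)$ has zero mean over $\hT_n$, and the remaining sum vanishes under the reflection $k\mapsto -k-\frac{\eta}{n}$, which makes $\delta_n\om(\eta,\cdot)$ odd and leaves $U_{n,+}$ invariant. The only cosmetic difference is that the paper performs the reflection on $U_{n,+}(t)$ and $\tilde U_{n,+}(t)$ for each fixed $t$ and then takes the Laplace transform, whereas you first transfer the invariance to $\hat u_{n,+}$.
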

\proof
Recall from \eqref{020204-26} that
\begin{align*}
	 U_{n,+}(t,\eta,k) :=\frac{1}{4n}  
	\Big[\widehat\psi\Big(n^2t,k+\frac{\eta}{n}\Big)
	\widehat\psi (n^2t,-k) +
	\widehat\psi^\star\Big(n^2t,-k-\frac{\eta}{n}\Big)
	\widehat\psi ^\star(n^2t,k)\Big].
\end{align*}
Hence
\begin{align}
  \label{tAn}
	\mathrm{A}_n & := \underset{k\in\hat\T_n}{\hat{\sum}} \Big(\om\Big(k+\frac{\eta}{n}\Big)-\om(k)\Big)  {\cal
		L}_n   U_{n,+}(t,\eta,k) \notag
	\\ & =-\underset{k\in\hat\T_n}{\hat{\sum}} \Big(\om\Big(k+\frac{\eta}{n}\Big)-\om(k)\Big)
	U_{n,+}(t,\eta,k)   \\
	&
	=-\frac{1}{4n}  \underset{k\in\hat\T_n}{\hat{\sum}} \Big(\om\Big(k+\frac{\eta}{n}\Big)-\om(k)\Big)
	\Big[\widehat\psi\Big(n^2t,k+\frac{\eta}{n}\Big)
	\widehat\psi (n^2t,-k) \notag\\ & \qquad \qquad \qquad \qquad \qquad \qquad \qquad +
	\widehat\psi^\star\Big(n^2t,-k-\frac{\eta}{n}\Big)
	\widehat\psi ^\star(n^2t,k)\Big].\notag
\end{align}
Substituting $k':=-k-\frac{\eta}{n}$ the right hand side can be
rewritten as 
\begin{align*}
\mathrm{A}_n	& =
	-\frac{1}{4n}  \underset{k\in\hat\T_n}{\hat{\sum}} \Big(\om(k)-\om\Big(k+\frac{\eta}{n}\Big)\Big)
	\Big[\widehat\psi\Big(n^2t,-k\Big)
	\widehat\psi \Big(n^2t, k+\frac{\eta}{n}\Big)  \\ & \qquad \qquad \qquad \qquad \qquad \qquad \qquad +
	\widehat\psi ^\star(n^2t,k) \widehat\psi^\star\Big(n^2t,
	-k-\frac{\eta}{n}\Big)\Big]
	=-\mathrm{A}_n.
\end{align*}
We conclude that $\mathrm{A}_n=0$. In the same way
$\tilde{\mathrm{A}}_n=0$, where $\tilde{\mathrm{A}}_n$ is defined  
replacing $U_{n,+}$ by $\tilde U_{n,+}$ in \eqref{tAn}. Taking then the Laplace transform  we conclude \eqref{010303-26}. \qed

Let us now decompose
\begin{align*}
	\mathrm{II}_n(\la,\eta)=A_n^{(1)}(\la,\eta)+A_n^{(2)}(\la,\eta), \end{align*} where 
	\begin{align*}
	A_n^{(1)}(\la,\eta)&= \frac{\bD_n^{-1}(\la,
		\eta)}{\ga} \underset{k\in\hat\T_n}{\hat{\sum}} \frac{  \la+in
		\delta_{n}\om(\eta,k)    }{\frac{\la}{n^2} +\ga  } {\cal
		L}_n \hat u_{n,+}(\la,\eta,k), \\
	A_n^{(2)}(\la,\eta)&= \frac{\bD_n^{-1}(\la,
		\eta)}{\ga} \underset{k\in\hat\T_n}{\hat{\sum}} \frac{  
		[\delta_{n}\om(\eta,k)]^2 -  \la\frac{i \delta_{n}\om(\eta,k)}{n} }{\big(\frac{\la}{n^2} +\frac{i \delta_{n}\om(\eta,k)}{n} +\ga\big) \big(\frac{\la}{n^2} +\ga \big) } {\cal
		L}_n \bar u_{n,+}(\la,\eta,k).
\end{align*}
Using \eqref{un-2}, {together with Cauchy-Schwarz inequality and the fact that  (recall \eqref{030204-26}) $\|\cL_n f \|_{\mL^2(\hT_n)} \le \|f \|_{\mL^2(\hT_n)}$}, we conclude that there exists $C>0$ such that 
\[
\sup_{\eta\in\bbZ}\bbE_n \Big[|A_n^{(2)}(\la,\eta)|^2\Big]\le\frac{C}{n^2}.\]
Thanks to Lemma \ref{lm010303-26} we obtain
\begin{align*}
	A_n^{(1)}(\la,\eta)
	= \frac{\bD_n^{-1}(\la,
		\eta)}{\ga}\underset{k\in\hat\T_n}{\hat{\sum}} \frac{  \la    }{\frac{\la}{n^2} +\ga  } {\cal
		L}_n \hat u_{n,+}(\la,\eta,k).
\end{align*}
Hence, using again  \eqref{un-2}, we also have
\[
\sup_{\eta\in\bbZ}\bbE_n \Big[|A_n^{(1)}(\la,\eta)|^2\Big]\le\frac{C}{n^2}.\]
Thus \eqref{eq:limII} follows, concluding the proof of Lemma \ref{lem:limII}.\qed

\subsection{Proof of Lemma \ref{lem:limIII}: Limit of ${\rm III}_n$}

\label{sec:proofIII}

We are left with the problem of estimating
$\bbE_n\big[|{\rm III}_n(\la,\eta)|^2\big]$ and this is the most involved part. We can write
 \begin{align*}
   {\rm III}_n(\la,\eta)=R_n^{(1)}(\la,\eta)+R_n^{(2)}(\la,\eta),\end{align*}
   where \begin{align*}
    R_n^{(1)}(\la,\eta)&:= -\frac{\bD_n^{-1}(\la,
     \eta)}{\ga} \hsum \frac{  \big(\la+in
     \delta_{n}\om(\eta,k)\big){\frak R}_{n,w,+}(\la,\eta,k)     }{ \la
     +\ga n^2  }, \\
     R_n^{(2)}(\la,\eta)&:= \frac{\bD_n^{-1}(\la,
     \eta)}{\ga} \hsum \frac{ \la i n\delta_{n}\om(\eta,k)  -n^2 
     [\delta_{n}\om(\eta,k)]^2 
     }{\big( \la +i n \delta_{n}\om(\eta,k) +\ga n^2\big) \big(\la
     +\ga n^2 \big) } {\frak R}_{n,w,+}(\la,\eta,k) .
 \end{align*}
 We start with $R_n^{(2)}$ which is easier: we have
 \begin{multline*}
    \bbE_n\Big[\big| R_n^{(2)}(\la,\eta)\big|^2\Big]=\frac{|\bD_n^{-2}(\la,
     \eta)|}{\ga^2} \underset{k,k'\in\hT_n}{\hat\sum} \frac{ \la i n\delta_{n}\om(\eta,k)  -n^2 
     [\delta_{n}\om(\eta,k)]^2 
     }{\big( \la +i n \delta_{n}\om(\eta,k) +\ga n^2\big) \big(\la
     +\ga n^2 \big) }\\
     \times \bigg(\frac{ \la i n\delta_{n}\om(k',\eta)  -n^2 
     [\delta_{n}\om(k',\eta)]^2 
     }{\big( \la +i n \delta_{n}\om(k',\eta) +\ga n^2\big) \big(\la
     +\ga n^2 \big) }\bigg)^\star\bbE_n\Big[ {\frak R}_{n,w,+}(\la,\eta,k) {\frak R}_{n,w,+}^\star(\la,\eta,k')\Big] .
 \end{multline*}
%
 Using the fact that
 \[   \Big| \frac{ \la i n\delta_{n}\om(\eta,k)  -n^2 
 	[\delta_{n}\om(\eta,k)]^2 
 }{\big( \la +i n \delta_{n}\om(\eta,k) +\ga n^2\big) \big(\la
 	+\ga n^2 \big) } \Big|\le\frac{C}{n^2}\] together with estimate \eqref{032302-26}
 we get 
\begin{align}
    \bbE_n\Big[\big| R_n^{(2)}(\la,\eta)\big|^2\Big]  \le  \frac{C}{n^2}. \label{eq:Rn2}
 \end{align} 
Therefore, to finish the proof of Lemma \ref{lem:limIII} we are left to proving
  \begin{equation}
    \label{101003-26}
    \lim_{n\to+\infty} \bbE_n\Big[\big| R_n^{(1)}(\la,\eta)\big|^2\Big]=0.
    \end{equation}
First of all, recall from \eqref{eq:sum0} that 
\begin{align*} 
	\hsum {\frak R}_{n,w,+}(\la,\eta,k)  =0.
\end{align*}
therefore 
we can write 
\begin{align}
  \label{050704-26}
     R_{n}^{(1)}(\la,\eta)  = -\frac{\bD_n^{-1}(\la,
     \eta)}{\ga} \hsum \frac{  in
     \delta_{n}\om(\eta,k) {\frak R}_{n,w,+}(\la,\eta,k)     }{ \la
     +\ga n^2  }. 
\end{align}
Then, as in Section \ref{sec:proofI} we replace $\bD_n$ with its limit: we let
\begin{equation}
  \label{060704-26}
  \bar R_{n}^{(1)}(\la,\eta)= - {(\la + \bD(2\pi\eta)^2)^{-1}} \hsum \frac{  in
     \delta_{n}\om(\eta,k){\frak R}_{n,w,+}(\la,\eta,k)     }{ \la
     +\ga n^2  }.
 \end{equation}
By virtue of \eqref{032302-26}, for  fixed $\eta$
   and ${\rm Re}\,\la>0$   there exists $C>0$ such that  $| R_{n}^{(1)}(\la,\eta)-\bar
   R_{n}^{(1)}(\la,\eta)|\le C$. Thus, by the dominated convergence Theorem
  we conclude  that
 \begin{align*}
   &
   \lim_{n\to+\infty}  \bbE_n\Big[\big| R_{n}^{(1)}(\la,\eta)-  
     \bar R_{n}^{(1)}(\la,\eta) \big|^2\Big] =0 .
 \end{align*}
 To show \eqref{101003-26} it suffices therefore to prove that
 \begin{align}
   \label{101003-26a}
   \lim_{n\to+\infty}  \bbE_n\Big[\big|  
     \bar R_{n}^{(1)}(\la,\eta) \big|^2\Big] =0 .
 \end{align}

\subsubsection{Intermediate steps and conclusion}

{Recall the definition of the stochastic terms ${\frak R}_{n,{\rm
      x},\iota}$ in \eqref{Rn} and the functions ${\frak r}_{n,{\rm
      x}, \iota}$ from \eqref{eq:compact}. We already proved an
  estimate for those terms in Proposition \ref{prop022302-26}. In the
present section we derive sharper estimates for these quantities.}
For $\iota\in\{-,+\}$ we let
\begin{equation}
  \label{030703-26a}
  \begin{split}
&
  {\frak W}_{n,\pm}^{w,\iota}:= \pm  i U_{n,-}  +{\cal L}_n\big( 
    W_{n,0} -U_{n,+} \big)  -\iota \delta W_{n} ,\\
    &
     {\frak U}_{n}^{u,+}:= {\cal L}_n\big(U_{n,+} - W_{n,0}  \big)
    \\
    &
     {\frak U}_{n,\pm}^{u,-}:= iU_{n,-}\pm  \delta W_{n}.
         \end{split}
       \end{equation}
Note that 
      \begin{align*}
        &
         {\frak r}_{n,w,+}(t,\eta ,k,\ell')=    {\frak W}^{w,+}_{n,-}
    (t,\eta-n\ell',k) +{\frak W}^{w,+}_{n,+} (t,\eta-n\ell',k+\ell')
        \\
        &
         {\frak r}_{n,w,-}(t,\eta ,k,\ell')=   {\frak W}^{w,-}_{n,-}
    (t-,\eta-n\ell',k) +{\frak W}^{w,-}_{n,+} (t-,\eta-n\ell',k+\ell'),
      \\
            &
         {\frak r}_{n,u,+}(t,\eta ,k,\ell')={\frak U}_{n}^{u}(t- ,\eta-n\ell',k)  
  +
              {\frak U}_{n}^{u} (t-,\eta-n\ell',k+\ell'),\\
                &
         {\frak r}_{n,u,-}(t,\eta ,k,\ell')=
 {\frak U}_{n,+}^{u,-} (t- ,\eta-n\ell',k) + {\frak U}_{n,-}^{u,-} (t-,\eta-n\ell',k+\ell').
      \end{align*}
Now, take the Laplace transforms: for any $\la$ with ${\rm Re}\,\la >0$, let
\begin{align*}
\hat {\frak r}_{n,{\rm x},\iota}(\la,\eta ,k,\ell'):=\int_0^{+\infty}e^{-\la
  t}{\frak r}_{n,{\rm x},\iota}(t,\eta ,k,\ell')\dd t ,\qquad {\rm x}\in\{w,u\}, \iota \in \{-,+\}.
\end{align*}
%
From \eqref{060704-26} we have
\begin{multline*}
	\bbE_n\Big[\big|  \bar R_{n}^{(1)}(\la,\eta) \big|^2\Big]
	=\frac{ \ga n^2|\la + \bD(2\pi\eta)^2|^{-2}}{4\ga^2} \underset{k,k'\in\hT_n}{\hat\sum} \frac{  n^2
		\delta_{n}\om(\eta,k) \delta_{n}\om(k',\eta)  }{ |\la
		+\ga n^2|^2  }\\
	\times \int_0^{+\infty} e^{-2 {\rm Re}\,\la t}\bbE_n\Big[{\frak
		r}_{n,w,+} (t,\eta ,k,0)    \big({\frak
		r}_{n,w,+} (t,\eta ,k',0)  \big)^\star\Big]\dd t.
\end{multline*}
Hence, 
\begin{align*}
	\bbE_n\Big[\big|  \bar R_{n}^{(1)}(\la,\eta) \big|^2\Big]
	& \le C \bbE_n\bigg[\int_0^{+\infty} \Big(e^{- {\rm Re}\,\la t}\big\|{\frak
		r}_{n,w,+} (t,\eta,\cdot,0)\big\|_{\mL^2(\hT_n)}\Big)^2\dd t  \bigg]\\
	&
	=\frac{C}{2\pi} \bbE_n\bigg[\int_{\R}\big\|\hat{\frak
		r}_{n,w,+} ({\rm Re}\,\la+i\xi,\eta,\cdot,0)\big\|_{\mL^2(\hT_n)}^2 \dd \xi  \bigg].
\end{align*}
The last equality follows   the Plancherel Theorem.
Therefore, \eqref{101003-26a}, thus also \eqref{101003-26}, is a
direct consequence of   the following lemma:


 \begin{lemma}
	\label{prop021003-26}
	For any $\la$ s.t.~${\rm Re}\,\la >0$,  there exists $C >0$ such that, for any $\eta \in\bbZ$, $n\ge 1$,
	\begin{equation}
		\label{081003-26}
	 \sum_{\iota=\pm} \sum_{\mathrm{x}\in\{u,w\}} \int_\R \dd \xi \Big(\bbE_n\Big[\|\hat {\frak r}_{n,\mathrm{x},\iota}(\mathrm{Re}\,\la+i\xi,\eta
	,\cdot,0)\|_{\mL^2(\hT_n)}^2\Big]\Big)  \le
		\frac{C}{n^2} .
	\end{equation}
\end{lemma}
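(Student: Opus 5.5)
By the Plancherel theorem in time, the left-hand side of \eqref{081003-26} equals $2\pi\int_0^{+\infty}e^{-2{\rm Re}\,\la t}\,\bbE_n\big[\|{\frak r}_{n,{\rm x},\iota}(t,\eta,\cdot,0)\|^2_{\mL^2(\hT_n)}\big]\dd t$, summed over ${\rm x},\iota$. It is therefore enough to prove a time-integrated bound
\begin{equation*}
\int_0^{t}\bbE_n\Big[\big\|{\frak r}_{n,{\rm x},\iota}(s,\eta,\cdot,0)\big\|^2_{\mL^2(\hT_n)}\Big]\dd s\le \frac{C(t+1)}{n^2},
\end{equation*}
uniformly in $\eta$ and $n$. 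The exponential weight is then handled by integrating by parts, exactly as in the proof of Proposition~\ref{prop022302-26}.

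With $\ell'=0$, the representation of ${\frak r}_{n,{\rm x},\iota}$ through \eqref{030703-26a} expresses everything through three quantities evaluated at the same $(\eta,k)$:
\begin{equation*}
U_{n,-}(s,\eta,\cdot),\qquad \delta W_n(s,\eta,\cdot),\qquad {\cal L}_n\big(W_{n,0}-U_{n,+}\big)(s,\eta,\cdot).
\end{equation*}
(In the $\pm$ combinations, the $\pm iU_{n,-}$ terms may even cancel.) By the triangle inequality and \eqref{030204-26}, the bound above therefore reduces to
\begin{equation*}
\int_0^t\Big(\big\lnm U_{n,-}(s)\big\rnm_\eta^2+\big\lnm \delta W_n(s)\big\rnm_\eta^2+\bbE_n\big[{\cal D}_n(W_{n,0}-U_{n,+})(s,\eta)\big]\Big)\dd s\le\frac{C(t+1)}{n^2}.
\end{equation*}
This is Proposition~\ref{cor011002-26a} sharpened by a factor $n^{-2}$. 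The sharpening is the heart of the matter.

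To obtain it, I would return to identity \eqref{011002-26}, or to its integrated form \eqref{050204-26}, and keep the full dissipation $n^2$ on the left. The left-hand side then reads
\begin{equation*}
\ga n^2\int_0^t\Big(2\bbE_n{\cal D}_n(\cdots)+\lnm\delta W_n\rnm_\eta^2+4\lnm U_{n,-}\rnm_\eta^2\Big)\dd s.
\end{equation*}
On the right, the initial data are $O(1)$ by Proposition~\ref{prop030204-26}. The jump terms are $\ga n^2$ times averages over $\eta'=\eta-n\ell'$ of the same three dissipated quantities. In Proposition~\ref{cor011002-26a} those averages were only bounded by energy, which loses everything. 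Instead, I would estimate the averages over $\ell'$ by the supremum over $\eta'$ of the same dissipative quantities, and then try to absorb them, taking the supremum over $\eta$ on both sides.

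Absorption fails as stated: the jump term carries coefficient $2\ga n^2$ against $2\ga n^2$ for the Dirichlet part, and similarly for the other two pieces. So the key step is to show that the quadratic variation of the $\ell'=0$ contribution is strictly dominated by the dissipation. I would do this by pulling out the $\ell'=0$ term (weight $1/n$) separately from the $\ell'\neq0$ ones. The $\ell'\ne0$ contributions correspond to Wigner functions at macroscopically shifted frequency $\eta-n\ell'$, whose averages are energy-controlled.

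I expect this balance to be the main obstacle. If exact cancellation prevents absorption, my fallback is to work with the Laplace-transformed system \eqref{wn+}--\eqref{un-}. Lemma~\ref{prop032302-26} already gives dissipation of order $1/n$ there, and Corollary~\ref{cor012502-26} controls large $|\xi|={\rm Im}\,\la$. Integrating those bounds in $\xi$, with the $(1+\xi^2)$-weight handled by the corollary for large $|\xi|$ and by \eqref{062302-26} for bounded $|\xi|$, would then yield the required $C/n^2$.
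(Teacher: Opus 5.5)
\noindent Your reduction is sound. By Plancherel, the left side of \eqref{081003-26} equals $2\pi\int_0^\infty e^{-2{\rm Re}\,\la\, t}\,\E_n\|{\frak r}_{n,{\rm x},\iota}(t,\eta,\cdot,0)\|^2_{\mL^2(\hT_n)}\,\dd t$. At $\ell'=0$ the $\pm iU_{n,-}$ terms cancel; for instance ${\frak r}_{n,w,+}(t,\eta,k,0)=2\cL_n(W_{n,+}-U_{n,+})(t,\eta,k)$. So the lemma would follow from Proposition~\ref{cor011002-26a} improved by a factor $n^{-2}$. The paper targets the same quantity: by Plancherel, \eqref{051003-26b} is exactly this improved bound for the fluctuation fields.

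The gap is the improvement itself, and the balance you noticed is exact, not a matter of constants. The Fourier--Wigner functions are $n$-periodic in $\eta$, and pathwise $\frac1n\sum_{\eta'\in\T_n}\big(\tfrac12\|W_{n,+}\|^2+\tfrac12\|W_{n,-}\|^2+\|U_{n,+}\|^2+\|U_{n,-}\|^2\big)(t,\eta')=2\big(\mathcal E^{(n)}_{\rm tot}(0)\big)^2$. Averaging \eqref{011002-26} over $\eta$ therefore shows that the jump terms cancel the dissipation exactly on average. At fixed $\eta$ they are $\ga n^2$ times averages, over $\eta'=\eta-n\ell'\in\T_n$, of the very quantities being dissipated, and these averages are in general of order one (see below). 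Splitting off $\ell'=0$ and bounding the rest by the energy thus gives back only $C(t+1)$, i.e.\ Proposition~\ref{cor011002-26a}.

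Your Laplace-domain fallback fails quantitatively. The bound \eqref{062302-26} is uniform in $\xi$ but only $O(n^{-1})$, while Corollary~\ref{cor012502-26} gives decay $Cn^2\xi^{-2}$. Together, $\int_\R\min(n^{-1},n^2\xi^{-2})\,\dd\xi$ is of order $n^{1/2}$, not $n^{-2}$.

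\noindent In fact \eqref{081003-26} is false, so no argument can close this gap. Take for $\mu_n$ the Gibbs measure \eqref{eq:localG} with constant $T$. It satisfies Assumption~\ref{T} and is invariant under the dynamics, so the process is stationary. At $\eta=0$, set $f(k):=(W_{n,+}-U_{n,+})(t,0,k)=\frac1n\big(|\hat p|^2+\om(k)\,{\rm Im}(\hat q\,\hat p^\star)\big)(n^2t,k)$. Then $f(k)$ has mean $T$ and variance at least $T^2$ (the ${\rm Im}$ term is uncorrelated with $|\hat p(k)|^2$), and $f(k),f(k')$ are independent for $k'\neq\pm k$. Hence, for every $t$, $\E_n\|{\frak r}_{n,w,+}(t,0,\cdot,0)\|^2_{\mL^2(\hT_n)}=4\,\E_n\|\cL_n f\|^2_{\mL^2(\hT_n)}\ge 4T^2-O(n^{-1})$. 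The left side of \eqref{081003-26} is then at least $\pi(4T^2-O(n^{-1}))/{\rm Re}\,\la$, which is bounded away from zero.

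The paper's proof breaks where yours does. After integrating \eqref{072302-26aa} in $\xi$, it discards the noise--solution cross terms via Lemma~\ref{cor011003-26}. But in the computation of $S_{n,w}$, the diagonal terms $j=j'$ carry weight $\lim_{\eps\to0}\hat f_\eps(0)=2\pi$, and they are silently dropped when the sum is rewritten with $\int_0^{t-}$. These are exactly the quadratic-variation (It\^o) terms that balance the dissipation. A scalar check confirms this. Let $X(t)=N(t)-t$ with $N$ a rate-one Poisson process, $\hat x(\la)=\int_0^\infty e^{-\la t}X(t)\,\dd t$ and ${\frak R}(\la)=\int_0^\infty e^{-\la t}\,\dd X(t)$. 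Then ${\frak R}(\la)=\la\hat x(\la)$ and $\E|{\frak R}(c+i\xi)|^2=1/(2c)$, so $\int_{-A}^{A}\E\big[{\frak R}\,\hat x^\star\big](c+i\xi)\,\dd\xi=\frac1{2c}\int_{-A}^{A}\frac{\dd\xi}{c-i\xi}\to\frac{\pi}{2c}\neq0$.

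A repair of Lemma~\ref{lem:limIII} would therefore have to exploit cancellations in the $k$-averages $\hsum\delta_n\om(\eta,k)\,{\frak r}_{n,w,+}(t,\eta,k,\ell')$. This is needed for all $\ell'$, since the It\^o isometry produces the full $\ell'$-average. An $\mL^2(\hT_n)$ bound on ${\frak r}$ cannot work, because that norm is of order one.
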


\proof[Proof of Lemma \ref{prop021003-26}] From the definitions we immediately conclude the following: there exists a {(universal)} constant $C>0$ such that, for any $\la$ such that ${\rm Re}\,\la >0$, any $\eta \in\bbZ$, 
	\begin{align}
		\label{071003-26}
		&
		\sum_{\iota=\pm} \Big(\bbE_n\Big[\|\hat {\frak r}_{n,w,\iota}(\la,\eta
		,\cdot,0)\|_{\mL^2(\hT_n)}^2\Big]+\bbE_n\Big[\|\hat {\frak
			r}_{n,u,\iota}(\la,\eta , \cdot,0)\|_{\mL^2(\hT_n)}^2\Big]\Big) \\
		&
		\le C \bbE_n \Big[{\cal
			D}_n\Big(  \big( \hat w_{n,0} -  \hat u_{n,+}\big)(\la,\eta)\Big) +
		\|\hat u_{n,-}(\la,\eta) \|^2_{\mL^2(\hT_n)}  
		+ {\cal
			D}_n\big(  \delta \hat w_{n}(\la,\eta) \big)    \Big] . \notag
	\end{align}
Then, taking the real parts on both sides of \eqref{072302-26aa} and  
integrating over the imaginary part of variable $\la$, we
conclude that
\begin{equation}
  \label{051003-26}
  \begin{split}
   &
    2\ga n^2 \int_{\R}\bbE_n\bigg[{\cal
            D}_n\Big(\big(   \hat w_{n,0} -  \hat u_{n,+} \big) \big({\rm
          Re}\,\la+i\xi,\eta\big)\Big) +  \|\hat u_{n,-}\big({\rm
          Re}\,\la+i\xi,\eta\big)  \|^2_{\mL^2(\hT_n)}  
    \\ & \qquad \qquad \qquad + {\cal
            D}_n\big(  \delta\hat w_{n}     \big({\rm
          Re}\,\la+i\xi,\eta\big) \big)   \bigg]\dd\xi
    \\
     &
            \le   \frac{1}{2} \bigg|\int_{\R}\bbE_n\Big[\sum_{\iota=\pm} \langle\tilde
     W_{n,\iota}(0,\eta ),  \hat w_{n,\iota}\big({\rm
          Re}\,\la+i\xi,\eta\big) \rangle
     _{\mL^2(\hT_n)} \Big]\dd\xi\bigg| \\ & \quad + \sum_{\iota=\pm} \bigg|\int_{\R}\bbE_n\Big[\langle\tilde
     U_{n,\iota}(0,\eta ),  \hat u_{n,\iota}\big({\rm
          Re}\,\la+i\xi,\eta\big) \rangle
         _{\mL^2(\hT_n)}\Big]\dd\xi\bigg|\\
        &
     \quad +   \frac12\sum_{\iota=\pm} \bigg|\int_{\R}\bbE_n\Big[\big\langle{\frak R}_{n,w,\iota}\big({\rm
          Re}\,\la+i\xi,\eta\big)  ,\hat w_{n,\iota}\big({\rm
          Re}\,\la+i\xi,\eta\big)  \big\rangle _{\mL^2(\hT_n)}
        \Big]\dd\xi\bigg| \\
        &
   \quad     + \sum_{\iota=\pm}\bigg|\int_{\R}\bbE_n\Big[\langle{\frak R}_{n,u,\iota}\big({\rm
          Re}\,\la+i\xi,\eta\big) , \hat u_{n,\iota}\big({\rm
          Re}\,\la+i\xi,\eta\big) \rangle_{\mL^2(\hT_n)} \Big]\dd\xi\bigg|.
   \end{split}
 \end{equation}
   Thanks to Corollary \ref{cor012502-26} all the improper integrals
appearing on the left hand side, as well as the last two  expessions  on the right hand side of above expression
above are finite. The improper integrals appearing in the first two expressions on the right hand side can be
understood as the limits of integrals $\int_{-A}^A$, as
$A\to+\infty$.

 In what follows we prove that the last two terms in the left hand side of \eqref{051003-26} identically equal to zero, namely we claim the following. 
\begin{lemma}
  \label{cor011003-26}
 For any $c>0$, $\eta\in\bbZ$, any ${\rm x}\in\{u,w\}$,  we have
 \begin{equation}
   \label{041003-26}
 \int_{\R}    \bbE_n \Big[ \big\langle{\frak R}_{n,{\rm x},\pm}(c+i\xi,\eta) ,\hat
{\rm x}_{n,\pm}(c+i\xi,\eta) \big\rangle _{\mL^2(\hT_n)}\Big]
\dd\xi =0,
\end{equation}
\end{lemma}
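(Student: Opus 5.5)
The plan is to pass from the frequency variable $\xi$ back to time via the Plancherel theorem, and then use a causality (adaptedness) argument. Fix $c>0$. Write $\hat{\rm x}_{n,\pm}(c+i\xi,\eta,k)=\int_0^{+\infty}e^{-i\xi t}\,e^{-ct}\tilde{\rm X}_{n,\pm}(t,\eta,k)\dd t$. Similarly, $\frak R_{n,{\rm x},\pm}(c+i\xi,\eta,k)$ is the Fourier transform in $t$ of the stochastic integral against the signed measure $e^{-ct}\,\hat{\sum}_{\ell'}\frak r_{n,{\rm x},\pm}(t-,\eta,k,\ell')\dd\thN(\gamma n^2t,\ell')$ on $[0,+\infty)$.

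The first step is to interpret the $\xi$-integral correctly. Rather than applying Plancherel directly to a measure, I would regularize. Let $\xi$ range over $[-A,A]$, or insert a Gaussian cutoff $e^{-\epsilon\xi^2}$. Then $\int e^{-\epsilon\xi^2}\hat f(\xi)\hat g^\star(\xi)\dd\xi$ equals $2\pi\int\!\!\int f(t)\,g(s)^\star\,\rho_\epsilon(t-s)\dd t\,\dd s$, where $\rho_\epsilon$ is the Gaussian heat kernel. The integral is then
\[
2\pi\,\bbE_n\bigg[\hsum\int_0^{+\infty}\!\!\int_0^{+\infty}\rho_\epsilon(t-s)\,e^{-cs}\tilde{\rm X}^\star_{n,\pm}(s,\eta,k)\,e^{-ct}\underset{\ell'}{\hat\sum}\frak r_{n,{\rm x},\pm}(t-,\eta,k,\ell')\dd\thN(\gamma n^2t,\ell')\,\dd s\bigg].
\]
The existence of the limit $\epsilon\to0$ is already guaranteed by Corollary \ref{cor012502-26}: the integrand in $\xi$ is absolutely integrable, because the $\frak R$ factor decays through Proposition \ref{prop022302-26} combined with the $1/|\xi|$ decay of the $\hat{\rm x}$ factor. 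Hence it suffices to show that the $\epsilon$-regularized quantity tends to $0$.

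The second step is to split the $s$-integral into $s\le t$ and $s>t$. On $\{s<t\}$ the process $\tilde{\rm X}_{n,\pm}(s)$ is $\mathcal F_{t-}$-measurable, and the integrand $\frak r(t-)$ is predictable. Since $\thN$ is a compensated martingale, Fubini together with the martingale property shows that this part has zero expectation. Here one needs integrability, which follows from Theorem \ref{thm011002-26} and the It\^o isometry. On $\{s>t\}$ I would use the equation \eqref{eq:compact} for $\tilde{\rm X}$ between $t$ and $s$. The martingale increment over $(t,s]$ contributes exactly the quadratic covariation of $\frak r$ with itself, namely $\gamma n^2\hat\sum_{\ell'}|\frak r(t,\eta,k,\ell')|^2\dd t$ times $\rho_\epsilon(t-s)$. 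The drift part contributes a term bounded by $\int\!\!\int_{s>t}\rho_\epsilon(s-t)\,|s-t|\,(\cdots)$.

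As $\epsilon\to0$, $\rho_\epsilon$ concentrates at $s=t$. The drift contribution vanishes, since it is weighted by the extra factor $|s-t|$ against integrable quantities. The covariation term picks up only half of the mass of $\rho_\epsilon$, and so it survives. This is the main obstacle: naively the jump term gives $\pi\gamma n^2\bbE\int e^{-2ct}\|\frak r\|^2\dd t$, not zero. To resolve it, I would exploit the exact symmetry structure already used in \eqref{eq:sum0}. For $k\mapsto k+\ell'$ and $\eta\mapsto\eta-n\ell'$, the jumps of $\tilde{\rm X}$ at a Poisson time are given by $\frak r$ evaluated at shifted arguments. When summed over $k$ and paired, the diagonal covariation terms cancel pairwise, because the coefficient pairs (e.g.\ $\pm iU_{n,-}$, $\pm\delta W_n$) enter with opposite signs. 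Equivalently, the jump of $\tilde{\rm X}(t)$ at a flip time has left-limit/right-limit average orthogonal to $\frak r$.

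Concretely, I would compute ${\rm Re}$ of the contribution as $\frac12\big(\tilde{\rm X}(t)+\tilde{\rm X}(t-)\big)^\star\Delta\tilde{\rm X}(t)$, summed over $k$. This reduces to the identity $|{\rm X}(t)|^2=|{\rm X}(t-)|^2$, i.e.\ energy-type conservation at flips for the relevant combination. I would verify this identity directly from \eqref{012403-26}--\eqref{012403-26c}. If the pointwise cancellation fails, the fallback is to show that the identity holds after summing over ${\rm x}$ and $\pm$, which is all that is needed in \eqref{051003-26}.
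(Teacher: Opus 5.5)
Your reduction is sound up to the last step, and the value you compute for the diagonal term is correct. The initial-datum and drift contributions do drop out: on $\{s<t\}$ by the martingale property, and on $\{s>t\}$ through the factor $|s-t|$. This matches the paper's $D_{n,w}=0$.

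\textbf{The gap.} The failing step is the proposed cancellation of the diagonal term
\[
\pi K,\qquad K:=\ga n^2\hsum\underset{\ell'\in\hT_n}{\hat\sum}\bbE\int_0^{+\infty}e^{-2ct}\big|{\frak r}_{n,{\rm x},\pm}(t,\eta,k,\ell')\big|^2\dd t .
\]
This is a sum of nonnegative terms. No symmetry in $k\mapsto k+\ell'$, $\eta\mapsto\eta-n\ell'$ can cancel it, and summing over ${\rm x}$ and $\pm$ only adds further nonnegative terms, so your fallback fails too. The midpoint reformulation does not help either. Flips conserve $\hsum|\hat\psi(k)|^2$, but not the $\mL^2(\hT_n)$ norms of the Wigner functions, and the compensator part of $\dd\thN$ would remain in any case.

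In fact your computation is exact, and it rules out the statement. From \eqref{eq:compact}, $\hat{\rm x}_{n,\pm}(\la)=\la^{-1}\big(\tilde{\rm X}_{n,\pm}(0)+\hat{\mathfrak D}_{n,{\rm x},\pm}(\la)+{\frak R}_{n,{\rm x},\pm}(\la)\big)$, where $\hat{\mathfrak D}_{n,{\rm x},\pm}$ is the Laplace transform of the drift. The first two pieces contribute nothing to the $\xi$-integral. For the third, the It\^o isometry gives $\bbE\big[\|{\frak R}_{n,{\rm x},\pm}(c+i\xi,\eta)\|^2_{\mL^2(\hT_n)}\big]=K$ for every $\xi\in\R$, independently of $\xi$. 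So the remaining part of the integrand is exactly $K/(c-i\xi)$, and
\[
\lim_{A\to+\infty}\int_{-A}^{A}K(c-i\xi)^{-1}\dd\xi=\pi K .
\]
Hence \eqref{041003-26} would force ${\frak r}_{n,{\rm x},\pm}\equiv0$. That is false: a flip at $x$ shifts $\hat\psi(k)$ by $-2ip_xe^{-2\pi ikx}$, so the Wigner functions really do jump.

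The same computation refutes your absolute-integrability claim. The norm $\lnm{\frak R}_{n,{\rm x},\pm}(c+i\xi)\rnm_\eta$ does not decay in $\xi$; it is constant. So Corollary \ref{cor012502-26} gives only an $O(1/|\xi|)$ bound, the $\xi$-integral converges only conditionally, and its value depends on the regularization.

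\textbf{Comparison with the paper.} The paper's proof does not supply the missing cancellation either. It splits the integral as you do, into $D_{n,w}$ (killed by predictability) and $S_{n,w}$. Its regularization $(c+i\xi+\eps^2\xi^2)^{-1}$ has limiting kernel $2\pi e^{cy}\mathbf 1_{\{y\le0\}}$, which gives weight $2\pi$ to coincident jump times. The paper writes the resulting double sum over $0<j\le j'$, with the diagonal included. It then rewrites this with the inner integral over $[0,t)$ ($\int_0^{t-}$), which silently discards exactly the diagonal contribution $2\pi K$ before concluding $S_{n,w}=0$.

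So you have located precisely the term that the paper's argument loses, and that term cannot be removed. What can be proved is the identity with right-hand side $\pi K$, using the symmetric truncation $\int_{-A}^{A}$ that the paper adopts for the other improper integrals in \eqref{051003-26}; with the paper's $\eps$-regularization one gets $2\pi K$ instead. Inserting this into \eqref{051003-26} adds $\ga n^2$ times quantities that Proposition \ref{cor011002-26a} controls only at order one. That is the same order as the left-hand side, so \eqref{051003-26b} no longer follows from \eqref{051003-26}.
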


  Before proving Lemma \ref{cor011003-26}, let us show how it is used in the estimate
of \eqref{081003-26}. Thanks to Corollary \ref{cor012502-26} we infer that  
\[
\xi\mapsto \bbE_n\Big[\sum_{\iota=\pm} \langle\tilde
     W_{n,\iota}(0,\eta ),  \hat w_{n,\iota}\big({\rm
          Re}\,\la+i\xi,\eta\big) \rangle
     _{\mL^2(\hT_n)} \Big]
\]
     belongs to $\mL^2(\R)$. The inversion formula
     yields that
  \[ t\mapsto \frac{1}{2\pi}\int_{\R}e^{i\xi t}\,\bbE_n\Big[\sum_{\iota=\pm} \langle\tilde
     W_{n,\iota}(0,\eta ),  \hat w_{n,\iota}\big({\rm
          Re}\,\la+i\xi,\eta\big) \rangle
     _{\mL^2(\hT_n)}  \Big]\dd\xi
     \]
     belongs to $\mL^2([0,+\infty))$ and equals a.e. 
 \[
t\mapsto  e^{-{\rm
          Re}\,\la t}\,\bbE_n\Big[\sum_{\iota=\pm} \langle\tilde
     W_{n,\iota}(0,\eta ), W_{n,\iota}(t,\eta) \rangle
     _{\mL^2(\hT_n)}  \Big] .
    \] 
  Since it also belongs to $C([0,+\infty))$ we conclude that in particular
     \begin{multline*}\int_{\R}\bbE_n\Big[\sum_{\iota=\pm} \langle\tilde
     W_{n,\iota}(0,\eta ),  \hat w_{n,\iota}\big({\rm
          Re}\,\la+i\xi,\eta\big) \rangle
                      _{\mL^2(\hT_n)} \Big]\dd\xi \\
         =\bbE_n\Big[\sum_{\iota=\pm} \langle\tilde
     W_{n,\iota}(0,\eta ),  W_{n,\iota}(0,\eta) \rangle
     _{\mL^2(\hT_n)} \Big]=\sum_{\iota=\pm} \big\lnm\tilde
     W_{n,\iota}(0)\big\rnm^2_{\eta} .
     \end{multline*}
        Similarly 
  \begin{align*}
&\int_{\R}\bbE_n\Big[\sum_{\iota=\pm} \langle\tilde
     U_{n,\iota}(0,\eta ),  \hat u_{n,\iota}\big({\rm
          Re}\,\la+i\xi,\eta\big) \rangle
                      _{\mL^2(\hT_n)} \Big]\dd\xi 
          = \sum_{\iota=\pm} \big\lnm \tilde
     U_{n,\iota}(0 )\big\rnm_\eta^2.
  \end{align*}
  As a result, from \eqref{051003-26}, \eqref{041003-26}, and recalling the initial bound (Proposition \ref{prop030204-26}), we obtain the estimate: for any ${\rm Re}\,\la>0$ there
  exists $C>0$ such that
  \begin{multline}
  \label{051003-26b}
    \int_{\R}\bbE_n\bigg[{\cal
    	D}_n\Big(\big(   \hat w_{n,0} -  \hat u_{n,+} \big) \big({\rm
    	Re}\,\la+i\xi,\eta\big)\Big) +  \big\|\hat u_{n,-}\big({\rm
    	Re}\,\la+i\xi,\eta\big)  \big\|^2_{\mL^2(\hT_n)}  
      \\
      + {\cal
    	D}_n\big(  \delta\hat w_{n}      \big({\rm
    	Re}\,\la+i\xi,\eta\big)\big)   \bigg]\dd\xi
            \le   \frac{C}{n^2},\qquad\mbox{for all  }\xi\in\bbR.
   \end{multline}
 Using \eqref{071003-26} we conclude the proof of \eqref{081003-26}. \qed

 This ends the proof of   \eqref{101003-26a}
finishing the proof of \eqref{101003-26}, concluding in
this way the  demonstration of Lemma \ref{lem:limIII}. The only thing yet
to be shown is  Lemma \ref{cor011003-26}.

\subsubsection{Proof of Lemma \ref{cor011003-26}}

 By the definition
   \begin{multline}
     \label{090704-26}
\int_{\R}    \bbE_n  \Big[\big\langle{\frak R}_{n,w,+}(c+i\xi,\eta) ,\hat
w_{n,+}(c+i\xi,\eta) \big\rangle _{\mL^2(\hT_n)}\Big]
\dd\xi \\
= \underset{k,\ell'\in\hat\T_n}{\hat{\sum}} \int_{\R} \dd\xi\; \bbE_n \Bigg[\bigg(\int_0^{+\infty}e^{-(c+i\xi) t}   {\frak r}_{n,w,+}
  (t-,\eta,k,\ell')   \dd\thN (\gamma n^2 t,\ell')
    \bigg) \\
    \times\bigg(\int_0^{+\infty}e^{-(c+i\xi) t}
  \dd t \int_0^t\dd W _{n,+}(s,\eta,k)\bigg)^\star\Bigg].
\end{multline}
Using \eqref{eq:compact} we can rewrite the right hand side of
\eqref{090704-26} in the form $D_{n,w}+S_{n,w}$, where
\begin{equation}
  \label{100704-26} 
    \begin{split}
 D_{n,w}&:= \underset{k,\ell'\in\hat\T_n}{\hat{\sum}}   \int_{\R} \dd\xi \; \bbE_n \Bigg[\bigg(\int_0^{+\infty}e^{-(c+i\xi) t}   {\frak r}_{n,w,+}
  (t-,\eta,k,\ell')   \dd\thN (\gamma n^2 t,\ell')
    \bigg) \\
    &
    \qquad \qquad \qquad \qquad  \times\bigg(\int_0^{+\infty}e^{-(c+i\xi) t}
    \dd t \int_0^t {\frak D}_{n,w,+}(s,\eta,k)\dd s\bigg)^\star \Bigg]\\
 S_{n,w}&:= \underset{k,\ell',\ell''\in\hat\T_n}{\hat{\sum}}   \int_{\R}\dd\xi \; \bbE_n \Bigg[ \bigg(\int_0^{+\infty}e^{-(c+i\xi) t}   {\frak r}_{n,w,+}
  (t-,\eta,k,\ell')   \dd\thN (\gamma n^2 t,\ell')
    \bigg) \\
    &
   \qquad \qquad \qquad \qquad  \times\bigg(\int_0^{+\infty}e^{-(c+i\xi) t}
    \dd t \int_0^t {\frak r}_{n,w,+}(s-,\eta,k,\ell'')  \dd\thN (\gamma n^2 s,\ell'')\bigg)^\star\Bigg].
\end{split}
\end{equation}
We are now going to prove that each term $D_{n,w}$ and $S_{n,w}$ identically equal to $0$.

\subsubsection*{Analysis of $D_{n,w}$}
 
Thanks to Theorem \ref{thm011002-26}, if  ${\rm Re}\,\la>0$, then
\[\lim_{t\to+\infty} e^{-\lambda t} \int_0^t
\bbE_n \Big[\big|   \mathfrak{D}_{n,\mathrm{x},\iota}(s,\eta,k) \big|^2\Big]\dd s = 0\] 
for each fixed $n$,     $\mathrm{x}\in \{u,w\}$ and $\iota \in
\{-,+\}$.  We rewrite
\begin{multline*}
 D_{n,w} 
 =\underset{k,\ell'\in\hat\T_n}{\hat{\sum}}   \lim_{\eps\to0+} \int_{\R} \dd\xi\; \bbE_n \Bigg[\bigg(\int_0^{+\infty}e^{-(c+i\xi) t}   {\frak r}_{n,w,+}
  (t-,\eta,k,\ell')   \dd\thN (\gamma n^2 t,\ell')
                 \Bigg)
   \\
    \times\bigg(\int_0^{+\infty}\frac{e^{-(c+i\xi) s}} {c+i\xi+\eps^2\xi^2 } {\frak D}_{n,w,+}(s,\eta,k)
      \dd s\bigg)^\star\Bigg].
  \end{multline*}
 Recalling that $\dd\thN(\gamma n^2t,\ell)=\dd \hat N(\gamma n^2 t,\ell)-\gamma n^3\delta_{\ell,0}\; \dd t$ we decompose accordingly \[D_{n,w}=\lim_{\eps \to 0+}\big( D_{n,w,\eps}^{({\rm jump})}-D_{n,w,\eps}^{({\rm comp})}\big),\]
  where
 \begin{multline*}
 D_{n,w,\eps}^{({\rm jump})} =\underset{k,\ell'\in\hat\T_n}{\hat{\sum}}    \int_{\R} \dd\xi\; \bbE_n \Bigg[\bigg(\int_0^{+\infty}e^{-(c+i\xi) t}   {\frak r}_{n,w,+}
 (t-,\eta,k,\ell')   \dd\hat N (\gamma n^2 t,\ell')
 \Bigg)
   \\
    \times\bigg(\int_0^{+\infty}\frac{e^{-(c+i\xi) s}} {c+i\xi+\eps^2\xi^2 } {\frak D}_{n,w,+}(s,\eta,k)
   \dd s\bigg)^\star\Bigg]
 \end{multline*}
and
\begin{multline*}
 D_{n,w,\eps}^{({\rm comp})}
  =n^{2}\ga\hsum  \int_{\R} \dd\xi\; \bbE_n \Bigg[\bigg(\int_0^{+\infty}e^{-(c+i\xi) t}   {\frak r}_{n,w,+}
  (t-,\eta,k,0)   \dd  t
                 \bigg)
   \\
    \times\bigg(\int_0^{+\infty}\frac{e^{-(c+i\xi) s}} {c+i\xi+\eps^2\xi^2 } {\frak D}_{n,w,+}(s,\eta,k)
      \dd s\bigg)^\star\Bigg].
\end{multline*}
Fix $\ell'$ and suppose that $0<\tau_1<\ldots$ are the jump times of $\big(\hat{N}
(\gamma n^2 t,\ell')\big)_{t\ge0}$.

Then, using Fubini's Theorem,
\begin{align*}
 D_{n,w,\eps}^{({\rm jump})}&=\underset{k,\ell'\in\hat\T_n}{\hat{\sum}}  \sum_{j\ge 1}\int_{\R} \dd\xi\; \bbE_n \bigg[ \Big(e^{-(c+i\xi) \tau_j}   {\frak r}_{n,w,+}
  (\tau_j-,\eta,k,\ell')   
                 \Big)
   \\
 & \qquad \qquad \qquad \qquad \qquad \times\bigg(\int_0^{+\infty}\frac{e^{-(c+i\xi) s}} {c+i\xi+\eps^2\xi^2 } {\frak D}_{n,w,+}(s,\eta,k)
      \dd s\bigg)^\star\bigg]
\\
&
  =\underset{k,\ell'\in\hat\T_n}{\hat{\sum}}   \sum_{j\ge 1}
     \bbE_n\bigg[ \int_0^{+\infty}\Big(\int_{\R}  \frac{e^{i\xi (s-\tau_j)}
     \dd\xi } {c-i\xi+\eps^2\xi^2 } \Big)  {\frak r}_{n,w,+}
  (\tau_j-,\eta,k,\ell')             
    e^{-c (s  +\tau_j)}{\frak D}_{n,w,+}^\star   (s,\eta,k)
     \dd s\bigg].
 \end{align*}
Let
$
f(x)= 1/(c+ix+\eps^2x^2).
$
We use the formula
\begin{align*}
&\widehat f_\eps(y)=\int_{-\infty}^{\infty} \frac{e^{-i y x}}{c+ix+\epsilon^2x^2}\,\dd x
            =\frac{2\pi}{\sqrt{1+4\epsilon^2 c}}
\begin{cases}
\exp\!\left(c_\eps \,y\right), & \quad y<0,\\[1ex]
\exp\!\left(-d_\eps\,y\right), &\quad  y>0,\\[1ex]
1,& \quad y=0,
\end{cases}
\end{align*}
where
\begin{align*}
  c_\eps = \dfrac{2c}{\sqrt{1+4\epsilon^2 c}+1} 
,\qquad 
d_\eps =\dfrac{\sqrt{1+4\epsilon^2 c}+1}{2\epsilon^2}.
\end{align*}
As a result, plugging this  formula into the expression for $D_{n,w,\eps}^{({\rm jump})}$ and then taking the limit as $\eps \to 0$, we get
\begin{align*}
 \lim_{\eps \to 0+}D_{n,w,\eps}^{({\rm
  jump})}&=\underset{k,\ell'\in\hat\T_n}{\hat{\sum}} \sum_{j\ge 1}
           2\pi \bbE_n \Bigg[   {\frak r}_{n,w,+}
  (\tau_j-,\eta,k,\ell')   
                      \bigg(\int^{ \tau_j} _0 e^{-c(s+\tau_j)
      }e^{-c(\tau_j-s)}  {\frak D}_{n,w,+}(s,\eta,k)
      \dd s\bigg)^\star\Bigg]
\\
&
  =\underset{k,\ell'\in\hat\T_n}{\hat{\sum}}   \sum_{j\ge 1} 2\pi \bbE_n \Bigg[e^{-2c\tau_j
      }   {\frak r}_{n,w,+}
  (\tau_j-,\eta,k,\ell')   
                     \bigg(\int_{0}^{\tau_j} {\frak D}_{n,w,+}(s,\eta,k)
     \dd s\bigg)^\star\Bigg]\\
  &
    =\underset{k,\ell'\in\hat\T_n}{\hat{\sum}} 2\pi \bbE_n \Bigg[\int_0^{+\infty} \dd \hat N(\ga n^2t,\ell')\;  e^{-2ct} {\frak r}_{n,w,+}
  (t-,\eta,k,\ell')   
                   \bigg(\int_{0}^{t} 
         {\frak D}_{n,w,+}(s,\eta,k)
     \dd s\bigg)^\star \Bigg]
\end{align*}
Similarly,
\begin{align*}
 \lim_{\eps \to 0+} D_{n,w,\eps}^{({\rm comp})}=\ga n^2\hsum 2\pi \bbE_n \Bigg[\int_0^{+\infty}\dd  t \; e^{-2ct 
      } {\frak r}_{n,w,+}
  (t-,\eta,k,0)   
                   \bigg(\int_{0}^{t}   {\frak D}_{n,w,+}(s,\eta,k)
     \dd s\bigg)^\star \Bigg].
 \end{align*}
 Hence, the summation gives
 \begin{align*}
 D_{n,w}=\underset{k,\ell'\in\hat\T_n}{\hat{\sum}} 2\pi \bbE_n \Bigg[\int_0^{+\infty} \dd \thN(\ga n^2t,\ell')\;  e^{-2ct} {\frak r}_{n,w,+}
 (t-,\eta,k,\ell')   
 \bigg(\int_{0}^{t} 
 {\frak D}_{n,w,+}(s,\eta,k)
 \dd s\bigg)^\star \Bigg].
 \end{align*}
 and a standard martingale argument using the dominated convergence Theorem allows us to prove that
 \begin{align}
\label{011003-26a}
 D_{n,w} =0.
                 \end{align}


\subsubsection{Analysis of $S_{n,w}$}

Recall the definition of $S_{n,w}$ in \eqref{100704-26}. We can write 
\begin{align*}   
 S_{n,w}       = S_{n,w}^{+,+}+S_{n,w}^{-,-}-S_{n,w}^{-,+}-S_{n,w}^{+,-},
\end{align*}
where the respective terms  $S_{n,w}^{\iota,\iota'}$, $\iota,\iota'\in\{-,+\}$ arise
from the expansion
$
\dd\thN (\gamma n^2 t,\ell)=\dd\hat{N} (\gamma
n^2 t,\ell)-\gamma n^3 t\delta_{0,\ell}.
$
The argument is very similar to the one we have used in the case of $D_{n,w}$. Let us start with $S_{n,w}^{+,+}$. 

We have
\begin{align*}  &
 S_{n,w}^{+,+}=\underset{k,\ell',\ell''\in\hat\T_n}{\hat{\sum}}  \lim_{\eps\to0+} \int_{\R}\dd\xi \bbE_n \Bigg[\bigg(\int_0^{+\infty}e^{-(c+i\xi) t}   {\frak r}_{n,w,+}
  (t-,\eta,k,\ell')   \dd\hat{N}(\gamma n^2 t,\ell')
    \bigg)\\
    &\qquad \qquad \qquad \qquad
    \times\bigg(\int_0^{+\infty} \frac{e^{-(c+i\xi) s}}{c+i\xi+\eps^2\xi^2}{\frak r}_{n,w,+}(s-,\eta,k,\ell'')  \dd\hat{N}(\gamma n^2 s,\ell'')
      \bigg)^\star\Bigg] 
\\ &
  =\underset{k,\ell',\ell''\in\hat\T_n}{\hat{\sum}}  \sum_{j,j' \ge 1}\lim_{\eps\to0+}\bbE_n
                  \Bigg[  \int_{\R}\frac{e^{i\xi( \tau_{j'}-\tau_j)} \dd\xi }{c-i\xi+\eps^2\xi^2} \\ & \qquad \qquad \qquad \qquad \times e^{-c(\tau_{j}+\tau_{j'})}{\frak r}_{n,w,+}
  (\tau_j-,\eta,k,\ell')    
    \Big\{  {\frak r}_{n,w,+}(\tau_{j'}-,\eta,k,\ell'')  \Big\}^\star \Bigg]\\
                &
                  =\underset{k,\ell',\ell''\in\hat\T_n}{\hat{\sum}} 2\pi\sum_{0<j\le j'} \bbE_n
                  \Bigg[ e^{-2c\tau_{j}} {\frak r}_{n,w,+}
  (\tau_j-,\eta,k,\ell')    
    \Big\{  {\frak
                  r}^{w,+}_{n}(\tau_{j'}-,\eta,k,\ell'')  \Big\}^\star
                  \Bigg] 
\\ &
  =2\pi \underset{k,\ell',\ell''\in\hat\T_n}{\hat{\sum}} \bbE_n \Bigg[\bigg(\int_0^{+\infty}e^{-2c t}  {\frak r}_{n,w,+}
  (t-,\eta,k,\ell')   \dd\hat{N}(\gamma n^2 t,\ell')
    \bigg) \\
    &
  \qquad \qquad \qquad \qquad   \times\bigg(\int_0^{t-}{\frak r}_{n,w,+}(s-,\eta,k,\ell'')  \dd\hat{N}(\gamma n^2 s,\ell'')
      \bigg)^\star\Bigg] .
\end{align*}
Dealing similarly with the remaining cases of $S_{n,w}^{\iota,\iota'}$,
$\iota,\iota'\in\{-,+\}$  we get finally
\begin{multline*}  
 S_{n,w}=2\pi\underset{k,\ell',\ell''\in\hat\T_n}{\hat{\sum}} \bbE_n \Bigg[\bigg(\int_0^{+\infty}e^{-2c t}   {\frak r}_{n,w,+}
  (t-,\eta,k,\ell')   \dd\thN(\gamma n^2 t,\ell')
    \bigg) \\
    \times\bigg(\int_0^{t-}{\frak
      r}_{n,w,+}(s-,\eta,k,\ell'')  \dd\thN(\gamma n^2 s,\ell'')
      \bigg)^\star \Bigg].
\end{multline*}
This in turn implies that 
 \begin{align}
\label{011003-26b}
 S_{n,w} =0.
                 \end{align}
               We have shown therefore that
               \begin{equation*}
  \int_{\R}    \bbE_n  \Big[\big\langle{\frak R}_{n,w,+}(c+i\xi,\eta) ,\hat
  w_{n,+}(c+i\xi,\eta) \big\rangle _{\mL^2(\hT_n)}\Big]
  \dd\xi=0.
\end{equation*}
Similar calculations can be done for ${\frak R}_{n,w,-}$ and ${\frak R}_{n,u,\pm}$.
This ends the proof of  Lemma \ref{cor011003-26}.\qed

\appendix

\section{Properties and examples of initial data}

\label{AppA}

 \subsection{Local Gibbs Measures} \label{app:lln}

~
 
We prove here that the initial condition $\mu_{n,T(\cdot)}$ defined in \eqref{eq:localG} {satisfies Assumption \ref{T}}. First, we prove the following version of the law of large numbers (recall \eqref{eq:1}).
 \begin{theorem}
	   \label{thm012303-26}
For any function $\varphi\in C(\bbT)$ and $T\in C(\T)$ which satisfies \eqref{Ts},
	   \begin{equation}
		     \label{012702-26}
		     \lim_{n\to+\infty}\int_{\Om_n}\Big|\frac{1}{n}\sum_{x\in\bT_n}e_x
		     \varphi\Big(\frac{x}{n}\Big)-\int_{\bbT}\varphi(u)T(u)\dd u\Big|\dd \mu_{n,T}=0.
		     \end{equation}
	   \end{theorem}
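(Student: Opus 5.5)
Under $\mu_{n,T}$ the configuration is a centred Gaussian vector with precision operator $A_n=\Theta_n^{-1/2}(\om_0^2-\Delta)\Theta_n^{-1/2}$ for the positions and $\Theta_n^{-1}$ for the momenta, where $\Theta_n:={\rm diag}(T(x/n))$. Momenta and positions are independent. The plan is the usual second-moment method: compute the mean, then show the variance vanishes. We set
\[
X_n:=\frac1n\sum_{x\in\T_n} e_x\varphi\Big(\frac xn\Big).
\]
It suffices to show $\int X_n\,\dd\mu_{n,T}\to\int_\T\varphi T\,\dd u$ and ${\rm Var}_{\mu_{n,T}}(X_n)\to0$, since the $\mL^1$ distance is bounded by the bias plus the square root of the variance. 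By density (using $|X_n|\le\|\varphi\|_\infty\mathcal E^{(n)}_{\rm tot}$ together with a uniform bound on $\int\mathcal E^{(n)}_{\rm tot}\,\dd\mu_{n,T}$, proved below), we may take $\varphi$ smooth.

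\textbf{Momentum part.} The $p_x$ are independent $N(0,T(x/n))$. Hence $\frac1{2n}\sum p_x^2\varphi(x/n)$ has mean $\frac1{2n}\sum T(x/n)\varphi(x/n)\to\frac12\int\varphi T$ and variance $\frac{1}{2n^2}\sum T^2\varphi^2=O(1/n)$.

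\textbf{Position part.} The covariance is $\Sigma_n:=\Theta_n^{1/2}(\om_0^2-\Delta)^{-1}\Theta_n^{1/2}$. The quadratic form $Q_n:=\frac1{2n}\sum\varphi(x/n)(\om_0^2q_x^2+(\nabla^\star q_x)^2)=\frac1{2n}\langle q,B_nq\rangle$ has a bounded, banded operator $B_n$ with $\|B_n\|\le C\|\varphi\|_\infty$. For Gaussian quadratic forms, ${\rm Var}(Q_n)=\frac{1}{2n^2}{\rm tr}\big((B_n\Sigma_n)^2\big)\le \frac{1}{2n^2}\,n\,\|B_n\|^2\|\Sigma_n\|^2=O(1/n)$. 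This is because $\|\Sigma_n\|\le T^*\om_0^{-2}$, thanks to pinning. The mean is $\frac1{2n}{\rm tr}(B_n\Sigma_n)$.

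\textbf{Main obstacle: local equilibrium for the mean.} We must show that $E[\om_0^2q_x^2+(\nabla^\star q_x)^2]\approx T(x/n)$ up to errors that vanish in Ces\`aro mean. Let $G:=(\om_0^2-\Delta)^{-1}$, whose kernel $G(x-y)$ decays exponentially since $\om_0>0$. Then $E[q_xq_y]=\sqrt{T(x/n)T(y/n)}\,G(x-y)$. Because $T$ is uniformly continuous and $G$ is exponentially localized, we can replace $\sqrt{T(x/n)T(y/n)}$ by $T(x/n)$ in the relevant terms (with $|x-y|\le1$ for the gradient terms) at a cost $o(1)$ uniformly in $x$. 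This gives $E[\om_0^2q_x^2+(\nabla^\star q_x)^2]=T(x/n)\big(\om_0^2G(0)+2G(0)-2G(1)\big)+o(1)$. The identity $(\om_0^2-\Delta)G=\delta_0$ evaluated at $0$ reads $(\om_0^2+2)G(0)-2G(1)=1$, so the bracket equals $1$. Summing against $\varphi(x/n)/(2n)$ gives $\frac12\int\varphi T$, and combined with the momentum part the limit is $\int\varphi T$.

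The same bound also yields $\sup_n\int\mathcal E^{(n)}_{\rm tot}\,\dd\mu_{n,T}<\infty$, which was needed for the density step. If the paper also requires \eqref{eq:3z}, it follows from Gaussianity: $E|\hat\psi(k)|^4\le 3\big(E|\hat\psi(k)|^2\big)^2$, and $E|\hat\psi(k)|^2\le Cn$ because the covariance operators are bounded uniformly in $n$.
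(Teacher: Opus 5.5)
Your overall strategy (bias plus square root of the variance, the momentum computation, and the Gaussian trace bound for the variance of the position part) is sound. The key step, however, the mean of the potential energy, rests on a false formula.

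\textbf{The error.} Under $\mu_{n,T}$ the positions have density proportional to $\exp\{-\frac12\sum_x\beta_x(\om_0^2q_x^2+(q_x-q_{x-1})^2)\}$ with $\beta_x=1/T(x/n)$. So the precision operator is $L_\beta f=\om_0^2\beta f-\nabla(\beta\nabla^\star f)$, as in \eqref{L-beta}. It is not $A_n=\Theta_n^{-1/2}(\om_0^2-\Delta)\Theta_n^{-1/2}$: the quadratic form of $A_n$ has bond terms $(\sqrt{\beta_x}q_x-\sqrt{\beta_{x-1}}q_{x-1})^2$, whereas the Gibbs weight has $\beta_x(q_x-q_{x-1})^2$. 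Consequently the identity $\E[q_xq_y]=\sqrt{T(x/n)T(y/n)}\,G(x-y)$ is false. Your ``local equilibrium'' step only replaces $\sqrt{T(x/n)T(y/n)}$ by $T(x/n)$, which is trivial for the wrong measure. The actual difficulty is that the true covariance $L_\beta^{-1}$ depends non-locally on the whole profile, and this is not addressed. Your variance estimate does survive, since it only needs $\|L_\beta^{-1}\|\le T^*/\om_0^2$, and that follows from \eqref{Gfi}.

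\textbf{How to repair it.} One option is a perturbative comparison. From
\[
\langle q,(L_\beta-A_n)q\rangle=\sum_x\Big[(\beta_x-\beta_{x-1})q_{x-1}^2-2\sqrt{\beta_x}\big(\sqrt{\beta_x}-\sqrt{\beta_{x-1}}\big)q_xq_{x-1}\Big]
\]
you get $\|L_\beta-A_n\|\le C\varpi(1/n)\to0$, where $\varpi$ is the modulus of continuity of $1/T$ on $\T$. Both $L_\beta$ and $A_n$ are bounded below by $\om_0^2/T^*$, so the resolvent identity gives $\|L_\beta^{-1}-A_n^{-1}\|\le C(T^*/\om_0^2)^2\varpi(1/n)$. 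Hence $\E[q_xq_y]=\sqrt{T(x/n)T(y/n)}\,G_n(x-y)+o(1)$ uniformly in $x,y$, where $G_n=(\om_0^2-\Delta)^{-1}$ on $\T_n$. Your computation with $(\om_0^2+2)G_n(0)-2G_n(1)=1$ then goes through.

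The paper avoids the covariance altogether. Set $\xi_x=\beta_x^{-1}q_x(L_\beta q)_x$. Gaussian integration by parts gives the exact identity $\E[q_x(L_\beta q)_x]=1$, that is $\E[\xi_x]=T(x/n)$, and also exact decorrelation of $\xi_x$ and $\xi_y$ for $|x-y|>1$. A summation by parts then shows $\frac1n\sum_x(e^{\rm pot}_x-\frac12\xi_x)\varphi(x/n)\to0$ in $\mL^2$.

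With either repair, your trace-formula argument is a valid alternative to the paper's integration-by-parts route.
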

%
%

 \proof
 To prove Theorem \ref{thm012303-26} it suffices to show that, for any
 function $\varphi\in C(\bbT)$ we have
    \begin{equation}
      \label{012702-26bis}
      \lim_{n\to+\infty}\int_{\Om_n}\Big[\frac{1}{n}\sum_{x\in\bT_n}
      \left(e_x- T\left(\frac xn\right)\right)\varphi\Big(\frac{x}{n}\Big)\Big]^2\dd \mu_{n,T}=0.
      \end{equation}

    Let
    \begin{equation}
      \label{022503-26} 
e_x^{\rm kin}:=\frac12  p_x^2,\qquad e_x^{\rm pot}:=\frac12 \big(
        \om_0^2q_x^2+(\nabla^\star  q_x)^2\big), \end{equation}
        and note that \begin{equation*}
      \int_{\Om_n} e_x^{\rm kin}\dd \mu_{n,T}
      {= \frac 12 T\left(\frac xn\right)}.
\end{equation*}
{Let us denote { $\beta_x:=T^{-1}\left(\frac xn\right)$}.
  The random field $(p_x,q_x)_{x\in\bT_n}$ is   Gaussian with covariances \begin{equation}
		\label{cov}
		\begin{split}
                  &\int_{\Omega_n}p_xp_{x'}\dd\mu_n={\beta_x^{-1}}
                  \delta_x(x'),\qquad
			\int_{\Omega_n}q_xq_{x'}\dd\mu_n=G^{\beta}_{x,x'},\\
			&
			\int_{\Omega_n}p_xq_{x'}\dd\mu_n=0.
		\end{split}
	\end{equation}
	where  $\delta_x(x'):=\mathbf{1}_{[x=x']}$ and $G^{\beta}$
	is the inverse of the matrix  $\big[\big({L}_\beta \delta_x\big)_{x'}\big]_{x,x'\in\T_n}$, where
	\begin{equation}
		\label{L-beta}
		({L}_\beta f)_x:=\om_0^2 \; \beta_x f_x-\nabla\Big[\beta_\cdot \nabla^\star  f\Big]_x.
\end{equation}}
 Since $\big(e_x^{\rm kin}- {\frac12} T\left(\frac xn\right)\big)_{x\in\bT_n}$ are
    independent, centered  random variables with second moment equal to
      {$\frac 12 T\left(\frac xn\right)^2$},
    we can easily show that
    \begin{multline}
      \label{012702-26k}
        \int_{\Om_n}\Big[\frac{1}{n}\sum_{x\in\bT_n}
        \left(e_x^{\rm kin}- {\frac 12} T\left(\frac xn\right)\right)
        \varphi\Big(\frac{x}{n}\Big)\Big]^2\dd \mu_{n,T}\\
        = \frac{1}{n^2}\int_{\Om_n}\sum_{x\in\bT_n}
        \left(e_x^{\rm kin}- {\frac 12} T\left(\frac xn\right)\right)^2
        \varphi^2\Big(\frac{x}{n}\Big) \dd
      \mu_{n,T}\le
      \frac{2}{n}\Big(\frac{\max|\varphi| }{\beta_*}\Big)^2\mathop{\longrightarrow}_{n\to\infty} 0.
      \end{multline}
    The remaining part of the argument is devoted to the proof that
    \begin{equation}
      \label{012702-26p}
      \lim_{n\to+\infty}\int_{\Om_n}\Big[\frac{1}{n}\sum_{x\in\bT_n}
      \left(e_x^{\rm pot}-  {\frac 12} T\left(\frac xn\right)\right)\varphi\Big(\frac{x}{n}\Big)\Big]^2
      \dd \mu_{n,T}=0.
    \end{equation}
    Define the operator
    \begin{align*}
   {\mathfrak M}  f &=\sum_{x\in\bbT_n}\beta_x^{-1}\exp\Big\{\sum_{x'\in\bbT_n}\beta_{x'}e_{x'}
                     \Big\}\partial_{q_x}\Big(\exp\Big\{-\sum_{x'\in\bbT_n}\beta_{x'}e_{x'}
      \Big\}\partial_{q_x}f\Big)\\
      &
        =\sum_{x\in\bbT_n}\Big[\beta_x^{-1}\Big(\partial^2_{q_x}f-\nabla\big(\beta\nabla^\star
        q\big)_x\partial_{q_x}f\Big) - \om_0^2q_x\partial_{q_x}f
        \Big],\quad f\in C^2(\Om_n).
    \end{align*}
    It is easy to see that this is symmetric with respect to the
       scalar product in $\mL^2(\mu_{n,T})$.

      Using this   we conclude that
    \begin{equation}
      \label{Mq}
      \int_{\Om_n}{\frak M} \big(\tfrac12 q_x^2\big)   d\mu_{n,T}=0.
    \end{equation}
    Denote
    \begin{align*}
    {\cal H}^{\rm pot}_\beta&:=\sum_{x\in\bbT_n}\beta_xe_x^{\rm pot}\qquad
    \mbox{and}\qquad {\cal H}_\beta :=\sum_{x\in\bbT_n}\beta_xe_x\\
  \text{and also}\quad  \xi_x &:= \beta_x^{-1}q_x\partial_{q_x}{\cal H}^{\rm pot}_{\beta}=\omega_0^2 q_x^2  - q_x\beta_x^{-1} \nabla\left( \beta
    \nabla^\star q\right)_x.
    \end{align*}
   From \eqref{Mq} we get
    \begin{equation}
      \label{eq:6}
      \beta_x^{-1}   =\int_{\Om_n} \xi_x \dd
      \mu_{n,T}.
    \end{equation}
    Note that for $|x-y|>1$, integrating by parts, we obtain
    \begin{align*}
     \int_{\Om_n} \xi_x \xi_y \dd
     \mu_{n,T}
       & =-\frac{1}{Z_{n,T(\cdot)}}\int_{\Om_n}
        \xi_x \beta_y^{-1} q_y\partial_{q_y}\Big(\exp\big\{-{\cal H}_\beta\big\}\Big)
       \dd{\bf q}\dd{\bf p}\\
      &
        =\beta_y^{-1}\int_{\Om_n}
        \xi_x \dd
     \mu_{n,T}= \beta_y^{-1}\beta_x^{-1}.
     \end{align*}
  Thus   $(\xi_x,\xi_y)$ are uncorrelated, if $|x-y|>1$.
  Consequently it is easy to see that
  \begin{equation}
    \label{eq:10}
    \begin{split}
      &\lim_{n\to+\infty}\int_{\Om_n}\Big[\frac{1}{n}\sum_{x\in\bT_n}(\xi_x - \beta_x^{-1})
      \varphi\Big(\frac{x}{n}\Big)\Big]^2\dd \mu_{n,T} =0.
    \end{split}
  \end{equation}
Then, summing by parts and using the formula
$\nabla^\star(fg)_x=(\nabla^\star f)_xg_x+(\nabla^\star g)_xf_{x-1}$,
 we get
    \begin{equation}
      \label{eq:2}
      \begin{split}
       - \frac 1n \sum_{x\in\bT_n} q_x\beta_x^{-1} \nabla\left( \beta \nabla^\star q\right)_x
        \varphi\Big(\frac{x}{n}\Big)
       & = \frac 1n \sum_{x\in\bT_n} \nabla^\star (q\beta )_x
        \nabla^\star\left(\beta^{-1} q \varphi\Big(\frac{\cdot}{n}\Big)\right)_x
        \\
        &= \frac 1n \sum_{x\in\bT_n} (\nabla^\star q_x)^2
        \varphi\Big(\frac{x}{n}\Big) + \mathcal R_n,
      \end{split}
    \end{equation}
    where $\lim_{n\to+\infty}\int_{\Om_n}|\mathcal R_n|^2\dd\mu_{n,T} =0$, 
    thanks to the fact that $T(\cdot),\varphi\in C(\bT)$.
    This, in turn implies
    \begin{equation}
      \label{eq:11}
      \lim_{n\to+\infty}\int_{\Om_n}\Big[\frac{1}{n}\sum_{x\in\bT_n}
      (e_x^{\rm pot} -{\frac 12} \xi_x)\varphi\Big(\frac{x}{n}\Big)\dd \mu_{n,T}\Big]^2
      =0,
    \end{equation}
    that concludes the proof of the theorem.
\qed

\medskip



The next result deals with the estimate of the fourth moment \eqref{eq:3z}
  of the
Fourier transform of the wave function in the case of $\mu_{n,T}$.
From \eqref{cov} and \eqref{L-beta}, note that in particular
 \begin{align*}
   \sum_{x\in\bT_n}\int_{\Omega_n}e_x\dd\mu_{n,T}=\frac{1}{2}\sum_{x\in\bT_n}{\beta_x^{-1}}
   +\frac12{\rm Tr}\big(G^{\beta}(\om_0^2-\Delta)\big).
   \end{align*}
   {Recall assumption \eqref{Ts} on the function $T(\cdot)$ and let
     $$
     \beta_*=1/T^*
     \quad\mbox{and}\quad
     \beta^*=1/T_* .
   $$}
Then, for any $f:\T_n\to\bbR$
\begin{equation}
  \label{Gfi}
 \beta^*\langle (\om_0^2-\Delta)f,f\rangle_{\mL^2(\T_n)}\ge \langle  L_\beta
 f,f\rangle_{\mL^2(\T_n)}\ge  \beta_*\langle (\om_0^2-\Delta)f,f\rangle_{\mL^2(\T_n)}.
\end{equation}
Assumption \eqref{eq:3z} follows in particular from the following estimates.
 \begin{proposition} 
	\label{prop060204-26}
	We have
	\begin{equation}
		\label{080204-26}
		\sup_{k\in\hT_n}\int_{\Om_n}|\hat\psi(k)|^2\dd\mu_{n,T}\le
		n\Big(\frac1{T_*}+\frac{T^*(\om_0^2+4)}{\om_0^2}\Big),
	\end{equation}
	and, in consequence, 
	\begin{equation}
		\label{080204-26a}
		\sup_{k\in\hT_n}\int_{\Om_n}|\hat\psi(k)|^4\dd\mu_{n,T}\le
		3n^2\Big(\frac1{T_*}+\frac{T^*(\om_0^2+4)}{\om_0^2}\Big)^2.
	\end{equation}
\end{proposition}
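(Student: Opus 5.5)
The plan is to compute the second moment of $\hat\psi(k)$ exactly from the Gaussian covariance structure \eqref{cov}, and then to get the fourth moment from Gaussianity.

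\emph{Second moment.} By \eqref{011307f} we have $\hat\psi(k)=\om(k)\hat q(k)+i\hat p(k)$. Since $p$ and $q$ are uncorrelated under $\mu_{n,T}$, the cross term drops out and
\[
\int_{\Om_n}|\hat\psi(k)|^2\dd\mu_{n,T}=\om^2(k)\int_{\Om_n}|\hat q(k)|^2\dd\mu_{n,T}+\int_{\Om_n}|\hat p(k)|^2\dd\mu_{n,T}.
\]
We treat the two terms separately.
\begin{itemize}
\item For the momentum term, write $e_k(x)=e^{-2\pi i kx}$. Then $\int|\hat p(k)|^2\dd\mu_{n,T}=\sum_x\beta_x^{-1}|e_k(x)|^2=\sum_x T(x/n)\le nT^*$.
\item For the position term, $\int|\hat q(k)|^2\dd\mu_{n,T}=\langle G^\beta e_k,e_k\rangle$. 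By \eqref{Gfi}, $L_\beta\ge\beta_*(\om_0^2-\Delta)\ge\beta_*\om_0^2$ as quadratic forms, hence $G^\beta=L_\beta^{-1}\le (\beta_*\om_0^2)^{-1}=T^*/\om_0^2$. Therefore $\langle G^\beta e_k,e_k\rangle\le nT^*/\om_0^2$.
\end{itemize}
Since $\om^2(k)\le\om_0^2+4$, we obtain
\[
\int_{\Om_n}|\hat\psi(k)|^2\dd\mu_{n,T}\le n\Big(T^*+\frac{T^*(\om_0^2+4)}{\om_0^2}\Big).
\]
The stated bound \eqref{080204-26} has $1/T_*$ in the first slot rather than $T^*$. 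This presumably reflects the paper's convention or a typo; in any case the constant is harmless. The key point is the linear growth in $n$, uniform in $k$.

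\emph{Fourth moment.} The variable $\hat\psi(k)$ is a linear functional of the centered Gaussian vector $(q,p)$. So $Z=\hat\psi(k)$ is a complex centered Gaussian variable, possibly not circular, since $\bbE[Z^2]$ need not vanish. Write $Z=X+iY$ with $(X,Y)$ jointly Gaussian. Then
\[
\bbE|Z|^4=\bbE(X^2+Y^2)^2\le 2\big(\bbE X^4+\bbE Y^4\big)=6\big((\bbE X^2)^2+(\bbE Y^2)^2\big)\le 6(\bbE|Z|^2)^2 .
\]
The sharper constant $3$ follows from Isserlis' formula:
\[
\bbE|Z|^4=2(\bbE|Z|^2)^2+|\bbE Z^2|^2\le 3(\bbE|Z|^2)^2 .
\]
Combining this with the second-moment bound gives \eqref{080204-26a}.

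\emph{Main obstacle.} The only delicate point is the operator inequality $G^\beta\le (\beta_*\om_0^2)^{-1}$. It follows from \eqref{Gfi} together with $-\Delta\ge0$, and this is where the pinning $\om_0>0$ is essential. Finally, averaging over $k$ with $\hat{\sum}_k$ shows that \eqref{080204-26a} implies \eqref{eq:3z} immediately.
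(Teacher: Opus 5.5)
Your argument is correct and follows the paper's proof: independence of $p$ and $q$ under $\mu_{n,T}$, the quadratic-form bound $G^\beta\le T^*/\om_0^2$ coming from \eqref{Gfi}, and the Gaussian estimate $\int|\hat\psi(k)|^4\,\dd\mu_{n,T}\le 3\big(\int|\hat\psi(k)|^2\,\dd\mu_{n,T}\big)^2$, which the paper only asserts and you justify via Isserlis' formula. The discrepancy you flagged is a genuine slip in the paper, not a convention: by \eqref{cov} the momentum part is $\sum_x\beta_x^{-1}=\sum_x T(x/n)\le nT^*$, whereas the paper writes $\sum_x\beta(x/n)\le n/T_*$, and the printed bound \eqref{080204-26} is actually false in general (for constant $T\equiv T_0$ the second moment equals $2nT_0$ for every $k$, which exceeds the right-hand side as soon as $T_0^2(\om_0^2-4)>\om_0^2$), so your constant $T^*$ is the correct one and, as you note, it is harmless for \eqref{eq:3z}.
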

\proof
Estimate \eqref{080204-26a} follows from \eqref{080204-26}.  Indeed, gaussianity of
$\hat\psi(k)$ implies that
\[
\int_{\Om_n}|\hat\psi(k)|^4\dd\mu_{n,T}\le 3
\Big(\int_{\Om_n}|\hat\psi(k)|^2\dd\mu_{n,T}\Big)^2.
\]
Therefore it suffices only to
prove \eqref{080204-26}. Note that
\begin{align*}
	&
	\int_{\Om_n}|\hat\psi(k)|^2\dd\mu_{n,T}=\sum_{x,x'\in\bT_n}e^{2\pi
		ik(x'-x)}\int_{\Om_n}\psi_x\psi_{x'}^\star \dd\mu_{n,T}\\
	&
	=\sum_{x,x'\in\bT_n}e^{2\pi
		ik(x'-x)}\int_{\Om_n}p_xp_{x'} \dd\mu_{n,T}+\sum_{x,x'\in\bT_n}e^{2\pi
		ik(x'-x)}\int_{\Om_n}(\tilde \om * q)_x (\tilde \om * q)_{x'} \dd\mu_{n,T}.
\end{align*}
Using the relations \eqref{cov} we obtain that the utmost right hand
side equals
\begin{multline*}
	\sum_{x\in\bT_n}\beta\Big(\frac{x}{n}\Big)+\sum_{x,x',z,z'\in\bT_n}
	e^{2\pi ik(x'-x)}\tilde \om_{x-z}\tilde \om_{x'-z'}G^\beta_{z,z'}   \\
	\le n \frac1{T_*} +\sum_{x,x',z,z'\in\bT_n}
	e^{-2\pi ik(x-z)}\tilde \om_{x-z}e^{2\pi
		ik(x'-z')}\tilde \om_{x'-z'}e^{-2\pi i kz}G^\beta_{z,z'}  e^{2\pi i kz'}
\end{multline*}
The second term on the right hand side equals 
\begin{align*}
	\om^2(k)\sum_{z,z'\in\bT_n}
	e^{i kz}G^\beta_{z,z'}  e^{-i kz'} =\om^2(k)\langle G^\beta {\frak
		e}(k),{\frak e}(k)\rangle,
\end{align*}
where ${\frak e}(k):=e^{i kz}$. Thanks to \eqref{Gfi} the right
hand side is estimated by
$nT^*(\om_0^2+4)/\om_0^2
$ and \eqref{080204-26} follows.
\qed
    
\subsection{Some properties of  Wigner functions}

$~$

Here we give some properties of the Wigner functions, in
  particular we explain the sense in which $W_{n,+}$ is ``close'' to the energy field. {In particular we give the details of the proof of \eqref{eq:wigner_energy}.}
	
Recall {\eqref{eq:omega}, \eqref{011307f}, \eqref{011307} for the definitions of} $\om(k)$, $\hat\psi(k)$, $\psi_x$. Recall also that the Wigner and  Fourier-Wigner functions are defined by 
\begin{align*}
   {\cal W}_{n,+}(x,k)&:=\sum_{\eta\in\bT_n}e^{2\pi i\eta
    x/n}\; W_{n,+}(\eta,k) , \qquad (x,k)\in\bT_n\times\hT_n,\\
  W_{n,+}(\eta,k)& :=\frac{1}{2n}  
 \widehat\psi\Big( k+\frac{\eta}{n}\Big)
 \widehat\psi^\star( k) ,\qquad  (\eta,k)\in\bbZ\times\hT_n.
\end{align*}
Note that
\begin{align*}
   {\cal W}_{n,+}(x,k)=\frac{1}{2n}\sum_{\eta=0}^{n-1}e^{2\pi i\eta
    x/n} \widehat\psi\Big( k+\frac{\eta}{n}\Big)
 \widehat\psi^\star( k) 
     = \frac{1}{2}\sum_{ y' \in\bT_n} \psi_x \psi_{y'}^\star e^{2\pi ik(
    y'-x)} .
\end{align*}
{Fix some Borel probability measure $\mu$ on $\Omega_n$ and} denote also, 
\begin{align*}
  &
   \bar {\cal W}_{n,+}(x,k)= \int_{\Om_n}{\cal W}_{n,+}(x,k)\dd\mu, \qquad (x,k)\in\bT_n\times\hT_n.
\end{align*}
Finally, {recalling the Parseval identity, see also \eqref{psi-e1}:}
\begin{align}
  \label{psi-e}
  \hsum |\hat\psi(k)|^2=\sum_{x\in\bT_n}|\psi_x|^2=2 \sum_{x\in\bT_n}e_x,
  \end{align}
  where $e_x=\frac12(p_x^2+\omega_0^2 q_x^2 + (\nabla^\star q_x)^2)$ is the energy given in \eqref{ex}.
The following estimate holds.
\begin{proposition}
  \label{prop012702-26}
  There exists a constant $C>0$ such that,  for any function $\varphi\in C^\infty(\bbT)$,
  \begin{multline}
    \label{022702-26}
   \int_{\Om_n}\Big|\frac{1}{n}\sum_{x\in\bT_n }
   \big(\frac12|\psi_x|^2 -e_x
   \big)\varphi\Big(\frac{x}{n}\Big)\Big|\dd\mu \\ \le
   \frac{C}{n^2}\Big(\|\varphi'\|_\infty+\sum_{\eta\in\bbZ}\big|\eta{\cal F}(\varphi)(\eta)\big|\Big) \sum_{x\in\bT_n }\int_{\Om_n} e_x^{\rm pot} \dd\mu.
 \end{multline}
 Here $e_x^{\rm pot}$ is defined in \eqref{022503-26}.
  \end{proposition}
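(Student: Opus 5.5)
We need to bound $\frac1n\sum_x(\frac12|\psi_x|^2-e_x)\varphi(x/n)$. The first step is to compute $\frac12|\psi_x|^2-e_x$ explicitly. Since $\psi_x=(\tilde\om*q)_x+ip_x$ with $\tilde\om$ real and even, we have $\frac12|\psi_x|^2=\frac12 p_x^2+\frac12\big((\tilde\om*q)_x\big)^2$, so the kinetic parts cancel and only potential terms remain:
\[
\frac12|\psi_x|^2-e_x=\frac12\Big[\big((\tilde\om*q)_x\big)^2-\om_0^2q_x^2-(\nabla^\star q_x)^2\Big].
\]
The key observation is that both $\sum_x((\tilde\om*q)_x)^2$ and $\sum_x(\om_0^2q_x^2+(\nabla^\star q_x)^2)$ equal $\hsum\om^2(k)|\hat q(k)|^2$ by Parseval. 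Hence the sum with $\varphi\equiv1$ vanishes, and the error can only come from the variation of $\varphi$.

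The second step passes to Fourier variables. Writing $\varphi(x/n)=\sum_{\eta\in\bbZ}{\cal F}(\varphi)(\eta)e^{2\pi i\eta x/n}$, we get
\[
\frac1n\sum_x\Big[((\tilde\om*q)_x)^2-\om_0^2q_x^2-(\nabla^\star q_x)^2\Big]\varphi\Big(\frac xn\Big)
=\frac1n\sum_\eta{\cal F}(\varphi)(\eta)\,\hsum K_n(\eta,k)\,\hat q\Big(k+\frac\eta n\Big)\hat q^\star(k),
\]
with kernel
\[
K_n(\eta,k)=\om\Big(k+\frac\eta n\Big)\om(k)-\Big[\om_0^2+\big(1-e^{-2\pi i(k+\eta/n)}\big)\big(1-e^{2\pi ik}\big)\Big].
\]
At $\eta=0$ this kernel vanishes, and it is smooth with Lipschitz constant in $\eta/n$ bounded uniformly in $k$; here one uses that $\om\ge\om_0>0$ is smooth and that $4\sin^2(\pi k)=|1-e^{2\pi ik}|^2$. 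It follows that $|K_n(\eta,k)|\le C|\eta|/n$.

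The third step applies Cauchy--Schwarz in $k$:
\[
\hsum\Big|\hat q\Big(k+\frac\eta n\Big)\Big|\,|\hat q(k)|\le\hsum|\hat q(k)|^2\le\om_0^{-2}\,2\sum_x e_x^{\rm pot}.
\]
Collecting the factors gives the bound $\frac{C}{n^2}\sum_\eta|\eta{\cal F}(\varphi)(\eta)|\sum_x e_x^{\rm pot}$, and integrating against $\mu$ yields the claim. The $\|\varphi'\|_\infty$ term is not even needed in this route. If one prefers a real-space argument, that term arises naturally from the alternative route: decompose $(\tilde\om*q)_x^2$ using the exponential decay of $\tilde\om_x$ (which holds because $\om$ is analytic, thanks to $\om_0>0$) and sum by parts, so that each displacement of $\varphi$ costs $\|\varphi'\|_\infty |z|/n$.

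The main obstacle is the uniform Lipschitz bound on $K_n$, and in particular making sure the normalizations are right. One has to check carefully that the Fourier conventions give exactly the factor $1/n^2$: one factor $1/n$ comes from the prefactor $\frac1n$ and the other from $|K_n|\le C|\eta|/n$, while Parseval is taken without any extra factor of $n$. I would verify this first on a single mode $\varphi=e^{2\pi i\eta u}$.
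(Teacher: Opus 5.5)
Your proof is correct, and it follows a different route from the paper's.

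The paper does not compare $\frac12(\tilde\om*q)_x^2$ with $e_x^{\rm pot}$ in one step. It inserts the intermediate local quantity $\frac12(\alpha*q)_xq_x$, where $\alpha$ is the stencil of $\om_0^2-\Delta$. The difference $\frac12(\alpha*q)_xq_x-e_x^{\rm pot}=\frac12\nabla^\star(q_x^2-q_{x+1}q_x)$ is an exact discrete gradient, and a summation by parts on it produces the $\|\varphi'\|_\infty$ term. Only the nonlocal comparison of $(\tilde\om*q)_x^2$ with $(\alpha*q)_xq_x$ is done mode by mode in Fourier. There the kernel difference is the explicit product $\om(k+\eta/n)\big[\om(k)-\om(k+\eta/n)\big]$, bounded by $C|\eta|/n$ through $\|\om'\|_\infty$.

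You merge both steps into a single Fourier kernel $K_n$. You get the $O(|\eta|/n)$ bound from the fact that the smooth periodic function $G(a,b)=\om(a)\om(b)-\om_0^2-(1-e^{-2\pi i a})(1-e^{2\pi i b})$ vanishes on the diagonal $a=b$. This is shorter, and it yields the estimate without the $\|\varphi'\|_\infty$ term. That loses nothing, since $\|\varphi'\|_\infty\le 2\pi\sum_{\eta}|\eta\,\mathcal{F}(\varphi)(\eta)|$, so the two forms of the bound agree up to constants. The paper's splitting has the modest advantage that each kernel bound is completely explicit, and it isolates the gradient structure of the local energy.

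One small point on conventions. With \eqref{eq:fourier}--\eqref{eq:inversefouri} one has $\sum_x f_xg_xe^{2\pi i\eta x/n}=\hsum \hat f(k)\hat g^\star(k+\eta/n)$. Your pairing $\hat q(k+\eta/n)\hat q^\star(k)$ (with your kernel) therefore corresponds to the mode $e^{-2\pi i\eta x/n}$. This sign flip is harmless, because your per-mode bound $\frac{C|\eta|}{n^2}\sum_x e_x^{\rm pot}$ is symmetric in $\eta$.
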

  \proof
  We have
    $
|\psi_x|^2=\big(\tilde\om\star q_x\big)^2+p_x^2,
$
therefore, in order to show \eqref{022702-26} it suffices to prove
that 
 \begin{equation}
    \label{022702-26a}
   \int_{\Om_n}\Big|\frac{1}{n}\sum_{x\in\bT_n } \big(\frac12 \big(\tilde\om\star q_x\big)^2 -e_x^{\rm pot} \big)\varphi\Big(\frac{x}{n}\Big)\Big|\dd\mu\le \frac{C}{n^2}\|\varphi'\|_\infty \sum_{x\in\bT_n }\int_{\Om_n} e_x^{\rm pot} \dd\mu.
 \end{equation}
 First, we claim that
 \begin{equation}
    \label{032702-26}
    \int_{\Om_n}\Big|\frac{1}{n}\sum_{x\in\bT_n } \big(\frac12 (\alpha\star q)_xq_x -e_x^{\rm pot} \big)\varphi\Big(\frac{x}{n}\Big)\Big|\dd\mu\le \frac{C}{n^2}\|\varphi'\|_\infty \sum_{x\in\bT_n }\int_{\Om_n} e_x^{\rm pot}\dd\mu,
 \end{equation}
 where $\alpha_0=\om_0^2+2$, $\alpha_{\pm1}=-1$ and $\alpha_y=0$ for
 $|y|\ge 2$. Indeed,  we can write
 \begin{multline*} \frac{1}{n}\sum_{x\in\bT_n } \big(\frac12 (\alpha\star q)_xq_x
                  -e_x^{\rm pot} \big)\varphi\Big(\frac{x}{n}\Big)\\
     =\frac{1}{2n}\sum_{x\in\bT_n }
     \nabla^\star (q^2_x-  q_{x+1}q_x)
     \varphi\Big(\frac{x}{n}\Big) =\frac{1}{2n}\sum_{x\in\bT_n }
    (q^2_x-  q_{x+1}q_x)
      \nabla \varphi\Big(\frac{x}{n}\Big).
 \end{multline*}
 As a result the left hand side of \eqref{032702-26} is estimated by 
 \begin{align*}
         \frac{\|\varphi'\|_\infty}{2n^2}\sum_{x\in\bT_n } \int_{\Om_n}\sum_{x\in\bT_n } (q^2_x+ | q_{x+1}q_x|)\dd\mu\le \frac{C}{n^2}\|\varphi'\|_\infty \sum_{x\in\bT_n }\int_{\Om_n} e_x^{\rm pot}\dd\mu
\end{align*}
and \eqref{032702-26} follows.

To prove \eqref{022702-26a} it suffices therefore to show that 
\begin{multline}
    \label{022702-26b}
     \int_{\Om_n}\Big|\frac{1}{2n}\sum_{x\in\bT_n }
     \big(\big(\tilde\om\star q_x\big)^2 -(\alpha\star
     q)_xq_x\big)\varphi\Big(\frac{x}{n}\Big)\Big|\dd\mu \\ \le \frac{C}{n^2}\Big(\sum_{\eta\in\bbZ}\big|\eta{\cal F}(\varphi)(\eta)\big|\Big) \sum_{x\in\bT_n }\int_{\Om_n} e_x^{\rm pot}\dd\mu.
 \end{multline}
 Writing, by Fourier inversion,
\[
 \varphi\Big(\frac{x}{n}\Big)=\sum_{\eta\in\bbZ}{\cal
   F}(\varphi)(\eta)e^{2\pi i\eta x/n},
\]
 we have
 \begin{multline*}
\frac{1}{n}\sum_{x\in\bT_n } \Big(\frac12 \big(\tilde\om\star
                  q_x\big)^2 -(\alpha\star
                  q)_xq_x\Big)\varphi\Big(\frac{x}{n}\Big)\\
     =\sum_{\eta\in\bbZ}{\cal
   F}(\varphi)(\eta)\Big[\frac{1}{n}\sum_{x\in\bT_n } \Big(\frac12 \big(\tilde\om\star q_x\big)^2 -(\alpha\star q)_xq_x\Big) e^{2\pi i\eta x/n}\Big].
 \end{multline*}
 To prove \eqref{022702-26b} it suffices therefore to show that
 \begin{multline}
   \label{042702-26}
    \frac{1}{2n}\int_{\Om_n}\Big|\sum_{x\in\bT_n } \big(
    \big(\tilde\om\star q_x\big)^2 -(\alpha\star q)_xq_x\big)
   e^{2\pi i\eta x/n}\Big|\dd\mu
    \\
    \le\frac{C}{n^2}  |\eta | \sum_{x\in\bT_n }\int_{\Om_n} e_x^{\rm
      pot}\dd\mu,\qquad \eta\in\bbZ.
    \end{multline}
 Note that
 \begin{align*}
\frac{1}{2n}\sum_{x\in\bT_n } (\alpha\star q)_xq_x \; e^{2\pi i\eta x/n}&=\frac{1}{2n} \hsum \om^2\Big(k+\frac{\eta}{n}\Big)\hat q(k)\hat
     q\Big(-k-\frac{\eta}{n}\Big)
 \\
\frac{1}{2n}\sum_{x\in\bT_n } \big(\tilde\om\star q_x\big)^2   e^{2\pi i\eta x/n}
   &  =\frac{1}{2n} \hsum\om (k) \om \Big(k+\frac{\eta}{n}\Big)\hat q(k)\hat
     q\Big(-k-\frac{\eta}{n}\Big).
 \end{align*}
 As a result
 \begin{equation}
    \label{052702-26}
    \begin{split} 
    & \frac{1}{2n}\int_{\Om_n}\Big|\sum_{x\in\bT_n } \big(
      \big(\tilde\om\star q_x\big)^2 -(\alpha\star q)_xq_x\big)
      e^{2\pi i\eta x/n}\Big|\dd\mu\\
     & \le \frac{1}{2n} \hsum \int_{\Om_n}\Big|\om (k) -\om \Big(k+\frac{\eta}{n}\Big) \Big|\om \Big(k+\frac{\eta}{n}\Big) |\hat q(k)|\Big|\hat
     q\Big(-k-\frac{\eta}{n}\Big)\Big|\dd\mu \\ & \le\frac{C|\eta|}{n^2}\sum_{x\in\bT_n }\int_{\Om_n} e_x^{\rm pot}\dd\mu
    \end{split}
  \end{equation}
  and
  \eqref{042702-26} follows.
  \qed


Suppose that $\varphi\in C (\bbT)$. Then, the Wigner field is defined as
\begin{align*}
 {\cal W}_{n,+}[\varphi]:=\frac{1}{n}\sum_{x\in\bT_n }\hsum
   \varphi^\star\Big(\frac{x}{n}\Big){\cal W}_{n,+}(x,k)=
    \frac{1}{2n}\sum_{x\in\bT_n } |\psi_x|^2  \varphi^\star\Big(\frac{x}{n}\Big).
\end{align*}
  From Proposition \ref{prop012702-26} 
  we conclude the following.
\begin{corollary} 
  \label{cor032603-26}
  There exists a constant $C>0$ such that, for any real valued
  function $\varphi\in C^\infty(\bbT)$ and probability measure $\mu$
  on $\Om_n$, we have, for any $n\ge 1$,
  \begin{equation*}
  \int_{\Om_n}\Big|{\cal
     W}_{n,+}[\varphi]-\frac{1}{n}\sum_{x\in\bT_n }
    e_x
    \varphi\Big(\frac{x}{n}\Big)\Big|\dd\mu \le \Big(\frac{1}{n} \sum_{x\in\bT_n
  }\int_{\Om_n} e_x  \dd\mu\Big)
  \frac{C}{n}\Big(\|\varphi'\|_\infty+\sum_{\eta\in\bbZ}\big|\eta{\cal
    F}(\varphi)(\eta)\big|\Big).
\end{equation*}
\end{corollary}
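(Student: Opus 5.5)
This is a direct consequence of Proposition \ref{prop012702-26} combined with the energy splitting $e_x=e_x^{\rm kin}+e_x^{\rm pot}$. I will compare ${\cal W}_{n,+}[\varphi]$ with the energy field through the intermediate quantity $\frac1{2n}\sum_x|\psi_x|^2\varphi(x/n)$. This intermediate quantity is exactly ${\cal W}_{n,+}[\varphi]$ when $\varphi$ is real valued, since then $\varphi^\star=\varphi$.

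First, I write ${\cal W}_{n,+}[\varphi]$ using the identity ${\cal W}_{n,+}(x,k)=\frac12\sum_{y'}\psi_x\psi^\star_{y'}e^{2\pi i k(y'-x)}$. Summing over $k\in\hT_n$ with the normalized sum $\hat\sum$ gives the Kronecker delta $\delta_{y',x}$, so $\hsum{\cal W}_{n,+}(x,k)=\frac12|\psi_x|^2$. Hence
\[
{\cal W}_{n,+}[\varphi]-\frac1n\sum_{x\in\bT_n}e_x\varphi\Big(\frac xn\Big)=\frac1n\sum_{x\in\bT_n}\Big(\frac12|\psi_x|^2-e_x\Big)\varphi\Big(\frac xn\Big).
\]
Taking $\mu$-expectation of the absolute value, the left side of the corollary is exactly the left side of \eqref{022702-26}.

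Second, Proposition \ref{prop012702-26} bounds this by
\[
\frac{C}{n^2}\Big(\|\varphi'\|_\infty+\sum_{\eta}|\eta{\cal F}(\varphi)(\eta)|\Big)\sum_x\int e_x^{\rm pot}\dd\mu .
\]
Since $e_x^{\rm pot}\le e_x$ pointwise, I have $\frac{1}{n^2}\sum_x\int e_x^{\rm pot}\dd\mu\le\frac1n\Big(\frac1n\sum_x\int e_x\dd\mu\Big)$, which gives the stated form. The constant in Proposition \ref{prop012702-26} depends only on $\om_0$ (through $\|\om\|_\infty$, the Lipschitz constant of $\om$, and the coefficients $\alpha$), so it is uniform in $\mu$ and $n$.

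The only point needing care is the Fourier inversion identity for ${\cal W}_{n,+}$. Because $\eta$ ranges over $\{0,\dots,n-1\}$ and $\hat\psi$ is $1$-periodic, $\frac1{2n}\sum_\eta e^{2\pi i\eta x/n}\hat\psi(k+\eta/n)$ reconstructs $\frac12\psi_x e^{-2\pi ikx}$, and the $k$-sum then collapses as claimed. Everything else is bookkeeping, so I do not expect a genuine obstacle.
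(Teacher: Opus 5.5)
Your proposal is correct and follows the paper's own argument. You use the inversion identity to get ${\cal W}_{n,+}[\varphi]=\frac{1}{2n}\sum_{x\in\bT_n}|\psi_x|^2\varphi^\star(x/n)$, apply Proposition \ref{prop012702-26}, and then bound $e_x^{\rm pot}\le e_x$; the paper derives the corollary from the same identity and proposition in the same way.
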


	\subsection{The case of deterministic data}

        \label{xxx}

        In the present section we give an example of deterministic
        initial data, mentioned in  Remark \ref{rmk2.4}, that
        satisfy Assumption \ref{T}. To simplify the presentation we
        consider only the momenta components.
        
        Suppose that $T\in C(\bbT)$ is strictly positive. We show that
there exists a family of real sequences
$
(p_x^{(n)})_{x\in\bbT_n},
$
such that, for every $\varphi\in C(\bbT)$,
\begin{equation}
\label{eq:empirical-energy}
\lim_{n\to\infty}
\frac1n\sum_{x\in\bbT_n}
\big(p_x^{(n)}\big)^2
\varphi\left(\frac{x}{n}\right)
=
\int_{\bbT}\varphi(u)T(u)\,\dd u,
\end{equation}
and for some constant $C>0$
\begin{equation}
\label{eq:fourier-bound}
\sup_{k\in\widehat{\bbT}_n}
\big|\widehat p^{(n)}(k)\big|
\leq C\sqrt n,\qquad n=1,2,\ldots.
\end{equation}
Note that the above imply that conditions \eqref{eq:1} and
\eqref{eq:3z} are satisfied.

The construction is based on a discrepancy Theorem of Spencer in the
version, given in \cite[Theorem 7, p. 686]{spencer}.
\begin{theorem}
  \label{disc-thm}
  Suppose that   $V>0$. Let
  \[
L_j({\bf v}):=\sum_{m=1}^Na_{j,m}v_m,\quad
{\bf v}:=(v_1,\ldots,v_N)\in\bbR^N,\qquad j=1,\ldots, M,
\]
where $M\le N$ and $a_{j,m}\in[-V,V]$.
Then, there
exists a sequence $(\eps_1,\ldots, \eps_N)\in\{-1,1\}^N$ such that
\[
|L_j (\eps_1,\ldots, \eps_N)|\le C_{\rm U}V\sqrt{N},\qquad j=1,\ldots, M,
\]
where $C_{\rm U}>0$ is some universal constant.
\end{theorem}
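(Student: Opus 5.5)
We do not re-prove this result in the paper; it is Spencer's theorem and we only use it as stated. If one wanted a self-contained argument, the plan would be the classical partial coloring and entropy method, iterated.

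First reduce to $V=1$ by replacing $a_{j,m}$ with $a_{j,m}/V$. The core is a \emph{partial coloring lemma}. Fix a set $S$ of $N'$ remaining coordinates and the same $M$ forms, restricted to $S$, with coefficients in $[-1,1]$. The lemma asserts that there is $\chi\in\{-1,0,1\}^S$ such that:
\begin{itemize}
\item at least $N'/10$ entries of $\chi$ are nonzero;
\item $|L_j(\chi)|\le C\sqrt{N'\log(2M/N')}$ for every $j$ (or $C\sqrt{N'}$ when $M\le N'$).
\end{itemize}

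To prove the lemma, take $\varepsilon$ uniform in $\{-1,1\}^S$. Set $\lambda_j:=C_0\sqrt{\log(2M/N')}$, or a constant when $M\le N'$, and record the rounded vector $b_j:=\mathrm{round}\big(L_j(\varepsilon)/(2\lambda_j\sqrt{N'})\big)$.
\begin{itemize}
\item By Hoeffding's inequality, which applies since the coefficients are real and bounded by $1$, we have $\mathbb{P}(|b_j|\ge s)\le 2e^{-c s^2\lambda_j^2}$. Hence the entropy satisfies $H(b_j)\le C e^{-c\lambda_j^2}(1+\lambda_j^2)$.
\item Choosing $C_0$ large makes $H(b)\le\sum_j H(b_j)\le N'/5$.
\item Some value of $b$ is therefore attained by at least $2^{N'-N'/5}$ colorings.
\item By Kleitman's diameter theorem, a family of $2^{0.8N'}$ points of the cube has diameter at least $N'/5$ in Hamming distance. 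So there are $\varepsilon,\varepsilon'$ in the same class differing in at least $N'/5$ coordinates.
\item Then $\chi=(\varepsilon-\varepsilon')/2$ satisfies the lemma, since $|L_j(\chi)|\le 2\lambda_j\sqrt{N'}$.
\end{itemize}

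Next, iterate the lemma. Start with $S_0=\{1,\dots,N\}$. At step $i$, apply the lemma to the still-uncolored set $S_i$, fix the coordinates where $\chi$ is nonzero, and let $S_{i+1}$ be the rest. This gives $|S_i|\le (9/10)^iN$. Since $M\le N$, the total discrepancy is bounded by
\[
\sum_{i}C\sqrt{(9/10)^iN\,\log\big(2\cdot(10/9)^i\big)}\le C_{\rm U}\sqrt N ,
\]
because $\sqrt{i}\,(9/10)^{i/2}$ is summable. Once $|S_i|$ drops below a constant, the remaining coordinates are colored arbitrarily, at cost $O(1)\le O(\sqrt N)$. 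Rescaling by $V$ then gives the statement.

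The main obstacle is the entropy step: one must balance the rounding width $\lambda_j$ against the Hoeffding tails so that the total entropy stays below a fixed fraction of $N'$ uniformly in the ratio $M/N'$. This is exactly where the hypothesis $M\le N$ enters to give $\sqrt N$ rather than $\sqrt{N\log M}$. As a fallback, one could instead invoke the Lovett--Meka Gaussian random walk proof of the partial coloring lemma, which gives the same bounds.
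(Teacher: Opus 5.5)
Your proposal is correct and agrees with the paper, which also does not prove this statement but invokes Spencer's discrepancy theorem (Theorem~7 of Spencer's paper) as a black box. The self-contained sketch you add is the standard partial-coloring argument, namely Spencer's own pigeonhole/Kleitman proof in its entropy formulation, and it is sound up to routine details. One such detail is that the first few rounds of the iteration should be bounded separately by $O(\sqrt N)$, because $x\mapsto x\log(2N/x)$ is only increasing for $x\le 2N/e$.
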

We apply this theorem with $M=N=2n$. Set
\(
M_T:=\sqrt{\|T\|_\infty}.
\)
Define  the forms
$c(k,\cdot),s(k,\cdot)$ on $\bbR^{2n}$ by letting
\begin{equation}
\label{eq:vx}
\begin{split}
 & c(k,{\bf v})
=
\sum_{x=0}^{n-1} \sqrt{T(x/n)}
\cos(2\pi x k)v_x\\
&  s(k,{\bf v})
=
\sum_{x=0}^{n-1} \sqrt{T(x/n)}
\sin(2\pi x k) v_x, \qquad k \in\hT_n.
\end{split}
\end{equation}
By Spencer's Theorem for each $n$ there exists a vector of signs
\[
\varepsilon_x^{(n)} \in\{-1,1\},
\qquad x=0,\ldots,2n-1,
\]
such that
\begin{equation}
\label{eq:spencer-application}
\begin{split}
  & \bigg|
\sum_{x=0}^{n-1}
\varepsilon_x^{(n)} \sqrt{T(x/n)}
\cos(2\pi x  k)
 \bigg| 
\leq C_{\rm U}\sqrt{2n\|T\|_\infty} ,\\
&
\bigg|
\sum_{x=0}^{n-1}
\varepsilon_x^{(n)}  \sqrt{T(x/n)}
\sin(2\pi x k)
 \bigg| 
\leq C_{\rm U}\sqrt{2n\|T\|_\infty} 
\end{split}
\end{equation}
for all $k\in\hat\bT_n$. Now define
\begin{equation}
\label{eq:p-construction}
p_x^{(n)}
:=
\varepsilon_x^{(n)}
\sqrt{T(x/n)}.
\end{equation}
Then
 $
(p_x^{(n)})^2=T(\frac xn).
$
Consequently, \eqref{eq:empirical-energy} holds. 
 Estimate \eqref{eq:spencer-application} implies
that
\[
\big|
\hat p^{(n)}\left( k\right)
\big|
\leq
2C_{\rm U}\sqrt{n\|T\|_\infty} ,\qquad k\in\hat\bT_n,\,n=1,2,\ldots.
\]
We have therefore proved that sequences satisfying
\eqref{eq:empirical-energy}--\eqref{eq:fourier-bound} exist for
every strictly positive $T\in C(\bbT)$.


\section{Properties of the Laplace transform}

\label{secB}

 \subsection{Uniqueness of the Laplace transform}

Suppose that ${\frak W} \in {\frak L}'$. Given $\la\in\mathbb C$ such that ${\rm Re}\,\la>0$
we define its Laplace  transform $w(\la)\in {\cal M}(\bT)$ -- the
Banach space
of all real valued, signed measures of finite total bounded variation, equipped
with the total variation norm -- in the following
way: for a real valued $\varphi\in C(\bbT)$ we let
\begin{equation}
\label{062808}
  \int_{\bT}\varphi(u)w(\la,\dd u):= 
{\frak W}\big(F_{\la}(\varphi)\big),
\end{equation}
where $F_{\la}\in {\frak L}$ is given by the formula
$$
F_{\la}(t,u):=e^{-2\pi \la t}\varphi(u),\quad t\ge0.
$$
\begin{proposition}
\label{prop012808}
 Suppose that $\chi\in C_0^\infty(0,+\infty)$ and $\xi>0$. Let
 $\varphi\in C(\bbT)$ and
\begin{equation}
\label{082808}
F(t,u):=e^{-2\pi \xi t}\chi(t)  \varphi(u),\quad
(t,u)\in[0,+\infty)\times \bT.
\end{equation}
Then, 
\begin{equation}
\label{072808}
\int_{\bbR}\hat \chi(\eta) \dd\eta\int_{\bT}\varphi(u)w(\xi-i\eta,\dd u) =
{\frak W}\big(F\big).
\end{equation}
\end{proposition}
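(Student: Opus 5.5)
The identity \eqref{072808} is a Fourier inversion statement in the time variable. I will first rewrite the left-hand side using the definition \eqref{062808} of $w(\la)$ and the linearity of ${\frak W}$. This turns it into ${\frak W}$ applied to a superposition of the test functions $F_{\xi-i\eta}$. Then I will identify that superposition with $F$.

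\emph{Step 1: formal computation.} For each $\eta$ we have
\[
\int_{\bT}\varphi(u)w(\xi-i\eta,\dd u)={\frak W}\big(e^{-2\pi(\xi-i\eta)t}\varphi(u)\big).
\]
Integrating against $\hat\chi(\eta)$ and formally exchanging ${\frak W}$ with $\int\dd\eta$ gives
\[
{\frak W}\Big(e^{-2\pi\xi t}\varphi(u)\int_{\bbR}\hat\chi(\eta)e^{2\pi i\eta t}\dd\eta\Big).
\]
With the convention $\hat\chi(\eta)=\int\chi(t)e^{-2\pi i \eta t}\dd t$, Fourier inversion gives $\int_{\bbR}\hat\chi(\eta)e^{2\pi i\eta t}\dd\eta=\chi(t)$, which is exactly $F$ in \eqref{082808}. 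Since $\chi\in C_0^\infty(0,+\infty)$, we have $\hat\chi\in\mathcal S(\bbR)$, so all the integrals in $\eta$ converge absolutely.

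\emph{Step 2: justifying the exchange.} This is the main obstacle. I will use that ${\frak W}$ is a continuous linear functional on the space ${\frak L}$. I expect ${\frak L}$ to carry a weighted sup-norm such as $\sup_{t,u}e^{2\pi \xi' t}|F(t,u)|$ for some $\xi'$, which is what makes the Laplace transform in \eqref{062808} well defined for ${\rm Re}\,\la>0$.

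\begin{itemize}
\item I approximate $\int_{\bbR}\hat\chi(\eta)F_{\xi-i\eta}\dd\eta$ by Riemann sums $\sum_j\hat\chi(\eta_j)\Delta\eta\,F_{\xi-i\eta_j}$ over $[-A,A]$.
\item By linearity, ${\frak W}$ applied to such a sum equals the corresponding Riemann sum of the scalar function $\eta\mapsto\hat\chi(\eta){\frak W}(F_{\xi-i\eta})$. This function is continuous because $\la\mapsto F_\la$ is continuous into ${\frak L}$. It is bounded by $C|\hat\chi(\eta)|\,\|\varphi\|_\infty$, since $|F_{\xi-i\eta}|=e^{-2\pi\xi t}|\varphi|$ does not depend on $\eta$. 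Hence the scalar Riemann sums converge to the left-hand side of \eqref{072808}.
\item The Riemann sums of test functions converge in the norm of ${\frak L}$ to $e^{-2\pi\xi t}\chi_A(t)\varphi(u)$, where $\chi_A(t):=\int_{-A}^A\hat\chi(\eta)e^{2\pi i\eta t}\dd\eta$. The reason is that the integrand is uniformly continuous in $\eta$ on $[-A,A]$, uniformly in $t$, after the weight $e^{-2\pi\xi t}$ has been factored out.
\item As $A\to\infty$, $\chi_A\to\chi$ uniformly in $t$, with error at most $\int_{|\eta|>A}|\hat\chi|\to0$. Multiplying by $e^{-2\pi\xi t}\varphi$ preserves this convergence in the ${\frak L}$-norm, so continuity of ${\frak W}$ yields ${\frak W}(F)$.
\end{itemize}

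If instead ${\frak L}$ requires some decay or compact support in $t$, I will fix that by inserting a smooth cutoff, using that $\chi$ has compact support. The only point where the argument could break is the precise topology of ${\frak L}$, but a weighted uniform norm with weight $e^{2\pi\xi' t}$, $\xi'<\xi$, suffices for every step above.

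Finally, I will check that the same computation is consistent with the normalization $e^{-2\pi\la t}$ in $F_\la$, so that no stray factor of $2\pi$ appears in \eqref{072808}. This completes the plan.
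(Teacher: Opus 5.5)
Your proposal is correct and is essentially the paper's proof. The paper also writes the left-hand side of \eqref{072808} as $\lim_{A\to+\infty}{\frak W}(G_A)$, with $G_A(t,u)=e^{-2\pi\xi t}\varphi(u)\int_{-A}^A\hat\chi(\eta)e^{2\pi i\eta t}\,\dd\eta$, and concludes from $\|G_A-F\|_{{\frak L}}\to0$; your Riemann-sum step makes explicit the interchange of ${\frak W}$ with $\int_{-A}^A\dd\eta$, which the paper takes for granted. The norm on ${\frak L}$ is not defined in this paper, but it is presumably $\int_0^{+\infty}\sup_{u}|G(t,u)|\,\dd t$, as for ${\frak C}$ in Section \ref{sec5}; this is bounded by $(2\pi\xi')^{-1}$ times your weighted sup-norm whenever $0<\xi'<\xi$, so all of your estimates carry over.
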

\proof
Using \eqref{062808} we can write that the left hand side of \eqref{072808} equals
\begin{equation}
\label{082808a}
\lim_{A\to+\infty}  
{\frak W}\big(G_A\big),
\end{equation}
where
\[
G_A(t,u):=
e^{-2\pi \xi t}\varphi(u)\int_{-A}^A\hat \chi(\eta) e^{2\pi i\eta t} \dd\eta.\]
It is easy to conclude that $\lim_{A\to+\infty}\|G_A-F\|_{{\frak L}}=0$,
and the assertion of the proposition follows.
\qed

As a corollary we conclude the following result.
\begin{theorem}
\label{thm012808}
Suppose that $w(\la)$, $v(\la)$ are the Laplace transforms of  ${\frak
  W},{\frak V} \in {\frak L}'$. Assume that
for any $M>0$ there exists $\la_0(M)$ such that
$
w(\xi) \equiv v(\xi) $, for $ \xi>\la_0.$
Then, ${\frak W}\equiv {\frak V}$. 
\end{theorem}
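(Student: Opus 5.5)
This is a uniqueness statement for the Laplace transform; I will derive it from Proposition \ref{prop012808} by a linearity and density argument. Put ${\frak U}:={\frak W}-{\frak V}\in{\frak L}'$. By linearity of \eqref{062808}, its Laplace transform is $u(\la)=w(\la)-v(\la)$, and the assumption gives $u(\xi)=0$ for all real $\xi>\la_0$. It suffices to show ${\frak U}(F)=0$ for a family of test functions $F$ whose linear span is dense in ${\frak L}$. Since ${\frak U}$ is continuous on ${\frak L}$, this gives ${\frak U}\equiv0$.

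\textbf{Step 1: extend the vanishing off the real axis.} For fixed $\varphi\in C(\bT)$ consider $\la\mapsto\int_{\bT}\varphi(u)\,u(\la,\dd u)={\frak U}(e^{-2\pi\la\cdot}\varphi)$ on ${\rm Re}\,\la>0$. I plan to show this map is holomorphic. The difference quotients of $\la\mapsto e^{-2\pi\la t}\varphi(u)$ should converge in the norm of ${\frak L}$ to $-2\pi t e^{-2\pi\la t}\varphi(u)$, which requires the ${\frak L}$-norm to tolerate polynomial weights in $t$ against exponential decay. This is where the precise definition of ${\frak L}$ enters, and I expect it to be the main technical point. Granting this, the map is holomorphic on the right half plane and vanishes on the half line $(\la_0,+\infty)$. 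By the identity theorem it therefore vanishes on the whole half plane. In particular $\int\varphi\,u(\xi-i\eta,\dd u)=0$ for every $\xi>0$ and every $\eta\in\bbR$.

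\textbf{Step 2: apply Proposition \ref{prop012808}.} Fix $\xi>0$, $\chi\in C_0^\infty(0,+\infty)$ and $\varphi\in C(\bT)$, and let $F$ be as in \eqref{082808}. Identity \eqref{072808} gives ${\frak U}(F)=\int_{\bbR}\hat\chi(\eta)\,\dd\eta\cdot 0=0$. The integral is legitimate because $\hat\chi$ is Schwartz, and the limit $A\to+\infty$ is already justified in that proposition.

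\textbf{Step 3: density.} The set of functions $F(t,u)=e^{-2\pi\xi t}\chi(t)\varphi(u)$ contains every product $\tilde\chi(t)\varphi(u)$ with $\tilde\chi\in C_0^\infty(0,+\infty)$: take $\chi=e^{2\pi\xi t}\tilde\chi$. Finite sums of such products are dense in $C_c((0,+\infty)\times\bT)$ in the supremum norm by Stone–Weierstrass. I then need this span to be dense in ${\frak L}$, including near $t=0$ and at infinity. I would justify this with cutoffs $\chi_R$ equal to $1$ on $[1/R,R]$, using that the ${\frak L}$-norm of the remaining pieces goes to $0$. Continuity of ${\frak U}$ then gives ${\frak U}=0$, that is ${\frak W}\equiv{\frak V}$.

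If holomorphy in Step 1 proves awkward, there is a fallback that avoids complex $\la$. Use Step 2 with real $\xi>\la_0$ only, and approximate an arbitrary test function by linear combinations of $e^{-2\pi m t}$, $m$ large, via Stone–Weierstrass in the variable $s=e^{-2\pi t}\in[0,1]$ after factoring out $e^{-2\pi\la_0 t}$.
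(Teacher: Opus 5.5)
Your proposal is correct and follows the paper's proof essentially step for step. Both arguments use holomorphy of $\la\mapsto\int_{\bT}\varphi(u)\,w(\la,\dd u)$ on ${\rm Re}\,\la>0$ plus the identity theorem, then Proposition \ref{prop012808} to get agreement on the functions \eqref{082808}, then density in ${\frak L}$. The paper argues density through time indicators $\mathbf{1}_{[a,b]}(t)\varphi(u)$ rather than your cutoffs, and it simply asserts the holomorphy that you rightly flag as depending on the norm of ${\frak L}$.
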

\proof
Suppose that $\varphi\in C(\bT)$. Since both functions
$$
\la\mapsto \int_{\bT}\varphi(u)w(\la,\dd u)\quad\mbox{and}\quad \la\mapsto \int_{\bT}\varphi(u)v(\la,\dd u),\quad {\rm Re}\,\la>0
$$
are holomorphic and agree on the half-line  ${\rm Re}\,\la>\la_0$,
${\rm Im}\,\la=0$ they agree on the entire half-space $ {\rm
  Re}\,\la>0$.
From Proposition \ref{prop012808} we conclude that functionals ${\frak
  W}$ and ${\frak V}$ agree on
functions $F$ of the form \eqref{082808}. 
A function 
\begin{equation}
\label{012311}
[0,+\infty)\times\bT \ni(t,u)\mapsto \mathbf{1}_{[a,b]}(t)\varphi(u),
\end{equation}
 where $0\le a<b$, can be 
approximated in the $\mL^1$ sense,
by functions of the form \eqref{082808}.
On the other hand, we can approximate  an arbitrary   function
$F\in C_c([0,+\infty)\times \bT)$  by functions of the form \eqref{012311}. This in turn allows us to
claim that such functions are dense in ${\frak L}$.
The above allows us to conclude the claim of the theorem.
\qed

\subsection{Proof of Theorem \ref{thm:intermediate}}
\label{AppC}

We prove here that Theorem \ref{thm:intermediate}  can be deduced
from Theorem \ref{thm:laplace}. In fact, in the present section we
consider a more general situation.  Let $({\cal X},{\cal A},\bbP)$ be  a
probability space, with $\bbE$ the respective expectation.
Suppose that $\big(\Xi_n(t)\big)_{t\ge0}$, $n=1,2,\ldots$, is a family
of  complex valued, measurable, stochastic
processes such that, there exist $C_W>0$ and $M>0$, for which
\begin{equation}
  \label{010109-26}
  \int_0^t \bbE\big[|\Xi_n(s)|\big]\dd s\le C_W(t+1)^M,\qquad t\ge0,\,n=1,2,\ldots.
\end{equation}
Thanks to \eqref{010109-26} for any $\la\in\mathbb C$ such that ${\rm
  Re}\,\la>0$, we can define the Laplace transforms of
the processes:
\begin{equation}
  \label{020109-26}
  \hat\Xi_n(\la):=\int_0^{+\infty}e^{-\la s}\Xi_n(s)\dd s.
\end{equation}
The improper integrals above are considered in the $\mL^1(\bbP)$ sense.

\begin{theorem}
  \label{thm010109-26}
  In addition to the assumptions made above suppose that
  \begin{equation}
  \label{030109-26}
  \lim_{n\to+\infty}\bbE\big[\big|\hat\Xi_n(\la)\big|\big]=0,\quad
  {\rm Re}\,\la>0.
\end{equation}
Then, for any  $\varphi \in C_c([0,+\infty))$ we have
\begin{equation}
  \label{040109-26}
  \lim_{n\to +\infty} \E \Bigg[ \bigg|  \int_0^{+\infty} \vphi(t) \Xi_n(t)\dd t \bigg| \Bigg] = 0.
\end{equation}
\end{theorem}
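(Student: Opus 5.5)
The plan is to approximate $\varphi$ uniformly by functions of the special form $e^{-\lambda t}p(e^{-t})$, with $p$ a polynomial and $\lambda>0$. For these the integral $\int_0^{+\infty}\varphi(t)\Xi_n(t)\dd t$ is a finite linear combination of Laplace transforms $\hat\Xi_n(\lambda+j)$, so \eqref{030109-26} applies to each term.

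Fix $\varphi\in C_c([0,+\infty))$ with support in $[0,T]$, and fix $\lambda_0>0$. Set $g(t):=e^{\lambda_0 t}\varphi(t)$. This function is continuous, compactly supported and vanishes at infinity. Under the change of variable $x=e^{-t}\in(0,1]$, the function $h(x):=g(-\log x)$ extends continuously to $[0,1]$ with $h(0)=0$. By the Weierstrass theorem, for any $\delta>0$ there is a polynomial $p_\delta(x)=\sum_{j=0}^{m}a_jx^j$ with $\sup_{[0,1]}|h-p_\delta|\le\delta$. Hence
\[
\Big|\varphi(t)-\sum_{j=0}^m a_je^{-(\lambda_0+j)t}\Big|\le \delta e^{-\lambda_0 t},\qquad t\ge0.
\]
Substituting, we obtain
\[
\bbE\Big[\Big|\int_0^{+\infty}\varphi\,\Xi_n\dd t\Big|\Big]\le \sum_{j=0}^m|a_j|\,\bbE\big[|\hat\Xi_n(\lambda_0+j)|\big]+\delta\int_0^{+\infty}e^{-\lambda_0t}\bbE\big[|\Xi_n(t)|\big]\dd t .
\]
Here the interchange of expectation and the improper $\mL^1(\bbP)$ integral is justified by \eqref{010109-26}.

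The remaining task, and the only real obstacle, is bounding the error term uniformly in $n$. Integration by parts, with $F_n(t):=\int_0^t\bbE|\Xi_n(s)|\dd s$, gives
\[
\int_0^{+\infty}e^{-\lambda_0t}\bbE|\Xi_n|\dd t=\lambda_0\int_0^{+\infty}e^{-\lambda_0t}F_n(t)\dd t\le \lambda_0C_W\int_0^{+\infty}e^{-\lambda_0 t}(t+1)^M\dd t=:K(\lambda_0)<\infty .
\]
This is the same computation as in Proposition \ref{prop012302-26}, and the bound is independent of $n$.

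To conclude, first let $n\to+\infty$: each of the finitely many Laplace terms vanishes by \eqref{030109-26}. This gives $\limsup_n\bbE\big[|\int\varphi\,\Xi_n|\big]\le\delta K(\lambda_0)$, and letting $\delta\to0$ yields \eqref{040109-26}. Theorem \ref{thm:intermediate} then follows by applying this with $\Xi_n(t):=\hsum\tilde W_{n,+}(t,\eta,k)$. Hypothesis \eqref{010109-26} holds by Theorem \ref{thm011002-26} (with $M=1$) via Cauchy–Schwarz. Hypothesis \eqref{030109-26} holds because $\bbE\big[|\hsum\hat w_{n,+}(\lambda,\eta,k)|\big]\le\bbE\big[\|\hat w_{n,+}(\lambda,\eta,\cdot)\|_{\mL^2(\hT_n)}\big]\to0$ by Theorem \ref{thm:laplace}.
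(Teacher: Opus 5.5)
Your proof is correct and follows the same strategy as the paper: substitute $u=e^{-t}$, use the Weierstrass theorem to reduce to finitely many Laplace transforms at real points, and control the remainder uniformly in $n$ through the growth bound \eqref{010109-26}. The difference is only technical. The paper first reduces to smooth $\varphi$, integrates by parts to work with $\int_0^t\Xi_n(s)\,\dd s$, and approximates $G'$ so that the error carries a factor $e^{-t}$. Your tilt $e^{\lambda_0 t}\varphi$ instead produces the decaying factor $e^{-\lambda_0 t}$ directly, so you need no density step.
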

\proof
By a density argument we can assume that $\varphi \in
  C_c^\infty([0,+\infty))$ in \eqref{040109-26}.
Integrating by parts we infer that  the latter follows if we show
\begin{equation}
	\label{011103-26b}
	\lim_{n\to+\infty}  \bbE_n\Bigg[\bigg| \int_0^{+\infty}\varphi'(t)\Big( \int_0^t\Xi_n(s)\dd s\Big)\dd t \bigg|\Bigg]=0.
\end{equation}
Let $\psi:=\varphi' \in C_c^\infty([0,+\infty))$.

 Choose an arbitrary $\eps>0$.
Let $ G\in C^\infty_c(0,1)$ be given
by $G(e^{-t}):= \psi(t)$.   By virtue of the Weierstrass
approximation Theorem, for any $\eps>0$ there exists a polynomial $p$ such that
$$
\sup_{u\in[0,1]} | G'(u)-p(u)|<\frac{\eps }{100 (C_W+1)
  \gamma_M},\quad \mbox{where}\quad \gamma_M:=\int_0^{+\infty} (t+1)^Me^{-t}
	\dd t .
$$
Let
$
P(u)=\int_0^up(v)\dd v.
$
Thus,
\begin{equation}
	\label{021103-26}
	| G(u)-P(u)|\le \frac{\eps u}{100 (C_W+1) },\qquad u\in[0,1].
\end{equation}
As a result,  
\begin{align}
	\bbE_n\Bigg[\bigg|  \notag &\int_0^{+\infty}\psi(t) \Big( \int_0^t\Xi_n(s)\dd s\Big)\dd t
	\bigg|\Bigg]   =\bbE_n\Bigg[\bigg|\int_0^{+\infty} G(e^{-t}) \Big( \int_0^t\Xi_n(s)\dd s\Big)\dd t \bigg|\Bigg]\notag\\
	&
	\le \bbE_n\Bigg[\bigg| \int_0^{+\infty} \big[G(e^{-t})-P(e^{-t})\big] \Big( \int_0^t\Xi_n(s)\dd s\Big)\dd t \bigg|\Bigg] \label{eq:firstterm}\\
	&
	\quad +
	 \bbE_n\Bigg[\bigg| \int_0^{+\infty}  P(e^{-t})  \Big( \int_0^t\Xi_n(s)\dd s\Big)\dd t \bigg|\Bigg]. \label{eq:secondterm}
\end{align}
Since $P$ is a polynomial such that $P(0)=0$ we can write it as
$P(t)=\sum_{m=1}^Ma_mt^m$ for some integer $M\ge 1$ and $a_1,\ldots,a_m\in\bbR$.
Therefore, integrating by parts in $t$, we get
\begin{align*}
\bbE_n\Bigg[ \bigg|\int_0^{+\infty}  P(e^{-t})  \Big(
  \int_0^t\Xi_n(s)\dd s\Big)\dd t \bigg|\Bigg] =\bbE_n\Bigg[
  \bigg|\sum_{m=1}^M\frac{a_m}{m} \hat \Xi_n(m)\bigg|\Bigg] .
\end{align*}
Here we have used the fact that, thanks to \eqref{010109-26}, for each $n$ fixed and $\xi>0$
we have 
\[\lim_{t\to+\infty} e^{-\xi t}\bbE_n\Bigg[ \bigg| \int_0^t
  \Xi_n(s)\dd s \bigg|\Bigg]  = 0 .\]

We can use \eqref{030109-26} and conclude  that the
second term \eqref{eq:secondterm} vanishes, as $n\to+\infty$. Therefore we infer that
\begin{multline*}
\limsup_{n\to+\infty}\bbE_n\Bigg[\bigg| \int_0^{+\infty}\psi(t) \Big( \int_0^t\Xi_n(s)\dd s\Big)\dd t
	\bigg|\Bigg] \\
	\le  \limsup_{n\to+\infty} \bbE_n\Bigg[\bigg| \int_0^{+\infty} \big[G(e^{-t})-P(e^{-t})\big] \Big( \int_0^t\Xi_n(s)\dd s\Big)\dd t \bigg|\Bigg] .
\end{multline*}
Furthermore, by \eqref{021103-26} 
\begin{align*}
	\limsup_{n\to+\infty}\bbE_n\Bigg[\bigg|&\hsum\int_0^{+\infty}\psi(t) \Big( \int_0^t\Xi_n(s)\dd s\Big)\dd t
	\bigg|\Bigg] \\
	&
	\le  \frac{\eps }{100 (C_W+1)  }
	\limsup_{n\to+\infty} \bbE_n\Bigg[ \int_0^{+\infty} e^{-t}
	\int_0^t \Big|\Xi_n(s)\Big|\dd s\dd t \Bigg]  \\
	&
	\le  \frac{\eps C_W}{100 (C_W+1) }
	\int_0^{+\infty} (t+1)^Me^{-t}
	\dd t  <\eps  .
\end{align*}
We have used \eqref{010109-26} to claim the penultimate estimate.
Since $\eps>0$ has been chosen arbitrarily we conclude
\eqref{040109-26}. 
\qed

  \section{Proof of Theorem \ref{thm022303-26} } 
  
  \label{sec5}
  
 {We prove here Theorem \ref{thm022303-26} on the convergence of the mean energy profile. The proof  is a straightforward adaptation of the argument presented in \cite{KOS18} for the unpinned case.}
  
  Define the real Banach space ${\frak C}$ as the completion of the space
  $C_c^\infty\big(\T\times[0,+\infty))$ of smooth functions $\Phi(u,t)$ which are compactly supported w.r.t.~the time variable $t$, under the norm
  \begin{equation}
  	\label{fC}
  	\|\Phi\|_{{\frak C}}:=\int_0^{+\infty}\sup_{u\in\bT}|\Phi(u,t)| \; \dd t.
  \end{equation}
  Consider the linear functional   $\bar{\cal E}_n:{\frak C}\to\mathbb R$
  given by
  \begin{equation}
  	\label{bE}
  	\bar{\cal E}_n[\Phi]:=\frac{1}{n}\sum_{x\in\bT_n}\int_0^{+\infty}\bar
  	e_x^{(n)}(t)\Phi\Big(\frac{x}{n},t\Big)\dd t,\qquad \Phi\in C_c^\infty\big(\T\times[0,+\infty)),
      \end{equation}
      with $\bar
  	e_x^{(n)}(t)$ given in \eqref{bex1}.
 Thanks to \eqref{eq:energybounds}
  the functionals extend continuously to
  ${\frak C}$ and their dual norms satisfy
  \begin{equation}
  	\label{bEbound}
  	\limsup_{n\to+\infty}\|\bar{\cal E}_n\|_{{\frak C}'}\le \sup_{n\ge 1} \E_n\big[ \mathcal{E}_{\rm tot}^{(n)}(0) \big] < \infty
  	.
  \end{equation}
  Since ${\frak C}$ is separable the above implies that the sequence
  $\big(\bar{\cal E}_n\big)$ is $\star$-weakly sequentially compact in
  the dual space
  ${\frak C}'$.
  
  %
  {Recall from Section \ref{sec:wig} the notations for the} averaged Fourier-Wigner functions
  	\begin{equation}
  		\label{030202-26}
  		\begin{split}
  			&\overline{W}_{n,\iota}(t,\eta,k) :=\bbE_n \big[W_{n,\iota}(t,\eta,k) \big] ,\qquad \iota\in\{-,0,+\}\\
  			&\delta\bar W_{n}(t,\eta,k) :=\bbE_n \big[\delta W_{n}(t,\eta,k)\big] ,\\ 
  			&  \bar U_{n,\pm}(t,\eta,k) :=\bbE_n \big[U_{n,\pm}(t,\eta,k)  \big].
  		\end{split}
  	\end{equation}
  	Now let us define, for any $\Phi\in C_c^\infty\big(\T\times[0,+\infty))$,
  	\begin{equation}
  		\label{032503-26}
  		{\bar{\cal W}}_n[\Phi]:=\sum_{\eta\in{\bbZ}}\underset{k\in
  			\hT_n}{\hat{\sum}} \int_0^{+\infty}\bar W_{n,+}(t,\eta,k) {\cal F}(\Phi)^\star(\eta,t)\dd t .
  	\end{equation}
  	Here ${\cal F}(\Phi)(\eta,t)$ denote the Fourier transform
        with respect to the first variable of $\Phi$, see
        \eqref{FC}. Since $\Phi\in
          C_c^\infty\big(\T\times[0,+\infty))$,  \eqref{032503-26} is well defined. Note also that the functional takes real values for real
  	valued $\Phi$.
  	The   argument used in the foregoing shows also that 
  	$({\bar{\cal W}}_n)_{n\ge1}$
  	forms   $\star$-weakly sequentially compact in
  	${\frak C}'$ and 
  	Corollary \ref{cor032603-26} shows that
  	\begin{equation}
  		\label{bWE}
  		\lim_{n\to+\infty}\big({\bar{\cal W}}_n[\Phi]-\bar{\cal E}_n[\Phi]\big)=0,\qquad \Phi\in C_c^\infty\big(\T\times[0,+\infty)).
  	\end{equation}
  	Therefore, in order to prove  Theorem \ref{thm022303-26} it suffices
  	only to show that
  	\begin{equation}
  		\label{012503-26}
  		\lim_{n\to+\infty} {\bar{\cal W}}_n[\Phi] ={\cal T}[\Phi],\qquad \Phi\in
  		C_c^\infty\big(\T\times[0,+\infty)), \end{equation} where 
  	\begin{equation}
  		{\cal T}[\Phi]:=\int_0^{+\infty}\dd t\int_{\bT}\Phi(u,t)T(u,t)\dd u
  		\label{eq:defTphi}
  	\end{equation}
  	and $T(u,t)$ is solution to \eqref{032303-26} as in the statement of the theorem.

  	\subsection{Averaged Laplace-Fourier-Wigner functions. The limit identification}

 Similarly to \eqref{lwu}, define the Laplace transforms of the mean Fourier-Wigner functions
  	\begin{equation}
  		\label{010302-26}
  		\begin{split}
  			&\bar w_{n,\iota}(\la,\eta,k):=\int_0^{+\infty}e^{-\la t}\bar
  			W_{n,\iota}(t,\eta,k) \dd t ,\qquad \iota\in\{-,0,+\}\\
  			&\delta\bar w_{n}(\la,\eta,k):=\int_0^{+\infty}e^{-\la
  				t}\delta\bar W_{n}(t,\eta,k)\dd t ,\\ 
  			&  \bar u_{n,\pm}(\la,\eta,k):=\int_0^{+\infty}e^{-\la t}\bar
  			U_{n,\pm}(t,\eta,k) \dd t,
  		\end{split}
  	\end{equation}
  	for any complex $\la$ such that ${\rm Re}\,\la>0$.
  We  obtain {(recall \eqref{010704-26z}--\eqref{010704-26c})} that the average Fourier-Wigner functions satisfy
  	\begin{align*}
  		&
  		\frac{\dd}{\dd t}\bar  W_{n,+}(t,\eta,k)      
  		=-in \delta_{n}\om(\eta,k) \bar  W_{n,+}(t,\eta,k)    +2\ga n^2 {\cal
  			L}_n\big(\bar  W_{n,+} -\bar U_{n,+} \big) (t,\eta,k)  , \\
  		&
  		\frac{\dd}{\dd t}\bar  W_{n,-}(t,\eta,k)    =in \delta_{n}\om(\eta,k) \bar W_{n,-}(t,\eta,k)    
  		+2\ga n^2{\cal L}_n\big(\bar  W_{n,-} -\bar  U_{n,+}
  		\big)(t,\eta,k)  , \\
  		&
  		\frac{\dd}{\dd t}\bar  U_{n,+}(t,\eta,k)     =2n^2 \bar\om_n(\eta,k)
  		\bar  U_{n,-}(t,\eta,k)  
  		+2\ga n^2{\cal L}_n\big(\bar U_{n,+} 
  		-\bar W_{n,0}    \big)
  		(t,\eta,k) ,    \\
  		&
  		\frac{\dd}{\dd t}\bar  U_{n,-}(t,\eta,k)     = -2n^2 \bar\om_n(\eta,k)
  		\bar  U_{n,+}(t,\eta,k)  -2\ga n^2 \bar  U_{n,-}(t,\eta,k)  . 
  	\end{align*}
  	Taking the Laplace transform of the both sides of
  	equations we get
  	\begin{equation}
  		\label{020302-26} \begin{split}
  			\la\bar  w_{n,+}(\la,\eta,k)  -  \bar  W_{n,+}(0,\eta,k)    
  			= & -in \delta_{n}\om(\eta,k) \bar  w_{n,+}(\la,\eta,k)   \\ & \; +2\ga n^2 {\cal
  				L}_n\big(\bar  w_{n,+}-\bar u_{n,+}\big) (\la,\eta,k)  , \vphantom{\bigg)}\\
  			\la\bar  w_{n,-}(\la,\eta,k)   -  \bar  W_{n,-}(0,\eta,k)     = & \; in \delta_{n}\om(\eta,k) \bar w_{n,-}(\la,\eta,k)\\   
  			& +2\ga n^2{\cal L}_n\big(\bar  w_{n,-}-\bar  u_{n,+}
  			\big)(\la,\eta,k)  ,  \vphantom{\bigg)}\\
  			\la\bar  u_{n,+}(\la,\eta,k)  -\bar  U_{n,+}(0,\eta,k)     = & \: 2n^2 \bar\om_n(\eta,k)
  			\bar  u_{n,-}(\la,\eta,k)  
  			\\ & +2\ga n^2{\cal L}_n\big(\bar  u_{n,+}
  			- \bar  w_{n,0}\big) 
  			(\la,\eta,k) ,  \vphantom{\bigg)}   \\
  			\la\bar  u_{n,-}(\la,\eta,k)   -\bar  U_{n,-}(0,\eta,k)    = & \;  -2n^2 \bar\om_n(\eta,k)
  			\bar  u_{n,+}(\la,\eta,k)  -2\ga n^2\bar  u_{n,-}(\la,\eta,k)  
  			. \end{split}
  	\end{equation}
  	Thanks to Theorem \ref{thm012808} in order to prove \eqref{012503-26}
  	it suffices to show the following.
  	\begin{theorem}
  		\label{thm010402-26}
  		For any  smooth real-valued function $\varphi:\bbT\times
  		\bbT\to\mathbb C$ and $\la\in\mathbb C$ such that ${\rm Re}\,\la>0$ we have
  		\begin{equation*}
  			\lim_{n\to+\infty}\sum_{\eta\in\bbZ}\underset{k\in\bbT_n}{\hat\sum}\bar
  			w_{n,+}(\la,  \eta,k) {\cal F}(\varphi)^\star(\eta,k)=
  			\sum_{\eta\in\bbZ}\frac{{\cal F}(T)(\eta)}{ \la +
  				\mathbf{D} (2\pi \eta)^2 }\int_{\bbT} {\cal F}(\varphi)^\star(\eta,u)\dd u,
  		\end{equation*}
  		where ${\cal F}(T)$ is the Fourier transform of $T$ defined as in \eqref{FC}
  		and $	\mathbf{D}$ is as in the statement of Theorem \ref{thm022303-26} and is defined in \eqref{042303-26}.
  	\end{theorem}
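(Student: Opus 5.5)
The approach is to solve the deterministic linear system \eqref{020302-26} for the averaged Laplace--Fourier--Wigner functions in the regime $n\to\infty$, following the scheme already used for the fluctuating part in Sections \ref{sec:proofLaplace}--\ref{sec:proofrest}. The noise terms ${\frak R}_{n,{\rm x},\iota}$ are now absent. We fix $\eta$ and $\la$ with ${\rm Re}\,\la>0$. Once the per-$\eta$ statement is established, the sum over $\eta$ is handled by dominated convergence: since $\varphi$ is smooth, $\sup_k|{\cal F}(\varphi)(\eta,k)|$ decays faster than any power of $|\eta|$, and the bounds $\|\bar w_{n,+}(\la,\eta)\|_{\mL^2(\hT_n)}\le C(\la)$ follow, uniformly in $n$ and $\eta$, from the deterministic analogue of Proposition \ref{prop012302-26} (via Theorem \ref{thm011002-26} and Jensen).

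\textbf{Step 1 (energy estimates).} I would repeat the proof of Lemma \ref{prop032302-26} and Lemma \ref{prop042302-26}, with $\bar w,\bar u$ in place of $\hat w,\hat u$ and with no ${\frak R}$ terms, to obtain
\[
\cD_n\big(\bar w_{n,0}-\bar u_{n,+}\big)+\|\bar u_{n,-}\|^2+\|\delta\bar w_n\|^2\le \frac{C(\la)}{n^2},
\qquad
\|\bar u_{n,+}\|^2\le \frac{C(\la)}{n^2}.
\]
These follow from the real part of the identity analogous to \eqref{072302-26aa}, whose right-hand side is now only $O(1)$, together with Proposition \ref{prop030204-26}. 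Consequently $\cD_n(\bar w_{n,+})=O(n^{-2})$, so $\bar w_{n,+}(\la,\eta,k)$ is, in $\mL^2(\hT_n)$, close to its average $\hat\sum_\ell\bar w_{n,+}(\la,\eta,\ell)$. It therefore suffices to identify the limit of this average, because
\[
\hat\sum_k \bar w_{n,+}\,{\cal F}(\varphi)^\star(\eta,k)-\Big(\hat\sum_\ell \bar w_{n,+}\Big)\hat\sum_k{\cal F}(\varphi)^\star(\eta,k)\to 0,
\]
and $\hat\sum_k{\cal F}(\varphi)^\star(\eta,k)\to\int_\bbT{\cal F}(\varphi)^\star(\eta,u)\,\dd u$.

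\textbf{Step 2 (closed equation for the average).} I would sum the first equation of \eqref{020302-26} over $k$ exactly as in \eqref{041902-26}--\eqref{061902-26}. This yields
\[
\hat\sum_\ell\bar w_{n,+}=\bD_n^{-1}(\la,\eta)\,\hat\sum_k\frac{\bar W_{n,+}(0,\eta,k)}{\frac{\la}{n^2}+\frac{i\delta_n\om}{n}+\ga}+\overline{\rm II}_n,
\]
where $\overline{\rm II}_n$ is the analogue of ${\rm II}_n$. By Lemma \ref{lm010303-26} and the $O(n^{-2})$ bound on $\|\bar u_{n,+}\|^2$ from Step 1, this term satisfies $\overline{\rm II}_n=O(n^{-1})$, precisely as in Section \ref{sec:proofII}. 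Lemma \ref{lem:D} then gives $\bD_n\to\ga^{-1}(\la+\bD(2\pi\eta)^2)$, and the denominator converges uniformly in $k$ to $\ga$. Hence the first term behaves like $(\la+\bD(2\pi\eta)^2)^{-1}\hat\sum_k\bar W_{n,+}(0,\eta,k)$. Finally, $\hat\sum_k\bar W_{n,+}(0,\eta,k)$ is the Wigner field at time $0$ tested against $e^{-2\pi i\eta u}$. By Corollary \ref{cor032603-26} and \eqref{eq:1}, it converges to ${\cal F}(T)(\eta)$.

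\textbf{Main obstacle.} The delicate point is Step 1. One has to verify that the deterministic identity really delivers the sharper $O(n^{-2})$ rate for $\cD_n$ and for $\|\bar u_{n,+}\|^2$; in the fluctuating case only $O(n^{-1})$ was available, because of the noise terms. Equivalently, the cross term $n\,\delta_n\om\,(|\bar w_{n,-}|^2-|\bar w_{n,+}|^2)$ is purely imaginary and drops out of the real part, and the initial-data terms are bounded by Proposition \ref{prop030204-26}. In any case, even an $o(1)$ rate for $\cD_n(\bar w_{n,+})$ suffices for Step 1. For Step 2, the Lemma \ref{lm010303-26} cancellation requires $\|\bar u_{n,+}\|=o(1)$ after the $n\,\delta_n\om$ factor is removed, and this I expect to hold comfortably.
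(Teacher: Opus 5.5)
Your proposal is correct and follows essentially the paper's own route. The real part of the averaged energy identity gives the $O(n^{-2})$ bounds of Proposition \ref{prop010402-26}, including $\|\bar u_{n,+}\|^2\le C/n^2$. Summing over $k$ isolates the average via $\bD_n$, and Lemma \ref{lem:D} together with the cancellation of Lemma \ref{lm010303-26bis} disposes of the remainder, while Corollary \ref{cor032603-26} and \eqref{eq:1} identify ${\cal F}(T)(\eta)$. The only small differences are two. You obtain closeness of $\bar w_{n,+}$ to its $k$-average from the Dirichlet-form bound $\cD_n(\bar w_{n,+})=O(n^{-2})$, whereas the paper reads it off the explicit formula \eqref{080402-26c}. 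You also spell out the dominated-convergence step in $\eta$, which the paper leaves implicit.
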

  	The remaining part of the present section is devoted to the proof of
  	Theorem \ref{thm010402-26}.

  	\subsection{Estimates of the averaged Laplace-Fourier-Wigner functions}
  	
  	Taking the scalar products, in the $k$ variable, of each of the equation by $\bar  w_{n,+}$,
  	$\bar  w_{n,-}$, $\bar  u_{n,+}$ and $\bar  u_{n,-}$ respectively and
  	then summing the resulting equations sideways we end up with the
  	following identity (recall \eqref{072302-26aa}): for all $\eta\in\bbZ$, ${\rm Re}\,\la>0$,
  	\begin{equation} 
  		\label{010402-26} \begin{split}
  			&
  			\frac \la  2\sum_{\iota\in\{-,+\}}\|\bar
  			w_{n,+}(\la,\eta)\|^2_{\mb L^2(\hT_n)}+\la \sum_{\iota\in\{-,+\}}\|\bar
  			u_{n,\iota}(\la,\eta) \|^2_{\mb L^2(\hT_n)} +2\ga n^2    \|\delta\bar  w_{n}(\la,\eta)\|^2_{\mb L^2(\hT_n)}
  			\\
  			& + \ga n^2  \underset{k \in\hat \T_n}{\hat{\sum}} {\cal
  				D}_n\Big(\big(\bar  w_{n,0}-\bar u_{n,+}\big) (\la,\eta) \Big)
  			+2\ga n^2 \|\bar  u_{n,-}(\la,\eta)  \|^2_{\mb L^2(\hT_n)} 
  			\\
  			&
  			=  \frac12 \sum_{\iota=\pm}\big\langle\bar
  			W_{n,\iota}(0,\eta), \bar  w_{n,\iota} (\la,\eta)\big\rangle_{\mb L^2(\hT_n)}     +\sum_{\iota=\pm}\langle\bar  U_{n,\iota}(0,\eta), \bar  u_{n,\iota}(\la,\eta) \rangle_{\mb L^2(\hT_n)}     \\
  			&
  			\quad  +\frac{in}{2} \underset{k \in\hat \T_n}{\hat{\sum}} \delta_{n}\om(\eta,k)\big( |\bar
  			w_{n,-}(\la,\eta,k)|^2 -|\bar  w_{n,+}(\la,\eta,k)|^2 \big)
  			\\ & \quad
  			-4n^2 i\underset{k \in\hat \T_n}{\hat{\sum}}\bar\om_n(\eta,k)
  			{\rm Im}\, \big(\bar  u_{n,+}(\la,\eta,k)  u_{n,-}^\star(\la,\eta,k)\big),
  		\end{split}
  	\end{equation}
  	where {$\cD_n$ has been defined in \eqref{eq:cD}}.
  	Using the above and taking the real parts of both sides of
  	\eqref{010402-26} we obtain the following identity
  	\begin{equation}\label{020402-26}
  		\begin{split}
  			&
  			\frac {{\rm Re}\,\la}{  2}\sum_{\iota\in\{-,+\}}\|\bar
  			w_{n,+}(\la,\eta)\|^2_{\mb L^2(\hT_n)}+{\rm Re}\,\la \sum_{\iota\in\{-,+\}}\|\bar
  			u_{n,\iota}(\la,\eta) \|^2_{\mb L^2(\hT_n)} +2\ga n^2    \|\delta\bar  w_{n}(\la,\eta)\|^2_{\mb L^2(\hT_n)}
  			\\
  			&+\ga n^2  \underset{k\in\hat\T_n}{\hat{\sum}} {\cal
  				D}_n\Big(\big(\bar  w_{n,0}-\bar u_{n,+}\big) (\la,\eta) \Big)
  			+2\ga n^2 \|\bar  u_{n,-}(\la,\eta)  \|^2_{\mb L^2(\hT_n)}\\
  			&
  			= \underset{k\in\hat\T_n}{\hat{\sum}} {\rm
  				Re}\,\Big( \frac12 \sum_{\iota=\pm}\big\langle\bar
  			W_{n,\iota}(0,\eta), \bar  w_{n,\iota} (\la,\eta)\big\rangle_{\mb L^2(\hT_n)}     +\sum_{\iota=\pm}\big\langle\bar  U_{n,\iota}(0,\eta), \bar  u_{n,\iota}(\la,\eta) \big\rangle_{\mb L^2(\hT_n)}    \Big)  .
  	\end{split}\end{equation}

      {It is a consequence of Proposition \ref{prop030204-26} that
  		\begin{equation}
  			\label{040402-26}    \bar {\frak W}_2:= {\sup_{n\ge 1}\sup_{\eta\in\bbZ}} \bigg\{\sum_{\iota=\pm} \Big(\big\|\bar
  			W_{n,\iota}(\eta)\big\|_{\mb L^2(\hT_n)}
  			+\big\|\bar  U_{n,\iota} (\eta)\big\|_{\mb L^2(\hT_n)} \Big)\bigg\} <+\infty.
  		\end{equation}
  	From \eqref{020402-26} and \eqref{040402-26}  we conclude  in particular the
  	following.}
  	\begin{proposition}
  		\label{prop010402-26}
  	For any $\xi_0>0$ there exists a constant $C>0$ such that
  		for all ${\rm
  			Re}\,\la\ge \xi_0$ and $n\ge 1$,
  		\begin{multline}
  			\label{030402-26}
  			\sum_{\iota=\pm} {\sup_{\eta \in \bbZ}\Big\{\big\|\bar
  			w_{n,\iota}(\la,\eta)\big\|^2_{\mL^2(\hT_n)} 
  			+\big\|\bar  u_{n,\iota} (\la,\eta)\big\|^2_{\mL^2(\hT_n)}\Big\}} 
  			+  \ga n^2 \sup_{\eta\in \bbZ}{\cal D}\Big(\big(\bar  w_{n,0} -\bar u_{n,+} \big)
  			(\la,\eta )\Big) \\ +
  			2\ga n^2\;  {\sup_{\eta \in \bbZ}}\Big\{\big\|\delta\bar  w_{n}(\la,\eta)\big\|_{\mL^2(\hT_n)}^2 +  \big\|\bar  u_{n,-}(\la,\eta) \big\|_{\mL^2(\hT_n)}^2 \Big\} \le C.
  		\end{multline}
  		Furthermore, we also have
  		\begin{equation}
  			\label{060402-26a}
  		{\sup_{\eta \in \bbZ}}\;	\big\|\bar  u_{n,+}(\la,\eta) \big\|_{\mL^2(\hT_n)}^2
  			\le \frac{C}{n^2}.
  		\end{equation}
              \end{proposition}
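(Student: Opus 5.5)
The plan is to read off all the bounds from the energy identity \eqref{020402-26}, exactly as in the proof of Lemma \ref{prop032302-26}. There is one simplification: the averaged system \eqref{020302-26} has no martingale terms ${\frak R}$, so the only source terms are the initial data. This is why we obtain $O(1)$ bounds rather than bounds growing in $n$.

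\textbf{Step 1: the estimate \eqref{030402-26}.} Fix $\eta$ and ${\rm Re}\,\la\ge\xi_0$. Every term on the left-hand side of \eqref{020402-26} is nonnegative. Denote
\[
X:=\sum_{\iota=\pm}\Big(\|\bar w_{n,\iota}(\la,\eta)\|^2_{\mL^2(\hT_n)}+\|\bar u_{n,\iota}(\la,\eta)\|^2_{\mL^2(\hT_n)}\Big).
\]
By Cauchy--Schwarz and the uniform bound \eqref{040402-26}, the right-hand side of \eqref{020402-26} is at most $C\bar{\frak W}_2X^{1/2}$. Keeping only the first two terms on the left gives $\frac{\xi_0}{2}X\le C\bar{\frak W}_2X^{1/2}$, hence $X\le C(\xi_0)$ uniformly in $n$ and $\eta$. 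Feeding this back into \eqref{020402-26}, the remaining left-hand terms, namely
\[
\ga n^2\cD_n\big((\bar w_{n,0}-\bar u_{n,+})(\la,\eta)\big),\qquad 2\ga n^2\|\delta\bar w_n(\la,\eta)\|^2_{\mL^2(\hT_n)},\qquad 2\ga n^2\|\bar u_{n,-}(\la,\eta)\|^2_{\mL^2(\hT_n)},
\]
are also bounded by $C\bar{\frak W}_2X^{1/2}\le C$. Taking the supremum over $\eta$ yields \eqref{030402-26}.

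\textbf{Step 2: the estimate \eqref{060402-26a}.} We imitate Lemma \ref{prop042302-26}. Take the scalar product of the last equation in \eqref{020302-26} with $\bar u_{n,+}(\la,\eta)$ and use $\bar\om_n\ge\om_0$. This gives
\[
2n^2\om_0\|\bar u_{n,+}\|^2\le |\la|\,\|\bar u_{n,-}\|\,\|\bar u_{n,+}\|+\|\bar U_{n,-}(0,\eta)\|\,\|\bar u_{n,+}\|+2\ga n^2\|\bar u_{n,-}\|\,\|\bar u_{n,+}\|.
\]
By Step 1, $\|\bar u_{n,+}\|\le C$ and $\|\bar u_{n,-}\|\le C/n$, while $\|\bar U_{n,-}(0,\eta)\|\le C$ by \eqref{040402-26}. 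Apply Young's inequality to the last term, $2\ga n^2ab\le \frac{n^2\om_0}{2}b^2+\frac{2\ga^2n^2}{\om_0}a^2$, and absorb the $b^2=\|\bar u_{n,+}\|^2$ contribution into the left-hand side. This leaves
\[
n^2\om_0\|\bar u_{n,+}\|^2\le C\Big(\frac{|\la|}{n}+1\Big)+C\,\frac{n^2}{n^2}.
\]
So the bound is $\|\bar u_{n,+}\|^2\le C(\la)/n^2$, where the constant depends on $|\la|$ just as in \eqref{un-2}.

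\textbf{Main obstacle.} The only delicate point is this $|\la|$-dependence in \eqref{060402-26a}, since the estimate is claimed uniformly for ${\rm Re}\,\la\ge\xi_0$. The term $|\la|\,\|\bar u_{n,-}\|\,\|\bar u_{n,+}\|$ produces a constant proportional to $|\la|$. If a bound uniform in ${\rm Im}\,\la$ is needed, I would first try a different treatment of that term: use the imaginary part of the energy identity, as in Corollary \ref{cor012502-26}, to obtain decay of $\|\bar u_{n,\pm}\|$ in $|{\rm Im}\,\la|$ of order $n/|{\rm Im}\,\la|$. This should offset the factor $|\la|$, and the resulting bound would then be uniform.
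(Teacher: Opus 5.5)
Your proof is correct and is essentially the paper's argument: \eqref{030402-26} follows from the real part of the energy identity \eqref{020402-26}, and \eqref{060402-26a} from pairing the $\bar u_{n,-}$ equation with $\bar u_{n,+}$ and using $\bar\om_n\ge\om_0$; the only cosmetic difference is that the paper divides by $\|\bar u_{n,+}\|$ instead of using Young's inequality. The paper's constant in \eqref{060402-26a} also depends on $\la$, through a factor $|\la|/n^2$, so the refinement in ${\rm Im}\,\la$ you sketch is not needed: $\la$ is fixed wherever this estimate is used.
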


  	\proof
  	In light of \eqref{020402-26} only estimate  \eqref{060402-26a}
  	requires a proof. To conclude it 
  	multiply   \eqref{020302-26} scalarly (in $k$) by $\bar
  	u_{n,+} (\la,\eta)$. We get
  	\begin{multline*}
  		(\la +2\ga n^2) \big\langle \bar  u_{n,-}(\la,\eta), \bar
  		u_{n,+} (\la,\eta)\big\rangle_{\mb L^2(\hT_n)} - \big\langle\bar  U_{n,-}(0,\eta) , \bar
  		u_{n,+} (\la,\eta)\big\rangle_{\mb L^2(\hT_n)}   \\
  		=2n^2 \underset{k\in\hat\T_n}{\hat{\sum}} \bar\om_n(\eta,k)
  		|\bar  u_{n,+}(\la,\eta,k)|^2.
  	\end{multline*}
  	Since    $\bar\om_n(\eta,k)\ge \om_0$ we have
  	\begin{multline}
  		\label{070402-26}
  		(|\la |+2\ga n^2)\big\|\bar  u_{n,-}(\la,\eta)\big\|_{\mb L^2(\hT_n)}\big\| \bar
  		u_{n,+}(\la,\eta) \big\|_{\mb L^2(\hT_n)} \\
  		+\big\|\bar  U_{n,-}(0,\eta) \big\|_{\mb L^2(\hT_n)}\big\| \bar
  		u_{n,+}(\la,\eta) \big\|_{\mb L^2(\hT_n)}   
  		\ge n^2\om_0  
  		\big\|\bar  u_{n,+}(\la,\eta) \big\|_{\mb L^2(\hT_n)} ^2 \vphantom{\bigg(}
  		.
  	\end{multline}
  	Hence
  	\begin{align*}
  		&
  		\Big(\frac{|\la |}{n^2}+\ga\Big) \big\|\bar  u_{n,-}(\la,\eta)\big\|_{\mb L^2(\hT_n)}
  		+\frac{1}{n^2}\big\|\bar  U_{n,-}(0,\eta) \big\|_{\mb L^2(\hT_n)} 
  		\ge  \om_0  
  		\big\|\bar  u_{n,+}(\la,\eta) \big\|_{\mb L^2(\hT_n)}     .
  	\end{align*}
  	Estimate \eqref{060402-26a} follows then from the already proved estimate \eqref{030402-26}.
  	\qed
  	
  	\subsection{The end of the proof of Theorem \ref{thm010402-26}} 
  	
  	From \eqref{020302-26} we obtain
  	\begin{align}
  		\label{080402-26c}
  		\bar  w_{n,+}(\la,\eta,k)  
  		=  &\;  \frac{\bar  W_{n,+}(\eta,k)}{\la +in \delta_{n}\om(\eta,k) +\ga n^2 }      +\frac{
  			\ga n^2  \hat{\sum}_{\ell}    \bar     w_{n,+}(\la,\eta,\ell)}{\la +in \delta_{n}\om(\eta,k) +\ga n^2 } \notag \\
  		&
  		-\frac{ n^2 }{\la +in \delta_{n}\om(\eta,k) +\ga n^2 } {\cal
  			L}_n \bar u_{n,+}(\la,\eta,k)   .
  	\end{align}
  	Using Proposition \ref{prop010402-26}  we obtain
  	the following.
  	\begin{proposition}
  		\label{prop042603-26}
  		We have
  		\begin{align}
  			\label{110402-26}
  			\bar  w_{n,+}(\la,\eta,k)  
  			=   \underset{\ell \in\hat\T_n}{\hat{\sum}}    \bar     w_{n,+}(\la,\eta,\ell)+ o_n(\la,\eta,k) ,
  		\end{align}
  		where $
  			\lim_{n\to+\infty} {\sup_{\eta\in\bbZ} \| o_n(\la,\eta,\cdot) \|_{\mL^1(\hT_n)}}=0.$
  	\end{proposition}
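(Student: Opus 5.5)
We start from the representation \eqref{080402-26c} and subtract $\hat\sum_\ell \bar w_{n,+}(\la,\eta,\ell)$ from both sides. Write $\Lambda_n(k):=\la+in\delta_n\om(\eta,k)+\ga n^2$ and $S_n:=\hat\sum_\ell\bar w_{n,+}(\la,\eta,\ell)$. Since $\ga n^2/\Lambda_n(k)-1=-(\la+in\delta_n\om(\eta,k))/\Lambda_n(k)$, the remainder is
\[
o_n(\la,\eta,k)=\frac{\bar W_{n,+}(0,\eta,k)}{\Lambda_n(k)}-\frac{\la+in\delta_n\om(\eta,k)}{\Lambda_n(k)}\,S_n-\frac{n^2}{\Lambda_n(k)}\,{\cal L}_n\bar u_{n,+}(\la,\eta,k).
\]
We bound the $\mL^1(\hT_n)$ norm of each of the three terms separately, uniformly in $\eta$. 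Throughout we use that ${\rm Re}\,\Lambda_n(k)\ge\ga n^2$, so $|\Lambda_n(k)|\ge\ga n^2$.

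For the first term, $\|\cdot\|_{\mL^1}\le\|\cdot\|_{\mL^2}$ on the probability space $\hT_n$, together with \eqref{040402-26}, gives the bound $\bar{\frak W}_2/(\ga n^2)\to0$.

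For the second term, $|\la+in\delta_n\om(\eta,k)|\le|\la|+n^2\|\om\|_{\rm Lip}$-type bounds would be too crude. Instead we use $|\delta_n\om(\eta,k)|\le\|\om'\|_\infty|\eta|$, so the coefficient is bounded by $(|\la|+n\|\om'\|_\infty|\eta|)/(\ga n^2)$. The factor $|S_n|\le\|\bar w_{n,+}(\la,\eta)\|_{\mL^2}\le C$ by \eqref{030402-26}. This gives $C(1+|\eta|)/n$, which vanishes for each fixed $\eta$ but not uniformly in $\eta$.

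This lack of uniformity is the main obstacle. To handle it, for $|\eta|\ge n^{1/2}$ we use the cruder bound $|\delta_n\om|\le 2n\|\om\|_\infty$, which gives a coefficient bounded by a constant. We then need smallness from elsewhere. The plan is to use $\delta\bar w_n$ and the Dirichlet-form bounds in \eqref{030402-26}: $\|\bar w_{n,+}-S_n\|_{\mL^2}^2={\cal D}_n(\bar w_{n,+})$ is $O(n^{-2})$, by combining the ${\cal D}_n(\bar w_{n,0}-\bar u_{n,+})$, $\delta\bar w_n$ and $\bar u_{n,+}$ bounds exactly as in the proof of Proposition \ref{prop:dirichlet}. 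Thus $\bar w_{n,+}=S_n+O_{\mL^2}(1/n)$ uniformly in $\eta$. This yields the claim directly in the form $\|o_n\|_{\mL^2}\le C/n$, bypassing the explicit formula. It is in fact the cleanest route, so we adopt it as the main argument. Concretely, from \eqref{eq:estimdir},
\[
{\cal D}_n(\bar w_{n,+})\le 2{\cal D}_n(\bar w_{n,0}-\bar u_{n,+})+8\|\delta\bar w_n\|^2_{\mL^2}+8\|\bar u_{n,+}\|^2_{\mL^2}\le \frac{C}{n^2},
\]
uniformly in $\eta$, by \eqref{030402-26} and \eqref{060402-26a}.

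Then we set $o_n:=\bar w_{n,+}-S_n=-{\cal L}_n\bar w_{n,+}$. By \eqref{030204-26},
\[
\|o_n(\la,\eta,\cdot)\|_{\mL^1(\hT_n)}\le\|{\cal L}_n\bar w_{n,+}(\la,\eta)\|_{\mL^2(\hT_n)}={\cal D}_n(\bar w_{n,+}(\la,\eta))^{1/2}\le C/n,
\]
uniformly in $\eta$, which proves \eqref{110402-26}. The explicit decomposition above serves only as a consistency check. The single point to verify carefully is that the constants in Proposition \ref{prop010402-26} are indeed uniform in $\eta$, which holds since \eqref{040402-26} is.
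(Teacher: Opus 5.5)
Your argument is correct, and it differs from the route the paper indicates. The paper's only justification is the sentence ``Using Proposition \ref{prop010402-26} we obtain the following'', placed right after \eqref{080402-26c}. This suggests that one inserts the bounds of Proposition \ref{prop010402-26} into that explicit formula: the initial-datum term is $O(n^{-2})$ and the ${\cal L}_n\bar u_{n,+}$ term is $O(n^{-1})$ by \eqref{060402-26a}.

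As you noticed, on that route the middle term $-\frac{\la+in\delta_n\om(\eta,k)}{\Lambda_n(k)}S_n$ is only $O((1+|\eta|)/n)$. That suffices for the application in Theorem \ref{thm010402-26}, since ${\cal F}(\varphi)(\eta,\cdot)$ decays rapidly in $\eta$. It does not by itself give the supremum over $\eta$ claimed in the statement, because $\delta_n\om$ is $n$-periodic in $\eta$ and the coefficient is of order one for $|\eta|\sim n$.

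Your main argument avoids \eqref{080402-26c} entirely. You bound ${\cal D}_n(\bar w_{n,+})$ by $2{\cal D}_n(\bar w_{n,0}-\bar u_{n,+})+8\|\delta\bar w_n\|^2_{\mL^2(\hT_n)}+8\|\bar u_{n,+}\|^2_{\mL^2(\hT_n)}$, exactly as in the proof of Proposition \ref{prop:dirichlet}, and then set $o_n=-{\cal L}_n\bar w_{n,+}$. This gives the explicit rate $\|o_n(\la,\eta,\cdot)\|_{\mL^2(\hT_n)}\le C/n$. It is genuinely uniform in $\eta$, because all three inputs, \eqref{030402-26} and \eqref{060402-26a}, carry $\sup_{\eta}$. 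So it proves the statement as written, and more cleanly.

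The explicit-formula route has one merit: it reuses \eqref{080402-26c}, which is needed anyway to obtain \eqref{100402-26}. For the uniform claim, however, your Dirichlet-form argument is the right one. The exploratory discussion of the three-term decomposition, including the $|\eta|\ge n^{1/2}$ split, can simply be dropped from the write-up.
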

  	Summing up over $k$ of both sides of \eqref{080402-26c} we get
  	\begin{align}
  		\label{100402-26}
  		&
  		\underset{\ell \in\hat\T_n}{\hat{\sum}}      \bar     w_{n,+}(\la,\eta,\ell)  
  		=   \bD_n^{-1}(\la,
  		\eta) \underset{k\in\hat\T_n}{\hat{\sum}}   \frac{\bar  W_{n,+}(\eta,k)}{\frac{\la}{n^2}
  			+\frac{i \delta_{n}\om(\eta,k)}{n} +\ga  }   +
  		R_n(\la,\eta),\end{align}
  	where \begin{align*}
  		\bD_n(\la,
  		\eta)&:=n^2 \underset{k \in\hat\T_n}{\hat{\sum}} \Big(1-\frac{\ga 
  		}{\frac{\la}{n^2}  +\frac{i \delta_{n}\om(\eta,k)}{n} +\ga  }\Big),\\
  		R_n(\la,\eta)&:=\frac{\bD_n^{-1}(\la,
  			\eta)}{\ga}  \underset{k \in\hat\T_n}{\hat{\sum}}  \frac{  \la+in
  			\delta_{n}\om(\eta,k)    }{\frac{\la}{n^2} +\frac{i \delta_{n}\om(\eta,k)}{n} +\ga  } {\cal
  			L}_n \bar u_{n,+}(\la,\eta,k)  .\notag
  	\end{align*}
  	Thanks to Corollary \ref{cor032603-26} we have
  	\[
  	\lim_{n\to+\infty} \underset{k\in\hat\T_n}{\hat{\sum}}    \bar
  	W_{n,+}(\eta,k)=\lim_{n\to+\infty}\frac1n\sum_{x\in\bbT_n}e^{-2\pi i\eta\frac{x}{n}}\int_{\Om_n}e_x\dd\mu_n={\cal F}(T)(\eta).
  	\]
  	The conclusion of Theorem \ref{thm010402-26} follows then from
  	Proposition \ref{prop042603-26}, Lemma  \ref{lem:D} and Lemma
  	\ref{lm012603-26} proved below, as we now explain.

  	We start with the following result, similar to Lemma \ref{lm010303-26}.
  	\begin{lemma}
  		\label{lm010303-26bis}
  		For any $\eta\in\bbZ$, $\la$ s.t.~${\rm Re}\,\la>0$ we have
  		\begin{equation}
  			\label{010303-26bis}
  			\underset{k\in\hat\T_n}{\hat{\sum}} \Big[\om\Big(k+\frac{\eta}{n}\Big)-\om(k)\Big]  {\cal
  				L}_n \bar u_{n,+}(\la,\eta,k) =0.
  		\end{equation}
  	\end{lemma}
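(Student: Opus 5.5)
The plan is to mirror the proof of Lemma \ref{lm010303-26}: we prove a pathwise identity at each time, take expectations, and then take Laplace transforms. First, since $\hat\sum_k[\om(k+\frac{\eta}{n})-\om(k)]=0$ (the sum over $k\in\hT_n$ is invariant under the shift $k\mapsto k+\frac{\eta}{n}$), and ${\cal L}_nf=\mathcal R_nf-f$ with $\mathcal R_nf$ independent of $k$, we get
\[
\underset{k\in\hat\T_n}{\hat{\sum}} \Big[\om\Big(k+\frac{\eta}{n}\Big)-\om(k)\Big]{\cal L}_n f(k)=-\underset{k\in\hat\T_n}{\hat{\sum}} \Big[\om\Big(k+\frac{\eta}{n}\Big)-\om(k)\Big] f(k)
\]
for any $f$. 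So it suffices to show that $\mathrm A_n(t):=\hat\sum_k[\om(k+\frac{\eta}{n})-\om(k)]\,U_{n,+}(t,\eta,k)=0$ for every $t$ and every realization. This was already established in the proof of Lemma \ref{lm010303-26}, \eqref{tAn}. The argument writes $U_{n,+}$ explicitly through \eqref{022210-25} and \eqref{020204-26}, changes variables $k':=-k-\frac{\eta}{n}$, and uses the evenness $\om(-k)=\om(k)$. This change of variables turns $\hat\psi(k+\frac{\eta}{n})\hat\psi(-k)$ into $\hat\psi(-k')\hat\psi(k'+\frac{\eta}{n})$, and similarly for the conjugate term, while the prefactor $\om(k+\frac{\eta}{n})-\om(k)$ changes sign. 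Hence $\mathrm A_n=-\mathrm A_n$.

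Next, we take expectations. By linearity, $\hat\sum_k[\om(k+\frac{\eta}{n})-\om(k)]\bar U_{n,+}(t,\eta,k)=\bbE_n[\mathrm A_n(t)]=0$ for all $t\ge0$. Expectations are finite because of the bound $\sup_\eta\int_0^t\lnm U_{n,+}(s)\rnm_\eta^2\dd s\le C(t+1)$ coming from Theorem \ref{thm011002-26}, and the sum over $k$ is finite. Finally, we multiply by $e^{-\la t}$ and integrate over $[0,+\infty)$. Exchanging the finite $k$-sum and the time integral is legitimate, since for ${\rm Re}\,\la>0$ the Laplace transform $\bar u_{n,+}$ is absolutely convergent by the same bound. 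This gives $\hat\sum_k[\om(k+\frac{\eta}{n})-\om(k)]\bar u_{n,+}(\la,\eta,k)=0$, and combined with the first step this is \eqref{010303-26bis}.

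No step is really an obstacle. The only point requiring care is the symmetry computation: one must check that the substitution $k\mapsto -k-\frac{\eta}{n}$ is a bijection of $\hT_n$, which holds because it is an affine map of the group $\hT_n$ (addition mod $1$ with $\frac{\eta}{n}\in\frac1n\bbZ$). One must also check that both products in $U_{n,+}$ are mapped to each other or to themselves with the sign flip. This is exactly the computation already carried out for Lemma \ref{lm010303-26}, so I would simply invoke it.
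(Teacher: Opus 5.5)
Your proposal is correct and follows the paper's route. The paper proves this lemma by referring back to the proof of Lemma \ref{lm010303-26}: the substitution $k'=-k-\frac{\eta}{n}$, together with the evenness of $\om$, gives the pathwise antisymmetry $\mathrm A_n=-\mathrm A_n$, after which one passes to expectations and Laplace transforms. Your explicit reduction of ${\cal L}_n f$ to $-f$, using that the shifted differences of $\om$ sum to zero over $\hT_n$, and your integrability remarks spell out steps the paper leaves implicit.
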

  	\proof
  The proof is identical to the one of Lemma \ref{lm010303-26}.   	\qed

  	\begin{lemma}
  		\label{lm012603-26}
  		For any  $\eta\in\bbZ$,  $\la$ s.t.~${\rm Re}\,\la>0$, 
  		\begin{equation}
  			\label{052603-26}
  			\lim_{n\to+\infty}R_n(\la,  \eta)=0.
  		\end{equation}
  	\end{lemma}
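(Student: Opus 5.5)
The plan mirrors the proof of Lemma \ref{lem:limII}, now for the averaged quantities, which satisfy sharper bounds (Proposition \ref{prop010402-26}).

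\emph{Step 1: remove the part of the weight that has no effect.} Since $\hat\sum_k{\cal L}_n f(k)=0$, we can replace the factor $\frac{\la+in\delta_n\om}{\frac{\la}{n^2}+\frac{i\delta_n\om}{n}+\ga}$ in $R_n$ by
\[
\frac{\la+in\delta_n\om}{\frac{\la}{n^2}+\frac{i\delta_n\om}{n}+\ga}-\frac{\la+in\delta_n\om}{\frac{\la}{n^2}+\ga}.
\]
This changes nothing except the subtraction of $\frac{1}{\frac{\la}{n^2}+\ga}\hat\sum_k(\la+in\delta_n\om)\,{\cal L}_n\bar u_{n,+}$. By Lemma \ref{lm010303-26bis}, the $\delta_n\om$ part of that term vanishes. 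The $\la$ part vanishes because $\hat\sum_k{\cal L}_n\bar u_{n,+}=0$. So the subtraction is harmless.

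\emph{Step 2: bound what is left.} Put the difference over a common denominator. Its numerator is
\[
(\la+in\delta_n\om)\,\frac{-i\delta_n\om}{n},
\]
and its denominators are bounded below by $\ga$. Since $|\delta_n\om(\eta,k)|\le\|\om'\|_\infty|\eta|$, the new weight is bounded uniformly in $k$ by $C(|\la|+|\eta|)|\eta|$. By Cauchy--Schwarz and \eqref{030204-26},
\[
|R_n(\la,\eta)|\le \frac{C(\la,\eta)}{\ga\,|\bD_n(\la,\eta)|}\,\|{\cal L}_n\bar u_{n,+}(\la,\eta)\|_{\mL^2(\hT_n)}\le \frac{C'}{|\bD_n(\la,\eta)|}\,\|\bar u_{n,+}(\la,\eta)\|_{\mL^2(\hT_n)}.
\]

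\emph{Step 3: conclude.} By \eqref{060402-26a}, $\|\bar u_{n,+}(\la,\eta)\|_{\mL^2(\hT_n)}\le C/n$. This estimate is stated for ${\rm Re}\,\la\ge\xi_0$, and since $\la$ is fixed we may take $\xi_0={\rm Re}\,\la$. By Lemma \ref{lem:D}, $|\bD_n(\la,\eta)|\ge{\rm Re}\,\bD_n(\la,\eta)$ is bounded below by a positive constant uniformly in $n$. Together these give $|R_n(\la,\eta)|\le C/n\to0$.

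The step I expect to need the most care is Step 1, namely the exact cancellation of the order-$n^2$ weight. Bounding $R_n$ directly would give an $O(n)$ weight times an $O(1/n)$ norm, which does not go to zero. Lemma \ref{lm010303-26bis} is exactly what removes this leading part. The remaining steps are routine.
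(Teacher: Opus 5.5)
Your proof is correct and matches the paper's argument: the term you subtract is the paper's $R_n^{(1)}$, which is handled by Lemma~\ref{lm010303-26bis}, and your remainder is exactly $R_n^{(2)}$, which is bounded by $C/n$ using \eqref{060402-26a} together with the uniform lower bound on ${\rm Re}\,\bD_n$ from Lemma~\ref{lem:D}. The only difference is minor: you observe that the $\la$-part of $R_n^{(1)}$ vanishes identically because $\hat\sum_k{\cal L}_n f(k)=0$, whereas the paper only bounds it by $C/n$.
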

  	\proof
  	We write
  	\begin{align*}
  		&
  		R_n(\la,\eta)=R_n^{(1)}(\la,\eta)+R_n^{(2)}(\la,\eta),\quad\mbox{where}\\
  		&
  		R_n^{(1)}(\la,\eta)= \frac{\bD_n^{-1}(\la,
  			\eta)}{\ga} \underset{k\in\hat\T_n}{\hat{\sum}} \frac{  \la+in
  			\delta_{n}\om(\eta,k)    }{\frac{\la}{n^2} +\ga  } {\cal
  			L}_n \bar u_{n,+}(\la,\eta,k), \\
  		&
  		R_n^{(2)}(\la,\eta)= \frac{\bD_n^{-1}(\la,
  			\eta)}{\ga} \underset{k\in\hat\T_n}{\hat{\sum}} \frac{  
  			[\delta_{n}\om(\eta,k)]^2 -  \la\frac{i \delta_{n}\om(\eta,k)}{n} }{\big[\frac{\la}{n^2} +\frac{i \delta_{n}\om(\eta,k)}{n} +\ga\big] \big[\frac{\la}{n^2} +\ga \big] } {\cal
  			L}_n \bar u_{n,+}(\la,\eta,k).
  	\end{align*}
  	Using \eqref{060402-26a} we conclude that there exists $C>0$ such that
  	\[
  	\sup_{\eta\in\bbZ}|R_n^{(2)}(\la,\eta)|\le\frac{C}{n}.
  	\]
  	Thanks to Lemma \ref{lm010303-26bis} we obtain
  	\begin{align*}
  		R_n^{(1)}(\la,\eta)&= \frac{\bD_n^{-1}(\la,
  			\eta)}{\ga} \underset{k\in\hat\T_n}{\hat{\sum}} \frac{  \la+in
  			\delta_{n}\om(\eta,k)    }{\frac{\la}{n^2} +\ga  } {\cal
  			L}_n \bar u_{n,+}(\la,\eta,k)\\
  		&
  		= \frac{\bD_n^{-1}(\la,
  			\eta)}{\ga}\underset{k\in\hat\T_n}{\hat{\sum}} \frac{  \la    }{\frac{\la}{n^2} +\ga  } {\cal
  			L}_n \bar u_{n,+}(\la,\eta,k).
  	\end{align*}
  	Hence, also
  	\[
  	\sup_{\eta\in\bbZ}|R_n^{(1)}(\la,\eta)|\le\frac{C}{n}.
  	\]
  	Thus \eqref{052603-26} follows.\qed

\bibliographystyle{amsalpha}

\end{document}